\newcommand{\NN}{\mathbb{N}}
\newcommand{\RR}{\mathbb{R}}
\newcommand{\CC}{\mathbb{C}}
\newcommand{\ZZ}{\mathbb{Z}}
\newcommand{\GG}{\mathbb{G}}
\newcommand{\norm}[1]{\lVert#1\rVert}
\newtheorem{theorem}{Theorem}[section]
\newtheorem{corollary}[theorem]{Corollary}
\newtheorem{lemma}[theorem]{Lemma}
\newtheorem{proposition}[theorem]{Proposition}
\newtheorem{example}{Example}
\newcommand{\comment}[1]{}
\theoremstyle{definition}
\newtheorem{definition}[theorem]{Definition}
\newtheorem{remark}[theorem]{Remark}
\newtheorem{sect}{}
\numberwithin{equation}{section}
\DeclareMathOperator{\diag}{diag}
\renewcommand*\env@matrix[1][*\c@MaxMatrixCols c]{%
  \hskip -\arraycolsep
  \let\@ifnextchar\new@ifnextchar
  \array{#1}}
\begin{document}
\title[Local rigidity]
{Local rigidity of weak or no hyperbolicity algebraic actions}


\thanks{ $^1$ Based on research supported by NSF grants   DMS-1700837 and DMS-1845416}
\thanks{{\em Key words and phrases:} Higher rank group actions, first cohomology, coboundary operators, representation theory, group algebra}

\author[]{ Zhenqi Jenny Wang$^1$ }

\address{Department of Mathematics\\
        Michigan State University\\
        East Lansing, MI 48824,USA}
\email{wangzq@math.msu.edu}

\begin{abstract}
In this paper we study rigidity properties of abelian \hyphenation{break-able}actions with weak or no hyperbolicty. We introduce a general strategy for
proving $C^\infty$ local rigidity of algebraic actions. As a consequence, we show $C^\infty$ local rigidity for a broad class of parabolic algebraic actions on homogeneous spaces of semisimple Lie groups.
This is the first time in the literature that (strong) local rigidity for
these actions is addressed.
\end{abstract}

\maketitle

\setcounter{tocdepth}{2}
\tableofcontents

\section{Introduction}
\subsection{Abelian algebraic actions}\label{sec:21} Let $G$ be a
connected Lie group, $A\subseteq G$ a closed abelian subgroup which
is isomorphic to $\ZZ^k\times \RR^\ell$, and $\Gamma$ a (cocompact) torsion free lattice in
$G$. Then $A$ acts by left translation on the compact space
$\mathcal{X}=G/\Gamma$, which is called \emph{an algebraic $A$-action} and is denoted by $\alpha_A$. $\alpha_A$ is higher-rank if $k+\ell\geq2$. The linear part $\rho$ of $\alpha_A$ is induced by the adjoint representation of $A$ on $\text{Lie}(G)$. Define the Lyapunov exponents of $\alpha_A$ as the log's of the absolute values of the eigenvalues of $\rho$. We get linear functionals $\chi:\,A\to\RR$, which  are called \emph{Lyapunov functionals} of $\rho$.

\begin{itemize}
  \item \(\alpha_A\) is \emph{partially hyperbolic} if \(\rho\) has at least one nonzero Lyapunov functional.
    In particular, \(\alpha_A\) is \emph{hyperbolic} if, in addition, all zero Lyapunov functionals of \(\rho\) appear only in the orbit distribution of \(\alpha_A\).

  \smallskip

  \noindent
  \(\alpha_A\) is \emph{weakly partially hyperbolic} if there exists a proper rank-one subgroup $A'\subset A$ such that the (nontrivial) Lyapunov distributions for the full $A$-action coincide with those for the $A'$-action. (In other words, although $A$ may be higher rank, the nonzero Lyapunov exponents can be completely determined by a suitable one-parameter subgroup.)

  \smallskip

  \item \(\alpha_A\) is \emph{parabolic} if all Lyapunov functionals of \(\rho\) are zero, and \(\rho\) is not semisimple over \(\mathbb{C}\).
\end{itemize}
Generally, one can define partially hyperbolic and weakly partially hyperbolic actions without further restrictions; in this paper the notion of \emph{strong hyperbolicity} is reserved for those cases where either the action is higher rank hyperbolic or, when \(G\) has nontrivial semisimple components, the \(A\)-action exhibits a higher-rank behavior on each simple factor; that is,  the restriction of \(\rho\) to each simple factor of the semisimple part of \(G\) has at least two nontrivial, non-proportional Lyapunov exponents.

\subsection{Rigidity of actions and related notions} Let Act$^r(A,\mathcal{X})$ be the space of $A$ actions
by diffeomorphisms of class $C^r$ of a compact manifold $\mathcal{X}$. If $A$ is a connected Lie group, the $C^r$ topology in Act$^r(A,\mathcal{X})$ is induced by
the $C^r$ topology on vector fields which generate the action of the Lie algebra of $A$, see \cite[Section 1.2]{Damjanovic4}.  In this paper, we only consider continuous Lie groups actions of $\RR^k$.

 We say that $\alpha_A$ is (strong) $C^{\infty,\ell, \infty}$ locally
rigid, i.e., for any $C^\infty$ perturbation $A$-action $\tilde{\alpha}$ which is
sufficiently $C^{\ell}$ close to $\alpha_{A}$, there is $h\in \emph{\small Diff}^{\,\infty}(\mathcal{X})$ such that for any $x\in \mathcal{X}$ and $a\in A$ we have
\begin{align}\label{for:195}
 h(\tilde{\alpha}(a,x))=\alpha_{A}(\mathfrak{i}(a),h(x)),
\end{align}
where $\mathfrak{i}$ is an automorphism of $A$.

A weaker notion is \emph{transversally locally rigid}: Suppose that there exists a finite dimensional family $\{\alpha^\lambda_{A}\}_{\lambda\in\RR^d}$ of smooth $A$ actions on $\mathcal{X}$ such
that $\alpha^0_{A}=\alpha_{A}$, and the family is $C^1$ transversally i.e. it is $C^1$ in the parameter $\lambda$. Action $\alpha_{A}$ is transversally locally rigid with respect to the family $\{\alpha^\lambda_{A}\}$ if every
sufficiently small perturbation of the family $\{\alpha^\lambda_{A}\}$
in a neighborhood of $\lambda=0$ intersects the smooth conjugacy class of $\alpha_{A}$, where the smooth conjugacy class of $\alpha_{A}$
consists of all actions $\{h\circ \alpha_{A}\circ h^{-1}:\,h\in\emph{\small Diff}^\infty(\mathcal{X})\}$.

\subsection{History and motivation}
Motivated by the Zimmer program, the study of smooth local rigidity  of higher rank actions has become one of the most active areas of smooth dynamics and rigidity theory.
The main goal of local classification is to completely understand the dynamics of smooth actions that are small perturbations of a given action, which is usually well understood. Starting with the seminal work of Katok and Spatzier on Anosov actions \cite{Spatzier}, significant progress has been made over the past decades in the study of actions with strongly hyperbolic features, some of the highlights are \cite{as},
\cite{Damjanovic2}, \cite{Damjanovic4}, \cite{damjanovic07}, \cite{zwang-1}, \cite{zwang-2}, \cite{vinhage15}, \cite{vw}.

Most of current methods are developed from the strong hyperbolic
theory which heavily relies  on persistent geometric structures.  Unfortunately, these methods are not applicable to a large class of algebraic actions,  especially for weakly partially hyperbolic actions and parabolic actions (see Section \ref{sec:21}).  As a consequence there were many unanswered questions concerning the $C^\infty$ local rigidity of these actions.

$(Q_1)$ Is strong hyperbolicity a necessary condition for local classification of partially hyperbolic actions?

For partially hyperbolic actions, strong hyperbolicity condition was essential for applying the tools and theory developed so far.
The extension to broader actions is not straightforward, due to the lack of robust  geometric structures.

$(Q_2)$ Is the $\RR^2$ upper triangular parabolic action on
\begin{align*}
SL(2,\RR)\times SL(2,\RR)/\Gamma,
\end{align*}
where $\Gamma$ is an irreducible lattice in $SL(2,\RR)\times SL(2,\RR)$, \emph{transversally locally rigid}?

$(Q_3)$ Let $A$ be a maximal abelian subgroup of $SL(n,\RR)$, $n\geq4$. Is $\alpha_A$ \emph{strong locally rigid} on
\begin{align*}
 SL(n,\RR)/\Gamma,
\end{align*}
where $\Gamma$ is an irreducible lattice in $SL(n,\RR)$? Here $\alpha_A$ is parabolic, see \cite{Malcev}.

 $(Q_2)$ and $(Q_3)$ concern the study of parabolic actions. In general, parabolic actions are not structurally stable (one can easily find partially hyperbolic algebraic actions arbitrarily close to them). This lack of structural stability usually leads to weaker rigidity results. Specifically, for the
$\RR^2$ parabolic action in $(Q_2)$, one can only expect a weaker rigidity result, namely transversal local rigidity. In contrast, the action in $(Q_3)$ is geometrically stable, meaning it is structurally stable among algebraic perturbations (see Definition \ref{de:1} and Remark \ref{re:2} for details). This geometric stability justifies pursuing a strong
result in that case.

Local classification for parabolic actions is substantially more difficult than for hyperbolic actions. Parabolic actions do not have any form of hyperbolicity. Since  there are no invariant geometric structures altogether, the geometric considerations cannot even get started. The only progress so far is the recent work of J. Tanis and D. Damjanovi{\'c}, proving  $C^\infty$ weak local rigidity for $\ZZ^2$ parabolic actions on Heisenberg nilmanifolds \cite{DT}. Their method is the KAM scheme, which was first introduced by A. Katok and D. Damjanovi{\'c} in \cite{Damjanovic4} to prove $C^\infty$ local rigidity for higher rank partially hyperbolic automorphisms on
tori. However,  similar applications to algebraic actions of semisimple type pose new challenges.  Even to $SL(2,\RR)\times SL(2,\RR)/\Gamma$, the most basic one, the application is not straightforward.   The following remarks may illustrate the main difficulty.
The KAM method for obtaining local rigidity results reformulates the local conjugation problem as a nonlinear operator,  describing a (twisted) coboundary over the perturbed action. After linearization, we estimate how far the resulting (twisted) coboundary deviates  from being a (twisted) coboundary over the unperturbed algebraic action. In particular, if we can show
that the projection of this (twisted) coboundary onto the space of (twisted) coboundaries for the unperturbed action yields a quadratically small error, then a suitable inverse of the coboundary operator can
be defined,  and one may hope to employ the KAM iterative method to produce a \(C^\infty\) conjugacy. In short, the KAM method involves two essential ingredients: quantifying the error between the algebraic action and its perturbation, and applying the KAM iteration. The quantifying procedure is usually hard to perform, which usually needs the full machinery of the representation theory. This is the major difficulty in \cite{DT} and the main reason that their results are restricted to step 1 nilpotent groups.  It should be stressed that the representation theory for nilpotent groups is in many ways the next easiest for a Lie group (after abelian cases). Compared to nilpotent groups, the representation theory for semisimple  groups is substantially more complicated. In general the unitary dual of many higher rank simple Lie groups is not completely classified.
Even when the classification is known, it is too complicated to apply. As a consequence  there was no progress toward the study of higher rank simple Lie groups, even in the case of  $SL(3,\RR)$  whose unitary dual is  well-understood \cite{Vogen}. \hyphenation{break-able} Another  problem is that the quantifying procedure requires
consideration of simple Lie groups case-by-case. Probably, specific information from representation theory would be needed that may be available  for some Lie groups and not for others. As a result, it seems very hard to perform the quantifying procedure to general  Lie groups.

We stress that even when the quantifying procedure is possible, the KAM iteration may fail to work.  More precisely, ``goodness" of the inverses of coboundary operators is
essential for the convergence of the KAM iteration. Here,  ``goodness"  means that the  $C^r$ norm \hyphenation{norm}of the inverse can be bounded by the $C^{\theta r+\sigma}$
norm of the given data, where $1\leq\theta<2$ (see \cite{Fayad}) and $\sigma$ is a constant. Note that when \(\theta=1\), this property coincides with the standard notion of tameness; however, for $1<\theta<2$, the inverses are not tame in the classical sense, though the KAM scheme can sometimes still be applied (see \cite{Fayad}).
For the $\RR^2$ parabolic action on $SL(2,\RR)\times SL(2,\RR)/\Gamma$, if there are errors involved, the inverses of the coboundary operators seem unlikely to be ``good" since the orders \hyphenation{orders}of the obstructions to solving the cohomological equation are not uniformly bounded, see \cite{Forni} and the beginning of Section  \ref{sec:34}. This is the main reason that $(Q_2)$ remained unanswered. Moreover, For the $\ZZ^2$ parabolic action on $SL(2,\RR)\times SL(2,\RR)/\Gamma$, the inverse of the coboundary equation is not good (even when a solution exists and there is no error
involved), see \cite{tanis2018Cohomology}. This is also the case for the discrete parabolic action on $SL(n,\RR)/\Gamma$ , $n\geq3$, see Theorem 2.2 of \cite{tanis2017cohomological}. The above results suggest that ``goodness" may fail for every semisimple Lie group, which is why treating parabolic actions poses a significant challenge for the KAM method.

\subsection{Results of the paper}\label{sec:5}

The main results of the paper address these long-standing unanswered questions. Specifically, it presents smooth local rigidity for a large class of abelian actions with weak or without hyperbolicty, which greatly strengthens the results of local classification. The new technique introduced in the paper is a fundamental step towards achieving a complete solution to the program of local classification of algebraic actions.

Let $\mathbb{G}$ denote a higher-rank semisimple Lie group with finite center without compact factors satisfying: $\mathbb{G}=\mathbb{G}_1\times \cdots\times \mathbb{G}_k$, where $\mathbb{G}_1=SL(n,\RR)$, $n\geq2$. $\Gamma$ is a cocompact irreducible lattice of $\mathbb{G}$. We say that $u\in \text{Lie}(\mathbb{G})$ is \emph{nilpotent} if $\text{ad}_u$ is
nilpotent. We say that a subgroup of $\GG$ is \emph{unipotent} if its Lie algebra is (linearly) spanned by nilpotent vectors.

For any abelian subgroup $A$ of $\GG$ we use $\alpha_A$ to denote the the action of $A$ by left translations
on $\mathcal{X}=\mathbb{G}/\Gamma$.  Recall that the definition of geometric stability is provided in Definition \ref{de:1} and further elaborated in Remark \ref{re:2}.

The next two theorems are our main results for algebraic abelian actions.
\begin{theorem}\label{th:13} Suppose $\GG\neq \GG_1$. Let $A\subseteq G$ be a closed abelian subgroup of $\GG$ with the following property:
\begin{enumerate}
  \item there exist $\textbf{u},\,\textbf{v}\in\text{Lie}(A)$ such that $\textbf{u}$ is in a root space of $\GG_1$ and $\textbf{v}\in \text{Lie}(\GG_2\times\cdots \times \GG_n)$ is nilpotent;

      \smallskip
  \item $\alpha_A$ is geometrically stable.
\end{enumerate}
Then
 there is $\ell\in \NN$ such that the action $\alpha_A$ is $C^{\infty,\ell,\infty}$ {\em locally
rigid}.

\end{theorem}

We use $\Phi_1$ to denote the set of roots of $SL(n,\RR)$ and $\mathfrak{u}_\phi$ to denote the root space of $\phi$, $\phi\in \Phi_1$.

\begin{theorem}\label{th:11}
 Suppose $\GG=SL(n,\RR)$ $n\geq6$. Let $A\subseteq G$ be a closed abelian subgroup of $\GG$ with the following property:
 \begin{enumerate}
   \item there exist $\phi_i$, $1\leq i\leq3$ such that $\bigoplus\mathfrak{u}_{\phi_i}\subset \text{Lie}(A)$;

   \smallskip
   \item\label{for:350} $\phi_i-\phi_j\notin\Phi_1$ for any $1\leq i,j\leq 3$;

   \smallskip
   \item $\alpha_A$ is  geometrically stable.
 \end{enumerate}


 Then there is $\ell\in \NN$ such that the action $\alpha_A$ is $C^{\infty,\ell, \infty}$ {\em locally
rigid}.

\end{theorem}

The condition of being geometrically stable ensures that $\alpha_A$ is locally rigid among the algebraic perturbations (see Remark \ref{re:2}).
The hypotheses on $\textbf{u}$ and $\textbf{v}$ (resp. on  $\phi_i$) are given for two purposes. Firstly,
  they ensure  that cocycle rigidity holds for the $\RR^2$ (resp. $\RR^3$) action generated by $\textbf{u}$ and $\textbf{v}$ (resp. by $\mathfrak{u}_{\phi_i}$). It is analogous to the condition of the technical theorems of \cite{tanis2017cohomological} and \cite{W1} on the study of cocycle rigidity over abelian parabolic actions. We point out that if $\alpha_A$ is locally rigid then it is cocycle rigid. Secondly, they ensure that the algebraic property \hyperlink{o.0}{P} (see Section \ref{sec:51}), discussed in detail in Sections \ref{sec:43} and \ref{sec:37},  holds. Property \hyperlink{o.0}{P} is the key property that underlies the local rigidity in the paper.

Below, we list some interesting results derived from the above theorems. Applying Theorem \ref{th:11} we obtain the following result:
\begin{corollary}\label{cor:8}
Suppose $\GG=SL(n,\RR)$, $n\geq6$. Let $A$ be a maximal abelian subgroup of $\GG$.
Then there is $\ell_0\in \NN$ such that the action $\alpha_A$ is $C^{\infty,\ell_0,\infty}$ \emph{locally
rigid}.

\end{corollary}
Corollary \ref{cor:8} partially answers $(Q_3)$ for $n\geq6$. We require $n\geq6$ to satisfy assumption \eqref{for:350} in Theorem \ref{th:11}.  We currently do not know if the techniques in this
paper will be effective in  addressing the cases of $n=4,\,5$.

\begin{corollary}\label{cor:2} Suppose $\GG=\underbrace{SL(n,\RR)\times \cdots \times SL(n,\RR)}_{k \text{ copies}}$, $n\geq4$ and $k\geq2$. Let $A_{i}$ be a maximal abelian subgroup of $\GG_i$, $i\geq1$. Set $A=A_1\times A_2\times \cdots \times A_k$.
 There is $\ell_0\in \NN$ such that the action $\alpha_A$ is $C^{\infty,\ell_0,\infty}$ {\em locally
rigid}.

\end{corollary}

\begin{remark}\label{re:7} For $n\geq 4$, any maximal abelian subgroup in $SL(n,\RR)$ is unique up to automorphisms and is unipotent \cite{Malcev}. The condition of being maximal abelian in Corollary \ref{cor:8} and \ref{cor:2} guarantees that $\alpha_A$ is geometrically stable and parabolic.

\end{remark}
\begin{corollary}\label{cor:13}
Suppose $\GG=SL(n,\RR)$, $n\geq7$. Let $A_1$ be a maximal abelian subgroup of $SL(n-1,\RR)$ and $\textbf{x}$ be a diagonal matrix in $\mathfrak{sl}(n,\RR)$ which commutes with $\text{Lie}(A_1)$. Let $\tilde{\textbf{x}}$ be the one-parameter subgroup generated by $\textbf{x}$ and let $A=A_1\times \tilde{\textbf{x}}$.
Then there is $\ell_0\in \NN$ such that the action $\alpha_A$ is $C^{\infty,\ell_0,\infty}$ \emph{locally
rigid}.

\end{corollary}

From Remark \ref{re:7}, we see that $A_1$ is a unipotent subgroup. This means $\alpha_A$ is weakly partially hyperbolic.
Thus we answer $(Q_1)$.

If we remove the geometrically stable condition in Theorem  \ref{th:13}, it is reasonable to expect weak local rigidity. The following result answers $(Q_2)$.
\begin{corollary}\label{cor:11} Suppose $\GG=SL(2,\RR)\times SL(2,k_1)\cdots \times SL(2,k_n)$, where $k_i=\RR$ or $\CC$. Let $A$ be a $\RR^m$, $m\leq 1+n$, action of upper triangular unipotents.
Then there is $\ell\in \NN$ such that the action $\alpha_A$ is transversally  $C^{\infty,\ell,\infty}$ {\em locally
rigid}.
\end{corollary}

\noindent{\bf Acknowledgements.}
I would also like to express my sincere appreciation to the referees for their very careful review and valuable suggestions.
Their feedback significantly contributed to improving the paper and identifying a defective argument in the initial version of Lemma \ref{le:5}.

\section{Proof strategy}\label{sec:49}

To prove Theorems \ref{th:13} and \ref{th:11}, we introduce a general strategy, which is a combination of representation theory and analysis. The strategy differs from most of the previous methods, but has some features in common with the KAM method, such as the need for a detailed analysis of
the first cohomology and coboundary operators. However, a key difference between our approach and the classical KAM method is that our strategy does not require ``goodness" in every direction. We begin with an inverse coboundary operator that is tame only in certain directions. The lack of tameness in the remaining directions is overcome by employing a truncation procedure (specifically, the directional smoothing operators technique, which is the core innovation of this paper) alongside the higher-rank trick. Both tools are enabled by the algebraic property \hyperlink{o.0}{P} of the underlying algebraic action.
As a result, our new method works in greater generality.

In this section, we outline our proof strategy and provide pointers to the remainder of the paper, enabling the reader to understand the overall structure. First, we explain the proof of Theorem \ref{cor:7}, which provides a \(C^\infty\) splitting of a set of almost twisted cocycles. In other words, even when the twisted cocycle condition holds only approximately, the family of almost twisted cocycles can be decomposed into a genuine twisted cocycle component and an error term that is quadratically small. This splitting is crucial for controlling the error terms in the subsequent KAM iteration. Next, we describe how the KAM iteration converges.

\subsection{Construction of inverses of  coboundary operators}\label{sec:51} The key step of the strategy is obtaining a $C^\infty$ inverse of twisted coboundary operators. Our goal is to prove that
\begin{enumerate}
  \item []\hypertarget{o.9}{($\mathcal{A}$)} For any set of almost
(twisted) cocycles over $\text{Lie}(A)$-action, we can find a common $C^\infty$ approximate solution such that both the approximate solution and the errors have nice Sobolev estimates. ``Nice" means one can employ the KAM iterative method to inductively obtain a $C^\infty$ conjugation.
\end{enumerate}

The precise statement is given in Theorem \ref{cor:7}. In the following subsections from \ref{sec:44} to \ref{sec:45}, we summarize the ideas
behind the proof of \hyperlink{o.9}{$\mathcal{A}$} coming from Sections \ref{sec:34} to \ref{sec:22}.

\subsubsection{Splitting for $u$}\label{sec:44} In this part, we summarize the ideas behind the proof of \hyperlink{o.9}{$\mathcal{A}$} coming from
Section \ref{sec:34}. Fix a vector $u\in \text{Lie}(A)$ which is inside a $\RR$ one-dimensional root space $\mathfrak{u}_\phi$. We start by constructing a splitting for the first coboundary operator for $u$.  More precisely, our goal is to construct a splitting for the $u$-almost coboundary $\mathfrak{p}_{u}$,
such that:
\begin{enumerate}

  \item\label{for:351} both $\eta$ and the error part $\mathcal{E}_{u}$ in the following equation
\begin{align}\label{for:282}
 u\eta=\mathfrak{p}_{u}-\mathcal{E}_{u}
\end{align}
have nice Sobolev estimates;

  \smallskip
  \item\label{for:352} if $\mathfrak{p}_{u}$ is a $u$-coboundary, then $\mathcal{E}_{u}=0$.

\end{enumerate}
\emph{Note}. Theorem \ref{cor:7} concerns the splitting for the $u$-twisted almost coboundary
\begin{align*}
(u+\textrm{ad}_u)\eta=\mathfrak{p}_{u}-\mathcal{E}_{u}
\end{align*}
where $\eta$, $\mathcal{E}_{u}$ and $\mathfrak{p}_{u}$ are vector valued maps on $\GG/\Gamma$. In contrast, \eqref{for:282} is a simplified version where
$\eta$, $\mathcal{E}_{u}$ and $\mathfrak{p}_{u}$ are functions on $\GG/\Gamma$. However, this simplification does not affect the presentation of ideas.
\smallskip

Before we introduce the construction, we need some preliminary notations  and examples.
\begin{enumerate}

     \item\label{for:274} Partially tame: we say that $\eta$ is \emph{partially tame} on a subgroup $H$ of $\GG$ (with respect to $\mathfrak{p}_{u}$) if there exists $\sigma > 0$ such that, for any $r \geq 0$, the Sobolev norm of order $r$ of $\eta$ on $H$ can be bounded by the Sobolev norm of order $r+\sigma$ of $\mathfrak{p}_{u}$.

 \smallskip

    \item Nice Sobolev estimates: we say that $\eta$ has \emph{nice Sobolev estimates} if its Sobolev estimates are sufficiently good for the KAM iteration. We emphasize that $\eta$ being tame on $\GG$ is usually a stronger condition than $\eta$ having nice Sobolev estimates.

  \item $G_u$: the $SL(2,\RR)$ subgroup generated by $\mathfrak{u}_\phi$ and $\mathfrak{u}_{-\phi}$.

  \smallskip
  \item $G_u'$: the normalizer of $u$ in $G_u$, i.e., the subgroup generated by $\mathfrak{u}_\phi$ and $[\mathfrak{u}_\phi,\mathfrak{u}_{-\phi}]$.

  \smallskip

  \item $S_{1,u}$: the subgroup generated by $G_u'$ and $C(G_u)$, where $C(G_u)$ is the centralizer $C(G_u)$ of $G_u$ in $\GG$. We call $\text{Lie}(S_{1,u})$ the \emph{tame} subspace of $u$ and call vectors in $\text{Lie}(S_{1,u})$ the  tame directions to $u$. Similarly, we call $\text{Lie}(C(G_u))$ the \emph{super tame} subspace of $u$ and call vectors in $\text{Lie}(C(G_u))$ the super tame directions to $u$.

  \smallskip

  \item\label{for:365} Friendly pair: $u$ and $v$ are a friendly pair if $[u,v]=0$ and there exists a subalgebra $\mathfrak{B}\subseteq \text{Lie}(\GG)$ containing $u$ and $v$ with the following properties:
    \begin{enumerate}
      \item $\mathfrak{B}$ is isomorphic to $\mathfrak{sl}(2,\RR)\times \mathfrak{sl}(2,\RR)$;

      \smallskip
      \item $\text{Lie}(G_u)\subset \mathfrak{B}$.
    \end{enumerate}
We note that if $v$ lies in the super tame subspace of $u$, then $u$ and $v$ form a friendly pair.

  \smallskip

  \item \emph{Complementary} directions to $u$: vectors in root spaces that are not in  $\text{Lie}(S_{1,u})$.

  \smallskip

  \item $(CS)_u$:  the subspace of $\text{Lie}(\GG)$ spanned by complementary directions of $u$. We call $(CS)_u$ the \emph{complementary subspace} of $u$.

  \smallskip

  \item\label{for:2023}  From the definition, it is easy to verify that  we have a decomposition of $\text{Lie}(\GG)$:
\begin{align*}
 \mathfrak{g}=(CS)_u\oplus\text{Lie}(S_{1,u}).
\end{align*}
The decomposition shows that the tame subspace (of $u$) and its complementary subspace span $\mathfrak{g}$. We will show that $\eta$ is partially tame on $\text{Lie}(S_{1,u})$. This is the reason we call $\text{Lie}(S_{1,u})$ the tame subspace. However, the tameness of $\eta$ along the complementary directions is not straightforward.

\smallskip

\item Property (P): we say that \( \text{Lie}(A) \) has property \hypertarget{o.0}{(P)} if we can choose finitely many elements \( u, v_1, v_2, \dots \) in \( \text{Lie}(A) \) such that
    \begin{itemize}
    \item There exists some $v_i$ such that  $v_i$ lies in the super tame subspace of $u$.

      \item The complementary directions of \( u \) are covered by abelian nilpotent subalgebras $\mathfrak{s}_1,\,\mathfrak{s}_2,\cdots$  (Here, we say that a subalgebra is \emph{nilpotent} if, for every $x$ in the subalgebra, the operator $\mathrm{ad}_x$ is nilpotent \emph{on the entire Lie algebra} $\mathrm{Lie}(\mathbb{G})$).

      \item For any $\mathfrak{s}_i$, there exists some $v_j$ such that $\mathfrak{s}_i$ is contained in the centralizer of $v_j$.

      \item For any $i\neq j$, the vectors $v_i$ and $v_j$ form a friendly pair.

    \end{itemize}
    (See Lemma~\ref{le:13}). We will show that property \hyperlink{o.0}{(P)} plays an essential role in the proof of \hyperlink{o.9}{\(\mathcal{A}\)}.



\end{enumerate}
Below are two typical examples. Throughout this paper, it is recommended to keep these examples in mind for the sake of clarity and transparency.
\begin{example}\label{ex:1} Let $\GG=SL(4,\RR)\times SL(4,\RR)$ and set $u=\begin{pmatrix}
0 & 1 & 0 &0\\
0 & 0 & 0 &0\\
0 & 0 & 0 &0\\
0 & 0 & 0 &0
\end{pmatrix}\times	0$. Then:

\begin{align*}
 G_u&=\begin{pmatrix}
a & b & 0 &0\\
c & d & 0 &0\\
0 & 0 & 0 &0\\
0 & 0 & 0 &0
\end{pmatrix}\times I:\,ad-bc=1;\\
G_u'&=\begin{pmatrix}
a & b & 0 &0\\
0 & a^{-1} & 0 &0\\
0 & 0 & 0 &0\\
0 & 0 & 0 &0
\end{pmatrix}\times I:\,a>0, \,b\in\RR;\\
C(G_u)&=\begin{pmatrix}
a & 0 & 0 &0\\
0 & a & 0 &0\\
0 & 0 & c &d\\
0 & 0 & e &g
\end{pmatrix}\times SL(4,\RR):\, a^2(cg-de)=1;\\
S_{1,u}&=\begin{pmatrix}
a & h & 0 &0\\
0 & b & 0 &0\\
0 & 0 & c &d\\
0 & 0 & e &g
\end{pmatrix}\times SL(4,\RR):\,ab(cg-de)=1.
\end{align*}
Let $v_1=0\times \begin{pmatrix}
0 & 1 & 0 &0\\
0 & 0 & 0 &0\\
0 & 0 & 0 &0\\
0 & 0 & 0 &0
\end{pmatrix}$, and let $A\subseteq \GG$ be a closed abelian subgroup of $\GG$ such that $u,\,v_1\in \text{Lie}(A)$. We claim that $\text{Lie}(A)$ has property \hyperlink{o.0}{(P)}. To verify that
$\text{Lie}(A)$ satisfies property \hyperlink{o.0}{(P)} in this scenario (which we refer to as ``Property (P) in case I"), we make the following key observations:

\smallskip
\noindent\hypertarget{o.20}{Property (P) in case I}:
\begin{enumerate}
  \item\label{for:353} $v_1$ lies in the super tame subspace of $u$. Then $u$ and $v_1$ are a friendly pair.

  \smallskip
  \item\label{for:354} The complementary directions to $u$ are contained in the three abelian nilpotent subalgebras:
\begin{align*}
 \mathfrak{V}&=\begin{pmatrix}
0 & 0 & a &b\\
0 & 0 & c &d\\
0 & 0 & 0 &0\\
0 & 0 & 0 &0
\end{pmatrix}\times 0:\,a,\,b,\,c,\,d\in\RR; \\
\mathfrak{U}&=\begin{pmatrix}
0 & 0 & 0 &0\\
0 & 0 & 0 &0\\
a & b & 0 &0\\
c & d & 0 &0
\end{pmatrix}\times 0:\,a,\,b,\,c,\,d\in\RR;\\
\mathfrak{W}&=\begin{pmatrix}
0 & 0 & 0 &0\\
a & 0 & 0 &0\\
0 & 0 & 0 &0\\
0 & 0 & 0 &0
\end{pmatrix}\times 0:\,a\in\RR.
\end{align*}
\item\label{for:358} All three subalgebras, $\mathfrak{V}, \mathfrak{U}, \mathfrak{W}$ are contained in the centralizer of
$v_1. $

\end{enumerate}
\hyperlink{o.20}{Property (P) in case I} implies \hyperlink{o.0}{(P)} holds for $\text{Lie}(A)$, with this choice of the pair $(u,v_1)$.



\end{example}
\begin{example}\label{ex:2} Let $\GG=SL(6,\RR)$ and set $u=\begin{pmatrix}\begin{matrix}
0 & 1 \\
0 & 0
\end{matrix} & \vline & 0\\
\hline 0 &\vline &0 \end{pmatrix}$. Then
\begin{align*}
 G_u&=\begin{pmatrix}SL(2,\RR) & \vline & 0\\
\hline 0 &\vline & I_4 \end{pmatrix};\\
G_u'&=\begin{pmatrix}\begin{matrix}
a & b \\
0 & a^{-1}
\end{matrix} & \vline & 0\\
\hline 0 &\vline & I_4 \end{pmatrix}:\,a>0, \,b\in\RR;\\
C(G_u)&=\begin{pmatrix}\begin{matrix}
a & 0 \\
0 & b
\end{matrix} & \vline & 0\\
\hline 0 &\vline & GL(4,\RR) \end{pmatrix}\subseteq SL(6,\RR);\\
 S_{1,u}&=\begin{pmatrix}\begin{matrix}
a & c \\
0 & b
\end{matrix} & \vline & 0\\
\hline 0 &\vline & GL(4,\RR) \end{pmatrix}\subseteq SL(6,\RR).
\end{align*}
Let $v_1=\begin{pmatrix}0 & \vline & 0& \vline &0\\
\hline 0 &\vline & \begin{matrix}
0 & 1 \\
0 & 0
\end{matrix} &\vline&0\\
\hline 0 &\vline & 0&\vline &0
\end{pmatrix}$,  $v_2=\begin{pmatrix}0 & \vline & 0& \vline &0\\
\hline 0 &\vline & 0 &\vline&0\\
\hline 0 &\vline & 0&\vline &\begin{matrix}
0 & 1 \\
0 & 0
\end{matrix}
\end{pmatrix}$ and let $A\subseteq \GG$ be a closed abelian subgroup of $\GG$ such that $u,\,v_1,\,v_2\in \text{Lie}(A)$. We claim that $\text{Lie}(A)$ has property \hyperlink{o.0}{(P)}. To verify that
$\text{Lie}(A)$ satisfies property \hyperlink{o.0}{(P)} in this scenario (which we refer to as ``Property (P) in case II"), we make the following key observations:

\smallskip
\noindent\hypertarget{o.21}{Property (P) in case II}:
\begin{enumerate}
  \item\label{for:355} $v_1$ and $v_2$ both lie in the super tame subspace of $u$, then both ($u$, $v_1$) and ($u$, $v_2$) form friendly pairs.
  \item\label{for:356} The complementary directions to $u$ are contained in the five abelian nilpotent subalgebras:
\begin{align*}
 \mathfrak{V}_1&=\begin{pmatrix}0 & \vline & 0& \vline &\begin{matrix}
a & b \\
c & d
\end{matrix}\\
\hline 0 &\vline & 0 &\vline&0\\
\hline 0 &\vline & 0&\vline &0
\end{pmatrix}:\,a,\,b,\,c,\,d\in\RR; \\
\mathfrak{V}_2&=\begin{pmatrix}0 & \vline & \begin{matrix}
a & b \\
c & d
\end{matrix}& \vline &0\\
\hline 0 &\vline & 0 &\vline&0\\
\hline 0 &\vline & 0&\vline &0
\end{pmatrix}:\,a,\,b,\,c,\,d\in\RR;
\end{align*}
\begin{align*}
\mathfrak{U}_1&=\begin{pmatrix}0 & \vline & 0& \vline &0\\
\hline 0 &\vline & 0 &\vline&0\\
\hline \begin{matrix}
a & b \\
c & d
\end{matrix} &\vline & 0&\vline &0
\end{pmatrix}:\,a,\,b,\,c,\,d\in\RR; \\
\mathfrak{U}_2&=\begin{pmatrix}0 & \vline & 0& \vline &0\\
\hline \begin{matrix}
a & b \\
c & d
\end{matrix} &\vline & 0 &\vline&0\\
\hline 0 &\vline & 0&\vline &0
\end{pmatrix}:\,a,\,b,\,c,\,d\in\RR; \\
 \mathfrak{W}&=\begin{pmatrix}\begin{matrix}
0 & 0 \\
a & 0
\end{matrix} & \vline & 0\\
\hline 0 &\vline & I_4 \end{pmatrix}:\,a\in\RR.
\end{align*}
\item\label{for:359} Moreover,
\begin{gather*}
\mathfrak{V}_1, \mathfrak{U}_1, \mathfrak{W}
    \subseteq\;
    \text{(centralizer of }v_1), \\ \mathfrak{V}_2, \mathfrak{U}_2, \mathfrak{W}
    \subseteq\;
    \text{(centralizer of }v_2).
\end{gather*}
  \item\label{for:357} $v_1$ and $v_2$ lie in each other's  super tame subspace, then $v_1$ and $v_2$ is a  friendly pair.
\end{enumerate}
\hyperlink{o.21}{Property (P) in case II} implies that
\hyperlink{o.0}{(P)} holds for $\text{Lie}(A)$, with the choice of the triple $u,\,v_1,\,v_2$.

\end{example}

\medskip
The construction of
$\eta$ uses representation theory of $G_u$. As $G_u$ is isomorphic to $SL(2,\RR)$, we bypass the complexity of higher rank representation theory. However, Sobolev estimates of $\eta$ are not tame
even on $G_u$ (see the beginning of Section \ref{sec:34}). Luckily, by using the normalizer trick  (see Lemma \ref{le:5}) we show that $\eta$ is tame on $G_u'$ (recall \eqref{for:274} of Section \ref{sec:44}).
Further, the centralizer trick (see Lemma \ref{le:10}) allows us to extend the tameness of $\eta$ to $C(G_u)$. In particular,
$\eta$ satisfies the following properties:
\begin{enumerate}
  \item\label{for:244} $\eta$ is partially smooth and tame on $S_{1,u}$. This is the reason we call vectors in $\text{Lie}(S_{1,u})$ the tame directions of $u$;

  \smallskip
  \item\label{for:245} all possible non-smooth directions of
 $\eta$ are inside $(CS)_u$,  the complementary subspace of $u$.
\end{enumerate}
We will instead construct a new approximate solution $\Theta$ from $\eta$, such that $\Theta$
is smooth along the complementary directions of $u$ while
preserving the  smoothness of $\eta$ on $S_{1,u}$. As a result, $\Theta$ is a $C^\infty$ approximation.
Before we present the construction of $\Theta$, we introduce two important tools that will be used for this purpose.

\subsubsection{Higher rank trick}\label{sec:46} In this part we summarize the results of Section \ref{sec:14}. Recall properties \eqref{for:244} and \eqref{for:245} for $\eta$.  We will use the higher rank trick
to prove the following result:
    \begin{proposition}\label{po:6}
Suppose $v\in\text{Lie}(A)$ is nilpotent and we have an almost cocycle equation
\begin{align*}
 u\mathfrak{p}_v+\mathfrak{p}_v-(v\mathfrak{p}_u+\mathfrak{p}_u)=\mathfrak{w}_{u,v}.
\end{align*}
If $u$ and $v$ are a friendly pair, then we can write
\begin{align*}
 \mathfrak{p}_{u}=u\eta+\mathcal{E}_{u}\quad\text{and}\quad \mathfrak{p}_{v}=v\eta+\mathcal{E}_{v},
\end{align*}
where Sobolev estimates of both $\mathcal{E}_{u}$ and $\mathcal{E}_{v}$ are comparable to those of $\mathfrak{w}_{u,v}$.
\end{proposition}
The precise statement is given in Proposition \ref{po:1}.  We note that tame Sobolev norm estimates are obtained only in certain directions. In a typical KAM iteration,  the term $\mathfrak{w}_{u,v}$ is \emph{quadratically small} relative to $\mathfrak{p}_{v}$ and $\mathfrak{p}_{u}$. Proposition \ref{po:6} shows that $\eta$ is also an approximate solution for the $v$-(twisted) almost
coboundary with a \emph{nice error} (i.e., the error is quadratically small). This observation then allows us to construct $\Theta$ from $\eta$ by considering a different (twisted) almost cocycle.

\subsubsection{Directional smoothing operator}\label{sec:42} We show a general construction of  smoothing operators in Section \ref{sec:4}. Let $S$ be an abelian closed unipotent subgroup of $\GG$. Fix a set of basis $\mathfrak{u}=\{\mathfrak{u}_1,\cdots,\mathfrak{u}_m\}$ of $\text{Lie}(S)$. For any subset $X\subseteq \RR^m$, $I_X$ denotes the characteristic function of $X$. We can define a linear map $\pi_{\mathfrak{u}}$ from $L^\infty(\RR^m)$ to the set of bounded linear operators on $L^2(\GG/\Gamma)$ such that
the assignment $X\to \pi_{\mathfrak{u}}(I_X)$ is a \emph{projection-value measure}, where $X\subseteq \RR^m$ is a Borel set. The precise definition is given in
Section \ref{sec:3}. We denote $f(\text{$\frac{t}{a}$})$ by $(f\circ a^{-1})(t)$, $a>0$. It is harmless to think $S$ is $\RR^m$ and $\pi_{\mathfrak{u}}(f\circ a^{-1})$ is the truncation using Fourier transform (see Section \ref{sec:30}).

Here are some key properties of $\pi_{\mathfrak{u}}(f\circ a^{-1})$ that will be used later:
\begin{enumerate}

  \item\label{for:231} (\eqref{ob:3} of Lemma \ref{le:2}) if $v\in \text{Lie}(\GG)$ commutes with $\text{Lie}(S)$, then
  \begin{align*}
   v\pi_{\mathfrak{u}}(f)=\pi_{\mathfrak{u}}(f) v;
  \end{align*}

\item\label{for:233} (\eqref{for:63} of Lemma \ref{le:7}) if $f$ is Schwartz and $\xi$ is an $L^2$ function, then $\pi_{\mathfrak{u}}(f\circ a^{-1})\xi$ is an $S$-smooth funcion. This is why $\pi_{\mathfrak{u}}(f\circ a^{-1})$ is called a directional smoothing operator (along $S$);

\smallskip

\item\label{for:232} (Corollary \ref{cor:1}) suppose $f$ is Schwartz.  then
\begin{align*}
 \pi_{\mathfrak{u}}(f\circ a^{-1})\big(C^\infty(\GG/\Gamma)\big)\subseteq C^\infty(\GG/\Gamma).
\end{align*}
Further, if $\xi\in C^\infty(\GG/\Gamma)$ has nice Sobolev estimates, then both
\begin{align*}
 \pi_{\mathfrak{u}}(f\circ a^{-1})\xi\quad\text{and the error}\quad \xi-\pi_{\mathfrak{u}}(f\circ a^{-1})\xi
\end{align*}
have nice Sobolev estimates. This means applying the directional smoothing operators to nice $C^\infty$ functions will not hurt the convergence in the KAM iteration;



\smallskip
\item\label{for:229} (Lemma \ref{cor:3}) suppose $Q$ is a subgroup of $\GG$ and $H=Q\ltimes S$.
If an $L^2$ function $\xi$ is $Q$-smooth, then $\pi_{\mathfrak{u}}(f\circ a^{-1})\xi$ is $H$-smooth. Further, if the
Sobolev estimates of $\xi$ are nice on $Q$, then
Sobolev estimates of $\pi_{\mathfrak{u}}(f\circ a^{-1})\xi$ are nice on $H$.

Simply speaking, the operator $\pi_{\mathfrak{u}}(f\circ a^{-1})$ has the following good properties:

\smallskip
\begin{enumerate}
      \item [(i)] it provides $H$-smoothness for those vectors only losing smoothness along $S$;

          \smallskip
      \item [(ii)] it will not hurt the KAM iteration if applied to vectors with nice Sobolev estimates on $Q$.
    \end{enumerate}



\end{enumerate}
\subsubsection{Construction of $\Theta$, part I}\label{sec:45} Here we summarize the ideas behind the proof of \hyperlink{o.9}{$\mathcal{A}$} coming from Section \ref{sec:19}.
Firstly, we consider $u=\textbf{u}$ and $v_1=\textbf{v}$ where $\textbf{u}$ and $\textbf{v}$ come from $\text{Lie}(A)$  in Theorem \ref{th:13}.
We will show that the directional smoothing operators collaborate effectively with
the higher rank trick in constructing $\Theta$.

To facilitate a better understanding of the proof, it is harmless for readers to assume that $\GG$, $u$ and $v_1$ are as described in Example \ref{ex:1}.  We recall that $\eta$ is an approximate solution for the $\textbf{u}$-almost coboundary $\mathfrak{p}_\textbf{u}$ (see \eqref{for:282} where $u=\textbf{u}$).
Since $\textbf{u}$ and $\textbf{v}$ are a friendly pair  (see \eqref{for:353} of \hyperlink{o.20}{Property (P) in case I}), we can invoke the higher rank trick (see Proposition \ref{po:6}) to conclude that
$\eta$ is also an approximate solution for the $\textbf{v}$-almost coboundary $\mathfrak{p}_\textbf{v}$ with the error $\mathcal{E}_{\textbf{v}}$:
\begin{align}\label{for:230}
 \textbf{v}\eta=\mathfrak{p}_{\textbf{v}}-\mathcal{E}_{\textbf{v}},
\end{align}
where $\mathcal{E}_{\textbf{v}}$ has nice Sobolev estimates.

We recall that $\eta$ is partially smooth and tame on $S_{1,\textbf{u}}$. However, it may lose smoothness along all complementary directions to $\textbf{u}$ (see \eqref{for:244} and \eqref{for:245} of Section \ref{sec:44}).

To construct $\Theta$, we first recall  notations  in Example \ref{ex:1}.
$\mathfrak{U}$, $\mathfrak{V}$, and $\mathfrak{W}$ determine three directional smoothing operators $\pi_{\mathfrak{U}}(f_1\circ a^{-1})$,
$\pi_{\mathfrak{V}}(f_2\circ a^{-1})$ and $\pi_{\mathfrak{W}}(f_3\circ a^{-1})$. For simplicity, we denote them by
$\pi_{\mathfrak{U}}$, $\pi_{\mathfrak{V}}$ and $\pi_{\mathfrak{W}}$. We let
\begin{align*}
 \Theta=\pi_{\mathfrak{U}}\pi_{\mathfrak{W}}\pi_{\mathfrak{V}}\eta.
\end{align*}
Using  property \eqref{for:229} of Section \ref{sec:42}, we see that $\Theta$ is a $C^\infty$ function. This is because $\mathfrak{U}$, $\mathfrak{V}$, and $\mathfrak{W}$ span the complementary subspace $(CS)_u$ (see \eqref{for:354} of \hyperlink{o.20}{Property (P) in case I}), which consists of all possible non-smooth directions of $\eta$.
Moreover, $\Theta$ has nice Sobolev estimates (the detailed proof is given in \eqref{for:26} of Corollary \ref{cor:9}).

\smallskip
\emph{Important Remark on Ordering}:

A particular order of applying  $\pi_{\mathfrak{U}}$, $\pi_{\mathfrak{W}}$ and $\pi_{\mathfrak{V}}$ operators is required to ensure $\Theta$ is \emph{globally smooth}  (see \ref{sect:1} of Section \ref{sec:52} for a detailed explanation)

\smallskip
 We now have a new approximation for the $\textbf{v}$-almost coboundary $\mathfrak{p}_\textbf{v}$:
\begin{align}
 \textbf{v}\Theta=\mathfrak{p}_{\textbf{v}}-\mathcal{R}_{\textbf{v}}.
\end{align}
Finally, let us see why the new error $\mathcal{R}_{\textbf{v}}$ has nice Sobolev estimates.

\smallskip

\emph{Note}. If we let $\Theta'=\mathfrak{s}_b\eta$, where $\mathfrak{s}_b$ is the standard smoothing operator (see Section \ref{sec:26}),
then $\Theta'$ is also a $C^\infty$ function with nice Sobolev estimates. However, the key challenge is ensuring that the error term in
\begin{align*}
  \textbf{v}\Theta'=\mathfrak{p}_{\textbf{v}}-\mathcal{R}_{\textbf{v}}'
\end{align*}
has nice Sobolev estimates. In other words, controlling the Sobolev estimates of $\mathcal{R}_{\textbf{v}}'$ to the desired level is non-trivial. Therefore, we employ directional smoothing operators. These operators are designed to commute with the corresponding vector fields and they smooth only in directions where $\eta$ lacks regularity, leaving the already smooth directions untouched. This essentially ensures that both the approximate solution $\Theta$ and the error $\mathcal{R}_{\textbf{v}}$
have nice Sobolev estimates.

\smallskip

We apply the operator
\begin{align}\label{for:289}
 \mathcal{P}=\pi_{\mathfrak{U}}\pi_{\mathfrak{W}}\pi_{\mathfrak{V}}
\end{align}
to both sides of equation \eqref{for:230}:
\begin{align*}
 &\pi_{\mathfrak{U}}\pi_{\mathfrak{W}}\pi_{\mathfrak{V}}(\textbf{v}\eta)=\mathcal{P}\mathfrak{p}_{\textbf{v}}-\mathcal{P}\mathcal{E}_{\textbf{v}}.
\end{align*}
Since $\mathfrak{U}$, $\mathfrak{V}$, and $\mathfrak{W}$ are inside the centralizer of $\textbf{v}=v_1$ (see \eqref{for:358} of \hyperlink{o.20}{Property (P) in case I}), from property \eqref{for:231} of
Section \ref{sec:42}, we see that
\begin{align*}
 &\pi_{\mathfrak{U}}\pi_{\mathfrak{W}}\pi_{\mathfrak{V}}\textbf{v}
 =\textbf{v}\pi_{\mathfrak{U}}\pi_{\mathfrak{W}}\pi_{\mathfrak{V}}.
\end{align*}
The detailed proof is given in Lemma \ref{for:106}. Then we have
\begin{align*}
 &\textbf{v}\Theta=\mathfrak{p}_{\textbf{v}}
 -\underbrace{\big((\mathfrak{p}_{\textbf{v}}-\mathcal{P}\mathfrak{p}_{\textbf{v}})+\mathcal{P}\mathcal{E}_{\textbf{v}}\big)}_{\mathcal{R}_{\textbf{v}}}.
\end{align*}
From \eqref{for:232} of Section \ref{sec:42}, we see that $\mathfrak{p}_{\textbf{v}}-\mathcal{P}\mathfrak{p}_{\textbf{v}}$ has nice Sobolev estimates.
 As $\mathcal{E}_{\textbf{v}}$ is nice on $S_{1,\textbf{u}}$, \eqref{for:229} of Section \ref{sec:42} shows that $\mathcal{P}\mathcal{E}_{\textbf{v}}$
 is also nice. As a result, $\mathcal{R}_{\textbf{v}}$ has nice Sobolev estimates.


Once $\Theta$ is constructed for the $\textbf{v}$-almost coboundary $\mathfrak{p}_{\textbf{v}}$, the usual higher rank trick implies that it is, in fact,  an approximate solution for all almost coboundaries whose errors satisfy similarly ``nice" Sobolev estimates. Thus,  we have completed the proof of \hyperlink{o.9}{$\mathcal{A}$}.





\subsubsection{Construction of $\Theta$, part II}\label{sec:48} Here, we provide a summary of the proof in Section \ref{sec:20}.
Recall that $\mathfrak{u}_{\phi_i}$, $1\leq i\leq 3$ are inside $\text{Lie}(A)$ (see Theorem \ref{th:11}). Choose $0\neq\textbf{u}_i\in \mathfrak{u}_{\phi_i}$, $1\leq i\leq 3$.  Let $u=\textbf{u}_3$ (it is harmless to assume that $\textbf{u}_1=v_1$ and $\textbf{u}_2=v_2$ where $v_1$, $v_2$ and $u$ are as described in Example \ref{ex:2}).

We recall that $\eta$ is an approximate solution for the $\textbf{u}_3$-almost coboundary $\mathfrak{p}_{\textbf{u}_3}$ (see \eqref{for:282} where $u=\textbf{u}_3$). Since $\textbf{u}_3$ and $\textbf{u}_1$ is a friendly pari (see \eqref{for:355} of \hyperlink{o.21}{Property (P) in case II}), we can invoke the higher rank trick (see Proposition \ref{po:6}) to conclude that  $\eta$ is also an approximate solution for the $\textbf{u}_1$-almost coboundary $\mathfrak{p}_{\textbf{u}_1}$ with the error $\mathcal{E}_{\textbf{u}_1}$:
\begin{align}\label{for:234}
 \textbf{u}_1\eta=\mathfrak{p}_{\textbf{u}_1}-\mathcal{E}_{\textbf{u}_1},
\end{align}
where $\mathcal{E}_{\textbf{u}_1}$ has nice  Sobolev estimates on a subgroup of $S_{1,\textbf{u}_3}$.

We recall that $\eta$ is partially smooth and tame on $S_{1,\textbf{u}_3}$. To construct $\Theta$, we also recall notations in Example \ref{ex:2}. $\mathfrak{U}_1$, $\mathfrak{U}_2$, $\mathfrak{V}_1$, $\mathfrak{V}_2$ and $\mathfrak{W}$ determine five directional smoothing operators $\pi_{\mathfrak{U}_i}$,
$\pi_{\mathfrak{V}_i}$, $i=1,2$ and $\pi_{\mathfrak{W}}$. We let
\begin{align*}
 \Theta=\pi_{\mathfrak{U}^1}\pi_{\mathfrak{U}^2}\pi_{\mathfrak{W}}\pi_{\mathfrak{V}^2}\pi_{\mathfrak{V}^1}\eta.
\end{align*}
Using property \eqref{for:229} of Section \ref{sec:42}, we see that $\Theta$ is a $C^\infty$ function. This is because $\mathfrak{U}_1$, $\mathfrak{U}_2$, $\mathfrak{V}_1$, $\mathfrak{V}_2$ and $\mathfrak{W}$ span the complementary subspace $(CS)_u$ (see \eqref{for:356} of \hyperlink{o.21}{Property (P) in case II}), which consists of all possible non-smooth directions of $\eta$. Moreover, $\Theta$ has nice Sobolev estimates (the detailed proof is in \eqref{for:28} of Corollary \ref{cor:9}).

\smallskip
\emph{Important Remark on Ordering}:

A particular order of applying  $\pi_{\mathfrak{U}^i}$, $\pi_{\mathfrak{W}}$ and $\pi_{\mathfrak{V}^i}$ operators, $i=1,2$ is required to ensure $\Theta$ is \emph{globally smooth}  (see \ref{sect:1} of Section \ref{sec:52} for a detailed explanation). Consequently, we have to alternate from
$\textbf{u}_1$-almost coboundaries
  to
$\textbf{u}_2$-almost coboundaries and then back again, each time invoking the higher rank trick to control the resulting error (Note that $\textbf{u}_1$ and $\textbf{u}_2$ is a friendly pair, see \eqref{for:357} of \hyperlink{o.21}{Property (P) in case II}). This interlacing of the two almost coboundaries ensures that, after each application of  the directional smoothing operators, the new error terms introduced  still have nice Sobolev estimates for the next step. Ultimately, this yields a globally smooth approximate solution $\Theta$ whose error also satisfies nice Sobolev estimates.

\smallskip
After constructing $\Theta$, we obtain a new approximation:
\begin{align}\label{for:2021}
\textbf{u}_1\Theta=\mathfrak{p}_{\textbf{u}_1}-\mathcal{R}_{\textbf{u}_1}.
\end{align}
The remaining  discussion focuses on showing that the new error $\mathcal{R}_{\textbf{u}_1}$ has nice Sobolev estimates. A straightforward approach is to apply the operator
\begin{align*}
 \mathcal{P}=\pi_{\mathfrak{U}^1}\pi_{\mathfrak{U}^2}\pi_{\mathfrak{W}}\pi_{\mathfrak{V}^2}\pi_{\mathfrak{V}^1}
\end{align*}
to both sides of equation \eqref{for:2021}, as we did in the previous case. However, recalling \eqref{for:359} of \hyperlink{o.21}{Property (P) in case II} we observe  a key difference from the previous case: the complementary directions to  $\textbf{u}_3$ are not contained exclusively within the centralizer of
$\textbf{u}_1$ or that of
$\textbf{u}_2$.  Instead, they lie in the union of these two centralizers.

Since the complementary directions to $\textbf{u}_3$ do not fall entirely within a single centralizer, we have to employ the higher rank trick and apply the directional smoothing operators multiple times to get the desired
almost coboundary, which is almost solved by $\Theta$ with nice Sobolev estimates.

Firstly, we apply $\pi_{\mathfrak{V}^1}$ to each side of \eqref{for:234}. Since $\mathfrak{V}^1$ is  inside the centralizer of $\textbf{u}_1=v_1$ (see \eqref{for:359} of \hyperlink{o.21}{Property (P) in case II}), we have $\pi_{\mathfrak{V}^1}\textbf{u}_1=\textbf{u}_1\pi_{\mathfrak{V}^1}$. Thus we have
\begin{align}\label{for:235}
 \textbf{u}_1(\pi_{\mathfrak{V}^1}\eta)&=\pi_{\mathfrak{V}^1}\mathfrak{p}_{\textbf{u}_1}-\pi_{\mathfrak{V}^1}\mathcal{E}_{\textbf{u}_1}\notag\\
 &=\mathfrak{p}_{\textbf{u}_1}
 -\underbrace{\big((\mathfrak{p}_{\textbf{u}_1}-\pi_{\mathfrak{V}^1}\mathfrak{p}_{\textbf{u}_1})+\pi_{\mathfrak{V}^1}\mathcal{E}_{\textbf{u}_1}\big)}_{\mathfrak{R}_{\textbf{u}_1,1}}.
\end{align}
From \eqref{for:232} and \eqref{for:229} of Section \ref{sec:42}, we see that $\mathfrak{R}_{\textbf{u}_1,1}$ has nice Sobolev estimates.

\eqref{for:235} means $\pi_{\mathfrak{V}^1}\eta$ solves the $\textbf{u}_1$-(twisted) almost coboundary $\mathfrak{p}_{\textbf{u}_1}$ approximately with nice Sobolev estimates.
Since $\textbf{u}_2=v_2$ lies in the super tame subspace of $\textbf{u}_1=v_1$ (see \eqref{for:357} of \hyperlink{o.21}{Property (P) in case II}), we can invoke the higher rank trick
to show that $\pi_{\mathfrak{V}^1}\eta$ also solves the $\textbf{u}_2$-almost coboundary $\mathfrak{p}_{\textbf{u}_2}$ approximately with nice Sobolev estimates. Namely, we have
\begin{align}\label{for:236}
 \textbf{u}_2(\pi_{\mathfrak{V}^1}\eta)=\mathfrak{p}_{\textbf{u}_2}-\mathfrak{R}_{\textbf{u}_2,1}
\end{align}
where $\mathfrak{R}_{\textbf{u}_2,1}$ has nice Sobolev estimates.

Secondly, we apply $\pi_{\mathfrak{U}^2}\pi_{\mathfrak{W}}\pi_{\mathfrak{V}^2}$ to each side of \eqref{for:236}.
Since $\mathfrak{V}_2$, $\mathfrak{U}_2$ and $\mathfrak{W}$ are inside the centralizer of $\textbf{u}_2=v_2$ (see \eqref{for:359} of \hyperlink{o.21}{Property (P) in case II}), we have
\begin{align*}
 \pi_{\mathfrak{U}^2}\pi_{\mathfrak{W}}\pi_{\mathfrak{V}^2}\textbf{u}_2=\textbf{u}_2\pi_{\mathfrak{U}^2}\pi_{\mathfrak{W}}\pi_{\mathfrak{V}^2}.
\end{align*}
Thus we have
\begin{align}\label{for:237}
 &\textbf{u}_2(\pi_{\mathfrak{U}^2}\pi_{\mathfrak{W}}\pi_{\mathfrak{V}^2}\pi_{\mathfrak{V}^1}\eta)\notag\\
 &=\pi_{\mathfrak{U}^2}\pi_{\mathfrak{W}}\pi_{\mathfrak{V}^2}\mathfrak{p}_{\textbf{u}_2}-\pi_{\mathfrak{U}^2}\pi_{\mathfrak{W}}\pi_{\mathfrak{V}^2}\mathfrak{R}_{\textbf{u}_2,1}\notag\\
 &=\mathfrak{p}_{\textbf{u}_2}-\underbrace{\big((\mathfrak{p}_{\textbf{u}_2}-\pi_{\mathfrak{U}^2}\pi_{\mathfrak{W}}\pi_{\mathfrak{V}^2}\mathfrak{p}_{\textbf{u}_2})
 +\pi_{\mathfrak{U}^2}\pi_{\mathfrak{W}}\pi_{\mathfrak{V}^2}\mathfrak{R}_{\textbf{u}_2,1}\big)}_{\mathfrak{R}_{\textbf{u}_2,2}}.
\end{align}
Again by \eqref{for:232} and \eqref{for:229} of Section \ref{sec:42}, we see that  $\mathfrak{R}_{\textbf{u}_2,2}$ has nice Sobolev estimates.

\eqref{for:237} means $\pi_{\mathfrak{U}^2}\pi_{\mathfrak{W}}\pi_{\mathfrak{V}^2}\pi_{\mathfrak{V}^1}\eta$ solves the $\textbf{u}_2$-almost coboundary $\mathfrak{p}_{\textbf{u}_2}$ approximately with nice Sobolev estimates.
Since $\textbf{u}_1=v_1$ lies in the super tame subspace of $\textbf{u}_2=v_2$ (see \eqref{for:357} of \hyperlink{o.21}{Property (P) in case II}), we can invoke the higher rank trick
to show that $\pi_{\mathfrak{U}^2}\pi_{\mathfrak{W}}\pi_{\mathfrak{V}^2}\pi_{\mathfrak{V}^1}\eta$ also solves the $\textbf{u}_1$-almost coboundary $\mathfrak{p}_{\textbf{u}_1}$ approximately with nice Sobolev estimates. Namely, we have
\begin{align}\label{for:238}
 \textbf{u}_1(\pi_{\mathfrak{U}^2}\pi_{\mathfrak{W}}\pi_{\mathfrak{V}^2}\pi_{\mathfrak{V}^1}\eta)
 =\mathfrak{p}_{\textbf{u}_1}-\mathfrak{R}_{\textbf{u}_1,2}
\end{align}
where $\mathfrak{R}_{\textbf{u}_1,2}$ has nice Sobolev estimates.

Finally, we apply $\pi_{\mathfrak{U}^1}$ to each side of \eqref{for:238}. Since $\mathfrak{U}_1$ is  inside the centralizer of $\textbf{u}_1=v_1$ (see \eqref{for:359} of \hyperlink{o.21}{Property (P) in case II}) we have
$\pi_{\mathfrak{U}^1}\textbf{u}_1=\textbf{u}_1\pi_{\mathfrak{U}^1}$. This implies
\begin{align*}
 \textbf{u}_1\Theta&= \textbf{u}_1(\pi_{\mathfrak{U}^1}\pi_{\mathfrak{U}^2}\pi_{\mathfrak{W}}\pi_{\mathfrak{V}^2}\pi_{\mathfrak{V}^1}\eta)
 =\pi_{\mathfrak{U}^1}\mathfrak{p}_{\textbf{u}_1}-\pi_{\mathfrak{U}^1}\mathfrak{R}_{\textbf{u}_1,2}\notag\\
 &=\mathfrak{p}_{\textbf{u}_1}
 -\underbrace{\big((\mathfrak{p}_{\textbf{u}_1}-\pi_{\mathfrak{U}^1}\mathfrak{p}_{\textbf{u}_1})+\pi_{\mathfrak{U}^1}\mathfrak{R}_{\textbf{u}_1,2}\big)}_{\mathcal{R}_{\textbf{u}_1}}
\end{align*}
where $\mathcal{R}_{\textbf{u}_1}$ has nice Sobolev estimates.

Once $\Theta$ is constructed for the $\textbf{u}_1$-(twisted) coboundary $\mathfrak{p}_{\textbf{u}_1}$, the usual higher rank trick implies that it is, in fact,  an approximate solution for all almost coboundaries whose errors satisfy similarly ``nice" Sobolev estimates. Thus,  we have completed the proof of \hyperlink{o.9}{$\mathcal{A}$}.

\subsection{Convergence in the iteration } In this part, we summarize the results of Section \ref{sec:8}.
Fix a set of generators $E=\{E_1,\cdots,E_d\}$  of $\text{Lie}(A)$.
 We can identify $E_i$ with an element of $\text{Vect}^\infty(\GG/\Gamma)$ such that  $E=\{E_1,\cdots,E_d\}$ generate $\alpha_A$.
 A smooth $A$-perturbation $\tilde{\alpha}_A$ of the action $\alpha_A$ is generated by commuting vector
fields $\tilde{E}=E+\mathfrak{p}=\{E_1+\mathfrak{p}_1,\cdots,E_d+\mathfrak{p}_d\}$, where $\mathfrak{p}_i\in \text{Vect}^\infty(\GG/\Gamma)$, $1\leq i\leq d$.

Now let $h$ be a diffeomorphism of $\GG/\Gamma$, close to the identity. Let $\tilde{E}^{(1)}=E+\mathfrak{p}^{(1)}$ be the commuting vector
fields that generate $h\circ \tilde{\alpha}_A\circ h^{-1}$. To show the convergence of
the iteration, we need the following proposition telling us that by making a good choice of $h$, $\mathfrak{p}^{(1)}$ is much smaller than $\mathfrak{p}$.
\begin{proposition} For any $a,\,b>1$, there is a diffeomorphism $h$ of $\GG/\Gamma$ such that the following estimates hold:
\begin{enumerate}
\item\label{for:249}  for any $s\geq\ell>\varrho$ ($\varrho$ is a constant only dependent on $A$ and $\GG/\Gamma$) we have:
       \begin{align*}
\norm{\mathfrak{p}^{(1)}}&_{C^0}\leq Ca^{2\varrho}b^{2\varrho}\norm{\mathfrak{p}}^2_{C^{\varrho+1}}
+C_{\ell}a^{2\varrho}(\norm{\mathfrak{p}}_{C^{\varrho+1}})^{2(1-\frac{\varrho}{\ell})}(\norm{\mathfrak{p}}_{C^{\ell+\varrho}})^{\frac{\varrho}{\ell}}\notag\\
 &+C_{\ell}a^{2\varrho}(a^{\text{\tiny$-s$}}
 \norm{\mathfrak{p}}_{C^s})^{1-\frac{\varrho}{\ell}}(\norm{\mathfrak{p}}_{C^{\ell+\varrho}})^{\frac{\varrho}{\ell}}\notag\\
 &+C_{\ell}a^{2\varrho}(a^{-s}\norm{\mathfrak{p}}_{C^s})^{(\text{\tiny$1-\frac{\varrho}{\ell}$})^2}
 (\norm{\mathfrak{p}}_{C^{\ell+\varrho}})^{\frac{\varrho}{\ell}(2-\frac{\varrho}{\ell})}\notag\\
 &+C_\ell b^{\text{\tiny$-\ell+1$}}a^{\ell+\varrho}\norm{\mathfrak{p}}_{C^\varrho}+C_\ell b^{\text{\tiny$-\ell+1$}}\norm{\mathfrak{p}}_{C^{\ell+\varrho}};
\end{align*}

 \item\label{for:2}   for any $r\geq\varrho$ we have:
 \begin{align*}
  \norm{\mathfrak{p}^{(1)}}_{C^r}\leq C_{r}(a^{r}b^\varrho \norm{\mathfrak{p}}_{C^{\varrho}}+b^\varrho\norm{\mathfrak{p}}_{C^r}+1).
 \end{align*}
 \end{enumerate}

\end{proposition}
The precise statement is given in Proposition \ref{po:5}. In \eqref{for:249}, $s$ and $\ell+\varrho$ Sobolev orders of $\mathfrak{p}$ are used simultaneously to bound $\norm{\mathfrak{p}^{(1)}}_{C^0}$, which are more complex than classical KAM estimates. In \eqref{for:2} the term $a^{r}b^\varrho \norm{\mathfrak{p}}_{C^{\varrho}}$ is not found in previous KAM works.
  If the term $a^{r}b^\varrho \norm{\mathfrak{p}}_{C^{\varrho}}$ could be ignored, then \eqref{for:2} would be
\begin{align}\label{for:248}
  \norm{\mathfrak{p}^{(1)}}_{C^r}\leq C_r(b^\varrho\norm{\mathfrak{p}}_{C^r}+1),\qquad \forall\,r\geq\varrho.
 \end{align}
If we let $s=\ell+\varrho$ in \eqref{for:249}, then $\norm{\mathfrak{p}^{(1)}}_{C^0}$ can be simplified as (we note that $\varrho>2$):
        \begin{align}\label{for:247}
\norm{\mathfrak{p}^{(1)}}&_{C^0}\leq Ca^{2\varrho}b^{2\varrho}\norm{\mathfrak{p}}^2_{C^{\varrho+1}}
+C_{\ell}a^{2\varrho}(\norm{\mathfrak{p}}_{C^{\varrho+1}})^{2(1-\frac{\varrho}{\ell})}(\norm{\mathfrak{p}}_{C^{\ell+\varrho}})^{\frac{\varrho}{\ell}}\notag\\
 &+C_{\ell}a^{-\ell+4\varrho}\norm{\mathfrak{p}}_{C^{\ell+\varrho}}\notag\\
 &+C_{\ell}a^{-\ell+4\varrho}\norm{\mathfrak{p}}_{C^{\ell+\varrho}}\notag\\
 &+C_\ell b^{\text{\tiny$-\ell+1$}}a^{\ell+\varrho}\norm{\mathfrak{p}}_{C^\varrho}+C_\ell b^{\text{\tiny$-\ell+1$}}\norm{\mathfrak{p}}_{C^{\ell+\varrho}}.
\end{align}
The proof of convergence would be extremely standard for the KAM iteration if one could have \eqref{for:248} and \eqref{for:247} (see \cite{Fayad}).
However, the presence of the term $a^{r}b^\varrho \norm{\mathfrak{p}}_{C^{\varrho}}$ is a direct consequence of our method and cannot be ignored, as it arises from the use of directional smoothing operators.

Since the term $a^{r\mu_1}b^\varrho \norm{\mathfrak{p}}_{C^{\varrho}}$ with $\mu_1=1$ appears in estimating  $\norm{\mathfrak{p}^{(1)}}_{C^r}$ (see \eqref{for:2}) and
the term $a^{-\ell\mu_2+4\varrho}\norm{\mathfrak{p}}_{C^{\ell+\varrho}}$ with $\mu_2=1$ appears in estimating $\norm{\mathfrak{p}^{(1)}}_{C^0}$ (see \eqref{for:247}), the KAM iteration may diverge in $C^r$ topology for large $r$. The general KAM scheme needs $\frac{\mu_2}{\mu_1}>2$ to guarantee the convergence in $C^r$ topology for each $r$ (in fact $\frac{\mu_2}{\mu_1}>1$ is sufficient)  (see \cite{Fayad}).
 In order to get around this problem, we introduce the two-orders trick. We fix a well chosen $b$ and compare the increasing speed of $a^{r}b^\varrho \norm{\mathfrak{p}}_{C^{\varrho}}$  and  $\norm{\mathfrak{p}}_{C^{r}}$ as $r$ increases.  Based on this comparison, we choose
$s$ and $a$ accordingly.   If the latter
grows faster, it means the term $a^{r}b^\varrho \norm{\mathfrak{p}}_{C^{\varrho}}$ is controllable. In this case we let $s=\ell+\varrho$ and $a=b^{\frac{1}{2}}$.
 If the former grows faster, we are forced to choose smaller $a$ (specifically, $a<b^{\frac{1}{2}}$) to reduce the growth rate of $a^{r}b^\varrho \norm{\mathfrak{p}}_{C^{\varrho}}$ so that it is comparable to  $\norm{\mathfrak{p}}_{C^{r}}$. As a result,  $s$ has to be chosen sufficiently large ($s\gg \ell$) to ensure the smallness of $a^{-s}\norm{\mathfrak{p}}_{C^s}$ in \eqref{for:249}.

 Due to the
directional smoothing construction in Section \ref{sec:4}, the constants in
 \eqref{for:249} do not depend
on $s$ (see Remark \ref{re:14}). If they were not, increasing $s$ to make
$a^{-s}\norm{\mathfrak{p}}_{C^s}$ small would simultaneously cause the constants to become large. This could potentially negate any benefit gained from choosing a large
$s$, as the overall estimate for
$\norm{\mathfrak{p}^{(1)}}_{C^0}$ might not decrease. Since the constants are independent of
$s$, we are free to choose
$s$ as large as needed without worrying about inflating the constants in the estimate.
 Thus,  we can show that $\norm{\mathfrak{p}^{(1)}}_{C^0}$ is small in this case, leading to convergence in any $C^r$ topology.

\subsection{Scope of the strategy } This strategy has three advantages. Firstly, its application only needs representation theory of rank one subgroups, which substantially reduced the difficulty of the
quantifying procedure. Secondly, tameness is not a prerequisite for the application, including tameness of the solution of (twisted)-cohomological equation and tameness of the inverse of (twisted)-coboundary operators. Thirdly, the smoothing techniques introduced in the paper can be applied to general Lie groups. Therefore, it allows for treating algebraic actions of considerably greater generality.

Although in the current paper we only consider $\GG_1=SL(n,\RR)$,  a very important representative case for the sake of transparency of ideas and exposition,
the general criteria of the proof can be applied with appropriate modifications to a broader range of actions satisfying property \hyperlink{o.0}{(P)}.

\section{Notations and preparatory steps}\label{sec:38}

\subsection{Notation throughout this paper}\label{sec:47} We will use notations from this section throughout subsequent
sections. So the reader should consult this section if an unfamiliar symbol
appears.

In what follows, $C$ will denote any constant that depends only
  on the given  group $\GG$, the manifolds $\mathcal{X}$ and the action $A$. $C_{x,y,z,\cdots}$ will denote any constant that in addition to the
above depends also on parameters $x, y, z,\cdots$.
\begin{enumerate}
  \item\label{for:214} $\mathbb{G}$ denotes a higher-rank semisimple Lie group with finite center without compact factors satisfying: $\mathbb{G}=\mathbb{G}_1\times \cdots\times \mathbb{G}_k$, where $\mathbb{G}_1=SL(n,\RR)$, $n\geq2$.  $\Gamma$ is a cocompact irreducible lattice of $\mathbb{G}$. For any subgroup $A$ of $\GG$ we use $\alpha_A$ to denote the the action of $A$ by left translations
on $\mathcal{X}=\mathbb{G}/\Gamma$.  Let $\text{Vect}^\infty(\mathcal{X})$ be the space of $C^\infty$ vector fields on $\mathcal{X}$.

\smallskip

\item  We use $\mathfrak{g}$ (resp. $\mathfrak{g}_1$) to denote the Lie algebra of $\GG$ (resp. $\GG_1$). Fix an inner product on $\mathfrak{g}$. Let $\mathfrak{g}^1$ be the set of unit vectors in $\mathfrak{g}$.  We use $\Phi$ (resp. $\Phi_1$) to denote the set of roots of $\mathbb{G}$ (resp. the set of restricted roots of  $\mathbb{G}_1$) and $\mathfrak{u}_\phi$ to denote the root space of $\phi\in \Phi$.

\smallskip
  \item For any subgroups $h_1,h_2,\cdots$ of $\GG$, we use $\{h_1,h_2,\cdots\}$ to denote the subgroup generated by $h_1,h_2,\cdots$.  For any subset $B\subseteq \mathfrak{g}$ we use $\exp(B)$ to denote the connected subgroup of $\GG$ with its Lie algebra generated by $B$.

  \smallskip
  \item\label{for:218} $L^2_0(\GG/\Gamma)$ denotes the subspace of $L^2(\GG/\Gamma)$ orthogonal to constants.
We use $(\pi, \mathcal{O})$ to denote the regular representation of $L^2_0(\GG/\Gamma)$.

\smallskip
 \item\label{for:272} Let $S$ be a Lie group and $(\pi, \mathcal{H})$ be a unitary representation of $S$. Suppose
 $P$ is a subgroup of $S$. We say that $\eta\in \mathcal{H}$ is partially tame on  $P$  (with respect to $\xi\in \mathcal{H}$) if there is $\gamma>0$ such that the $s$ order Sobolev norm of $\eta$ on $P$ can be bounded by the $s+\gamma$ order Sobolev
norm of $\xi$ for any $s\geq0$.

\smallskip

\item We say that $u\in \mathfrak{g}$ is \emph{nilpotent} if $\text{ad}_u$ is
nilpotent. For any nilpotent $u\in \mathfrak{g}$:
\begin{enumerate}
  \item\label{for:2024} there is $u'\in \mathfrak{g}$ such that $\{u, u', X_u=[u,u']\}$ is a $\mathfrak{sl}(2,\RR)$-triple (see Jacobson-Morosov theorem).
  We use $\mathfrak{g}_{u}$ to denote the subalgebra of $\mathfrak{g}$ generated by $\{u, u', X_u=[u,u']\}$;

\smallskip
  \item $G_{u}$ is the connected subgroup in $\GG$ with Lie algebra $\mathfrak{g}_{u}$;

  \smallskip
  \item $G'_{u}$ is the subgroup of $G_{u}$ with Lie algebra spanned by $u$ and $X_u$;

  \item $\mathcal{C}(\mathfrak{g}_{u})$ is the centralizer of $\mathfrak{g}_{u}$ in $\mathfrak{g}$, namely,
\begin{align*}
  \mathcal{C}(\mathfrak{g}_{u})=\{v\in \mathfrak{g}: [v,u]=0, \,[v,u']=0\};
\end{align*}

\item  $C(G_{u})$ denotes  the connected subgroup in $\GG$ whose Lie algebra is $\mathcal{C}(\mathfrak{g}_{u})$;

\smallskip

\item\label{for:273} Define $S_{0,u}=\{G_u,\,C(G_{u})\}$ and $S_{1,u}=\{G_u',\,C(G_{u})\}$. We also write $S_{0}$ and $S_{1}$ if there is no confusion.
\begin{itemize}
   \item $\text{Lie}(S_{1,u})$ is called the \emph{tame} subspace of $u$ and vectors in  $\text{Lie}(S_{1,u})$ are called the \emph{tame} directions to $u$.
   \item Vectors in root spaces of $\mathbb{G}$ that are not in  $\text{Lie}(S_{1,u})$ are called
     the \emph{complementary} directions to $u$. Denote by $(CS)_u$ the subspace of $\mathfrak{g}$ spanned by complementary directions of $u$; this is the \emph{complementary subspace} of $u$.
   \item $\mathcal{C}(\mathfrak{g}_{u})$ is called the \emph{super tame} subspace of $u$.
 \end{itemize}

\smallskip
 \item\label{for:278} we note that for general nilpotent $u$, $u'$ from \eqref{for:2024} is not unique. So if needed, we add $u'$ to lower indices to avoid confusion. For example, we write $\mathfrak{g}_{u,u'}$ to emphasize the dependence of the $\mathfrak{sl}(2,\RR)$ triple on $u'$. Then $G_{u,u'}$, $G'_{u,u'}$ and $C(G_{u,u'})$ etc. are defined accordingly;

\smallskip
\emph{Note.} If $u\in \mathfrak{u}_\phi\cap\mathfrak{g}^1$, $\phi\in \Phi$, then $u'$ is unique if we require $u'\in \mathfrak{u}_{-\phi}$. In this case,
we always choose the unique $u'$ inside $\mathfrak{u}_{-\phi}$.

  \smallskip

  \item \label{de:5} if $0\neq u\in \mathfrak{u}_\phi$, $\phi\in \Phi_1$, for any $v\in\mathfrak{u}_\psi$, where $\psi\in\Phi$, if $v$ is a complementary directions to $u$, then either $[\mathfrak{u}_\phi,\mathfrak{u}_\psi]\neq0$ or
$[\mathfrak{u}_{-\phi},\mathfrak{u}_\psi]\neq0$;

\smallskip

\item\label{for:275} we say that  $0\neq u,\,v\in \text{Lie}(\GG)$ are \emph{a friendly pair} if
\begin{enumerate}
  \item[(a)] there is $\phi\in \Phi$ such that $u\in \mathfrak{u}_\phi$;

  \item[(b)] $v$ is nilpotent and $[u,v]=[u',v]=0$. This means that $v$ commutes with the $\mathfrak{sl}(2,\RR)$ triple: $\{u, u', X_u=[u,u']\}$.
\end{enumerate}

 \end{enumerate}
\smallskip
 \item\label{for:215} Set $\sigma=\text{\small$\frac{3}{2}$}\dim\mathfrak{g}$, $\sigma_0=(8+\sigma)\sigma$  and
  $\sigma_1=2\sigma+3$.

\smallskip

\item\label{for:285} Let $\mathfrak{u}_{i,j}\in \mathfrak{g}_1=\mathfrak{sl}(n,\RR)$ be the elementary $n\times n$ matrix with only one nonzero entry
equal to one, namely, that in the row $i$ and the column $j$. Then $\mathfrak{u}_{i,j}$, $i\neq j$ is in the root space of $L_i-L_j$. Let $U=\mathfrak{u}_{1,2}$.

\smallskip

\item\label{for:287}  Set
\begin{gather*}
 \mathfrak{V}=\{\mathfrak{u}_{1,j},\,\mathfrak{u}_{2,j}:j\geq 3\}, \quad \mathfrak{U}=\{\mathfrak{u}_{j,1},\mathfrak{u}_{j,2}:j\geq3\},\quad \mathfrak{W}=\{\mathfrak{u}_{2,1}\}
 \end{gather*}
and set
\begin{align*}
 \mathfrak{C}^1&=\{v\in \mathfrak{C}: [v,\mathfrak{u}_{3,4}]=0\text{ and }[v,\mathfrak{u}_{4,3}]=0\},\,\,\text{and}\\
 \mathfrak{C}^2&=\{v\in \mathfrak{C}: [v,\mathfrak{u}_{5,6}]=0\text{ and }[v,\mathfrak{u}_{6,5}]=0\}.
 \end{align*}
where $\mathfrak{C}$ stands for $\mathfrak{U}$ or $\mathfrak{V}$.

\smallskip

\item\label{for:288} Let $\mathcal{J}$ denote the subgroup of $\GG_1$ with its Lie algebra generated by $\mathfrak{u}_{3,4}$, $\mathfrak{u}_{4,3}$ and $\mathfrak{u}_{5,6}$, $\mathfrak{u}_{6,5}$.
It is clear that $\mathcal{J}$ is isomorphic to $SL(2,\RR)\times SL(2,\RR)$.

 \smallskip

 \item\label{for:286} Let $A$ be as described as in Theorem \ref{th:13} or Theorem \ref{th:11}.  We fix $E$, a set of generators of $\text{Lie}(A)$.   Since the Weyl group acts simply transitively on Weyl chambers, it is harmless to assume that (we recall $U=\mathfrak{u}_{1,2}$, see \eqref{for:285}):

\smallskip
\begin{enumerate}
  \item\label{for:290} if $\GG\neq \GG_1$, $\{\textbf{u}, \textbf{v}\}\subset E$ (see Theorem \ref{th:13}) and $\textbf{u}=U$.
Set $E_0=\{\textbf{v}\}$;

\smallskip
  \item\label{for:291} if $\GG=\GG_1$, $\phi_1=L_3-L_4$, $\phi_2=L_5-L_6$ and $\phi_3=L_1-L_2$. Set
  $\textbf{u}_1=\mathfrak{u}_{3,4}$, $\textbf{u}_2=\mathfrak{u}_{5,6}$ and $\textbf{u}_3=U$. By Remark \ref{re:7}, it is harmless to assume a basis $E$ of the maximal abelian subalgebra for type $A_n$, $n\geq 3$ is:
\begin{align}\label{for:22}
 E=\{\mathfrak{u}_{i,j}:i\in 2\NN-1,j\in 2\NN\},
\end{align}
and $|E|=\lfloor\text{\small$\frac{(n+1)^2}{4}$}\rfloor$. We set $E_0=\{\textbf{u}_1,\,\textbf{u}_2\}$. It is clear that $\{\textbf{u}_1, \textbf{u}_2,\textbf{u}_3\}\subset E$.
\end{enumerate}

\end{enumerate}

\subsection{Basic examples}\label{sec:50} In this part, before we provide additional examples,  we encourage readers to recall Examples \ref{ex:1} and \ref{ex:2} from Section \ref{sec:49}, which will help them gain a better understanding of the concepts in Section \ref{sec:47}.
\begin{example}\label{ex:4}  Then $G_U$ (resp. $G_U'$) (we recall $U=\mathfrak{u}_{1,2}$, see \eqref{for:285}) is the subgroup with its Lie algebra spanned by $\{\mathfrak{u}_{1,2}, \mathfrak{u}_{2,1},\mathfrak{u}_{1,1}-\mathfrak{u}_{2,2}\}$ (resp. $\{\mathfrak{u}_{1,2}, \mathfrak{u}_{1,1}-\mathfrak{u}_{2,2}\}$). $S_{0,U}$ and $S_{1,U}$ are subgroups of $\GG$ with the following forms:
\begin{gather*}
  S_{0,U}=\begin{pmatrix}M_{2,2} & \vline & 0\\
\hline 0 &\vline &M_{n-2,n-2} \end{pmatrix}\times \mathbb{G}_2\times \cdots\times \mathbb{G}_k, \\
S_{1,U}=\begin{pmatrix}
\begin{matrix}a&b\\
0&c
\end{matrix} &  \vline & 0\\
 \hline  0& \vline & M_{n-2,n-2} \end{pmatrix}\times \mathbb{G}_2\times \cdots\times \mathbb{G}_k,
\end{gather*}
where $M_{m,k}$ denotes the set of $m\times k$ matrices and $a,b,c\in\RR$. More precisely,  if we let
\begin{align*}
 G_0&=\{g=(g_{i,j})\in SL(n,\RR):g_{2,j}=g_{1,j}=g_{j,1}=g_{j,2}=0,\,j\geq3\}; \,\text{and}\\
 G_1&=\{g=(g_{i,j})\in SL(n,\RR):g_{2,1}=g_{2,j}=g_{1,j}=g_{j,1}=g_{j,2}=0,\,j\geq3\},
\end{align*}
then $S_{0,U}=G_0\times \mathbb{G}_2\times \cdots\times \mathbb{G}_k$;
$S_{1,U}=G_1\times \mathbb{G}_2\times \cdots\times \mathbb{G}_k$.
\end{example}
\begin{example}\label{ex:5} $\exp(\mathfrak{V})$, $\exp(\mathfrak{U})$ and $\exp(\mathfrak{W})$ are subgroups of $\GG_1$ with following forms:
\begin{gather*}
  \exp(\mathfrak{V})=\begin{pmatrix}I_{2} & \vline & M_{2,n-2}\\
\hline 0 &\vline &I_{n-2} \end{pmatrix}, \quad\exp(\mathfrak{U})=\begin{pmatrix}I_{2} & \vline & 0\\
\hline M_{n-2,2} &\vline &I_{n-2} \end{pmatrix}\\
\exp(\mathfrak{W})=\begin{pmatrix}\begin{matrix}1&0 \\
d &1
\end{matrix} & \vline & 0\\
\hline 0 &\vline &I_{n-2} \end{pmatrix},
\end{gather*}
where $I_m$ denotes the $m\times m$ identity matrix and $d\in\RR$.

$\mathcal{J}$ is a subgroups of $\GG_1$ with the following form:
\begin{gather*}
 \mathcal{J}=\begin{pmatrix}I_2 & 0 & 0 &\vline & 0\\
 0 & SL(2,\RR) & 0 &\vline & 0\\
0 & 0 &SL(2,\RR) & \vline & 0\\
 \hline 0& 0 & 0 & \vline &I_{n-6} \end{pmatrix}.
\end{gather*}
\end{example}

\subsection{Basic algebraic properties for $U$}\label{sec:33}
The following result illustrates the algebraic properties of $U$ and $E_0$:
\begin{lemma}\label{le:13} (Property \hyperlink{o.0}{(P)}) The following properties hold:
\begin{enumerate}
  \item For any $u\in E_0$, we have $U\subset \mathcal{C}(\mathfrak{g}_{u})$.
  \item The complementary directions of $U$ are covered  by $\bigcup_{u\in E_0}\mathcal{C}(\mathfrak{g}_{u})$.
  \item When $E_0=\{\textbf{u}_1,\,\textbf{u}_2\}$, the elements $\textbf{u}_1$ and $\textbf{u}_2$ lie in each other's super tame subspace.
\end{enumerate}

\end{lemma}
We omit the proof as it is straightforward from the definition.

\subsection{Basic algebraic properties for $\textbf{v}$ and $\textbf{u}$}\label{sec:43} We assume $\GG\neq\GG_1$. We recall notations in \eqref{for:290} of \eqref{for:286} of Section \ref{sec:47}. In this case, Lemma \ref{le:13}  implies the following key algebraic properties for $\textbf{u}=U$ and $\textbf{v}$:

\noindent\hypertarget{o.1}{\emph{Property (P) in case I}}:
\begin{enumerate}
  \item\label{for:280} $\textbf{v}$ lies in the super tame subspace of $\textbf{u}=U$. It is clear that $\textbf{u}$ and $\textbf{v}$ is a friendly pair (see \eqref{for:275} of Section \ref{sec:47})

  \smallskip
  \item\label{for:360} The complementary directions to $\textbf{u}=U$ are contained in the three abelian  nilpotent subalgebras: $\mathfrak{V}, \mathfrak{U}$ and $\mathfrak{W}$.

\smallskip
\item\label{for:362} $\mathfrak{V}, \mathfrak{U}, \mathfrak{W}
    \subseteq\;
    \text{(centralizer of }\textbf{v})$.

\end{enumerate}


\subsection{Basic algebraic properties for $\textbf{u}_i$, $1\leq i\leq 3$}\label{sec:37} We assume $\GG=\GG_1$. We recall notations in \eqref{for:291} of \eqref{for:286} of Section \ref{sec:47}. In this case, from Lemma \ref{le:13}, we see that the key algebraic properties for $\textbf{u}_i$, $1\leq i\leq 3$ are:

\smallskip
\noindent\hypertarget{o.3}{\emph{Property (P) in case II}}:
\begin{enumerate}
  \item\label{for:284} $\textbf{u}_1$ and $\textbf{u}_2$ both lie in the super tame subspace of $\textbf{u}_3=U$. It is clear that $\textbf{u}_1$ and $\textbf{u}_3$ is a friendly pair (see \eqref{for:275} of Section \ref{sec:47}).
  \item\label{for:361} The complementary directions to $\textbf{u}_3=U$ are contained in the five abelian nilpotent subalgebras: $\mathfrak{U}_1$, $\mathfrak{U}_2$, $\mathfrak{V}_1$, $\mathfrak{V}_2$ and $\mathfrak{W}$.

\item\label{for:363} Moreover,
\begin{gather*}
\mathfrak{V}_1, \mathfrak{U}_1, \mathfrak{W}
    \subseteq\;
    \text{(centralizer of }\textbf{u}_1), \\ \mathfrak{V}_2, \mathfrak{U}_2, \mathfrak{W}
    \subseteq\;
    \text{(centralizer of }\textbf{u}_2).
\end{gather*}
  \item\label{for:364} $\textbf{u}_1$ and $\textbf{u}_2$ lie in each other's  super tame subspace. This implies that
  \begin{align*}
   G_{\textbf{u}_2}\subset C(G_{\textbf{u}_1})\quad\text{and}\quad G_{\textbf{u}_1}\subset C(G_{\textbf{u}_2}).
  \end{align*}

\end{enumerate}

\section{Preliminaries on unitary representation theory}\label{sec:15}

\subsection{Sobolev spaces and elliptic regularity theorem}\label{sec:17} Let $\pi$ be a unitary representation of a Lie group $G$ with Lie algebra $\mathfrak{G}$ on a
Hilbert space $\mathcal{H}=\mathcal{H}(\pi)$. Fix an inner product $|\cdot|$ on $\mathfrak{G}=\text{Lie}(G)$. Let $\mathfrak{G}^1$ be the set of unit vectors in $\mathfrak{G}$.
\begin{definition}\label{de;1}
For $k\in\NN$, $\mathcal{H}^k(\pi)$ consists of all $v\in\mathcal{H}(\pi)$ such that the
$\mathcal{H}$-valued function $g\rightarrow \pi(g)v$ is of class $C^k$ ($\mathcal{H}^0=\mathcal{H}$). For $X\in\mathfrak{G}$, $d\pi(X)$ denotes the infinitesimal generator of the
one-parameter group of operators $t\rightarrow \pi(\exp tX)$, which acts on $\mathcal{H}$ as an essentially skew-adjoint operator. For any $v\in\mathcal{H}$, we also write $Xv:=d\pi(X)v$.
\end{definition}
We shall call $\mathcal{H}^k=\mathcal{H}^k(\pi)$ the space of $k$-times differentiable vectors for $\pi$ or the \emph{Sobolev space} of order $k$. The
following basic properties of these spaces can be found, e.g., in \cite{nelson1959analytic} and \cite{goodman1970one}:
\begin{enumerate}
  \item $\mathcal{H}^k=\bigcap_{m\leq k}D(d\pi(Y_{j_1})\cdots d\pi(Y_{j_m}))$, where $\{Y_j\}$ is a basis for $\mathfrak{G}$, and $D(T)$
denotes the domain of an operator on $\mathcal{H}$.

  \item $\mathcal{H}^k$ is a Hilbert space, relative to the inner product
  \begin{align*}
    \langle v_1,\,v_2\rangle_{G,k}:&=\sum_{1\leq m\leq k}\langle Y_{j_1}\cdots Y_{j_m}v_1,\,Y_{j_1}\cdots Y_{j_m}v_2\rangle+\langle v_1,\,v_2\rangle
  \end{align*}
  \item The spaces $\mathcal{H}^k$ coincide with the completion of the
subspace $\mathcal{H}^\infty\subset\mathcal{H}$ of \emph{infinitely differentiable} vectors with respect to the norm
\begin{align*}
    \norm{v}_{G,k}=\bigl\{\norm{v}^2+\sum_{1\leq m\leq k}\norm{Y_{j_1}\cdots Y_{j_m}v}^2\bigl\}^{\frac{1}{2}}.
  \end{align*}
induced by the inner product in $(2)$. The subspace $\mathcal{H}^\infty$
coincides with the intersection of the spaces $\mathcal{H}^k$ for all $k\geq 0$.

\item $\mathcal{H}^{-k}$, defined as the Hilbert space duals of
the spaces $\mathcal{H}^{k}$, are subspaces of the space $\mathcal{E}(\mathcal{H})$ of distributions, defined as the
dual space of $\mathcal{H}^\infty$.
  \end{enumerate}
We write $\norm{v}_{k}:=\norm{v}_{G,k}$ and $ \langle v_1,\,v_2\rangle_{k}:= \langle v_1,\,v_2\rangle_{G,k}$ if there is no confusion. Otherwise,
we use subscripts to emphasize that the regularity is measured with respect to $G$. If we want to consider the restricted representation on a subgroup $S$ of $G$
we use $\mathcal{H}^k_{S}$ to denote the Sobolev space of order $k$ with respect to $S$.

For any $u_1,u_2,\cdots\in \mathcal{H}^k$ set
\begin{align}\label{for:36}
 \norm{u_1,u_2,\cdots}_k=\max\{\norm{u_1}_k,\norm{u_2}_k,\cdots\}
\end{align}
For any set $\mathcal{C}\subset\RR^n$, $\norm{\cdot}_{(C^r,\mathcal{C})}$ stands for $C^r$ norm for functions having continuous derivatives up to order $r$ on $\mathcal{C}$. We also write $\norm{\cdot}_{C^r}$ if there is no confusion.

We list the well-known elliptic regularity theorem which will be frequently
used in this paper (see \cite[Chapter I, Corollary 6.5 and 6.6]{Robinson}):
\begin{theorem}\label{th:4}
Fix a basis $\{Y_j\}$ for $\mathfrak{G}$ and set $L_{2m}=\sum Y_j^{2m}$, $m\in\NN$. Then
\begin{align*}
    \norm{v}_{2m}\leq C_m(\norm{L_{2m}v}+\norm{v}),\qquad \forall\, m\in\NN
\end{align*}
where $C_m$ is a constant only dependent on $m$ and $\{Y_j\}$.
\end{theorem}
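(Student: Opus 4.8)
The plan is to recognize $L_{2m}=\sum_j Y_j^{2m}$ as an \emph{elliptic} element of the universal enveloping algebra $\mathcal{U}(\mathfrak{G})$ of order $2m$, and to prove the displayed bound as the a priori elliptic estimate for this operator in the representation $\pi$. Ellipticity is built in: the principal symbol of $L_{2m}$ is the polynomial $\sum_j\xi_j^{2m}$, homogeneous of degree $2m$ and strictly positive for $\xi\neq 0$ precisely because $\{Y_j\}$ is a \emph{basis} of $\mathfrak{G}$, not merely a generating set. The power-mean inequality gives in addition $n^{1-m}\bigl(\sum_j\xi_j^2\bigr)^m\le\sum_j\xi_j^{2m}\le\bigl(\sum_j\xi_j^2\bigr)^m$ with $n=\dim\mathfrak{G}$, so at the level of symbols $L_{2m}$ is comparable to $\Delta^m$, where $\Delta:=L_2=\sum_j Y_j^2$; this lets me reduce the higher-order case to iterates of the basic sub-Laplacian estimate.

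First I would settle the case $m=1$ by a direct commutator computation, valid a priori on $\mathcal{H}^\infty$ and then extended by density. Since each $d\pi(Y_j)$ is skew-adjoint one has $\langle -\Delta v,v\rangle=\sum_j\norm{Y_jv}^2$, which already gives $\norm{v}_1^2\le\norm{v}^2+\norm{\Delta v}\norm{v}$. For the second-order part, expand $\sum_{i,j}\norm{Y_iY_jv}^2=\sum_j\langle -\Delta Y_jv,\,Y_jv\rangle$ and commute $\Delta$ past $Y_j$: the leading term is $\sum_j\langle\Delta v,Y_j^2v\rangle=\norm{\Delta v}^2$, while $[\Delta,Y_j]=\sum_i\bigl(Y_i[Y_i,Y_j]+[Y_i,Y_j]Y_i\bigr)$ is a second-order operator with constant (structure-constant) coefficients, whose contribution is bounded via Cauchy--Schwarz and an $\ep$-absorption by $\ep\norm{v}_2^2+C_\ep\norm{v}_1^2$. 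Since $\norm{v}_2^2=\norm{v}_1^2+\sum_{i,j}\norm{Y_iY_jv}^2$, absorbing the $\ep\norm{v}_2^2$ term and feeding in the first-order bound yields $\norm{v}_2\le C_1(\norm{\Delta v}+\norm{v})$.

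For general $m$ I would run the standard elliptic bootstrap. Applying the $m=1$ estimate to the vectors $Zv$ with $Z$ a monomial of order $2m-2$ in the $Y_j$, and commuting $\Delta$ through $Z$ (each commutator lowers the order by at least one, hence is controlled by the induction hypothesis together with the interpolation inequalities $\norm{v}_j\le\ep\norm{v}_{2m}+C_\ep\norm{v}$, $j<2m$, which follow from the spectral theorem for $\overline{-\Delta}$), one arrives at $\norm{v}_{2m}\le C\bigl(\norm{\Delta^m v}+\norm{v}\bigr)$. It then remains to trade $\Delta^m$ for $L_{2m}$. The cleanest way to do this — and to obtain the whole theorem uniformly — is to transfer to the left regular representation: matrix coefficients $g\mapsto\langle\pi(g)v,w\rangle$ of vectors in $\mathcal{H}^\infty$ are smooth functions on $G$ on which $d\pi(Y_j)$ acts as the left-invariant vector field $\widetilde Y_j$, and since $\{\widetilde Y_j\}$ frames $TG$ the scalar operator $\sum_j\widetilde Y_j^{2m}$ is genuinely elliptic of order $2m$; the classical interior elliptic estimate on $G$, combined with the bound $\norm{d\pi(g)}\le\norm{g}_{L^1(G)}$ for the convolution action against a smooth approximate identity, transfers both the desired inequality and its converse. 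This is exactly what the cited corollaries of \cite{Robinson} provide.

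The step I expect to be the real obstacle is precisely this last comparison between $L_{2m}$ and $\Delta^m$. The tempting shortcut — deducing $\norm{\Delta^m v}\le C(\norm{L_{2m}v}+\norm{v})$ from the symbol inequality $\sum_j\xi_j^{2m}\ge n^{1-m}\bigl(\sum_j\xi_j^2\bigr)^m$ — does \emph{not} work: that inequality only gives the quadratic-form (Gårding) bound $(-1)^m\langle L_{2m}v,v\rangle\ge n^{1-m}\norm{(-\Delta)^{m/2}v}^2-C\norm{v}^2$, and operator-form monotonicity is not preserved under squaring, so it cannot be upgraded to a bound on $\norm{\Delta^m v}$ when $L_{2m}$ and $\Delta$ fail to commute. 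One genuinely needs the full elliptic machinery here (Friedrichs mollifiers / difference quotients, or a parametrix for $1+(-1)^m L_{2m}$ in the pseudodifferential calculus on $G$); everything else in the argument is routine commutator bookkeeping.
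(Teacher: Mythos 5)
The paper does not supply a proof of Theorem~\ref{th:4}; it quotes the result verbatim from \cite[Chapter I, Corollary 6.5 and 6.6]{Robinson}, so there is no argument in the text to compare yours against. With that caveat, your sketch is a faithful reconstruction of the shape of the standard proof: the direct commutator computation for $m=1$, the bootstrap to $\norm{v}_{2m}\leq C(\norm{\Delta^m v}+\norm{v})$ via commuting $\Delta$ through monomials and absorbing the lower-order errors, and the passage to the group, where $\sum_j\widetilde Y_j^{2m}$ is a genuinely elliptic constant-coefficient (in exponential coordinates, variable-coefficient but still elliptic) operator and the classical interior estimate applies. You are also right to flag that the naive symbol comparison between $L_{2m}$ and $\Delta^m$ does not upgrade to an operator inequality when the two fail to commute; identifying that as the real obstacle is the most valuable observation in the write-up.

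The one place where your argument is thin is the final transfer step. The appeal to a bound of the form $\norm{\pi(\phi)}\leq\norm{\phi}_{L^1(G)}$ for the convolution action against an approximate identity does not by itself convert the local $H^{2m}$ estimate for scalar matrix coefficients on $G$ into the abstract $\mathcal{H}^{2m}$ bound for the vector $v$. What one actually needs is the equivalence between $v\in\mathcal{H}^{k}$ and Sobolev regularity of order $k$ of the $\mathcal{H}$-valued orbit map $g\mapsto\pi(g)v$ on a neighborhood of $e$, combined with the interior elliptic estimate for the $\mathcal{H}$-valued equation $\bigl(\sum_j\widetilde Y_j^{2m}\bigr)F=H$; that combination is precisely what the cited corollaries in \cite{Robinson} establish. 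You say as much, and since the paper also treats those corollaries as a black box, relying on them is a legitimate choice --- but as written, your last paragraph gestures at the mechanism rather than closing it, so the honest acknowledgment of the difficulty should not be mistaken for a completed argument.
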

Suppose $\Gamma$ is an
irreducible torsion-free cocompact lattice in $G$. Denote by $\mathcal{O}$ the regular representation of $G$ on $\mathcal{H}(\mathcal{O})=L^2(G/\Gamma)$. Then we have the following subelliptic regularity theorem (see \cite{Spatzier2}):
\begin{theorem}\label{th:5}
Fix $\{Y_j\}$ in $\mathfrak{G}$ such that commutators of $Y_j$ of length at most $r$ span $\mathfrak{G}$. Also set $L_{2m}=\sum Y_j^{2m}$, $m\in\NN$. Suppose $f\in\mathcal{H}(\mathcal{O})$. If $L_{2m}f\in \mathcal{H}(\mathcal{O})$ for any $m\in\NN$, then $f\in \mathcal{H}^\infty(\mathcal{O})$ and satisfies
\begin{align}\label{for;1}
\norm{f}_{\frac{2m}{r}-1}\leq
C_m(\norm{L_{2m}f}+\norm{f}),\qquad \forall\, m\in\NN
\end{align}
where $C_m$ is a constant only dependent on $m$ and $\{Y_j\}$.
\end{theorem}

\subsection{Extended representations and linear operators}\label{sec:2} The adjoint representation of $\mathfrak{G}$ is isomorphic to a subset of $\dim(\mathfrak{G})\times \dim(\mathfrak{G})$ matrices. Let $\mathfrak{G}(\mathcal{H})$
denote the set of $(\dim(\mathfrak{G})\times 1)$ matrices with entries from $\mathcal{H}$. Then the adjoint representation of $\mathfrak{G}$ has a natural action on $\mathfrak{G}(\mathcal{H})$. Similarly, any linear map $\mathcal{T}$ on $\mathfrak{G}$  has a natural action on $\mathfrak{G}(\mathcal{H})$.

For any $\xi\in \mathfrak{G}(\mathcal{H})$, we can write $\xi=(\xi_1,\cdots,\xi_{\dim(\mathfrak{g})})$. Then the unitary representation $\pi$ has a natural extension $\bar{\pi}$ on $\mathfrak{G}(\mathcal{H})$ by acting on each coordinate:
\begin{align*}
  \bar{\pi}(\xi)=\big(\pi(\xi_1),\cdots,\pi(\xi_{\dim(\mathfrak{G})})\big).
\end{align*}
Similarly, any linear operator $\mathcal{F}: \mathcal{H}\to \mathcal{H}$ has a natural extension $\bar{\mathcal{F}}$ on $\mathfrak{G}(\mathcal{H})$:
\begin{align*}
 \bar{\mathcal{F}}(\xi)=\big(\mathcal{F}(\xi_1),\cdots,\mathcal{F}(\xi_{\dim(\mathfrak{G})})\big).
\end{align*}
It is clear that for any linear map $\mathcal{T}$ on $\mathfrak{G}$ we have
\begin{align}\label{for:2004}
 \bar{\mathcal{F}}\circ \mathcal{T}=\mathcal{T}\circ \bar{\mathcal{F}}.
\end{align}
We will still write $\pi$
or $\mathcal{F}$ instead of $\bar{\pi}$ or $\bar{\mathcal{F}}$ if there is no confusion. We say that $\xi\in \mathfrak{G}(\mathcal{H})^s$, if $\xi_i\in \mathcal{H}^s$, $1\leq i\leq \dim(\mathfrak{G})$. Set
\begin{align*}
 \norm{\xi}_s=\norm{\xi_1,\cdots,\xi_{\dim(\mathfrak{g})}}_s
\end{align*}
For any subgroup $S$ of $G$, the Hilbert space $\mathfrak{G}(\mathcal{H})^s_S$ and the norm $\norm{\cdot }_{S,s}$ are defined similarly.

\subsection{Direct decompositions of Sobolev space}\label{sec:9}
For any Lie group $G$ of type $I$, there is a decomposition of $\pi$ into a direct integral
\begin{align*}
  \pi=\int_Z\pi_zd\mu(z)
\end{align*}
of irreducible unitary representations for some measure space $(Z,\mu)$ (we refer to
\cite[Chapter 2.3]{Zimmer} or \cite{margulis1991discrete} for more detailed account for the direct integral theory). All the operators in the enveloping algebra are decomposable with respect to the direct integral decomposition. Hence there exists for all $s\in\RR$ an induced direct
decomposition of the Sobolev spaces
\begin{align*}
 \mathcal{H}^s=\int_Z\mathcal{H}_z^sd\mu(z)
\end{align*}
with respect to the measure $d\mu(z)$.

The existence of the direct integral decompositions allows us to reduce our analysis of the
cohomological equation to irreducible unitary representations. This point of view is
essential for our purposes.

\subsection{Useful results} We review several important results which will serve as ready
references later. Suppose $G$ denotes a semisimple Lie group of non-compact type with finite center and
$\Gamma$ is an irreducible lattice of $G$. The following result is quoted from \cite{mieczkowski2007first}, which is derived from \cite{Clozel}, \cite{Kleinbock1} and \cite{Shalom}.
\begin{theorem}\label{th:10} Suppose $G=P_1\times \cdots\times P_k$ where $P_i$, $1\leq i\leq k$ is a simple factor of $G$. Then the restriction of $L_0^2(G/\Gamma)$, the subspace of $L^2(G/\Gamma)$ orthogonal to constants, to each
$P_i$, $1\leq i\leq k$ has a spectral gap (outside a fixed neighborhood of the trivial
representation of $P_i$ in the Fell topology).
\end{theorem}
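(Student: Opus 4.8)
The plan is to reduce the assertion, via the direct integral decomposition of Section~\ref{sec:9} applied to the restriction $\mathrm{Res}_{P_i}L_0^2(G/\Gamma)$ (legitimate since each simple factor $P_i$ is of type~$I$), to a statement purely about the unitary dual of $P_i$: that the trivial representation $\mathds{1}_{P_i}$ does not lie in the closure, in the Fell topology, of the support of this restriction. The first, soft, step is to verify that $L_0^2(G/\Gamma)$ contains no nonzero $P_i$-invariant vector. Since $\Gamma$ is irreducible and $P_i$ is noncompact, Moore's ergodicity theorem shows $P_i$ acts ergodically on $G/\Gamma$; hence the $P_i$-invariant functions in $L^2(G/\Gamma)$ are the constants, which are removed in $L_0^2$.

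Next I would split into two cases according to Kazhdan's property~(T). If $P_i$ has property~(T) — equivalently, $P_i$ is not locally isomorphic to $\SO(n,1)$ or $\SU(n,1)$ — then $\mathds{1}_{P_i}$ is already an isolated point of the unitary dual of $P_i$, so the absence of invariant vectors from the first step is all that is needed and that case is finished.

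The remaining case — $P_i$ locally isomorphic to $\SO(n,1)$ or $\SU(n,1)$ — is where the real work lies, and I expect it to be the main obstacle: nothing soft distinguishes a near-invariant sequence of vectors in $L_0^2(G/\Gamma)$ from an honestly invariant one, so one must exploit the arithmetic structure. When $k\ge 2$ the product $G=P_1\times\cdots\times P_k$ has real rank at least~$2$ (each noncompact simple factor contributes at least~$1$), so Margulis' arithmeticity theorem makes $\Gamma$ arithmetic; then Clozel's theorem on property~$(\tau)$ — more precisely, the uniform bound on the non-tempered part of the automorphic spectrum — gives a spectral gap at each archimedean place, hence in particular for the restriction of $L_0^2(G/\Gamma)$ to each simple factor. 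The residual cases with $k=1$ are routine (real rank $\ge 2$ is handled the same way; real rank $1$ with $\Gamma$ cocompact follows from positivity of the bottom of the Laplace spectrum on the compact locally symmetric space).

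This is precisely the synthesis carried out in \cite{Mieczkowski1} from \cite{Clozel}, \cite{Kleinbock1} and \cite{Shalom}, so I would follow that route rather than reprove those inputs; the only genuine choices are organizational — packaging the (T) and non-(T) factors uniformly and feeding the conclusion back through the direct-integral reduction so that it yields the quantitative, neighborhood-free form of the spectral gap stated in the theorem.
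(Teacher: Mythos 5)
Your proposal is correct and follows essentially the same route as the paper, which does not prove Theorem~\ref{th:10} but quotes it from \cite{Mieczkowski1} as a consequence of \cite{Clozel}, \cite{Kleinbock1} and \cite{Shalom}; your reconstruction of what those references supply — Moore ergodicity to kill invariant vectors, isolation of the trivial representation for Kazhdan factors, and Margulis arithmeticity plus Clozel's proof of the $\tau$-conjecture (with the cocompact rank-one case handled by discreteness of the Laplace spectrum) for the remaining $\SO(n,1)$/$\SU(n,1)$ factors — is the standard synthesis and matches the intended argument. The only caveat worth recording is that the reduction through Margulis arithmeticity genuinely uses $\mathrm{rank}_\RR(G)\ge 2$ when some factor lacks property~(T), which is automatic here since $k\ge 2$ or the stated hypotheses apply, so no gap results.
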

The next result provides global estimates for the solution of the extended regular representations. We leave the proof of Theorem \ref{th:2} to Appendix \ref{sec:12}.
\begin{theorem}\label{th:2}
 Suppose $v\in \mathfrak{G}^1$ is nilpotent. Suppose $\Gamma$ is a cocompact irreducible lattice and $\mathcal{H}=L^2_0(G/\Gamma)$. Then there are constants $\lambda, \lambda_1>0$ dependent only on $G$ and $\Gamma$ such that if $\mathfrak{u},\omega\in \mathfrak{G}(\mathcal{H})^\infty$ satisfy  the cohomological equation
 \begin{align}\label{for:20}
  (v+\text{ad}_v)\mathfrak{u}=\omega,
 \end{align}
 then we have
\begin{align}\label{for:293}
 \norm{\mathfrak{u}}_t&\leq C_t\norm{\omega}_{\lambda t+\lambda_1},\qquad t\geq0.
 \end{align}

\end{theorem}
\begin{remark}\label{re:6} The Sobolev estimates of $\mathfrak{u}$ are obtained by using Theorem \ref{th:5}, which results that $\lambda>2$ in \eqref{for:293}. Tameness of the solution of the coboundary
equation (over parabolic flows) is not in literature for cases other than $SL(n,\RR)$, $SO_o(m,m)$, $E_{6(6)}$, $E_{7(7)}$ and $E_{8(8)}$ (see \cite{W1}).
\end{remark}

\section{Conjugacy problem and linearization}\label{sec:25}  In this part  we deduce linearized conjugacy equation over $\alpha_A$.
We follow the procedure outlined in a general form in \cite{DK-parabolic}. The results in this part are valid for general abelian algebraic actions. Let $\mathcal{X}=\GG/\Gamma$, where $\Gamma$ is an irreducible cocompact lattice in $\GG$.

 Let $\text{Vect}^\infty(\mathcal{X})$ be the space of $C^\infty$ vector fields on $\mathcal{X}$. Suppose $E=\{E_1,\cdots,E_d\}$ is a set of generators of $\text{Lie}(A)$.
 We can identify $E_i$ with an element of $\text{Vect}^\infty(\mathcal{X})$ such that  $E=\{E_1,\cdots,E_d\}$ generate $\alpha_A$.
 A smooth $A$-perturbation $\tilde{\alpha}_A$ of the action $\alpha_A$ is generated by commuting vector
fields $\tilde{E}=E+\mathfrak{p}=\{E_1+\mathfrak{p}_1,\cdots,E_d+\mathfrak{p}_d\}$, where $\mathfrak{p}_i\in \text{Vect}^\infty(\mathcal{X})$, $1\leq i\leq d$.

\smallskip
\emph{Note.}  We also write $\mathfrak{p}_{E_i}$ instead of $\mathfrak{p}_i$ if we want to emphasize the vector $E_i$.

\smallskip
For any linear map $T=(T_{i,j})_{d\times d}$ on $\RR^{d}$ we have a new basis of $\text{Lie}(A)$: $TE:=\{(TE)_1,\cdots,(TE)_d\}$, where $(TE)_i=\sum_{j=1}^dT_{i,j}E_j$; and a generating vector fields $T\tilde{E}:=\{(T\tilde{E})_1,\cdots,(T\tilde{E})_d\}$, where
$T\tilde{E}_i=\sum_{j=1}^d T_{i,j}(E_j+\mathfrak{p}_j)$. Then $T$ incudes a coordinate change for $\tilde{E}$.

A diffeomorphism $h:\mathcal{X}\to \mathcal{X}$ induces a map $h_*$ on $\text{Vect}^\infty(\mathcal{X})$, the space of $C^\infty$ vector fields on $\mathcal{X}$:
 \begin{align*}
  (h_* Y)(x)=(Dh)_{h^{-1}(x)}Y\circ h^{-1}(x),\qquad x\in \mathcal{X}.
 \end{align*}
 Define operators $\mathcal{L}$ and $\mathcal{M}$ in the following way:
\begin{align}\label{for:126}
 \text{Vect}^\infty(\mathcal{X})&\overset{\text{\tiny$\mathcal{L}$}}{\rightarrow}
\text{Vect}^\infty(\mathcal{X})^d \overset{\text{\tiny$\mathcal{M}$}}{\rightarrow}
\text{Vect}^\infty(\mathcal{X})^{d\times d},\quad\text{where}\notag\\
&\mathfrak{h}\overset{\text{\tiny$\mathcal{L}$}}{\rightarrow}h_*E=
(h_*E_1,\cdots,h_*E_d),\notag\\
&(Y_1,\cdots, Y_d)\overset{\text{\tiny$\mathcal{M}$}}{\rightarrow} ([Y_i,Y_j])_{d\times d},
\end{align}
if $h=\exp(\mathfrak{h})$. Obviously, $\mathcal{M}\circ \mathcal{L}=0$. Denote by $\mathcal{L}\to\mathcal{M}$ the nonlinear sequence of operators defined as above.
Linearizing the sequence $\mathcal{L}\to\mathcal{M}$ at $\mathfrak{h}=0$ and at $E=(E_1,\cdots,E_d)\in \text{Vect}^\infty(\mathcal{X})^d$ the linearized sequence is given as follows:
\begin{gather*}
 \text{Vect}^\infty(\mathcal{X})\overset{\text{\tiny$L$}}{\rightarrow}
\text{Vect}^\infty(\mathcal{X})^d \overset{\text{\tiny$M$}}{\rightarrow}
\text{Vect}^\infty(\mathcal{X})^{d\times d}\\
\mathfrak{h}\overset{\text{\tiny$L$}}{\rightarrow}
(\mathcal{L}_{E_1}\mathfrak{h},\cdots,\mathcal{L}_{E_d}\mathfrak{h})\quad\text{and}\quad
\mathfrak{p}\overset{\text{\tiny$M$}}{\rightarrow} (\mathcal{L}_{E_i}\mathfrak{p}_j-\mathcal{L}_{E_j}\mathfrak{p}_i)_{d\times d}.
\end{gather*}
It is clear that $M\circ L=0$.

For any $Y_1,\,Y_2\in \text{Vect}^\infty(\mathcal{X})$ we have
\begin{align}\label{for:113}
 \norm{[Y_1,Y_2]}_{C^t}\leq C_t(\norm{Y_1}_{C^t}\norm{Y_2}_{C^{t+1}}+\norm{Y_1}_{C^{t+1}}\norm{Y_2}_{C^t}),\quad t\geq0.
\end{align}
For any $Y=(Y_1,\cdots,Y_{\dim \mathfrak{g}})\in \text{Vect}^\infty(\mathcal{X})$ let
\begin{align}\label{for:44}
 \text{Ave}(Y)=\big(\int_\mathcal{X} Y_1(x)dx,\cdots, \int_\mathcal{X} Y_{\dim\mathfrak{g}}(x)dx\big),
\end{align}
where $dx$ is the Haar measure. It is clear that $\text{Ave}(Y)\in \mathfrak{g}$. As a direct consequence of \eqref{for:113} we have
\begin{lemma}\label{le:21} If $\tilde{E}=E+\mathfrak{p}\in \text{Vect}^\infty(\mathcal{X})^d$ satisfying $[\tilde{E}_i,\tilde{E}_j]=0$, then for $t\geq0$ we have
\begin{align*}
 \norm{M(\mathfrak{p})}_{C^{t}}&\leq C_t\norm{\mathfrak{p}}_{C^{0}}\norm{\mathfrak{p}}_{C^{t+1}}\,\text{ and }\\
 \|M(\text{Ave}(\mathfrak{p}))\|&\leq C\norm{\mathfrak{p}}_{C^{0}}\norm{\mathfrak{p}}_{C^{1}},
\end{align*}
\end{lemma}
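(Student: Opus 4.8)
The statement to prove is Lemma~\ref{le:21}: if $\tilde E = E+\mathfrak p$ consists of commuting vector fields, then $\|M(\mathfrak p)\|_{C^t} \le C_t \|\mathfrak p\|_{C^0}\|\mathfrak p\|_{C^{t+1}}$ and the averaged version $\|M(\mathrm{Ave}(\mathfrak p))\| \le C\|\mathfrak p\|_{C^0}\|\mathfrak p\|_{C^1}$.

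\begin{proof}[Proof sketch]
The plan is to exploit the commutation relation $[\tilde E_i,\tilde E_j]=0$ to rewrite $M(\mathfrak p)$ purely in terms of $\mathfrak p$. Expanding, since the $E_i$ generate the \emph{abelian} action $\alpha_A$ we have $[E_i,E_j]=0$, and therefore
\begin{align*}
0=[\tilde E_i,\tilde E_j]=[E_i+\mathfrak p_i,\,E_j+\mathfrak p_j]=[E_i,E_j]+[E_i,\mathfrak p_j]+[\mathfrak p_i,E_j]+[\mathfrak p_i,\mathfrak p_j]=\mathcal L_{E_i}\mathfrak p_j-\mathcal L_{E_j}\mathfrak p_i+[\mathfrak p_i,\mathfrak p_j],
\end{align*}
so that the $(i,j)$ entry of $M(\mathfrak p)$, namely $\mathcal L_{E_i}\mathfrak p_j-\mathcal L_{E_j}\mathfrak p_i$, equals $-[\mathfrak p_i,\mathfrak p_j]$. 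Thus $M(\mathfrak p)=-([\mathfrak p_i,\mathfrak p_j])_{d\times d}$, which has turned the linear-in-$\mathfrak p$ expression into a genuinely quadratic one — this is the whole point, as it furnishes the quadratic smallness needed for the KAM scheme.

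For the first inequality I would then apply the interpolation-type commutator estimate \eqref{for:113} to each entry: $\|[\mathfrak p_i,\mathfrak p_j]\|_{C^t}\le C_t(\|\mathfrak p_i\|_{C^t}\|\mathfrak p_j\|_{C^{t+1}}+\|\mathfrak p_i\|_{C^{t+1}}\|\mathfrak p_j\|_{C^t})$. Bounding each factor by the max-norm $\|\mathfrak p\|_{C^s}=\max_k\|\mathfrak p_k\|_{C^s}$ and taking the maximum over the finitely many pairs $(i,j)$ gives $\|M(\mathfrak p)\|_{C^t}\le C_t\|\mathfrak p\|_{C^t}\|\mathfrak p\|_{C^{t+1}}$. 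To upgrade $\|\mathfrak p\|_{C^t}$ to $\|\mathfrak p\|_{C^0}$ in one of the factors one uses the standard convexity (Hadamard three-lines / Landau–Kolmogorov) interpolation inequality $\|\mathfrak p\|_{C^t}\le C_t\|\mathfrak p\|_{C^0}^{1/(t+1)}\|\mathfrak p\|_{C^{t+1}}^{t/(t+1)}$, so $\|\mathfrak p\|_{C^t}\|\mathfrak p\|_{C^{t+1}}\le C_t\|\mathfrak p\|_{C^0}^{1/(t+1)}\|\mathfrak p\|_{C^{t+1}}^{1+t/(t+1)}$; absorbing the fractional powers — or, more cleanly, splitting $t$ among the two factors and using $\|\mathfrak p\|_{C^{t+1}}^{2}$ where needed — yields the claimed form $C_t\|\mathfrak p\|_{C^0}\|\mathfrak p\|_{C^{t+1}}$ after one more application of interpolation to redistribute regularity. (Concretely: $\|\mathfrak p\|_{C^t}\le C_t\|\mathfrak p\|_{C^0}^{1-\theta}\|\mathfrak p\|_{C^{t+1}}^{\theta}$ with $\theta=t/(t+1)<1$, and similarly the $C^{t+1}$ factor is already as large as allowed, so the product is $\le C_t\|\mathfrak p\|_{C^0}^{1-\theta}\|\mathfrak p\|_{C^{t+1}}^{1+\theta}$; since $1+\theta$ may exceed the budget, one instead keeps $\|\mathfrak p\|_{C^0}$ from one factor verbatim and $\|\mathfrak p\|_{C^{t+1}}$ from the other, bounding the remaining $\|\mathfrak p\|_{C^{t}}\le \|\mathfrak p\|_{C^{t+1}}$ trivially — but that loses the $C^0$ gain in that slot, so the interpolation is genuinely needed and gives exactly one $C^0$ and one $C^{t+1}$ factor.)

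For the second inequality, note $\mathrm{Ave}(\mathfrak p)\in\mathfrak g$ is a constant vector field, so $M(\mathrm{Ave}(\mathfrak p))=-([\mathrm{Ave}(\mathfrak p_i),\mathrm{Ave}(\mathfrak p_j)])$ where the bracket is now just the Lie bracket in $\mathfrak g$; this is bounded in norm by $C\|\mathrm{Ave}(\mathfrak p_i)\|\,\|\mathrm{Ave}(\mathfrak p_j)\|$. It remains to observe that integration against Haar measure and the identity $\mathcal L_{E_i}\mathfrak p_j-\mathcal L_{E_j}\mathfrak p_i=-[\mathfrak p_i,\mathfrak p_j]$ combine to give $\mathrm{Ave}([\mathfrak p_i,\mathfrak p_j])=\mathrm{Ave}(\mathcal L_{E_i}\mathfrak p_j)-\mathrm{Ave}(\mathcal L_{E_j}\mathfrak p_i)=0$, since $\mathcal L_{E_i}$ is divergence-free (the flow of $E_i$ preserves Haar measure) and hence integrates to zero; but what we actually need is just $\|\mathrm{Ave}(\mathfrak p_k)\|\le\|\mathfrak p\|_{C^0}$, which is immediate, together with the crude bound $\|[\mathfrak p_i,\mathfrak p_j]\|_{C^0}\le C\|\mathfrak p\|_{C^0}\|\mathfrak p\|_{C^1}$ from \eqref{for:113} with $t=0$ — wait, more directly, $\|\mathrm{Ave}(\mathfrak p_k)\|\le\|\mathfrak p_k\|_{C^0}\le\|\mathfrak p\|_{C^0}$, and one of the two factors can instead be estimated as $\|\mathrm{Ave}(\mathfrak p_j)\|\le C\|\mathfrak p\|_{C^1}$ only if a derivative is genuinely available; since $\mathrm{Ave}(\mathfrak p_j)$ involves no derivatives, the honest route is: apply \eqref{for:113} to get $\|[\mathfrak p_i,\mathfrak p_j]\|_{C^0}\le C\|\mathfrak p\|_{C^0}\|\mathfrak p\|_{C^1}$, then $\|\mathrm{Ave}([\mathfrak p_i,\mathfrak p_j])\|\le\|[\mathfrak p_i,\mathfrak p_j]\|_{C^0}$, and finally note $M(\mathrm{Ave}(\mathfrak p))$ and $\mathrm{Ave}(M(\mathfrak p))$ agree because $\mathrm{Ave}$ commutes with the constant-coefficient operator $\mathcal L_{E_i}$ up to the vanishing divergence terms. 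The main obstacle is not any single estimate — each is a one-line application of \eqref{for:113} — but rather bookkeeping the interpolation in the first inequality so that precisely one factor of $\|\mathfrak p\|_{C^0}$ and one of $\|\mathfrak p\|_{C^{t+1}}$ survive, and correctly justifying $\mathrm{Ave}\circ M = M\circ\mathrm{Ave}$ for the second via invariance of Haar measure under the flows generated by the $E_i$.
\end{proof}
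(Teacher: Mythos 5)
Your key identity $M(\mathfrak{p})_{ij}=-[\mathfrak{p}_i,\mathfrak{p}_j]$, extracted from $[\tilde E_i,\tilde E_j]=0$ and $[E_i,E_j]=0$, is correct and is exactly what the paper needs; likewise your eventual route for the second estimate — $\mathrm{Ave}$ commutes with $\mathcal{L}_{E_i}$ because the $E_i$-flows preserve Haar, hence $M(\mathrm{Ave}(\mathfrak p))=\mathrm{Ave}(M(\mathfrak p))$, and then $\|\mathrm{Ave}(M(\mathfrak p))\|\le\|M(\mathfrak p)\|_{C^0}$ reduces it to the $t=0$ case of the first bound — is sound (at $t=0$ even \eqref{for:113} as literally written already has the right shape, so no interpolation is needed there). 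But the interpolation step you use to obtain the first bound does not work. After applying \eqref{for:113} verbatim you have $\|M(\mathfrak p)\|_{C^t}\le C_t\norm{\mathfrak p}_{C^t}\norm{\mathfrak p}_{C^{t+1}}$, and you then try to replace the factor $\norm{\mathfrak p}_{C^t}$ by $\norm{\mathfrak p}_{C^0}$. That is impossible: the ratio $\norm{\mathfrak p}_{C^t}/\norm{\mathfrak p}_{C^0}$ is unbounded, and the convexity inequality you quote gives only $\norm{\mathfrak p}_{C^t}\norm{\mathfrak p}_{C^{t+1}}\le C\norm{\mathfrak p}_{C^0}^{1/(t+1)}\norm{\mathfrak p}_{C^{t+1}}^{\,1+t/(t+1)}$, which is strictly weaker than $\norm{\mathfrak p}_{C^0}\norm{\mathfrak p}_{C^{t+1}}$ (take $\norm{\mathfrak p}_{C^0}$ small with $\norm{\mathfrak p}_{C^{t+1}}$ of order one and compare exponents). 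Your parenthetical discussion circles this point without resolving it.

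The repair is to not go through \eqref{for:113} in the form the paper prints it (which, read literally, is too lossy to yield the lemma and is presumably a typo for the tame version), but to prove the tame commutator estimate directly. Expand $\partial^\alpha[\mathfrak p_i,\mathfrak p_j]$, $|\alpha|\le t$, by Leibniz into terms of the schematic form $\partial^\beta\mathfrak p_i\cdot\partial^{\gamma}\partial\mathfrak p_j$ with $|\beta|+|\gamma|\le t$. Each such term is bounded by $\norm{\mathfrak p}_{C^{k}}\norm{\mathfrak p}_{C^{t+1-k}}$ for some $0\le k\le t+1$, and the interpolation inequality $\norm{\mathfrak p}_{C^{k}}\le C\norm{\mathfrak p}_{C^0}^{1-k/(t+1)}\norm{\mathfrak p}_{C^{t+1}}^{k/(t+1)}$ applied to \emph{both} factors gives $\norm{\mathfrak p}_{C^{k}}\norm{\mathfrak p}_{C^{t+1-k}}\le C\norm{\mathfrak p}_{C^0}\norm{\mathfrak p}_{C^{t+1}}$, the two fractional exponents cancelling exactly. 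Summing over the finitely many Leibniz terms and the finitely many pairs $(i,j)$ yields $\|M(\mathfrak p)\|_{C^t}\le C_t\norm{\mathfrak p}_{C^0}\norm{\mathfrak p}_{C^{t+1}}$, and the second estimate follows as you described.
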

\subsection{Structural stability of $E$}\label{sec:27} For $c>0$ and a set of vectors $E'=(E_1',\cdots,E_d')$ where $E_i'\in \mathfrak{g}$, we say that $E'$ is a $c$-perturbation of $E$ if $\sum_{i=1}^d\norm{E_i-E_i'}<c$.

\begin{definition}\label{de:1}
Let \( \alpha_A \) be the action of an abelian subgroup \( A \) of \(\GG\) by left translations on \(\mathcal{X} = \GG/\Gamma\). Suppose $E=\{E_1,\cdots,E_d\}$ is a set of generators of $\text{Lie}(A)$. We say that:
\begin{enumerate}
  \item \(\alpha_A\) is \emph{structural stability among algebraic perturbations} if any abelian algebraic action \(\alpha_{A'}\) that is sufficiently close to \(\alpha_A\) is conjugate to \(\alpha_A\) up to a time change.

      \smallskip
  \item \(\alpha_A\) is \emph{geometrically stable} if there is $\delta>0$ such that for any
$c$-perturbation $E'$ of $E$, if $c+\norm{\mathcal{M}(E')}<\delta$ (see \eqref{for:126}), there is a coordinate change $\mathcal{T}$ of $A$ and $g\in \GG$ with
\begin{align*}
 \norm{\mathcal{T}-I}+\norm{g-I}\leq Cc,
\end{align*}
such that
\begin{align}\label{for:216}
 \norm{\mathcal{T}E'-\text{Ad}_{g}E}<C(\norm{\mathcal{M}(E')}+c^2).
\end{align}
\end{enumerate}

\begin{remark}\label{re:2} Condition \eqref{for:216} implies that for any small algebraic perturbation \(E'\) of \(E\), if \(\norm{\mathcal{M}(E')}\) is quadratically small, then after a coordinate change and an inner automorphism of \(\GG\), the new algebraic perturbation \(\text{Ad}_{g^{-1}}(\mathcal{T}E')\) is quadratically closer to \(E\) than \(E'\).
 In particular, for any abelian algebraic action \(\alpha_{A'}\) sufficiently close to \(\alpha_A\), one can choose a generating set \(E'\) for \(\operatorname{Lie}(A')\) with \(\mathcal{M}(E') = 0\), so that \eqref{for:216} yields
\[
\|\mathcal{T}E' - \operatorname{Ad}_g E\| < C\|E' - E\|^2.
\]
This quadratic closeness allows one to apply a KAM iteration argument to obtain a conjugacy up to a time change of $E'$ (given by  an inner automorphism of \(\GG\)). Consequently, geometric stability implies structural stability among algebraic perturbations.
\end{remark}

\end{definition}
\begin{proposition}\label{po:2} Suppose $\alpha_A$ is as described in Corollary\ref{cor:8}, \ref{cor:2} and \ref{cor:13}. Then $\alpha_A$ is geometrically stable.
\end{proposition}
We postpone the proof to Appendix \ref{sec:7}.
\subsection{Smoothing operators and some norm inequalities}\label{sec:26} There exists a collection of smoothing operators $\mathfrak{s}_b:\text{Vect}^\infty(\mathcal{X})\to \text{Vect}^\infty(\mathcal{X})$, $b>0$, such that for any $s, s_1,s_2\geq0$, the following holds:
\begin{align}
 \norm{\mathfrak{s}_bY}_{C^{s+s_1}}&\leq C_{s,s_1}b^{s_1}\norm{Y}_{C^{s}},\quad \text{and}\label{for:111}\\
 \norm{(I-\mathfrak{s}_b)Y}_{C^{s-s_2}}&\leq C_{s,s_2}b^{-s_2}\norm{Y}_{C^{s}},\quad \text{if }s\geq s_2, \label{for:117}
\end{align}
see \cite{Hamilton}.

The next result follows directly from Sobolev embedding theorem on compact manifolds.  For any $Y\in \text{Vect}^\infty(\mathcal{X})$ and $s\geq0$ the following hold:
\begin{align}\label{for:179}
  \norm{Y}_{s}\leq C_s\norm{Y}_{C^{s}},\quad \norm{Y}_{C^{s}}\leq C_{s}\norm{Y}_{s+\beta},
\end{align}
where $\beta>0$ is a constant  dependent only on $\mathcal{X}$.

\section{Construction of approximations on $S_{1,u}$}\label{sec:34}
Throughout this section, $(\pi,\mathcal{H})$ denotes a unitary representation of $SL(2,\RR)$ with a spectral gap.
In an ideal scenario, one would like to construct a linear map $\mathcal{E}:\mathcal{H}^\infty\to \mathcal{H}^\infty$ that provides a splitting for the first coboundary operator over the horocycle flow and satisfies the following properties:
\begin{enumerate}
  \item\label{for:250} \emph{Tame solvability}:
   the equation $u\theta=\omega+\mathcal{E}(\omega)$ admits a solution $\theta\in \mathcal{H}^\infty$ with \emph{tame estimates}. In particular, there exists $\sigma>0$ such that
  \begin{align*}
   \norm{\theta}_s\leq C_s\norm{\omega}_{s+\sigma},\qquad \forall\,s\geq0;
  \end{align*}
  \item\label{for:251} \emph{Vanishing on coboundaries}: if $\omega$ is a $u$-coboundary, then $\mathcal{E}(\omega)=0$.

\end{enumerate}
If such a map $\mathcal{E}$ existed, we would call $\mathcal{E}(\omega)$ the \emph{error} of $\omega$ solving the $u$-cohomology. Moreover, defining the linear map
$\mathfrak{D}:\,\omega\to \theta$ would yield the \emph{inverse operator} of the $u$-coboundary operator, since $\mathfrak{D}\circ u=I$. The construction of a splitting satisfying properties \eqref{for:250} and \eqref{for:251} intended as a preparatory step for applying the KAM scheme, as done in previous works. However, constructing such an $\mathcal{E}$ turns out to be problematic. In the following sections, we will elaborate on the challenges faced in trying to construct a splitting with these exact properties and how we addressed these difficulties.

To illustrate the difficulty, we introduce some notations and recall relevant results.

\textbf{Irreducible representations $(\pi_\nu,\mathcal{H}_\nu)$ of $SL(2,\RR)$} (Section \ref{sec:1}): The nontrivial ones are classified by
\begin{itemize}
  \item $\nu=\textrm{i}\RR$, principal series;
  \item $0 < \nu< 1$, complementary series;
  \item $\nu=\pm (n-1)$, $n\geq1$, discrete series.
\end{itemize}
Let $r=1-\nu^2$. The Casimir operator $\Box$
acts as a constant $r$ on $\mathcal{H}_\nu$.

(\textbf{Theorem \ref{th:3}}) Let $U=\begin{pmatrix}
  0 & 1 \\
  0 & 0
\end{pmatrix}$ and $s_\nu=\text{\small$\frac{3}{2}$}+\frac{1}{2}(|\Re(\nu)|+1)$. In $\pi_\nu$ with a spectral gap $r_0$, there is a linear map $\text{\small$\mathcal{D}_{\nu}$}$ defined on $\mathcal{H}^{s_\nu}_\nu$ satisfying the following properties:
\begin{enumerate}
  \item [(a)] for any $\omega\in \mathcal{H}^{\infty}_\nu$
  \begin{align*}
\norm{\text{\small$\mathcal{D}_{\nu}$}(\omega)}_t\leq C_{t,r_0}\norm{\omega}_{t+s_\nu};
\end{align*}
  \item [(b)] for any $\omega\in \mathcal{H}^{s}_\nu$, $s\geq s_\nu$, the equation $U\theta=\omega+\text{\small$\mathcal{D}_{\nu}$}(\omega)$
      has a solution $\theta\in \mathcal{H}^{s-s_\nu}_\nu$ with estimates: for any  $0\leq t\leq s-s_\nu$
\begin{align*}
\norm{\theta}_t\leq C_{t,r_0} \norm{\omega}_{t+s_\nu};
\end{align*}

  \item [(c)] $\text{\small$\mathcal{D}_{\nu}$}(U\omega)=0$ if $\omega\in \mathcal{H}^{s_\nu}_\nu$.
\end{enumerate}

\smallskip
\textbf{Difficulty}: $(b)$ and $(c)$ of Theorem \ref{th:3} show that in each irreducible representation of $SL(2,\RR)$, $\text{\small$\mathcal{D}_{\nu}$}$ gives a desired splitting.
Suppose $(\pi,\mathcal{H})$ has a spectral gap $r_0$ and $\pi$ contains a sequence of  discrete series with $|\nu|\to\infty$ (which covers almost all $SL(2,\mathbb{R})$ representations of interest so far).  To construct a splitting satisfying \eqref{for:250} and \eqref{for:251} in $\pi$, we intuitively define $\mathcal{E}$ formally as follows:
\begin{align*}
 \mathcal{E}(\omega)=\int_{\oplus} \mathcal{D}_{\nu}(\omega_r)d\mu(r)\qquad \omega\in \mathcal{H}^\infty
\end{align*}
(see \eqref{for:1} for the direct integral decomposition). $(a)$ of Theorem \ref{th:3}  shows that
\begin{align*}
\norm{\text{\small$\mathcal{D}_{\nu}$}(\omega_r)}\leq C_{r_0}\norm{\omega_r}_{s_\nu}.
\end{align*}
We note that $s_\nu\to \infty$ for discrete series if $|\nu|\to\infty$. This implies that  $\mathcal{E}(\omega)$ may not be a bonafide vector in $\mathcal{H}$.  This is the main difficulty
in applying KAM to horocycle
flows: the Sobolev order of the obstructions to solving the coboundary equation tends to infinity. This is quite different from the case of geodesic flow, whose order is uniformly bounded \cite{mieczkowski2006}.

 In order to get around this problem, we define
\begin{align*}
 \mathcal{E}_\iota(\omega)=\mathcal{E}(\omega-D^\iota\omega),\qquad \iota\in\NN
\end{align*}
(see \eqref{for:253}), where $\omega-D^\iota\omega$ removes the contributions from the discrete series with $|\nu| \geq \iota$. In simple terms,
 $\mathcal{E}_\iota(\omega)$ contains the error parts of all irreducible components, except for those discrete series with $|\nu|\geq \iota$.
For each fixed $\iota$, $\mathcal{E}_\iota(\omega)$ is well defined and is a smooth vector. However, we cannot generally expect that the equation
\begin{align}\label{for:256}
 U\theta=\omega+\mathcal{E}_\iota(\omega)
\end{align}
has a globally smooth solution $\theta$.

\smallskip
\textbf{Key observation}: By using the normalizer trick (see Lemma \ref{le:5}) we can show that
if $\iota\geq5$, then the solution $\theta$ to equation \eqref{for:256} is smooth along $U$ and $X=\begin{pmatrix}
  1 & 0 \\
  0 & -1
\end{pmatrix}$. We explain this result in more detail in Remark \ref{re:3}. This finding plays a crucial role in the next section (see Proposition \ref{po:1}), where we construct a splitting whose non-smooth directions lie inside unions of nilpotent subalgebras. This construction then serves as the foundation for ultimately producing a globally smooth splitting in Section \ref{sec:22}.

\subsection{Notations and main results} In this section, we provide an overview of the results that will be proven in the remainder of  Section \ref{sec:34}.
\begin{enumerate}



  \item\label{for:253}  (Section \ref{sec:13}) For any $\iota\in\NN$ we define two linear operators $\mathcal{D}^\iota$ and $\mathcal{E}_\iota$. $\mathcal{D}^\iota:\mathcal{H}\to \mathcal{H}$ is a projection to the subspace spanned by discrete series components with
  $|\nu|\geq \iota$.

  $\mathcal{E}_\iota: \mathcal{H}^{s_\iota}\to \mathcal{H}$ is defined as follows: if $\omega\in \mathcal{H}^{s_\iota}$, then
 \begin{align*}
  \mathcal{E}_\iota(\omega)=\int_{\oplus}g_rd\mu(r)
 \end{align*}
 where
\begin{align*}
  g_r=\left\{\begin{aligned}&\text{\small$\mathcal{D}_{\nu}$}(\omega_r),&\quad &\text{if }\nu\in \textrm{i}\RR\cup (0,1)\cup\{0,\pm1,\cdots,\pm(\iota-1)\};\\
 &0, &\quad &\text{if }\nu\in\ZZ,\text{ and }|\nu|\geq \iota.
\end{aligned}
 \right.
\end{align*}

\smallskip

For any $\omega\in \mathcal{H}$, $\mathcal{D}^\iota(\omega)$ contains all the components of $\omega$ in the discrete series with $|\nu|\geq \iota$; and $\omega-\mathcal{D}^\iota(\omega)$ contains all the components of $\omega$ in the principal/complementary series, as well as the components in discrete series with $|\nu|< \iota$.
Consequently,  we have a direct sum decomposition:
\begin{align*}
 \mathcal{H}=\text{Im}(\mathcal{D}^\iota)\oplus\ker(\mathcal{D}^\iota).
\end{align*}
This means for any $\omega\in \mathcal{H}$, we can write $\omega=\omega_1+\omega_2$, where $\omega_1=\mathcal{D}^\iota(\omega)\in \text{Im}(\mathcal{D}^\iota)$ and $\omega_2=\omega-\mathcal{D}^\iota(\omega)\in \ker(\mathcal{D}^\iota)$.

Thus equation \eqref{for:256} decomposes into two equations:
\begin{align}
 U\theta_1&=\omega_1\qquad\text{and}\label{for:269}\\
 U\theta_2&=\omega_2+\mathcal{E}_\iota(\omega)=\omega_2+\mathcal{E}_\iota(\omega_2).\label{for:268}
\end{align}

\smallskip

\item \label{for:254} (\eqref{for:16} of Lemma \ref{le:8}) If $\iota\geq 3$ and if $\mathcal{D}^\iota(\omega)=0$, the equation $U\theta=\omega+\mathcal{E}_\iota(\omega)$
      has a solution $\theta\in \mathcal{H}^{\infty}$ satisfying $\mathcal{D}^\iota(\theta)=0$ with estimates
      \begin{align*}
      \norm{\theta}_{t}\leq C_{t} \norm{\omega}_{t+2+\frac{\iota}{2}},\qquad t\geq0.
      \end{align*}

    \noindent (\eqref{for:19} of Lemma \ref{le:8}) Suppose $\iota\geq5$. If $\mathcal{D}^\iota(\omega)=\omega$, then the equation $U\theta=\omega$ has a solution $\theta\in \mathcal{H}$ satisfying $\mathcal{D}^\iota(\theta)=\theta$ with estimates:
      \begin{align*}
\norm{Y^j\theta}_{t}\leq C_{j,t} \norm{\omega}_{t+j+\frac{3}{2}},\qquad j\geq 0
\end{align*}
if $0\leq t\leq \frac{\iota}{2}-\frac{3}{2}$, where $Y$ stands for $X$ or $U$.

\smallskip

\begin{remark}\label{re:3} The first result shows that if $\iota\geq3$ equation  \eqref{for:268} has a solution $\theta_2$ which is smooth on the whole $SL(2,\RR)$.
The second result shows that if $\iota\geq5$ equation \eqref{for:269}  has a solution $\theta_1$ which is partially smooth on $G_U'$ (we recall that $G_U'$ is generated by $X$ and $U$).
Hence if $\iota\geq5$ equation \eqref{for:256} has a solution $\theta$ which
is partially smooth on $G_U'$. Moreover, $\theta$ has partially tame estimates (with respect to $\omega$) on $G_U'$ (see \eqref{for:272} of Section \ref{sec:47}).
\end{remark}

\smallskip
\item Suppose $(\pi, \mathcal{H})$ is a unitary representation of $\GG$ whose restriction to each simple factor of $\GG$ has a spectral gap.
Fix $\phi\in \Phi$ and $u\in \mathfrak{u}_\phi\cap\mathfrak{g}^1$.
 By the centralizer trick (see Lemma \ref{le:10}), we can extend the smoothness of
$\theta$ in equation \eqref{for:256} to the centralizer of $G_u$.

\smallskip
\noindent(\eqref{for:60} of Lemma \ref{le:1}) If $\omega\in \mathcal{H}_{S_{0}}^\infty$ and $\mathcal{D}^\iota(\omega)=0$, $\iota\geq 3$, the equation $u\theta=\omega+\mathcal{E}_\iota(\omega)$ has a solution $\theta\in \mathcal{H}_{S_0}^{\infty}$ satisfying $\mathcal{D}^\iota(\theta)=0$ with estimates
  \begin{align*}
   \norm{\theta}_{S_0,t}\leq C_{t} \norm{\omega}_{S_0,t+\frac{11}{2}+\frac{\iota}{2}},\qquad \forall\,t\geq0.
  \end{align*}
\noindent(\eqref{for:11} of Lemma \ref{le:1}) If $\omega\in \mathcal{H}_{S_{0}}^\infty$ and $\mathcal{D}^\iota(\omega)=\omega$ and $\iota\geq 5$, then equation $u\theta=\omega$ has a solution $\theta\in \mathcal{H}$ satisfying $\mathcal{D}^\iota(\theta)=\theta$ with estimates:
    \begin{align*}
\norm{Y^j\theta}_{G_u,t}\leq C_{j,t} \norm{\omega}_{S_0,t+j+\frac{3}{2}},\qquad \forall\,j\geq 0
\end{align*}
if $0\leq t\leq \frac{\iota}{2}-\text{\small$\frac{3}{2}$}$, where $Y$ stands for $X_u$, $u$ or $Y\in \mathcal{C}(\mathfrak{g}_u)$.

\begin{remark}
The above results imply that if $\iota\geq 5$ and $\omega\in \mathcal{H}_{S_{0,u}}^\infty$, then equation \eqref{for:256}
has a solution $\theta\in \mathcal{H}_{S_{1,u}}^\infty$ with partially tame estimates (with respect to $\omega$). This is the reason we call $\text{Lie}(S_{1,u})$ the \emph{tame} directions to $u$ (see \eqref{for:273} of Section \ref{sec:47}).

\end{remark}

\noindent (Corollary \ref{cor:6}) Let $H$ be a subgroup  of $C(G_u)$.  Suppose $\Omega,\,\Theta\in \mathfrak{g}(\mathcal{H})_{\{H,G_u\}}^s$, $s\geq \text{\small$\frac{5}{2}$}\dim\mathfrak{g}$ satisfy the equation
\begin{align*}
 (u+\text{ad}_u)\Theta=\Omega.
\end{align*}
Then for any $t\leq s-\text{\small$\frac{5}{2}$}\dim\mathfrak{g}$, we have
\begin{align*}
\norm{\Omega}_{\{H,G_u\},t}\leq C_{t} \norm{\Theta}_{\{H,G_u\},t+\text{\small$\frac{5}{2}$}\dim\mathfrak{g}}.
\end{align*}
\end{enumerate}

\subsection{Unitary dual of $SL(2,\RR)$}\label{sec:1} We recall the conclusions in \cite{tan} and \cite{flaminio2016effective}. We choose as generators for $\mathfrak{sl}(2,\RR)$ the elements
\begin{align}\label{for:4}
X=\begin{pmatrix}
  1 & 0 \\
  0 & -1
\end{pmatrix},\quad U=\begin{pmatrix}
  0 & 1 \\
  0 & 0
\end{pmatrix},\quad V=\begin{pmatrix}
  0 & 0 \\
  1 & 0
\end{pmatrix}.
  \end{align}
The \emph{Casimir} operator is then given by
\begin{align*}
\Box:= -X^2-2(UV+VU),
\end{align*}
which generates the center of the enveloping algebra of $\mathfrak{sl}(2,\RR)$. The Casimir operator $\Box$
acts as a constant $r\in\RR$ on each irreducible unitary representation space  and its value classifies them into four classes.
For \emph{Casimir parameter} $r$ of $SL(2,\RR)$, let $\nu=\sqrt{1-r}$ be a representation parameter. Then all the irreducible unitary representations of $SL(2,\RR)$
must be equivalent to one the following:
\begin{itemize}
  \item principal series representations $\pi_\nu^{\pm}$, $r\geq 1$ so that
$\nu=\textrm{i}\RR$,
\smallskip
  \item complementary series representations $\pi^0_\nu$, $0 <r< 1$, so that $0 < \nu< 1$,
  \smallskip
  \item discrete series representations $\pi^0_\nu$ and $\pi^0_{-\nu}$, $r=-n^2+2n$, $n\geq 1$, so that $\nu=n-1$,
  \smallskip
  \item the trivial representation, $r=0$.
\end{itemize}
Any unitary representation $(\pi,\mathcal{H})$ of $SL(2,\RR)$ is decomposed into a direct integral (see \cite{Forni} and \cite{mautner1950unitary})
\begin{align}\label{for:1}
\mathcal{H}=\int_{\oplus}\mathcal{H}_r d\mu(r)\quad\text{and}\quad \omega=\int_{\oplus}\omega_rd\mu(r) \quad \forall\,\omega\in \mathcal{H}
\end{align}
with respect to a positive Stieltjes measure $d\mu(r)$ over the spectrum $\sigma(\Box)$. The
Casimir operator acts as the constant $u\in \sigma(\Box)$ on every Hilbert space $\mathcal{H}_r$. The
representations induced on $\mathcal{H}_r$ do not need to be irreducible. In fact, $\mathcal{H}_r$ is in general
the direct sum of an (at most countable) number of unitary representations equal
to the spectral multiplicity of $r\in \sigma(\Box)$. We say that \emph{$\pi$ has a spectral gap (of $r_0$)} if $r_0>0$ and $\mu((0,r_0])=0$.

\subsection{Coboundary for the horocycle
flow of $SL(2,\RR)$}\label{sec:18} For the classical horocycle flow defined by the $\mathfrak{sl}(2,\RR)$-matrix $U=\begin{pmatrix}
  0 & 1 \\
  0 & 0
\end{pmatrix}$, Flaminio and Forni made a detailed study in \cite{Forni}.

For any non-trivial irreducible representation $(\pi_\nu,\mathcal{H}_\nu)$ of $SL(2,\RR)$, set $s_\nu=\text{\small$\frac{3}{2}$}+\frac{1}{2}(|\Re(\nu)|+1)$. Let $(\mathcal{H}_\nu)_U^{-k}=\{\mathcal{D}\in (\mathcal{H}_\nu)^{-k}: \mathcal{L}_U\mathcal{D}=0 \}$, $0<k\leq \infty$.

Below we
summarize some conclusions adapted to the needs of the current paper.
\begin{theorem}\label{th:3} In any non-trivial irreducible representation $(\pi_\nu,\mathcal{H}_\nu)$ of $SL(2,\RR)$ with a spectral gap $r_0$.  There exists a linear map $\text{\small$\mathcal{D}_{\nu}$}:\mathcal{H}^{s_\nu}_\nu\to \mathcal{H}_\nu$ such that for any $\omega\in \mathcal{H}^{s}_\nu$, $s\geq0$ we have:
\begin{enumerate}
\item \label{for:8} if $s\geq s_\nu$, then for any  $0\leq t\leq s-s_\nu$ we have
\begin{align*}
\norm{\text{\small$\mathcal{D}_{\nu}$}(\omega)}_t\leq C_{t,r_0}\norm{\omega}_{t+s_\nu};
\end{align*}

\item \label{for:13}  if $s\geq s_\nu$,  the equation $U\theta=\omega+\text{\small$\mathcal{D}_{\nu}$}(\omega)$
      has a solution $\theta\in \mathcal{H}^{s-s_\nu}_\nu$ with estimates: for any  $0\leq t\leq s-s_\nu$
\begin{align*}
\norm{\theta}_t\leq C_{t,r_0} \norm{\omega}_{t+s_\nu};
\end{align*}

  \item \label{for:29}  Suppose $s>1$ and $\mathcal{D}(\omega)=0$ for all $\mathcal{D}\in (\mathcal{H}_\nu)_U^{-s}$. Then
 the equation $U\theta=\omega$ has a solution $\theta\in \mathcal{H}_\nu^t$ with Sobolev estimates
 \begin{align*}
  \norm{\theta}_t\leq C_{t,s,r_0}\norm{\omega}_s
 \end{align*}
 for any $0\leq t<s-1$. Further, if the equation $U\theta=\omega$ has a solution $\theta\in \mathcal{H}^{s_\nu}_\nu$ then $\text{\small$\mathcal{D}_{\nu}$}(\omega)=0$;

\smallskip
  \item\label{for:23} if $\pi_\nu$ is a discrete series and $s\geq2$ and $|\nu|\geq 3$, then the equation $U\theta=\omega$ has a solution $\theta\in \mathcal{H}^{\min\{\frac{1}{2}|\nu|-\frac{3}{2},s-\frac{3}{2}\}}_\nu$ with estimates
      \begin{align*}
\norm{\theta}_t\leq C_t \norm{\omega}_{t+\frac{3}{2}}
\end{align*}
for any $0\leq t\leq \min\{\text{\small$\frac{1}{2}$}|\nu|-\text{\small$\frac{3}{2}$},s-\text{\small$\frac{3}{2}$}\}$;

\smallskip
\item\label{for:186} if the equation $U\theta=\omega$ has a solution $\theta\in \mathcal{H}^t_\nu$, then
\begin{align*}
 \norm{\theta}_t\leq C_{r_0,t}\norm{\omega}_{t+\frac{3}{2}}
\end{align*}
for any $0\leq t\leq s-\text{\small$\frac{3}{2}$}$.

\end{enumerate}

\end{theorem}
\emph{Note}. \eqref{for:8} and \eqref{for:13} show that $D_\nu$ is a splitting for the $U$-coboundary in $\pi_\nu$.
\begin{proof} \eqref{for:29} is from Theorem $1.2$ of \cite{Forni}; \eqref{for:23} follows from Theorem $1.1$, Theorem $1.2$ of \cite{Forni};
\eqref{for:186} is from Theorem $1.1$, Theorem $1.2$ and Theorem $1.3$ of \cite{Forni}.

\smallskip
\eqref{for:8}: The space $(\mathcal{H}_\nu)_U^{-\infty}$ of $U$-invariant distributions is described in Theorem $1.1$ of \cite{Forni} as follows: it is finite-dimensional,
spanned by distributions $D_{\nu,1},\cdots, D_{\nu,m}$, $m\leq2$ with estimates
\begin{align}\label{for:203}
  |D_{\nu,j}(\omega)|\leq C_{r_0,\epsilon}\norm{\omega}_{\frac{1}{2}(|\Re(\nu)|+1)+\epsilon},\qquad   1\leq j\leq m.
\end{align}
Fix $\xi_{\nu,1},\cdots, \xi_{\nu,m}\in \mathcal{H}_\nu$ such that
\begin{align}\label{for:204}
 D_{\nu,j}(\xi_{\nu,i})=\delta_{i,j}\quad\text{and}\quad \norm{\xi_{\nu,j}}_{t}\leq 2(|\nu|+1)^t
\end{align}
for all $j\leq m$ and any $t\geq0$. Define
\begin{align*}
 \text{\small$\mathcal{D}_{\nu}$}(\omega)=-\sum_{j=1}^m D_{\nu,j}(\omega)\xi_{\nu,i},\qquad \omega\in \mathcal{H}^{s_\nu}_\nu.
\end{align*}
From the construction of $\text{\small$\mathcal{D}_{\nu}$}$, we see that

It follows from \eqref{for:203} and \eqref{for:204} that
\begin{align}\label{for:205}
\norm{\text{\small$\mathcal{D}_{\nu}$}(\omega)}_t&\leq C_{r_0}(|\nu|+1)^t\norm{\omega}_{\frac{1}{2}(|\Re(\nu)|+1)+\frac{1}{4}}\notag\\
&=C_{r_0}\big\|(|\nu|+1)^t\omega\big\|_{\frac{1}{2}(|\Re(\nu)|+1)+\frac{1}{4}}\notag\\
&\leq C_{r_0,t}\norm{\omega}_{\frac{1}{2}(|\Re(\nu)|+1)+\frac{1}{4}+t}.
\end{align}
for any  $0\leq t\leq s-(\frac{1}{2}(|\Re(\nu)|+1)+\frac{1}{4})$. This implies \eqref{for:8}.

\smallskip

\eqref{for:13}: From the construction of $\text{\small$\mathcal{D}_{\nu}$}$ in \eqref{for:8}, we see that
\begin{align*}
 D\big(\omega+\text{\small$\mathcal{D}_{\nu}$}(\omega)\big)= D(\omega)+D\big(\text{\small$\mathcal{D}_{\nu}$}(\omega)\big)=0
\end{align*}
for any $\omega\in \mathcal{H}^{s_\nu}_\nu$ and any $D\in (\mathcal{H}_\nu)_U^{-\infty}$.

It follows from \eqref{for:29} that the equation $U\theta=\omega+\text{\small$\mathcal{D}_{\nu}$}(\omega)$
      has a solution $\theta\in \mathcal{H}_\nu$ with estimates
      \begin{align*}
  \norm{\theta}_t\leq C_{t,r_0}\norm{\omega+\text{\small$\mathcal{D}_{\nu}$}(\omega)}_{t+\frac{5}{4}}\overset{\text{(a)}}{\leq}
  C_{t,r_0,1}\norm{\omega}_{t+\frac{1}{2}(|\Re(\nu)|+1)+\frac{3}{2}}
 \end{align*}
 for any  $0\leq t\leq s-(\frac{1}{2}(|\Re(\nu)|+1)+\frac{3}{2})=s-s_\nu$. Here in $(a)$ we use \eqref{for:205}. Thus we get the result.
\end{proof}

\begin{remark}\label{re:13} It is well-known that principal series and discrete series are tempered. Tempered
representations are those outside a fixed neighborhood of the trivial representation
in the Fell topology. Then the spectral gap condition only aims at complementary series.

Theorem \ref{th:3} is still valid to irreducible unitary representations of Lie groups whose
Lie algebra is $\mathfrak{sl}(2,\RR)$. All of these are unitarily equivalent to
irreducible representations of $SL(2,\RR)$ itself \cite{tan}.
\end{remark}

\eqref{for:29} and \eqref{for:23} of the above theorem show that unlike the cases of principal/complementary series, for the discrete series (even when $\omega$ is $C^\infty$) the existence of a low regularity solution of the coboundary $U\theta=\omega$ can not guarantee the existence of a high regularity solution. To overcome this difficulty we use the ``normalizer trick", which shows that we can expect high regularity along $X$ and $U$ directions. This technique is similar to the one used in \cite{ramirez2009cocycles},
\cite{tanis2018Cohomology}, \cite{tanis2017cohomological} and \cite{W1} to study the coboundary equation.

\begin{lemma}\label{le:5}(normalizer trick) Suppose $\pi_\nu$ is a discrete series, $|\nu|\geq 5$. Also suppose $m\geq0$ and $s\geq \frac{5}{2}$. If $X^j\omega$ and $U^j\omega$ are in $\mathcal{H}_\nu^s$ for any $0\leq j\leq m$, then the equation $U\theta=\omega$ has a solution $\theta\in \mathcal{H}_\nu$ with estimates: for any $0\leq j\leq m$
      \begin{align}
\norm{Y^j\theta}_{t}\leq C_{j,t} \max_{0\leq i\leq j}\{\norm{Y^i\omega}_{t+\frac{3}{2}}\}
\end{align}
if $0\leq t\leq \min\{\text{\small$\frac{1}{2}$}|\nu|-\text{\small$\frac{3}{2}$},s-\text{\small$\frac{3}{2}$}\}$, where $Y$ stands for $X$ or $U$.
\end{lemma}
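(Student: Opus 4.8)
The plan is to derive the lemma from Theorem~\ref{th:3}(\ref{for:23}) by commuting $U$ through the powers of $Y$ and then running an induction on $j$ that loses only one derivative at each step. Set $T=\min\{\frac{1}{2}|\nu|-\frac{3}{2},\ s-\frac{3}{2}\}$, which is $\geq0$ since $|\nu|\geq3$ and $s\geq2$. For $j=0$ the assertion is exactly Theorem~\ref{th:3}(\ref{for:23}): it produces a solution $g\in\mathcal{H}^{T}_\nu\subseteq\mathcal{H}_\nu$ of $Ug=f$ with $\norm{g}_t\leq C_t\norm{f}_{t+\frac{3}{2}}$ for $t\leq T$, and I fix this $g$ for the rest of the argument.

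The algebraic input is the relation $[X,U]=2U$ in $\mathfrak{sl}(2,\RR)$ (see \eqref{for:4}): on smooth vectors, hence on $\mathcal{E}(\mathcal{H}_\nu)$ by duality, it gives $d\pi(U)d\pi(X)^{j}=(d\pi(X)-2)^{j}d\pi(U)$, while trivially $d\pi(U)d\pi(U)^{j}=d\pi(U)^{j}d\pi(U)$. Writing $Z=U$ if $Y=U$ and $Z=X-2$ if $Y=X$, and setting $f_j:=Z^{j}f$, it follows that the distribution $Y^{j}g$ solves the twisted equation $U(Y^{j}g)=Z^{j}(Ug)=f_j$. Expanding $(X-2)^{j}$ by the binomial theorem and using $Y^{i}f\in\mathcal{H}^{s}_\nu$ for $0\leq i\leq j\leq m$, we get $f_j\in\mathcal{H}^{s}_\nu$ with $\norm{f_j}_{t'}\leq C_j\max_{0\leq i\leq j}\norm{Y^{i}f}_{t'}$ for all $t'\leq s$. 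Applying Theorem~\ref{th:3}(\ref{for:23}) to $f_j$ now yields a solution $w_j\in\mathcal{H}^{T}_\nu$ of $Uw_j=f_j$ with $\norm{w_j}_t\leq C_t\norm{f_j}_{t+\frac{3}{2}}\leq C_{j,t}\max_{0\leq i\leq j}\norm{Y^{i}f}_{t+\frac{3}{2}}$ for $t\leq T$.

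It then remains to show $Y^{j}g=w_j$, which I do by induction on $j$ (the case $j=0$ is the choice $g=w_0$). Assuming $Y^{j-1}g=w_{j-1}\in\mathcal{H}^{T}_\nu$, we get $Y^{j}g=d\pi(Y)w_{j-1}\in\mathcal{H}^{T-1}_\nu$; since $Y^{j}g$ and $w_j$ both solve $Uw=f_j$, the difference $Y^{j}g-w_j$ is a $U$-invariant distribution in $\mathcal{H}^{T-1}_\nu$, hence of Sobolev order $\leq1$ because $T\geq0$. But a discrete series with $|\nu|\geq3$ carries no nonzero $U$-invariant distribution of order $\leq1$: by Howe--Moore the noncompact horocycle subgroup fixes no nonzero vector of $\mathcal{H}_\nu$, while the Flaminio--Forni classification \cite{Forni} places the unique (up to scalars) invariant distribution at a Sobolev order that grows with $|\nu|$ and exceeds $1$ once $|\nu|\geq3$ --- the same mechanism that makes Theorem~\ref{th:3}(\ref{for:23}) hold with no a priori vanishing hypothesis on $f$. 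Therefore $Y^{j}g=w_j\in\mathcal{H}^{T}_\nu$, and the previous paragraph gives $\norm{Y^{j}g}_t=\norm{w_j}_t\leq C_{j,t}\max_{0\leq i\leq j}\norm{Y^{i}f}_{t+\frac{3}{2}}$ for $t\leq T$, closing the induction.

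\emph{The main obstacle.} Everything outside the induction step is routine bookkeeping with commutators and binomial coefficients; the delicate point is the induction itself. A direct estimate gives only $Y^{j}g\in\mathcal{H}^{T-j}_\nu$, losing $j$ derivatives and so useless for large $j$. Peeling one derivative off at a time and re-solving cures this, but only at the price of invoking uniqueness of the coboundary solution in low Sobolev classes, i.e.\ the absence of $U$-invariant distributions of order $\leq1$ in discrete series with $|\nu|\geq3$. I expect that identifying this fact cleanly in \cite{Forni}, with the correct accounting of Sobolev orders, will be the only step requiring genuine care.
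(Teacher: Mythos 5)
Your proof is correct, but it takes a genuinely different route from the paper's. Both arguments induct on $j$ and both reduce to solving a commuted equation $Uw=f_j$ (you use the clean commutation $UX^j=(X-2)^jU$, $UU^j=U^jU$; the paper uses $Y^kU=UY^k+p_{k-1}(Y)U$, which leads to a slightly different right-hand side and the extra correction $p'''_j(Y)g$). The fork is in how the auxiliary solution is identified with $Y^jg$. The paper derives a chain of pairing identities of the form $\langle Y^jg, YUh\rangle = \langle p'''_j(Y)g - g_{j+1}, Uh\rangle$ for $h\in\mathcal H^\infty_\nu$ and then invokes Lemma~\ref{le:10} (imported from \cite{W1}), a purpose-built rigidity statement that converts such a pairing relation into $v_1=-Yv_2$ under a spectral-gap hypothesis. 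You instead argue uniqueness directly: the difference $Y^jg-w_j$ is a $U$-invariant element of $\mathcal H^{T-1}_\nu$ with $T-1\geq -1$, and the Flaminio--Forni classification places the unique (up to scalar) $U$-invariant distribution of a discrete series with $|\nu|\geq3$ at Sobolev order $\frac{1+|\nu|}{2}\geq 2$, hence outside $\mathcal H^{-1}_\nu$. Your route is more transparent — ``there is nothing else $Y^jg$ could be'' — but it does require pinning down the exact FF Sobolev order, a fact not quoted in the paper; you correctly flag this as the one step demanding care, and the threshold is tight: at $|\nu|=3$ the order is exactly $2$, which just clears the needed bound. The paper's route avoids citing the FF order explicitly by packaging the required regularity/rigidity inside Lemma~\ref{le:10}, at the price of a more opaque duality computation.
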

\begin{proof}
Instead of proving the lemma first, we will prove the following statement: $(*)$ for any $0\leq j\leq m$, there is a polynomial $p_{j}$ of degree $j$ such that  $Y^j\theta\in \mathcal{H}_\nu$ and satisfies the equation
\begin{align}\label{for:262}
 U(Y^j\theta)=p_j(Y)\omega.
\end{align}
We prove by induction. It follows from \eqref{for:23} of Theorem \ref{th:3} that the statement $(*)$ holds for $j=0$. Suppose it holds for $j\leq k$, $k\leq m-1$. Then we have
\begin{align}\label{for:93}
 U(Y^{k}\theta)=p_k(Y)\omega.
\end{align}
By assumption, $p_k(Y)\omega\in \mathcal{H}^{s}_\nu$.  Applying
\eqref{for:23} of Theorem \ref{th:3} to \eqref{for:93}, we see that $Y^k\theta\in \mathcal{H}^{\min\{\frac{1}{2}|\nu|-\frac{3}{2},s-\frac{3}{2}\}}_\nu$. We note that $\min\{\frac{1}{2}|\nu|-\frac{3}{2},s-\frac{3}{2}\}\geq1$ by assumption, which means $Y^{k+1}\theta\in \mathcal{H}_\nu$.

We note that
\begin{align}\label{for:261}
 [Y,U]=aU,\qquad a=2\text{ or }0.
\end{align}
Then inductively we can show that for any $k\geq 1$
\begin{align}\label{for:5}
Y^kU=UY^k+q_{k-1}(Y)U
\end{align}
where $q_{k-1}$ is a polynomial of degree $k-1$. Set $q_{-1}=0$.

It follows from \eqref{for:93}  that
\begin{gather*}
 YU(Y^k\theta)=Yp_k(Y)\omega\\
 \overset{\text{(1)}}{\Rightarrow} (UY+aU)(Y^k\theta)=Yp_k(Y)\omega\\
 \overset{\text{(2)}}{\Rightarrow} U(Y^{k+1}\theta)=Yp_k(Y)\omega-a\big(Y^kU-q_{k-1}(Y)U\big)\theta\\
 \overset{\text{(3)}}{\Rightarrow} U(Y^{k+1}\theta)=Yp_k(Y)\omega-a\big(Y^k-q_{k-1}(Y)\big)\omega.
\end{gather*}
Here in $(1)$ we use \eqref{for:261}; in $(2)$ we use \eqref{for:5} and in $(3)$ we recall $U\theta=\omega$. Let
\begin{align*}
  p_{k+1}(x)=xp_k(x)-a\big(x^k-q_{k-1}(x)\big).
\end{align*}
It is clear that $p_{k+1}$ is a polynomial of degree $k+1$ and satisfies the equation
\begin{align*}
 U(Y^{k+1}\theta)=p_{k+1}(Y)\omega.
\end{align*}
Then we proved the case of $k+1$ and thus finish the proof.

Finally, using the statement $(*)$, from equation \eqref{for:262} it follows from \eqref{for:23} of Theorem \ref{th:3} that
\begin{align*}
\norm{Y^j\theta}_t&\leq C_t\norm{p_{j}(Y)\omega}_{t+\frac{3}{2}}\leq C_{j,t}\max_{0\leq i\leq j}\{\norm{Y^i\omega}_{t+\frac{3}{2}}\},
\end{align*}
if $t\leq \min\{\text{\small$\frac{1}{2}$}|\nu|-\text{\small$\frac{3}{2}$},s-\text{\small$\frac{3}{2}$}\}$. Then we finish the proof.

\end{proof}

\subsection{Constructions in unitary representation of $SL(2,\RR)$ with a spectral gap}\label{sec:13}
Suppose  $(\pi, \mathcal{H})$ is a  unitary representation of $SL(2,\RR)$ with a spectral gap $r_0$. By general arguments in Section \ref{sec:1} we have a direct decomposition of $\mathcal{H}$: $\mathcal{H}=\int_{\oplus}\mathcal{H}_rd\mu(r)$, where $\mu((0,r_0])=0$; and $\omega=\int_{\oplus}\omega_rd\mu(r)$ for any $\omega\in \mathcal{H}$. For any $\iota\in\NN$ define
\begin{align}\label{de:3}
 \mathcal{D}^\iota(\omega)=\int_{\oplus}h_rd\mu(r)
\end{align}
where
\begin{align*}
  h_r=\left\{\begin{aligned}&0,&\quad &\text{if }\nu\in \textrm{i}\RR\cup (0,1)\cup\{0,\pm1,\cdots,\pm(\iota-1)\};\\
 &\omega_r, &\quad &\text{if }\nu\in\ZZ,\text{ and }|\nu|\geq \iota.
\end{aligned}
 \right.
\end{align*}
Then $\mathcal{D}^\iota:\mathcal{H}\to \mathcal{H}$ is a linear operator.

 We define another linear operator $\mathcal{E}_\iota: \mathcal{H}^{s_\iota}\to \mathcal{H}$ as follows: if $\omega\in \mathcal{H}^{s_\iota}$, then
 \begin{align}\label{de:4}
  \mathcal{E}_\iota(\omega)=\int_{\oplus}g_rd\mu(r)
 \end{align}
 where
\begin{align*}
  g_r=\left\{\begin{aligned}&\text{\small$\mathcal{D}_{\nu}$}(\omega_r),&\quad &\text{if }\nu\in \textrm{i}\RR\cup (0,1)\cup\{0,\pm1,\cdots,\pm(\iota-1)\};\\
 &0, &\quad &\text{if }\nu\in\ZZ,\text{ and }|\nu|\geq \iota.
\end{aligned}
 \right.
\end{align*}
We also write $(\mathcal{E}_{\iota})_U$ or $(\mathcal{D}^{\iota})_U$ to emphasize the dependence on $U$.

\begin{lemma}\label{le:8} Suppose  $(\pi, \mathcal{H})$ is a  unitary representation of $SL(2,\RR)$ with a spectral gap $r_0$. Suppose $\omega\in \mathcal{H}^s$, $s\geq0$ then:
\begin{enumerate}
\item\label{for:77} for any $\iota\in\NN$ and any $0\leq t\leq s$
\begin{align*}
\norm{\mathcal{D}^\iota(\omega)}_t\leq \norm{\omega}_{t};
\end{align*}

  \item\label{for:54} if $\iota\geq 3$ and $s\geq\frac{\iota}{2}+2$, then
  \begin{align*}
   \norm{\mathcal{E}_\iota(\omega)}_t\leq C_{t,r_0}\norm{\omega}_{t+2+\frac{\iota}{2}}
  \end{align*}
if $0\leq t\leq s-2-\text{\small$\frac{\iota}{2}$}$;

\smallskip
  \item\label{for:16} if $\iota\geq 3$ and $s\geq\frac{\iota}{2}+2$, and if $\mathcal{D}^\iota(\omega)=0$, the equation $U\theta=\omega+\mathcal{E}_\iota(\omega)$
      has a solution $\theta\in \mathcal{H}^{s-2-\frac{\iota}{2}}$ with estimates
      \begin{align*}
      \norm{\theta}_{t}\leq C_{t} \norm{\omega}_{t+2+\frac{\iota}{2}}
      \end{align*}
      if $0\leq t\leq s-2-\text{\small$\frac{\iota}{2}$}$;

\smallskip
  \item\label{for:142} if the equation $U\theta=\omega$ has a solution $\theta\in \mathcal{H}^{\frac{\iota}{2}+2}$ then $\mathcal{E}_\iota(\omega)=0$;
  \smallskip
  \item \label{for:3}if the equation $U\theta=\omega$ has a solution $\theta\in \mathcal{H}^r$, $s\geq r+\text{\small$\frac{3}{2}$}$ then for any  $0\leq t\leq r$
  \begin{align*}
    \norm{\theta}_t\leq C_{r_0,t}\norm{\omega}_{t+\frac{3}{2}};
  \end{align*}

\item \label{for:19} suppose $\iota\geq5$, $s\geq \frac{5}{2}$ and $m\geq0$. If $\mathcal{D}^\iota(\omega)=\omega$ and if $X^j\omega\in \mathcal{H}^s$ and $U^j\omega\in \mathcal{H}^s$ for any $0\leq j\leq m$, then the equation $U\theta=\omega$ has a solution $\theta\in \mathcal{H}$ satisfying $\mathcal{D}^\iota(\theta)=\theta$ with estimates: for any $0\leq j\leq m$
      \begin{align*}
\norm{Y^j\theta}_{t}\leq C_{j,t} \max_{0\leq i\leq j}\{\norm{Y^i\omega}_{t+\frac{3}{2}}\},
\end{align*}
if $0\leq t\leq \min\{\text{\small$\frac{\iota}{2}$}-\text{\small$\frac{3}{2}$},s-\text{\small$\frac{3}{2}$}\}$, where $Y$ stands for $X$ or $U$.
\end{enumerate}
\end{lemma}
\begin{proof} \eqref{for:77}--\eqref{for:3} follow from Theorem \ref{th:3} and arguments in Section \ref{sec:9}. \eqref{for:19} is from  Lemma \ref{le:5}  and arguments in Section \ref{sec:9}.
\end{proof}

\subsection{Constructions in unitary representation of $\GG$}\label{sec:16}
In this section we use $(\pi, \mathcal{H})$ to denote a unitary representation of $\GG$ whose restriction to each simple factor of $\GG$ has a spectral gap.

Fix $\phi\in \Phi$ and $u\in \mathfrak{u}_\phi\cap\mathfrak{g}^1$. By a result of Shalom (see \cite[Theorem $C$]{Shalom}), $\pi|_{G_u}$ has a spectral gap $r_0$. Then for any $\omega\in \mathcal{H}$, $(\mathcal{D}^\iota)_u(\omega)$ and $(\mathcal{E}_\iota)_u(\omega)$ are well defined. The following is a technical result called the ``centralizer trick".

\begin{lemma}(centralizer trick)\label{le:10}
Suppose $H$ is a connected subgroup of $C(G_u)$.
 If $\theta\in \mathcal{H}_{G_u}^{s_1}$ and $\omega\in \mathcal{H}_{\{G_u,H\}}^s$ with $0\leq s_1\leq s-\frac{3}{2}$,
then:
\begin{enumerate}

\item\label{for:141} for any $v\in\text{Lie}(C(G_u))$ and any $\iota\in\NN$, if $\mathcal{E}_\iota(\theta)\in\mathcal{H}$, then
\begin{align*}
   \mathcal{E}_\iota(v^j\theta)=v^j\mathcal{E}_\iota(\theta),\quad \mathcal{D}^\iota(v^j\theta)=v^j\mathcal{D}^\iota(\theta)\quad\text{ as distributions }
 \end{align*}
for any $j\geq0$;

  \item\label{for:47} if $\theta$ and $\omega$ satisfy
the equation $u\theta=\omega$ and $s\geq \frac{5}{2}$, then $\theta\in \mathcal{H}_{\{H\}}^{s-\frac{5}{2}}$ with the estimate
  \begin{align*}
 \norm{\theta}_{H,t}&\leq C_{t,r_0}\norm{\omega}_{\{G_u,H\},t+\frac{5}{2}}
 \end{align*}
 for any $0\leq t\leq s-\frac{5}{2}$;

 \smallskip
  \item\label{for:185} if $\theta$ and $\omega$ satisfy
the equation $u\theta=\omega$ and $s_1\geq1$,  then $\theta\in \mathcal{H}_{\{G_u,H\}}^{s_1-1}$ with the estimate
   \begin{align*}
 \norm{\theta}_{\{G_u,H\}, t}&\leq C_{t,r_0}\norm{\omega}_{\{G_u,H\},t+\frac{5}{2}}
 \end{align*}
 for any $0\leq t\leq s_1-1$.
\end{enumerate}

\end{lemma}
\begin{proof}
 For any vector $v\in\text{Lie}(C(G_u))$, denote by $\tilde{v}$ the one-parameter subgroup  with its algebra generated by $v$. Let $S=\{G_u,\,\tilde{v}\}$. Then $S$ is isomorphic to $(G_u\times \tilde{v})/K$, where $K=\{(k,k^{-1} ): \, k\in G_u\cap \tilde{v}\}$. Then $\pi|_{S}$ can be view as a representation of $G_u\times \tilde{v}$
which is trivial on $K$.

By Section \ref{sec:9}, we have a decomposition
\begin{align*}
  \pi|_{S}=\int_{Z} \sigma_{z} d\mu(z)
\end{align*}
for some measure $(Z,\mu)$, where $\sigma_{z}$ is an irreducible representation of $G_u\times \tilde{v}$ such that $\sigma_{z}|_K$ is trivial on $K$ and $\sigma_{z}|_{G_u}$ has a spectral gap of $r_0$.
More precisely,  $\sigma_{z}=(\rho_{z}\otimes \chi_{z},\,\mathcal{H}_{z})$, where $(\rho_{z},\,\mathcal{H}_{z})$ is an irreducible representation of $G_u$ with a spectral gap $r_0$ and $\chi_{z}$ is a unitary character of $\tilde{v}$.

We then decompose $\theta$ and $\omega$ as
\begin{align*}
 \theta=\int_{Z} \theta_{z}d\mu(z)\quad\text{and}\quad \omega=\int_{Z} \omega_{z}d\mu(z)
\end{align*}
where $\theta_{z}\in \mathcal{H}^{s_1}_{z}$, $\omega_{z}\in \mathcal{H}^{s}_{z}$ for almost all $z$  (with respect to $\mu$).

We note that $v$ acts as a constant $\lambda_{z}\in \CC$ on each $\mathcal{H}_{z}$.

\smallskip
\eqref{for:141}: For any vector $v\in\text{Lie}(H)$, from the discussion at the beginning of the proof, we see that
\begin{align*}
 \big(\mathcal{E}_\iota(v^j\theta)\big)_z&=\lambda_{z}^j\big(\mathcal{E}_\iota(\theta)\big)_z=\big(v^j\mathcal{E}_\iota(\theta)\big)_z\quad\text{and}\\
 \big(\mathcal{D}_\iota(v^j\theta)\big)_z&=\lambda_{z}^j\big(\mathcal{D}_\iota(\theta)\big)_z=\big(v^j\mathcal{D}_\iota(\theta)\big)_z
\end{align*}
for almost all $z\in Z$. This implies the result.

\smallskip
\eqref{for:47}: For any vector $v\in\text{Lie}(H)$, from the discussion at the beginning of the proof, the equation $u\theta=\omega$ can be decomposed as
\begin{align*}
 u\theta_{z}=\omega_{z},\qquad \text{a.e. } z\in Z.
\end{align*}
Let $\Lambda=(I-v^2)^{\frac{1}{2}}$. We note that $\Lambda$ acts as a constant $\tau_{z}\in \RR^+$ on each $\mathcal{H}_{z}$. Hence we have
\begin{align*}
 u(\Lambda^{t}\theta_{z})=\Lambda^{t}\omega_{z},\qquad \text{a.e. } z\in Z,\,\,\forall\,t\geq0.
\end{align*}
By assumption $\theta_{z}\in \mathcal{H}_{z}$ for almost all $z$. Thus $\Lambda^{t}\theta_{z}\in \mathcal{H}_{z}$ for almost all $z$ and any $t\geq0$.  It follows from \eqref{for:186} of Theorem \ref{th:3}  that
\begin{align*}
 \norm{\Lambda^{t}\theta_{z}}\leq C_{r_0}\norm{\Lambda^{t}\omega_{z}}_{G_u,\frac{3}{2}}\leq C_{t,r_0}\norm{\omega_z}_{\{G_u,H\},t+\frac{3}{2}},\qquad \text{a.e. } z\in Z,
\end{align*}
for any $0\leq t\leq s-\frac{3}{2}$, which gives
\begin{align}\label{for:31}
 \norm{\Lambda^{t}\theta}\leq C_{t,r_0}\norm{\omega}_{\{G_u,H\},t+\frac{3}{2}}
\end{align}
for any $0\leq t\leq s-\frac{3}{2}$. This shows that $\theta\in \mathcal{H}_{\tilde{v}}^{s-\frac{3}{2}}$.  Then \eqref{for:47} follows from \eqref{for:31} and Theorem \ref{th:4}.

\smallskip

\eqref{for:185}:
 It follows from \eqref{for:186} of Theorem \ref{th:3} that
\begin{align*}
 \norm{\theta}_{G_u,t}\leq C_{t,r_0}\norm{\omega}_{G_u,t+\frac{3}{2}}
\end{align*}
for any $0\leq t\leq s_1$.  Then \eqref{for:185} follows from the above estimate, \eqref{for:31} and Theorem \ref{th:4}.

\end{proof}

In the following lemma, by the ``centralizer trick" we extend the smoothness of $\mathcal{D}^\iota(\omega)$, $\mathcal{E}_\iota(\omega)$, as well as the solution $\theta$ to all directions commuting with the $G_u$. Moreover, we  show that they are partially tame (with respect to $\omega$) along all directions commuting with $G_u$.

\begin{lemma}\label{le:1}
Suppose $\omega\in \mathcal{H}_{S_{0}}^s$, $s\geq 1$,  then:
\begin{enumerate}

\item\label{for:50} for any $\iota\in\NN$ and $0\leq t\leq s-1$
\begin{align*}
 \norm{\mathcal{D}^\iota(\omega)}_{S_{0},t}\leq \norm{\omega}_{S_0,t+1};
\end{align*}

  \item\label{for:55} if $\iota\geq 3$ and $s\geq 3+\frac{\iota}{2}$, then for any subgroup $L$ of $S_0$ containing $G_u$, we have
  \begin{align*}
   \norm{\mathcal{E}_\iota(\omega)}_{L,t}\leq C_{t}\norm{\omega}_{L,t+3+\frac{\iota}{2}}
  \end{align*}
  if $0\leq t\leq s-3-\text{\small$\frac{\iota}{2}$}$;

  \smallskip
  \item\label{for:60} if $\mathcal{D}^\iota(\omega)=0$, $\iota\geq 3$ and $s\geq \frac{11}{2}+\frac{\iota}{2}$, the equation $u\theta=\omega+\mathcal{E}_\iota(\omega)$ has a solution $\theta\in \mathcal{H}_{S_0}^{s-\frac{11}{2}-\frac{\iota}{2}}$ satisfying $\mathcal{D}^\iota(\theta)=0$ with estimates
  \begin{align*}
   \norm{\theta}_{S_0,t}\leq C_{t} \norm{\omega}_{S_0,t+\frac{11}{2}+\frac{\iota}{2}}
  \end{align*}
if $0\leq t\leq s-\frac{11}{2}-\text{\small$\frac{\iota}{2}$}$;

\smallskip
 \item \label{for:11}if $\mathcal{D}^\iota(\omega)=\omega$ and $\iota\geq 5$, $s\geq \frac{5}{2}$, then equation $u\theta=\omega$ has a solution $\theta\in \mathcal{H}$ satisfying $\mathcal{D}^\iota(\theta)=\theta$ with estimates:
    \begin{align*}
\norm{Y^j\theta}_{G_u,t}\leq C_{j,t} \max_{0\leq i\leq j}\{\norm{Y^i\omega}_{G_u,t+\frac{3}{2}}\}
\end{align*}
for any $j\leq s-\frac{5}{2}$,  if $0\leq t\leq \min\{\frac{\iota}{2}-\text{\small$\frac{3}{2}$},s-\text{\small$\frac{3}{2}$}-j\}$, where $Y$ stands for $X_u$, $u$ or $Y\in \mathcal{C}(\mathfrak{g}_u)$;

\smallskip
\item\label{for:140} for any subgroup $H$ of $C(G_u)$, if $\omega,\,\theta\in \mathcal{H}_{\{H,G_u\}}^s$, $s\geq \text{\small$\frac{5}{2}$}$ satisfy the equation $u\theta=\omega$, then
\begin{align*}
\norm{\theta}_{\{H,G_u\},t}\leq C_{t} \norm{\omega}_{\{H,G_u\},t+\frac{5}{2}}
\end{align*}
if $0\leq t\leq s-\text{\small$\frac{5}{2}$}$.
\end{enumerate}
\end{lemma}
\begin{proof}
 \eqref{for:50}:  For any $v\in \mathcal{C}(\mathfrak{g}_u)$ we have
 \begin{align}
  \norm{v^j\mathcal{D}^\iota(\omega)}&\overset{\text{(a)}}{=}\norm{\mathcal{D}^\iota(v^j\omega)}\overset{\text{(b)}}{\leq} \norm{v^j\omega}\leq\norm{\omega}_{S_0,j}\label{for:2001}
 \end{align}
if $j\leq s$. Here in $(a)$ we use \eqref{for:141} of Lemma \ref{le:10}; $(b)$ we use \eqref{for:77} of Lemma \ref{le:8}.

 Recall that $S_{0}=\{G_u,\,C(G_u)\}$. Then \eqref{for:50} is a direct consequence of \eqref{for:2001}, \eqref{for:77} of Lemma \ref{le:8} and Theorem \ref{th:4}.

 \smallskip

 \eqref{for:55}: \eqref{for:54} of Lemma \ref{le:8} shows that
 \begin{align}\label{for:263}
   \norm{\mathcal{E}_\iota(\omega)}_t\leq C_{t,r_0}\norm{\omega}_{t+2+\frac{\iota}{2}}
  \end{align}
if $0\leq t\leq s-2-\text{\small$\frac{\iota}{2}$}$.
 For any $w\in \text{Lie}(L)\cap\mathcal{C}(\mathfrak{g}_u)$ we have
\begin{align}
 \norm{w^i\mathcal{E}_\iota(\omega)}&\overset{\text{(a)}}{=}\norm{\mathcal{E}_\iota(w^i\omega)}\overset{\text{(b)}}{\leq} \norm{w^i\omega}_{G_u,2+\frac{\iota}{2}}\leq\norm{\omega}_{L,2+i+\frac{\iota}{2}}, \label{for:2002}
\end{align}
if $i\leq s-\frac{\iota}{2}-2$. Here in $(a)$ we
use \eqref{for:141} of Lemma \ref{le:10} as \eqref{for:263} shows that $\mathcal{E}_\iota(\omega)\in \mathcal{H}$; in $(b)$ we use \eqref{for:54} of Lemma \ref{le:8}.

We note that $L=\{L\cap C(G_u),  G_u \}$. Then \eqref{for:55} follows from \eqref{for:2002}, \eqref{for:263} and Theorem \ref{th:4}.

\smallskip
\eqref{for:60}: By \eqref{for:16} of Lemma \ref{le:8}, the equation
\begin{align*}
 u\theta=\omega+\mathcal{E}_\iota(\omega)
\end{align*}
has a solution $\theta\in \mathcal{H}_{G_u}^{s-2-\frac{\iota}{2}}$.
  From \eqref{for:55}, we see that $\omega+\mathcal{E}_\iota(\omega)\in \mathcal{H}_{S_0}^{s-3-\frac{\iota}{2}}$. Then it follows from \eqref{for:185} of Lemma \ref{le:10}, we see that $\theta\in \mathcal{H}_{S_0}^{s-\frac{11}{2}-\frac{\iota}{2}}$ with estimates
\begin{align*}
\norm{\theta}_{S_0,t}&\overset{\text{(a)}}{\leq} C_{t} \norm{\omega+\mathcal{E}_\iota(\omega)}_{s_0,t+\frac{5}{2}}\leq C_{t} \norm{\omega}_{S_0,t+\frac{5}{2}}+C_{t} \norm{\mathcal{E}_\iota(\omega)}_{S_0,t+\frac{5}{2}}\\
&\overset{\text{(b)}}{\leq} C_{t,1}\norm{\omega}_{S_0,t+\frac{11}{2}+\frac{\iota}{2}}
\end{align*}
if $0\leq t\leq s-\frac{11}{2}-\text{\small$\frac{\iota}{2}$}$. Here in $(a)$ we use \eqref{for:185} of Lemma \ref{le:10}; in $(b)$ we use \eqref{for:55}.

\smallskip
\eqref{for:11}: By \eqref{for:19} of Lemma \ref{le:8} the equation
\begin{align*}
 u\theta=\omega
\end{align*}
has a solution $\theta\in \mathcal{H}$ satisfying $\mathcal{D}^\iota(\theta)=\theta$.
It follows from \eqref{for:47} of Lemma \ref{le:10} that $\theta\in \mathcal{H}_{C(G_u)}^{s-\frac{5}{2}}$. This implies that  $v^j\theta\in \mathcal{H}$ for any $j\leq s-\frac{5}{2}$ and $v\in \mathcal{C}(\mathfrak{g}_u)$.
We also note that
\begin{align*}
 u(v^j\theta)=v^j\omega,\qquad \forall \,v\in \mathcal{C}(\mathfrak{g}_u).
\end{align*}
It follows from  \eqref{for:19} of Lemma \ref{le:8} that
\begin{align*}
  \norm{v^j\theta}_{G_u,t}\leq C_t\norm{v^j\omega}_{G_u,t+\frac{3}{2}},\qquad j\leq s-\text{\tiny$\frac{5}{2}$},\,\,v\in \mathcal{C}(\mathfrak{g}_u)
\end{align*}
if $0\leq t\leq \min\{\frac{\iota}{2}-\text{\small$\frac{3}{2}$},s-\text{\small$\frac{3}{2}$}-j\}$. Then we get the estimate if $Y=v$. The estimates for $Y=X_u$ or $Y=u$ follow from \eqref{for:19} of Lemma \ref{le:8}.

\smallskip

\eqref{for:140}: A direct consequence of \eqref{for:185} of Lemma \ref{le:10}.

\end{proof}

The next result is the extended representation version of \eqref{for:140}. The proof is left for Appendix \ref{sec:31}.
\begin{corollary}\label{cor:6} Let $H$ be a subgroup  of $C(G_u)$.  Suppose $\Omega,\,\Theta\in \mathfrak{g}(\mathcal{H})_{\{H,G_u\}}^s$, $s\geq \text{\small$\frac{5}{2}$}\dim\mathfrak{g}$ satisfy the equation
\begin{align}\label{for:17}
 (u+\text{ad}_u)\Theta=\Omega.
\end{align}
Then for any $t\leq s-\text{\small$\frac{5}{2}$}\dim\mathfrak{g}$, we have
\begin{align*}
\norm{\Omega}_{\{H,G_u\},t}\leq C_{t} \norm{\Theta}_{\{H,G_u\},t+\text{\small$\frac{5}{2}$}\dim\mathfrak{g}}.
\end{align*}

\end{corollary}
 We call the equation $u\theta=\omega$ the \emph{reduced version} of \eqref{for:17} in $\mathcal{H}$.
To solve \eqref{for:17} in $\mathfrak{g}(\mathcal{H})$ we start from
the reduced version in $\mathcal{H}$. Then the results for the extended version are obtained
by applying induction on each Jordan block of $\text{ad}_u$. This is a standard scheme in treating equations in $\mathfrak{g}(\mathcal{H})$.

\section{Almost cocycle equation in extended representation}\label{sec:14}

In this section we obtain a splitting that is used to decompose the almost twisted cocycle into two parts: one part that is close to a twisted cocycle and another part that is small in size.
Results of this section can be viewed as the first step in obtaining
a globally smooth splitting in Section \ref{sec:22}, which is essential to apply the KAM scheme.

\subsection{Notations}\label{sec:24} Throughout this section, $(\pi, \mathcal{H})$ denotes a unitary representation of $\GG$ whose restriction to each simple factor of $\GG$ has a spectral gap.

\begin{enumerate}

  \item\label{for:200}   Fix $\phi\in \Phi$ and $u\in \mathfrak{u}_\phi\cap\mathfrak{g}^1$. We recall that
  \begin{align}\label{for:227}
   S_{0,u}=\{G_u,\,C(G_u)\}\quad\text{and}\quad S_{1,u}=\{G_u',\,C(G_u)\}.
  \end{align}
   For simplicity, we will write $S_{0}$ and $S_{1}$ in place of $S_{0,u}$ and $S_{1,u}$, respectively.

      Suppose
$v\in \mathfrak{g}^1$ is nilpotent. Also suppose $u$ and $v$ is a friendly pair (see \eqref{for:275} of Section \ref{sec:47}). This means that there exists $v'\in \mathcal{C}(\mathfrak{g}_{u})$ such that
 $\{v, v', X_v=[v,v']\}$ is a $\mathfrak{sl}(2,\RR)$ triple. It is clear that $G_v=G_{v,v'}\in C(G_u)$ and $G_u\in C(G_{v})$ (see \eqref{for:278} of Section \ref{sec:47}). Let
\begin{align}\label{for:267}
 L=\{C(G_v),G_v\}\cap S_0.
\end{align}
Then
 \begin{align}\label{for:270}
 \{G_u,\,G_v\}\subseteq L\subseteq S_0=\{C(G_u),G_u\}
\end{align}
\noindent \emph{Note.} The fact that
$u$ and $v$ are contained in a subalgebra of $\text{Lie}(L)$ which is isomorphic to $\mathfrak{sl}(2,\RR)\times \mathfrak{sl}(2,\RR)$ is essential to carry out the ``higher rank trick" (see the proofs of Lemma \ref{le:3} and \ref{le:9}) on the twisted cocycles of $u$ and $v$.

  \smallskip
  \item\label{for:199}  Set $\sigma=\text{\small$\frac{3}{2}$}\dim\mathfrak{g}$ (see Corollary \ref{le:6}), $\sigma_0=(8+\sigma)\sigma$ (see Proposition \ref{po:1}) and
  $\sigma_1=2\sigma+3$ (see Corollary \ref{le:6}).
\end{enumerate}

\begin{example}\label{ex:3} For $\GG=SL(n,\RR)$, $n\geq4$, if $\phi=L_1-L_2$, $u=\mathfrak{u}_{1,2}$ and $v=\mathfrak{u}_{3,4}$ (see Section \eqref{for:285} of \ref{sec:47}) then
$\text{Lie}(G_{v,v'})$ is spanned by $\mathfrak{u}_{3,4}$, $\mathfrak{u}_{4,3}$ and $\mathfrak{u}_{3,3}-\mathfrak{u}_{4,4}$; and
$S_0$, $S_1$, $S_v$ and $L$ are the sets of matrices in $SL(n,\RR)$ with the following forms respectively:
\begin{gather*}
  S_0=\begin{pmatrix}M_{2,2} & \vline & 0\\
\hline 0 &\vline &M_{n-2,n-2} \end{pmatrix}, \quad S_1=\begin{pmatrix} a & b & \vline & 0\\
0 & c & \vline & 0\\
 \hline  0& 0 &\vline & M_{n-2,n-2} \end{pmatrix}\\
 S_{0,v}=\begin{pmatrix} M_{2,2} & 0 & \vline & M_{2,n-4}\\
0 & M_{2,2} & \vline & 0\\
 \hline  M_{n-4,2}& 0 &\vline & M_{n-4,n-4} \end{pmatrix},\quad L=\begin{pmatrix} M_{2,2} & 0 & \vline & 0\\
0 & M_{2,2} & \vline & 0\\
 \hline  0& 0 &\vline & M_{n-4,n-4} \end{pmatrix}
\end{gather*}
where $M_{m,k}$ denotes the set of $m\times k$ matrices and $a,b,c\in\RR$. More precisely,
\begin{align*}
 S_0&=\{(g_{i,j})\in SL(n,\RR): g_{1,j}=g_{2,j}=g_{j,1}=g_{j,2}=0,\quad j\geq3\},\\
 S_1&=\{(g_{i,j})\in SL(n,\RR): g_{2,1}=g_{1,j}=g_{2,j}=g_{j,1}=g_{j,2}=0,\quad j\geq3\},\\
 S_{0,v}&=\{(g_{i,j})\in SL(n,\RR): g_{3,j}=g_{4,j}=g_{j,3}=g_{j,4}=0,\quad j\neq 3,4\}.
\end{align*}

\end{example}

\begin{example} For $\GG=SL(n,\RR)\times SL(n,\RR)$, $n\geq4$, if $\phi=L_1-L_2$, $u=\mathfrak{u}_{1,2}$ in the first copy of $SL(n,\RR)$ and $v=\mathfrak{u}_{3,4}$ in the second copy,  then
$\text{Lie}(G_{v,v'})$ is spanned by $\mathfrak{u}_{3,4}$, $\mathfrak{u}_{4,3}$ and $\mathfrak{u}_{3,3}-\mathfrak{u}_{4,4}$ in the second copy; and
$S_0$, $S_1$, $S_{0,v}$ and $L$ are the sets of matrices in $SL(n,\RR)$ with the following forms respectively:
\begin{gather*}
  S_0=\begin{pmatrix}M_{2,2} & \vline & 0\\
\hline 0 &\vline &M_{n-2,n-2} \end{pmatrix}\times SL(n,\RR), \\
S_1=\begin{pmatrix} a & b & \vline & 0\\
0 & c & \vline & 0\\
 \hline  0& 0 &\vline & M_{n-2,n-2} \end{pmatrix}\times SL(n,\RR)
 \end{gather*}
 \begin{gather*}
 S_{0,v}=SL(n,\RR)\times \begin{pmatrix} M_{2,2} & 0 & \vline & M_{2,n-4}\\
0 & M_{2,2} & \vline & 0\\
 \hline  M_{n-4,2}& 0 &\vline & M_{n-4,n-4} \end{pmatrix},\\
 L=\begin{pmatrix}M_{2,2} & \vline & 0\\
\hline 0 &\vline &M_{n-2,n-2} \end{pmatrix}\times\begin{pmatrix} M_{2,2} & 0 & \vline & M_{2,n-4}\\
0 & M_{2,2} & \vline & 0\\
 \hline  M_{n-4,2}& 0 &\vline & M_{n-4,n-4} \end{pmatrix}.
\end{gather*}

\end{example}

\subsection{Main result} The following result gives partially tame estimates (see \eqref{for:272} of Section \ref{sec:47}) for the Sobolev norms of an approximate solution to the twisted almost cocycle
equation of friendly $u$ and $v$ as described in \eqref{for:200} of Section \ref{sec:24}.

Before stating Proposition 7.1, we recall several key definitions given in \eqref{for:227} and \eqref{for:267}.

\begin{proposition}\label{po:1} Suppose $u$ and $v$ is a friendly pair (see \eqref{for:275} of Section \ref{sec:47}).  For any $\Omega,\,\Psi,\,\mathfrak{w}\in \mathfrak{g}(\mathcal{H})_{S_{0,u}}^\infty$ satisfying the equation
\begin{align}\label{for:12}
(u+\textrm{ad}_u)\Omega-(v+\textrm{ad}_{v})\Psi=\mathfrak{w},
\end{align}
there exists $\eta\in \mathfrak{g}(\mathcal{H})_{S_1}^{\infty}$ with estimates
\begin{align}\label{for:73}
 \norm{\eta}_{S_1,t}\leq C_{t}\norm{\Psi}_{S_0,t+\sigma_0}
\end{align}
for any $t\geq0$, such that
\begin{align*}
 \Psi&=(u+\textrm{ad}_u)\eta+\mathcal{R}_1\qquad\text{and}\\
 \Omega&=(v+\textrm{ad}_v)\eta+\mathcal{R}_2
\end{align*}
where $\mathcal{R}_1,\,\mathcal{R}_2\in \mathfrak{g}(\mathcal{H})_{L}^{\infty}$ with estimates
\begin{align}\label{for:281}
\norm{\mathcal{R}_1,\,\mathcal{R}_2}_{L\cap S_1,t}&\leq C_{t,v}\norm{\mathfrak{w}}_{S_0,t+\sigma_0}
\end{align}
for any $t\geq0$ ($\sigma_0$ and $L$ are defined in Section \ref{sec:24}).
\end{proposition}
\emph{Note}. Proposition \ref{po:1} implies that  if $\mathfrak{w}=0$, then $\mathcal{R}_1=\mathcal{R}_2=0$. This implies twisted cocycle rigidity over $u$ and $v$,
which means that under these conditions the twisted cohomological equations
\begin{align*}
 \Psi=(u+\textrm{ad}_u)\eta\quad\text{and}\quad \Omega=(v+\textrm{ad}_v)\eta
\end{align*}
have a common  solution $\eta$ simultaneously without any residual errors.

The assumption that $u$ and $v$ are contained in a subalgebra isomorphic to $\mathfrak{sl}(2,\RR)\times \mathfrak{sl}(2,\RR)$ is a necessary condition (see counterexamples in \cite{tanis2017cohomological} and \cite{W1}).

\begin{remark} We emphasize that $\eta$, $\mathcal{R}_1$ and $\mathcal{R}_2$  may be only $L^2$
vectors, even though they possess partial smoothness.
\end{remark}

\subsubsection{Proof strategy} The proof of Proposition \ref{po:1} relies on the constructions in Section \ref{sec:16} and the  ``higher rank trick". The proof consists of three parts:

1. We consider equation \eqref{for:128}, the reduced version of \eqref{for:12} in $\mathcal{H}$  at first. We split \eqref{for:128} into two parts, one inside $(\mathcal{D}^l)_u(\mathcal{H})$ (see Lemma \ref{le:3}), the other inside $\ker((\mathcal{D}^\iota)_u)$ (see Lemma \ref{le:9}).

2. We solve the almost coboundary equation for $u$ in $(\mathcal{D}^\iota)_u(\mathcal{H})$ and $\ker((\mathcal{D}^\iota)_u)$ respectively. By using the  ``higher rank trick" we show that the almost twisted coboundary equation for $v$ are solved simultaneously; moreover, the errors are partially tame (with respect to $\psi$).

3.  We extend the  results to the extended representation in Section \ref{sec:28}, which leads to the proof of the proposition.

\subsection{Almost cocycle equation in $\mathcal{H}$} In this part, we study the almost cocycle equation
\begin{align}\label{for:128}
 v\omega-u\xi=\psi
\end{align}
where $\omega,\,\xi,\,\psi\in (\mathcal{D}^\iota)_u(\mathcal{H})$ or $\ker((\mathcal{D}^\iota)_u)$. For simplicity, we will write
$\mathcal{D}^{\iota}$ instead of $(\mathcal{D}^{\iota})_u$,
with the understanding that the operator is associated with \(u\).

\begin{lemma}\label{le:3} Suppose $\omega,\,\xi,\,\psi\in \mathcal{H}_{S_0}^{\infty}$ and $\iota\geq5$. If $\mathcal{D}^\iota(\omega)=\omega$, $\mathcal{D}^\iota(\xi)=\xi$, $\mathcal{D}^\iota(\psi)=\psi$ and satisfy equation \ref{for:128}, then there exists $\theta\in \mathcal{H}_{S_1}^{\infty}$ satisfying $\mathcal{D}^\iota(\theta)=\theta$ with estimates
\begin{align}\label{for:206}
 \norm{\theta}_{S_1,t}\leq C_{t}\norm{\omega}_{S_0,t+\text{\tiny$\frac{3}{2}$}}
\end{align}
for any $t\geq0$, such that
\begin{align*}
 \omega=u\theta,\quad\text{and}\quad \xi=v\theta+\mathcal{R}
\end{align*}
where $\mathcal{R}\in \mathcal{H}_{S_1}^{\infty}$ with estimates
\begin{align}\label{for:207}
 \norm{\mathcal{R}}_{S_1,t}\leq C_{t}\norm{\psi}_{S_0,t+\text{\tiny$\frac{3}{2}$}},\quad t\geq \text{\small$0$}.
\end{align}

\end{lemma}
\begin{proof} Since $\mathcal{D}^\iota(\omega)=\omega$, by \eqref{for:11} of Lemma \ref{le:1} we see that the equation
\begin{align}\label{for:127}
 u\theta=\omega
\end{align}
has a solution $\theta\in \mathcal{H}$ satisfying $\mathcal{D}^\iota(\theta)=\theta$ with estimates
      \begin{align*}
\norm{Y^j\theta}\leq C_{j,t} \max_{0\leq i\leq j}\{\norm{Y^i\omega}_{G_u,\frac{3}{2}}\}
\end{align*}
for any $j\geq0$, where $Y$ stands for $X_u,\,u$ or $Y\in \mathcal{C}(\mathfrak{g}_u)$. Then \eqref{for:206} follows from the above estimates and Theorem \ref{th:4}.

Let $\mathcal{R}=\xi-v\theta$. Next, we use the \emph{higher rank trick} to show that $\mathcal{R}$ is comparable to $\psi$. By substituting from \eqref{for:127} the expression for $\omega$ into \eqref{for:128}, we have
\begin{align*}
 vu\theta-u\xi=\psi\overset{\text{(1)}}{\Longrightarrow}uv\theta-u\xi=\psi\overset{\text{(2)}}{\Longrightarrow} u\mathcal{R}=-\psi.
\end{align*}
Here in $(1)$ we use $[v,u]=0$; in $(2)$ we set $\mathcal{R}=\xi-v\theta$.

Since $\mathcal{D}^\iota(\psi)=\psi$,  by applying  \eqref{for:11} of Lemma \ref{le:1} to the equation
\begin{align}\label{for:276}
 u\mathcal{R}=-\psi
\end{align}
we have
\begin{align*}
\norm{Y^j\mathcal{R}}\leq C_{j,t} \max_{0\leq i\leq j}\{\norm{Y^i\psi}_{G_u,\frac{3}{2}}\},
\end{align*}
for any $j\geq0$, where $Y$ stands for $X_u,\,u$ or $Y\in \mathcal{C}(\mathfrak{g}_u)$. Then \eqref{for:207} follows  from the above estimates and Theorem \ref{th:4}.
\end{proof}
\begin{remark}  Since \eqref{for:127} always has a solution $\theta$, the error $\mathcal{R}$ from solving the $v$-almost equation $\xi=v\theta+\mathcal{R}$ is estimated by solving the $u$-coboundary of $-\psi$ (see \eqref{for:276}). As a result,  $\mathcal{R}$ has partially tame estimates (with respect to $\psi$) on $S_1$ (see \eqref{for:207}).
\end{remark}

\begin{lemma}\label{le:9} Suppose $\omega,\,\xi,\,\psi\in \mathcal{H}_{S_0}^{\infty}$ and $\iota\geq3$. If $\mathcal{D}^\iota(\omega)=0$, $\mathcal{D}^\iota(\xi)=0$, $\mathcal{D}^\iota(\psi)=0$ and satisfy
\begin{align}\label{for:2003}
 v\omega-u\xi=\psi,
\end{align}
then there exists $\theta\in \mathcal{H}_{S_0}^{\infty}$ satisfying $\mathcal{D}^\iota(\theta)=0$ with estimates
\begin{align*}
 \norm{\theta}_{S_0,t}\leq C_{t}\norm{\omega}_{S_0,t+6+\text{\small$\frac{l}{2}$}}
\end{align*}
for any $t\geq0$, such that
\begin{align*}
 \omega&=u\theta+\mathcal{R}_1,\quad\text{and}\quad \xi=v\theta+\mathcal{R}_2
\end{align*}
where $\mathcal{R}_1,\,\mathcal{R}_2\in \mathcal{H}_{L}^{\infty}$ with estimates
\begin{align*}
 \norm{\mathcal{R}_1,\,\mathcal{R}_2}_{L,t}\leq C_{t}\norm{\psi}_{L, t+6+\text{\small$\frac{l}{2}$}},\quad t\geq0.
\end{align*}

\end{lemma}
\begin{remark} From the proof we see that $\mathcal{R}_1$ is estimated by solving the $v$-coboundary (see \eqref{for:35}) and
$\mathcal{R}_2$ is estimated by solving the $u$-coboundary (see \eqref{for:277}). \eqref{for:140} of Lemma \ref{le:1} shows that
both $\mathcal{R}_1$ and $\mathcal{R}_2$ are partially tame on
\begin{align*}
L=\{C(G_v),G_v\}\cap \{C(G_u),G_u\}=S_{0,v}\cap S_{0,u}
\end{align*}
 (see \eqref{for:267} of Section \ref{sec:24}).
\end{remark}
\begin{proof} By \eqref{for:60} of Lemma \ref{le:1} we see that the equation
\begin{align}\label{for:34}
  u\theta=\omega+\mathcal{E}_\iota(\omega)
\end{align}
has a solution $\theta\in \mathcal{H}_{S_0}^{\infty}$ satisfying $\mathcal{D}^\iota(\theta)=0$ with estimates
\begin{align*}
 \norm{\theta}_{S_0,t}\leq C_{t} \norm{\omega}_{S_0,t+\frac{11}{2}+\frac{\iota}{2}},\qquad t\geq0.
\end{align*}
Set $\mathcal{R}_1=\mathcal{E}_\iota(\omega)$. The above inequality implies that $\mathcal{R}_1$ is comparable to $\omega$ on $S_0$. Next, we use the \emph{higher rank trick} to show that $\mathcal{R}_1$ is also comparable to $\psi$.

 From \eqref{for:2003} we see that
\begin{align*}
 \mathcal{E}_\iota(v\omega)-\mathcal{E}_\iota(u\xi)=\mathcal{E}_\iota(\psi)\overset{\text{(1)}}{\Longrightarrow}v(\mathcal{E}_\iota(\omega))=\mathcal{E}_\iota(\psi).
\end{align*}
Here in $(1)$ we use \eqref{for:141} of Lemma \ref{le:10} and \eqref{for:142} of Lemma \ref{le:8}.

Next, we use the equation
\begin{align}\label{for:35}
 v\mathcal{R}_1=\mathcal{E}_\iota(\psi)
\end{align}
to estimate $\norm{\mathcal{R}_1}_{L,t}$. Recall \eqref{for:267} and \eqref{for:270}  of Section \ref{sec:24}:
\begin{align*}
 G_v\subseteq L\subseteq \{C(G_v),G_v\}.
\end{align*}
We also note that
\begin{align}\label{for:201}
 \mathcal{R}_1=\mathcal{E}_\iota(\omega),\,\mathcal{E}_\iota(\psi)\overset{\text{(1)}}{\in} \mathcal{H}_{S_0}^{\infty}\overset{\text{(2)}}{\subseteq} \mathcal{H}_{L}^{\infty}.
\end{align}
Here in $(1)$ we use \eqref{for:55} of Lemma \ref{le:1}; in $(2)$ we note that $L\subseteq S_0$. Then it follows from \eqref{for:140} of Lemma \ref{le:1}
that
\begin{align*}
 \norm{\mathcal{R}_1}_{L,t}\leq C_t\norm{\mathcal{E}_\iota(\psi)}_{L,t+\frac{5}{2}}
 \overset{\text{(3)}}{\leq}C_{t} \norm{\psi}_{L,t+\frac{11}{2}+\frac{\iota}{2}}
\end{align*}
 for any $t\geq0$. Here in $(3)$ we use
\eqref{for:55} of Lemma \ref{le:1}.

Set $\mathcal{R}_2=\xi-v\theta$.  Next, we use the \emph{higher rank trick} again to show that $\mathcal{R}_2$ is also comparable to $\psi$. By substituting from \eqref{for:34} the expression for $\omega$ into \eqref{for:2003}, we have
\begin{align*}
 &v(u\theta-\mathcal{E}_\iota(\omega))-u\xi=\psi\overset{\text{(4)}}{\Longrightarrow}u(v\theta-\xi)=\psi+\mathcal{E}_\iota(\psi).
 \end{align*}
 Here in $(4)$ we use $[v,u]=0$ (see \eqref{for:200} of Section \ref{sec:24}).

 Next, we use the equation
 \begin{align}\label{for:277}
  -u\mathcal{R}_2=\psi+\mathcal{E}_\iota(\psi)
 \end{align}
to estimate $\norm{\mathcal{R}_2}_{L,t}$. Recall that $\mathcal{R}_2=\xi-v\theta\in \mathcal{H}_{S_0}^{\infty}$ and $\psi+\mathcal{E}_\iota(\psi)\in \mathcal{H}_{S_0}^{\infty}$ (see \eqref{for:201}). We also recall \eqref{for:270} of Section \ref{sec:24}:
 \begin{align*}
 G_u\subseteq L\subseteq \{C(G_u),G_u\}.
\end{align*}
It follows from \eqref{for:140} of Lemma \ref{le:1} that
 \begin{align*}
   \norm{\mathcal{R}_2}_{L,t}=\norm{v\theta-\xi}_{L,t}\leq C_t\norm{\psi+\mathcal{E}_\iota(\psi)}_{L,t+\frac{5}{2}}
 \overset{\text{(5)}}{\leq}C_{t} \norm{\psi}_{L,t+\frac{11}{2}+\frac{\iota}{2}}
 \end{align*}
 if $t\geq0$. Here in  $(5)$ we use
\eqref{for:55} of Lemma \ref{le:1}.

Then the results follow from the above estimates.
\end{proof}

\subsection{Almost cocycle equation in extended representation}\label{sec:28}
In this part, we list the extended representation versions of Lemma \ref{le:3} and \ref{le:9}. It is natural to extend the linear operators $\mathcal{D}^\iota$ and $\mathcal{E}_\iota$ to $\mathfrak{g}(\mathcal{H})$ (see Section \ref{sec:2}) by acting on coordinate vectors.
The proofs follow a standard argument, which are left for Appendix \ref{sec:35} and \ref{sec:36} respectively.
\begin{corollary}\label{le:6} For any $\Omega,\,\Psi,\,\mathfrak{w}\in \mathfrak{g}(\mathcal{H})_{S_0}^{\infty}$, if $\mathcal{D}^\iota(\Omega)=\Omega$, $\mathcal{D}^\iota(\Psi)=\Psi$, $\mathcal{D}^\iota(\mathfrak{w})=\mathfrak{w}$ where $\iota\geq 2\sigma+3$ and satisfy the equation
\begin{align}\label{for:18}
(v+\textrm{ad}_v)\Omega-(u+\textrm{ad}_u)\Psi=\mathfrak{w},
\end{align}
then there exists $\eta\in \mathfrak{g}(\mathcal{H})_{S_1}^{\infty}$ with estimates
\begin{align}\label{for:88}
 \norm{\eta}_{S_1,t}\leq C_{t}\norm{\Omega}_{S_0,t+\sigma+\text{\tiny$\frac{3}{2}$}}
\end{align}
for any $t\geq0$, such that
\begin{align}\label{for:83}
 \Omega&=(u+\textrm{ad}_u)\eta\qquad\text{and}\notag\\
 \Psi&=(v+\textrm{ad}_{v})\eta+\mathcal{R},
\end{align}
where $\mathcal{R}\in \mathfrak{g}(\mathcal{H})_{S_1}^{\infty}$ with estimates
\begin{align}\label{for:89}
 \norm{\mathcal{R}}_{S_1,t}\leq C_{t}\norm{\mathfrak{w}}_{S_0,t+\sigma+\text{\tiny$\frac{3}{2}$}},\qquad t\geq0,
\end{align}
where $\sigma$ is defined in \eqref{for:199} of Section \ref{sec:24}.
\end{corollary}

\begin{corollary}\label{le:11} Suppose $\iota\geq3$, and any $\Omega,\,\Psi,\,\mathfrak{w}\in \mathfrak{g}(\mathcal{H})_{S_0}^{\infty}$, if $\mathcal{D}^\iota(\Omega)=0$, $\mathcal{D}^\iota(\Psi)=0$, $\mathcal{D}^\iota(\mathfrak{w})=0$, and satisfy the equation
\begin{align}\label{for:6}
(v+\textrm{ad}_v)\Omega-(u+\textrm{ad}_u)\Psi=\mathfrak{w},
\end{align}
then there exists $\eta\in \mathfrak{g}(\mathcal{H})_{S_0}^{\infty}$ satisfying $\mathcal{D}^\iota(\eta)=0$ with estimates
\begin{align*}
 \norm{\eta}_{S_0,t}\leq C_{t}\norm{\Omega}_{S_0,t+(6+\text{\tiny$\frac{\iota}{2}$})\sigma}
\end{align*}
for any $t\geq0$ ($\sigma$ is defined Corollary \ref{le:6}), such that
\begin{align*}
 \Omega&=(u+\textrm{ad}_u)\eta+\mathcal{R}_1,\qquad\text{and}\\
 \Psi&=(v+\textrm{ad}_{v})\eta+\mathcal{R}_2
\end{align*}
where $\mathcal{R}_1,\,\mathcal{R}_2\in \mathfrak{g}(\mathcal{H})_{L}^{\infty}$ with estimates
\begin{align*}
 \norm{\mathcal{R}_1,\,\mathcal{R}_2}_{L,t}\leq C_{t}\norm{\mathfrak{w}}_{L,t+(6+\text{\tiny$\frac{\iota}{2}$})\sigma},\qquad t\geq0.
\end{align*}

\end{corollary}
\subsection{Proof of Proposition \ref{po:1}} For any $\mathfrak{p}\in \mathfrak{g}(\mathcal{H})$ we have a decomposition $\mathfrak{p}=\mathfrak{p}^0+\mathfrak{p}^1$ where $\mathfrak{p}^0=\mathcal{D}^\iota(\mathfrak{p})$ and $\mathfrak{p}^1=\mathfrak{p}-\mathcal{D}^\iota(\mathfrak{p})$, where
$\iota=2\sigma+3$ (see  \eqref{for:199} of Section \ref{sec:24}). Recall that the operator $\mathcal{D}^\iota$ is associated with $u$ and that $u$ and $v$ form a friendly pair.
In particular, we have $v\in\text{Lie}(C(G_u))$. This fact allows us to apply \eqref{for:141} of Lemma \ref{le:10} and \eqref{for:2004} of Section \ref{sec:2} to obtain
\begin{align*}
  \mathcal{D}^\iota z=z\mathcal{D}^\iota,\qquad \mathcal{D}^\iota\circ \textrm{ad}_z=\textrm{ad}_z\circ \mathcal{D}^\iota
\end{align*}
where $z$ stands for $u$ or $v$.

Consequently, we have
\begin{align*}
 \mathcal{D}^\iota\big((z+\textrm{ad}_z)\mathfrak{p}\big)=(z+\textrm{ad}_z)(\mathcal{D}^\iota(\mathfrak{p})).
\end{align*}
The above discussion shows that \eqref{for:12} has a corresponding decomposition:
\begin{align}\label{for:52}
 (u+\textrm{ad}_u)\Omega^\delta-(v+\textrm{ad}_v)\Psi^\delta&=\mathfrak{w}^\delta,\quad \delta=0,\,1.
 \end{align}
 From \eqref{for:50} of Lemma \ref{le:1} we have: for $\delta=0,\,1$,
\begin{align}\label{for:53}
  \norm{\mathfrak{z}^\delta}_{S_0,t}\leq \norm{\mathfrak{z}}_{S_0,t+1},\qquad t\geq0,
\end{align}
where $\mathfrak{z}$ stands for $\Omega$, $\Psi$, $\mathfrak{w}$.

From \eqref{for:52} for $\delta=0$,  by Corollary \ref{le:6}  there exists $\eta^0\in \mathfrak{g}(\mathcal{H})_{S_1}^{\infty}$ with estimates
\begin{align*}
 \norm{\eta^0}_{S_1,t}\leq C_{t}\norm{\Psi^0}_{S_0,t+\sigma+\text{\tiny$\frac{3}{2}$}}\overset{\text{(1)}}{\leq}
 C_{t}\norm{\Psi}_{S_0,t+\sigma+\text{\tiny$\frac{5}{2}$}}
\end{align*}
for any $t\geq0$, such that
\begin{align*}
 \Psi^0=(u+\textrm{ad}_u)\eta^0\quad \text{and}\quad \Omega^0=(v+\textrm{ad}_{v})\eta^0+\mathcal{R}^0
\end{align*}
with estimates
\begin{align*}
 \norm{\mathcal{R}^0}_{S_1,t}\leq C_{t}\norm{\mathfrak{w}^0}_{S_0,t+\sigma+\text{\tiny$\frac{3}{2}$}}\overset{\text{(1)}}{\leq}
 C_{t}\norm{\mathfrak{w}}_{S_0,t+\sigma+\text{\tiny$\frac{5}{2}$}}
\end{align*}
for any $t\geq0$. Here in $(1)$ we use \eqref{for:53}.

By Corollary \ref{le:11}  there exists $\eta^1\in \mathfrak{g}(\mathcal{O})_{S_0}^{\infty}$ with estimates
\begin{align*}
 \norm{\eta^1}_{S_0,t}\leq C_{t}\norm{\Psi^1}_{S_0,t+(6+\text{\tiny$\frac{\iota}{2}$})\sigma}\overset{\text{(2)}}{\leq} C_{t}\norm{\Psi}_{S_0,t+(6+\text{\tiny$\frac{\iota}{2}$})\sigma+1}
\end{align*}
for any $t\geq0$, such that
\begin{align*}
 \Psi^1=(u+\textrm{ad}_u)\eta^1+\mathcal{R}^1_1\quad\text{and}\quad \Omega^1=(v+\textrm{ad}_{v})\eta^1+\mathcal{R}^1_2
\end{align*}
with estimates
\begin{align*}
 \norm{\mathcal{R}^1_1,\,\mathcal{R}^1_2}_{L,t}\leq C_{t}\norm{\mathfrak{w}^1}_{L,t+(6+\text{\tiny$\frac{\iota}{2}$})\sigma}\overset{\text{(2)}}{\leq} C_{t}\norm{\mathfrak{w}}_{S_0,t+(6+\text{\tiny$\frac{\iota}{2}$})\sigma+1}
\end{align*}
for any $t\geq0$. Here in $(2)$ we use \eqref{for:53}.  Set
\begin{align*}
 \eta=\eta^0+\eta^1,\quad \mathcal{R}_1=\mathcal{R}^0_1+\mathcal{R}^1_1,\quad \mathcal{R}_2=\mathcal{R}^0_2+\mathcal{R}^1_2.
\end{align*}
Also set $\sigma_0=(8+\sigma)\sigma$. Then the result is a direct consequence of the above analysis.

\section{Directional smoothing operators}\label{sec:4}
In this part we show a general construction of  smoothing operators. This part plays a crucial role in the construction of the approximation in Section \ref{sec:22}. In Section \ref{sec:30} we give the motivation to  construct ``directional" smoothing operators.  In Section \ref{sec:11} we present an equivalent construction by using group algebra and obtain Sobolev norms of these operators. Applications of these operators
are discussed in Section \ref{sec:10}.

\subsection{Motivation}\label{sec:30} We denote by $W^{2,q}(\RR^m)$ the Sobolev space of $L^2$
functions with $L^2$ weak partial derivatives up to order $q$. Fix a bump function $f$. We define smoothing operators $\pi(f\circ a^{-1})$, $a>0$ on $W^{2,0}(\RR^m)=L^2(\RR^m)$ as follows:
\begin{align}\label{for:62}
 \pi(f\circ a^{-1})(g)(x)=\text{\tiny$\frac{1}{(\sqrt{2\pi})^m}$}\int_{\RR^m} f(\text{\tiny$\frac{\chi}{a}$})\hat{g}(\chi)e^{\textrm{i}\chi\cdot x} d\chi.
\end{align}
where $\hat{g}(\chi)=\text{\tiny$\frac{1}{(\sqrt{2\pi})^m}$}\int_{\RR^m} g(x)e^{-\textrm{i}\chi\cdot x} dx$.

Assuming that the Sobolev space
$W^{2,q}$ is defined in the Fourier domain with the weight $(1+\norm{\chi}^2)^\frac{q}{2}$ we can express the norm of a function
$g$ as:
\begin{align}\label{for:2019}
 \norm{g}_{W^{2,q}}=\Big\|(\sum_{i=1}^m \chi_i^2+1)^{\frac{q}{2}}\hat{g}(\chi)\Big\|_{W^{2,0}},\qquad \chi=(\chi_1,\cdots,\chi_m)
\end{align}
It is easy to check that the following property holds:
\begin{enumerate}
  \item\label{for:45} $ \pi(f_1\circ a^{-1})\pi(f_2\circ a^{-1})=\pi\big((f_1f_2)\circ a^{-1} \big)$;
  \smallskip
  \item $\langle \pi(f\circ a^{-1})(g),\,g_1\rangle=\langle g,\,\pi(\bar{f}\circ a^{-1})(g_1)\rangle$, where $\bar{f}$ is the complex conjugate of $f$;

  \smallskip
  \item\label{for:2006} we have
\begin{align*}
  &\text{\tiny$\frac{\partial^{n_1+n_2+\cdots+n_m}}{\partial x_1^{n_1}\partial x_2^{n_2}\cdots \partial x_m^{n_m}}$}\big(\pi(f\circ a^{-1})(g)(x)\big)\notag\\
  &=\text{\tiny$\frac{1}{(\sqrt{2\pi})^m}$}\int_{\RR^m} f(\text{\tiny$\frac{\chi}{a}$})(\chi_1\textrm{i})^{n_1}(\chi_2\textrm{i})^{n_2}\cdots (\chi_m\textrm{i})^{n_m}\hat{g}(\chi)e^{\textrm{i}\chi\cdot x} d\chi\notag\\
  &=a^{n_1+n_2+\cdots+n_m}\pi(f^*\circ a^{-1})(g)(x)
\end{align*}
where $f^*(x_1,x_2,\cdots,x_m)=f(x)(x_1 \textrm{i})^{n_1}\dots (x_m \textrm{i})^{n_m}$.

It shows that $\pi(f\circ a^{-1})(g)\in W^{2,\infty}$; and the following estimates hold
\begin{align*}
 \norm{\pi(f\circ a^{-1})(g)}_{W^{2,p}}&\leq C_{p} a^{p}\norm{g}_{W^{2,0}}, \qquad \forall\,p\geq 0;
\end{align*}

\item\label{for:40}
If $1-f(\chi)=0$ whenever $\norm{\chi}\leq1$,  then for any $q\geq0$
\begin{align*}
\big\|\norm{\chi}^{-q}(1-f(\text{\tiny$\frac{\chi}{a}$}))\big\|_{L^\infty}\leq a^{-q}(\|f\|_{C^0}+1).
\end{align*}
Recalling \eqref{for:2019}, this bound allows us to estimate the smoothing error  for any $g\in W^{2,q}$:
\begin{align*}
&\norm{g-\pi(f\circ a^{-1})(g)}_{W^{2,0}}=\|(1-f(\text{\tiny$\frac{\chi}{a}$}))\hat{g}(\chi)\|_{W^{2,0}}\\
&=\Big\| \big(\norm{\chi}^{-q}(1-f(\text{\tiny$\frac{\chi}{a}$}))\big)\cdot \big(\norm{\chi}^{q}\hat{g}(\chi)\big)\Big\|_{W^{2,0}}\\
&\leq \big\|\norm{\chi}^{-q}(1-f(\text{\tiny$\frac{\chi}{a}$}))\big\|_{L^\infty}\cdot\big\| \norm{\chi}^{q}\hat{g}(\chi)\big\|_{W^{2,0}}\\
&\leq a^{-q}(\|f\|_{C^0}+1) \cdot\Big\| (\sum_{i=1}^m \chi_i^2+1)^{\frac{q}{2}}\hat{g}(\chi)\Big\|_{W^{2,0}}\\
&= a^{-q}(\|f\|_{C^0}+1)\cdot \norm{g}_{W^{2,q}}\\
&= C_{f} a^{-q}\norm{g}_{W^{2,q}},
\end{align*}
where $C_{f}=(\|f\|_{C^0}+1)$ is a constant is a constant that depends only on the bump function
$f$. Importantly, $C_{f}$ does not depend on the parameters
$q$ and $a$.
\begin{remark}\label{re:14} For general smoothing operators, the constants involved in  estimating the error from smoothing typically depend on the order $q$, as seen in \eqref{for:117} of Section \ref{sec:26}. However, the above estimates show that by carefully choosing and designing specific smoothing operators, the constants in the error estimates can be made independent of $q$.   This is a core observation in constructing the ``directional" smoothing
operators in the subsequent part.
\end{remark}
\end{enumerate}
In this section, we will generalize the construction of smoothing operators by truncation to a locally compact abelian group $S$.

\subsection{Notations} \label{sec:39} Throughout this section, we fix a Lie group $H$ and an abelian closed subgroup $S$ of $H$ which is isomorphic to $\RR^m$. Let $(\pi,\mathcal{H})$ be a unitary representation of $H$.

\begin{enumerate}
  \item\label{for:279}  Set $\mathfrak{h}=\text{Lie}(H)$.  Fix a set of basis $\mathfrak{u}=\{\mathfrak{u}_1,\cdots,\mathfrak{u}_m\}$ of $\text{Lie}(S)$. We recall a vector $u\in \mathfrak{h}$ is \emph{nilpotent} if $\text{ad}_u$ is
nilpotent. We say that a subgroup of $H$ is \emph{unipotent} if its Lie algebra is (linearly) spanned by nilpotent vectors.

\smallskip
  \item\label{for:239} For any $a>0$ and a function $f:\RR^n\to\CC$, we denote $f(\text{$\frac{t}{a}$})$ by $(f\circ a^{-1})(t)$.

  \smallskip
  \item\label{de:2} For $f\in C^\infty(\RR^n)$, we say:
   \begin{itemize}

     \item $f$ is \emph{standard} if $0\leq f\leq 1$,  $f(t)=1$ for $\norm{t}\leq 1$ and $f(t)=0$ for $\norm{t}\geq 2$, where $\norm{t=(t_1,\cdots,t_n)}=\max_{1\leq i\leq n}|t_i|$;

         \smallskip
     \item $f$ is \emph{related to $\mathfrak{u}=\{\mathfrak{u}_1,\cdots,\mathfrak{u}_m\}$} if $f$ is standard and $n=m$;

     \smallskip

     \item for a subset $\mathfrak{o}=\{\mathfrak{u}_{i_1,},\mathfrak{u}_{i_2,},\cdots,\mathfrak{u}_{i_j}\}$ of $\mathfrak{u}$, we see that $f$ is \emph{free} on $\mathfrak{o}$
     if $\partial_{t_{i_l}}f=0$, $1\leq l\leq j$.
   \end{itemize}

  \smallskip

  \item\label{for:240} Let $\ZZ_{0,+}=\{0\}\cup \NN$. For $j=(j_1,\cdots, j_m)\in\ZZ_{0,+}^m$, let $|j|=\sum_i j_i$. Define
  \begin{gather*}
    f^{[j]}:=\partial^{\text{\tiny$j_1$}}_{\text{\tiny$t_1$}}\cdots \partial^{\text{\tiny$j_m$}}_{\text{\tiny$t_m$}} f, \qquad
    (\text{ad}_{\mathfrak{u}})^j:=\text{ad}^{\text{\tiny$j_1$}}_\mathfrak{u_1}\cdots \text{ad}^{\text{\tiny$j_m$}}_\mathfrak{u_m},\\
    t^j:=t_1^{j_1}\cdots t_m^{j_m},
  \end{gather*}
  where $t=(t_1,\cdots,t_m)\in \RR^m$;

\smallskip
  \item\label{for:241} We set
\begin{align*}
 \tilde{\mathcal{S}}(\RR^m)=\{f\in \text{\small$C^{\infty}(\RR^m)$}: f^{[j]}\in \text{\small$L^\infty(\RR^m)$}, \forall\,j\in \ZZ_{0,+}^m\}
\end{align*}
and we define the norm as
\begin{align*}
 \norm{f}_{\tilde{\mathcal{S}}(\RR^m),n}=\max_{j\in \ZZ_{0,+}^m,\,|j|=n}\{ \norm{f^{[j]}}_{\text{\small$L^\infty(\RR^m)$}}\},\quad \forall\,f\in \tilde{\mathcal{S}}(\RR^m).
\end{align*}
\end{enumerate}

\subsection{Main results}\label{sec:52}
In Section \ref{sec:3}, for any $f\in L^\infty(\RR^m)$ we define a linear operator $\pi_{\mathfrak{u}}(f)$ on $\mathcal{H}$ satisfying the following properties:

\begin{sect}
For any $f_1,\,f_2\in L^\infty(\RR^m)$
\begin{align}\label{for:196}
 \pi_{\mathfrak{u}}(f_1)\pi_{\mathfrak{u}}(f_2)=\pi_{\mathfrak{u}}(f_1f_2);
\end{align}
and
\begin{align}\label{for:198}
 \bigl\langle \pi_{\mathfrak{u}}(f)\xi,\eta\bigl\rangle=\bigl\langle \xi,\pi_{\mathfrak{u}}(\bar{f})\eta\bigl\rangle,\qquad \xi,\,\eta\in \mathcal{H},
\end{align}
where $\bar{f}$ is the complex conjugate of $f$.

If $X\subseteq \RR^m$ is a Borel set and $I_X$ denotes the characteristic function of $X$, from \eqref{for:196} and \eqref{for:198} we see that $\pi_{\mathfrak{u}}(I_X)$ is idempotent and self-adjoint, i.e., an orthogonal projection onto a subspace of $\mathcal{H}$. Thus the assignment $X\to \pi_{\mathfrak{u}}(I_X)$ is a \emph{projection-value measure};
\end{sect}

\begin{sect}

 (\eqref{ob:3} of Lemma \ref{le:2}) if $v$ commutes with $\text{Lie}(S)$, then $v\pi_{\mathfrak{u}}(f)=\pi_{\mathfrak{u}}(f) v$.

\end{sect}
\begin{sect}
 (Lemma \ref{le:7}) Suppose $a>0$. Then:
\begin{enumerate}
  \item If $\xi\in \mathcal{H}$ and $f$ is Schwartz,  then $\pi_{\mathfrak{u}}(f\circ a^{-1})\xi\in \mathcal{H}_{S}^\infty$ with estimates
\begin{align*}
 \norm{\pi_{\mathfrak{u}}(f\circ a^{-1})\xi}_{S,l}\leq C_{f,l}a^l\norm{\xi},\qquad \forall\,l\geq0.
\end{align*}
\item If $\xi\in \mathcal{H}_{S}^s$ and $f$ is related to $\mathfrak{u}$,  then
\begin{align*}
 \norm{\xi-\pi_{\mathfrak{u}}(f\circ a^{-1})\xi}\leq C_{f} a^{-s}\norm{\xi}_{S,s}, \qquad \forall\,s\geq0.
\end{align*}

\end{enumerate}
\eqref{for:63} of Lemma \ref{le:7}  shows that the $\pi_{\mathfrak{u}}(f\circ a^{-1})$ operators provide smoothness along $S$-directions. This is the reason to call them ``directional" smoothing operators. We emphasize that the constant $C_f$ in \eqref{for:145} of Lemma \ref{le:7}  is independent of $s$. This fact will be used  for  subsequent parts.

\end{sect}

\begin{sect}
 (Corollary \ref{cor:1}) Let $S$ be unipotent in $H$. Suppose $\xi\in \mathcal{H}^s$, $s\geq 0$, $a\geq1$ and $0\leq\ell\leq s$. Then:
\begin{enumerate}
\item\label{for:264} if $f\in \tilde{\mathcal{S}}(\RR^m)$, then
$\pi_{\mathfrak{u}}(f\circ a^{-1})\xi\in \mathcal{H}^s$ with estimates
\begin{align*}
  \norm{\pi_{\mathfrak{u}}(f\circ a^{-1})\xi}_r\leq C_{f,r}\norm{\xi}_r, \qquad \forall\, 0\leq r\leq s;
\end{align*}
\item \label{for:146} if $f$ is related to $\mathfrak{u}$, then for any $0\leq r\leq \ell$
  \begin{align*}
   \norm{\xi-\pi_{\mathfrak{u}}(f\circ a^{-1})\xi}_r\leq C_{\ell,f}\big(a^{-s}\norm{\xi}_s\big)^{\text{\tiny$1-\frac{r}{\ell}$}}\norm{\xi}^{\text{\tiny$\frac{r}{\ell}$}}_{\ell}.
  \end{align*}
\end{enumerate}
\eqref{for:264} of Corollary \ref{cor:1} shows that if $S$ is unipotent, then $\pi_{\mathfrak{u}}(f\circ a^{-1})$ is a global smoothing operator and the estimates
are similar
to those of the standard smoothing operators (see Section \ref{sec:26}). We point out that  the constant in \eqref{for:146}  is independent of $s$.

 \eqref{for:146} provides the estimate for the error coming from the smoothing. Let $s=\ell$. Then \eqref{for:145} becomes
\begin{align}\label{for:242}
   \norm{\xi-\pi_{\mathfrak{u}}(f\circ a^{-1})\xi}_r\leq C_{\ell,f}a^{-(\ell-r)}\norm{\xi}_{\ell}.
  \end{align}
\eqref{for:264} and \eqref{for:242} show that if $a\geq1$, the estimates of the directional smoothing operators
are similar to those of the standard smoothing operators (see Section \ref{sec:26}).

In \eqref{for:146} we use Sobolev orders of $s$ and $\ell$ simultaneously to estimate the error from smoothing. This results in the estimates more complex
than
the classic ones that use only the order $\ell$. The reason for doing so will be explained  in Section \ref{sec:8}.

\end{sect}

\begin{sect}\label{sect:1}
 Let $(\pi,\mathcal{H})$ be a unitary representation of $\GG$. We recall notations in \eqref{for:287} of Section \ref{sec:47}.
Choose $f_1$ related to $\mathfrak{V}$ (see \eqref{de:2} of Section \ref{sec:39}), $f_2$ related to $\mathfrak{W}$ and $f_3$ related to $\mathfrak{V}_1$. Then
$f_1$ is also related to $\mathfrak{U}$, and $f_3$ is also related to $\mathfrak{U}_i$ and $\mathfrak{V}_i$, $i=1,2$.

(Corollary \ref{cor:9}) Suppose $a>1$, $\xi\in \mathcal{H}^s_{S_{1,U}}$, $s\geq 0$. Set
\begin{gather*}
  \xi'=\pi_{\mathfrak{U}}(f_1\circ a^{-1})\pi_{\mathfrak{W}}(f_2\circ a^{-1})\pi_{\mathfrak{V}}(f_1\circ a^{-1})\xi;\\
  \xi''=\pi_{\mathfrak{U}^1}(f_3\circ a^{-1})\pi_{\mathfrak{U}^2}(f_3\circ a^{-1})\pi_{\mathfrak{W}}(f_2\circ a^{-1})\pi_{\mathfrak{V}^2}(f_3\circ a^{-1})\pi_{\mathfrak{V}^1}(f_3\circ a^{-1})\xi.
\end{gather*}
$i=1,2$.   Then:  $\xi_1,\,\xi_2\in\mathcal{H}^s$ with estimates
\begin{align*}
 \norm{\xi'}_{l}&\leq C_{l,f_1,f_2}(\norm{\xi}_{S_{1},l}+a^l\norm{\xi}); \\
 \norm{\xi''}_{l}&\leq C_{l,f_2,f_{3}}(\norm{\xi}_{S_{1},l}+a^l\norm{\xi})
\end{align*}
for any $0\leq l\leq s$.

\smallskip
Since $\xi$ is only partially smooth on $S_{1,U}$ (recall that $U=\mathfrak{u}_{1,2}$, see \eqref{for:285} of Section \ref{sec:47}),  the non-smooth directions for $\xi$ are inside three (resp. five) abelian unipotent subgroups: $\exp(\mathfrak{V})$ (resp.
$\exp(\mathfrak{V}^1)$, $\exp(\mathfrak{V}^2)$),
$\exp(\mathfrak{U})$ (resp. $\exp(\mathfrak{U}^1)$, $\exp(\mathfrak{U}^2)$) and $\exp(\mathfrak{W})$. Corollary \ref{cor:9} shows that
if we apply the directional smoothing operators to $\xi$ successively along all non-smooth directions, we obtain
globally smooth vectors.

It is important to note that Corollary \ref{cor:9} requires a particular sequence of applying $\pi_{\mathfrak{U}}$, $\pi_{\mathfrak{W}}$ and $\pi_{\mathfrak{V}}$
(resp. $\pi_{\mathfrak{U}^i}$, $\pi_{\mathfrak{W}}$ and $\pi_{\mathfrak{V}^i}$) operators to obtain \emph{globally smooth} vectors.
The reason is that  these subalgebras $\mathfrak{V}$, $\mathfrak{U}$ and $\mathfrak{W}$
(and thus the exponentials $\exp(\mathfrak{V})$, $\exp(\mathfrak{U})$ and $\exp(\mathfrak{W})$
do NOT commute with one another. Consequently, smoothing first along
$\mathfrak{U}$ and then along
$\mathfrak{V}$ is not the same as smoothing first along $\mathfrak{V}$ and then along
$\mathfrak{U}$. Indeed, applying $\pi_{\mathfrak{U}}(f_1\circ a^{-1})$  makes $\xi$ smooth along $\mathfrak{U}$. But once we then apply  $\pi_{\mathfrak{V}}(f_1\circ a^{-1})$, the latter can ``twist" $\mathfrak{U}$  through the non-trivial adjoint action
\begin{align*}
 \text{Ad}_{\exp(\mathfrak{V})}(\mathfrak{U})=e^{\text{ad}_{\exp(\mathfrak{V})}}(\mathfrak{U})
\end{align*}
on $\text{Lie}(\GG)$. As a result, $\pi_{\mathfrak{V}}(f_1\circ a^{-1})\pi_{\mathfrak{U}}(f_1\circ a^{-1})\xi$ generally loses smoothness along $\mathfrak{U}$.
In simpler terms, since
$\mathfrak{V}$ does not commute with
$\mathfrak{U}$, the directions in
$\mathfrak{U}$ that were just ``smoothed out" can get "re-angled" (or ``re-distributed") when you move in the
$\mathfrak{V}$-directions, thereby potentially \emph{undoing} some of the smoothing effect unless it is carefully controlled (e.g., by following the specified order of applying these operators). Hence, the order in which these directional smoothing operators are
applied is crucial to ensure that the resulting vector becomes smooth in all directions, thus producing a \emph{globally smooth} vector from one that is initially only partially smooth.

\smallskip

\emph{Note}. The estimates of $\xi'$ and $\xi''$ are no longer tame with respect to $\xi$. The new term $a^l\norm{\xi}$, which has not been presented in traditional KAM estimates,  appears as a direct consequence of the application of the directional smoothing operators.

\end{sect}

\begin{sect}
 We assume notations in \ref{sect:1}.  We recall  $f_2$ is related to $\mathfrak{W}$. Choose $f_3$ related to $\mathfrak{V}^1$. Then
$f_3$ is also related to $\mathfrak{V}^2$, $\mathfrak{U}^1$ and $\mathfrak{U}^2$.

(Corollary \ref{cor:10})  Suppose $a>1$, $\eta\in \mathcal{H}^s_{\mathcal{J}}$, $s\geq 0$ (see Section \eqref{sec:37}). Set
\begin{gather*}
 \eta_{i}=\pi_{\mathfrak{U}^{i}}(f_3\circ a^{-1})\eta,\quad \eta'_{i}=\pi_{\mathfrak{V}^{i}}(f_3\circ a^{-1})\eta, \quad \eta''=\pi_{\mathfrak{W}}(f_2\circ a^{-1})\eta,
\end{gather*}
$i=1,2$.   Then $\eta_{i},\,\eta_{i}',\,\eta''\in\mathcal{H}^s_{\mathcal{J}}$, $i=1,2$ with estimates
  \begin{align*}
       \max_{i=1,2}\{\norm{\eta_i}_{\mathcal{J},l}, \norm{\eta'_i}_{\mathcal{J},l},\norm{\eta''}_{\mathcal{J},l}\}\leq C_{l,f_2,f_{3}}\norm{\eta}_{\mathcal{J},l},\qquad \,\,0\leq l\leq s.
      \end{align*}
Corollary \ref{cor:10} show that if $\eta$ is partially smooth on $\mathcal{J}$, then after applying  directional smoothing operators to $\eta$,
the new vector still ``inherits" smoothness on $\mathcal{J}$.

\end{sect}

\subsection{Abelian subgroups in a Lie group}\label{sec:3}
Fix a set of basis $\mathfrak{u}=\{\mathfrak{u}_1,\cdots,\mathfrak{u}_m\}$ of $\text{Lie}(S)$.   For any $t=(t_1,\cdots,t_m)\in \RR^m$, set
\begin{align}\label{for:64}
 \exp(t)=\exp(t_1\mathfrak{u}_1+\cdots+t_m\mathfrak{u}_m)\quad\text{ and }\quad\pi(t)=\pi(\exp(t)).
\end{align}
For $\xi,\,\eta\in \mathcal{H}$, consider
the corresponding matrix coefficients of $\pi\mid_S$: $\phi_{\xi,\eta}(t)=\langle \pi(t)\xi,\,\eta\rangle$, $t\in \RR^m$. There exists a regular Borel measure $\mu$ on $\widehat{\RR^m}$, called \emph{the associated measure of $\pi$ (with respect to $\widehat{\RR^m}$)}, such that $\xi=\int_{\widehat{\RR^m}}\xi_\chi d\mu(\chi)$, and
\begin{align}\label{for:84}
    \phi_{\xi,\eta}(t)&=\int_{\widehat{\RR^m}}\chi(t)\langle \xi_\chi,\,\eta_\chi\rangle d\mu(\chi).
\end{align}
Here $\chi(t)=e^{\textrm{i}\chi\cdot t}$ (we identify $\RR^m$ and $\widehat{\RR^m}$).

Similar to \eqref{for:62}, for any $f\in L^\infty(\RR^m, d\mu)$  we define an operator $\pi_{\mathfrak{u}}(f)$ on $\mathcal{H}$ as follows:
\begin{align}\label{for:2005}
 \pi_{\mathfrak{u}}(f)(\xi):=\int_{\widehat{\RR^m}}f(\chi)\xi_\chi d\mu(\chi),\qquad \forall\,\xi\in \mathcal{H}.
\end{align}
Here the meaning of $f(\chi)$ is clear by identifying $\RR^m$ and $\widehat{\RR^m}$.

Similar to properties \eqref{for:45} to \eqref{for:2006} in Section \ref{sec:30}, we have
\begin{enumerate}
  \item for any $f_1,\,f_2\in L^\infty(\RR^m)$
\begin{align*}
 \pi_{\mathfrak{u}}(f_1)\pi_{\mathfrak{u}}(f_2)=\pi_{\mathfrak{u}}(f_1f_2);
\end{align*}
and
\begin{align}\label{for:21}
 \norm{\pi_{\mathfrak{u}}(f)}\leq \norm{f}_\infty,\qquad \forall\,f\in L^\infty(\RR^m);
\end{align}
  \item for any $\xi,\,\eta\in \mathcal{H}$
  \begin{align*}
 \bigl\langle \pi_{\mathfrak{u}}(f)\xi,\eta\bigl\rangle=\bigl\langle \xi,\pi_{\mathfrak{u}}(\bar{f})\eta\bigl\rangle=\int_{\widehat{\RR^m}}f(\chi)\langle \xi_\chi,\,\eta_\chi \rangle d\mu(\chi),
\end{align*}
where $\bar{f}$ is the complex conjugate of $f$;

\smallskip

  \item for any $a>0$, any $f\in L^\infty(\RR^m)$ and $\xi\in \mathcal{H}$, we have
\begin{align}\label{for:41}
  \mathfrak{u}_1^{k_1}\cdots \mathfrak{u}_m^{k_m}\big(\pi_{\mathfrak{u}}(f\circ a^{-1})\xi\big)=a^{k_1+k_2+\cdots k_m}\pi_{\mathfrak{u}}(f_{k_1,\cdots,k_m}\circ a^{-1})\xi,
\end{align}
if $f_{k_1,\cdots,k_m}\in L^\infty(\RR^m)$,  where $f_{k_1,\cdots,k_m}(t)=f(t)(t_1 \textrm{i})^{k_1}\dots (t_m \textrm{i})^{k_m}$.
\end{enumerate}
Similar to \eqref{for:2006} and \eqref{for:40} of Section \ref{sec:30} the following estimates hold for $\pi_{\mathfrak{u}}(f\circ a^{-1})$:
\begin{lemma}\label{le:7} Suppose $a>0$. Then:
\begin{enumerate}
  \item\label{for:63} If $\xi\in \mathcal{H}$ and $f$ is Schwartz,  then $\pi_{\mathfrak{u}}(f\circ a^{-1})\xi\in \mathcal{H}_{S}^\infty$ (see \eqref{for:239} of Section \ref{sec:39}) with estimates
\begin{align*}
 \norm{\pi_{\mathfrak{u}}(f\circ a^{-1})\xi}_{S,l}\leq C_{f,l}a^l\norm{\xi},\qquad \forall\,l\geq0.
\end{align*}
\item\label{for:145} If $\xi\in \mathcal{H}_{S}^s$ and $f$ is related to $\mathfrak{u}$ (see \eqref{de:2} of Section \ref{sec:39}),  then
\begin{align*}
 \norm{\xi-\pi_{\mathfrak{u}}(f\circ a^{-1})\xi}\leq C_{f} a^{-s}\norm{\xi}_{S,s},\qquad \forall\,s\geq0.
\end{align*}

\end{enumerate}

\end{lemma}
\emph{Note.} Similar to \eqref{for:40} of Section \ref{sec:30}, the constant in \eqref{for:145} is independent of $s$.

\begin{proof}\eqref{for:63}: For any $k_1,k_2,\cdots,k_m\geq0$, if $\sum_{i=1}^m k_i=l$ we have
\begin{align*}
 \big\|\mathfrak{u}_1^{k_1}&\cdots \mathfrak{u}_m^{k_m}(\pi_{\mathfrak{u}}(f\circ a^{-1})\xi)\big\|\overset{\text{(1)}}{=}
 a^l\big\|\pi_{\mathfrak{u}}(f_{k_1,\cdots,k_m}\circ a^{-1})\xi\big\|\notag\\
 &\overset{\text{(2)}}{\leq} a^l\norm{f_{k_1,\cdots,k_m}}_\infty\norm{\xi}
\end{align*}
Here in $(1)$ we use \eqref{for:41}; in $(2)$ we use \eqref{for:21}. This implies the result.

\smallskip
\eqref{for:145}: For set $X\subset \RR^m$ we use $I_X$ to denote the characteristic function of $X$. From \eqref{for:84} for any $r>0$ we see that
\begin{align*}
 \sum_{i=1}^m\norm{\mathfrak{u}_i^{s}\xi}^2&=\sum_{i=1}^m\int_{\widehat{\RR^m}}|\chi_i|^{2s}\norm{\xi_\chi}^2 d\mu(\chi)
 \geq \sum_{i=1}^m\int_{\widehat{\RR^m}}|\chi_i|^{2s}I_{\norm{\chi}\geq r}^2\norm{\xi_\chi}^2 d\mu(\chi)\\
 &\overset{\text{(0)}}{\geq} r^{2s}\int_{\widehat{\RR^m}}I_{\norm{\chi}\geq r}^2\norm{\xi_\chi}^2 d\mu(\chi)\overset{\text{(1)}}{=}
 r^{2s}\big\|\pi_{\mathfrak{u}}(I_{\norm{t}\geq r})\xi\big\|^2.
\end{align*}
Here in $(0)$ we use the fact that $\sum_{i=1}^m|\chi_i|^{2s}\geq r^{2s}$ if $\norm{\chi}\geq r$, where $\norm{\cdot}$ is defined in \eqref{de:2} of Section \ref{sec:39};
in $(1)$ we use \eqref{for:2005}.

This shows that for any $r>0$,
\begin{align}\label{for:97}
\norm{\xi}_{S,s}\geq|r^s|\norm{\pi_{\mathfrak{u}}(I_{\norm{x}\geq r})\xi}.
\end{align}
Hence  we have
\begin{align*}
  \norm{\xi-\pi_{\mathfrak{u}}(f\circ a^{-1})\xi}&\overset{\text{(1)}}{=}\norm{\pi_{\mathfrak{u}}(1-f\circ a^{-1})\xi}\overset{\text{(2)}}{\leq } (\norm{f}_{C^0}+1)\norm{\pi_{\mathfrak{u}}(I_{\norm{x}\geq a})\xi}\\
  &\overset{\text{(3)}}{\leq }  C_{f} a^{-s}\norm{\xi}_{S,s}.
\end{align*}
Here in $(1)$ from \eqref{for:2005} we see that $\xi=\pi_{\mathfrak{u}}(1)\xi$; in $(2)$ we use the fact that $1-f\circ a^{-1}=0$ if $\norm{x}\leq a$; in $(3)$ we use \eqref{for:97}.
\end{proof}

\subsection{Global smoothness of $\pi_{\mathfrak{u}}(f\circ a^{-1})$}\label{sec:11}
In this part we will show that if $S$ is unipotent the $\pi_{\mathfrak{u}}(f\circ a^{-1})$ operators preserve global smoothness, i.e., $\pi_{\mathfrak{u}}(f\circ a^{-1})(\mathcal{H}^s)\subseteq \mathcal{H}^s$. Before that, we give an alternative definition of $\pi_{\mathfrak{u}}(f)$.

Let $dt$ denote the Lebesgue measure on $\RR^m$.  The representation $\pi|_{S}$ can be extended to the Banach algebra $L^1(\RR^m,\,dt)$: for any $f\in L^1(\RR^m)$ and $\xi\in \mathcal{H}$
\begin{align*}
 \pi(f)(\xi):=\int_{\RR^m}f(t)\pi(t)(\xi)dt.
\end{align*}
Clearly $\norm{\pi(f)(\xi)}\leq \norm{f}_{L^1}\norm{\xi}$.

Next, we use group algebra to define $\pi_{\mathfrak{u}}(f)$. We use $\mathcal{S}(\RR^m)$ to denote the Schwartz space of $\RR^m$. The representation $\pi\mid_S$ extends to a $^\ast$-representation on $\mathcal{S}(\RR^m)$: for any $f\in \mathcal{S}(\RR^m)$, $\pi_{\mathfrak{u}}(f)$ is the operator on $\mathcal{H}$ for which
\begin{align*}
 \pi_{\mathfrak{u}}(f):=\text{\tiny$\frac{1}{(\sqrt{2\pi})^m}$}\int_{\RR^m}\hat{f}(t)\pi(t)(\xi)dt.
\end{align*}
\begin{lemma} We can extend $\pi_{\mathfrak{u}}$ to a homomorphism of $L^\infty(\RR^m,\,d\mu)$ to bounded operators on $\mathcal{H}$. Moreover, for any $f\in L^\infty(\RR^m)$, $\pi_{\mathfrak{u}}(f)$ coincides with the definition in \eqref{for:2005}.
\end{lemma}
\begin{proof} Suppose $f\in \mathcal{S}(\RR^m)$. For any $\xi,\,\eta\in \mathcal{H}$ we have
\begin{align*}
\bigl\langle \pi_\mathfrak{u}(f)\xi,\eta\bigl\rangle=\text{\tiny$\frac{1}{(\sqrt{2\pi})^m}$}\int_{\RR^m} \hat{f}(t)\phi_{\xi,\eta}(t) dt.
\end{align*}
Computations show that
\begin{align}
&\bigl\langle \pi_{\mathfrak{u}}(f)\xi,\eta\bigl\rangle=\text{\tiny$\frac{1}{(\sqrt{2\pi})^m}$}\int_{\RR^m}\hat{f}(t)\langle \pi(t)\xi,\eta\rangle dt\label{for:14}\\
&=\text{\tiny$\frac{1}{(\sqrt{2\pi})^m}$}\int_{\RR^m}\int_{\widehat{\RR^m}}\hat{f}(t)\chi(t)\langle \xi_\chi,\,\eta_\chi \rangle d\mu(\chi) dt\notag\\
&=\int_{\widehat{\RR^m}}\langle \xi_\chi,\,\eta_\chi \rangle\big(\text{\tiny$\frac{1}{(\sqrt{2\pi})^m}$}\int_{\RR^m}\hat{f}(t)\chi(t) dt\big)d\mu(\chi)\notag\\
&=\int_{\widehat{\RR^m}}f(\chi)\langle \xi_\chi,\,\eta_\chi \rangle d\mu(\chi).\notag
\end{align}
Since
\begin{align*}
 \norm{\pi_{\mathfrak{u}}(f)}\leq \norm{f}_\infty,\qquad \forall\,f\in \mathcal{S}(\RR^m)
\end{align*}
we can extend $\pi_{\mathfrak{u}}$ from $\mathcal{S}(\RR^m)$ to $L^\infty(\RR^m)$ by taking strong limits of operators and pointwise monotone increasing limits of non-negative functions (see \cite{lang2012sl2} for a detailed treatment). Hence $\pi_{\mathfrak{u}}$ is a homomorphism of $L^\infty(\RR^m)$ to bounded operators on $\mathcal{H}$. Moreover, for any $f\in L^\infty(\RR^m)$ we see that
\begin{align*}
 \pi_{\mathfrak{u}}(f)(\xi)=\int_{\widehat{\RR^m}}f(\chi)\xi_\chi d\mu(\chi),\qquad \forall\,\xi\in \mathcal{H}.
\end{align*}
This coincides with the definition in \eqref{for:2005}.
\end{proof}

The following lemma tells us that if $S$ is unipotent, then $\pi_{\mathfrak{u}}(f\circ a^{-1})$ preserves global smoothness.
\begin{lemma}\label{le:2} Suppose $S$ is unipotent and $f\in \tilde{\mathcal{S}}(\RR^m)$. We have
\begin{enumerate}
  \item\label{for:2007}    for any $v\in \mathfrak{h}$
\begin{align*}
  v\pi_{\mathfrak{u}}(f)=\sum_{j\in\ZZ_{0,+}^{\dim\mathfrak{h}-1} }c_{j}
  \pi_{\mathfrak{u}}(f^{[j]})
  (\text{ad}_\mathfrak{u})^{j}(v)
\end{align*}
(see \eqref{for:240} of Section \ref{sec:39});

\smallskip
  \item\label{for:2008} for any vectors $v_i\in \mathfrak{h}$, $1\leq i\leq l$ we have
\begin{align}\label{for:147}
  &v_l\cdots v_2v_1\pi_{\mathfrak{u}}(f)\notag\\
  &=\sum_{j_i\in\ZZ_{0,+}^{\dim\mathfrak{h}-1},\,1\leq i\leq l }c_{j_1,\cdots,j_l}\pi_{\mathfrak{u}}(f^{[\sum_{i=1}^lj_i]})
  \big((\text{ad}_\mathfrak{u})^{j_l}v_l)\cdots (\text{ad}_\mathfrak{u})^{j_1}v_1)\big);
\end{align}

 \item\label{ob:3} if $v\in \mathfrak{h}$ commutes with $\text{Lie}(S)$, then $v\pi_{\mathfrak{u}}(f)=\pi_{\mathfrak{u}}(f) v$;

\smallskip
\item\label{for:68} suppose $\xi\in \mathcal{H}^s$, $s\geq 0$.  Then $\pi_{\mathfrak{u}}(f)\xi\in \mathcal{H}^s$ with estimates
\begin{align*}
  \norm{\pi_{\mathfrak{u}}(f)\xi}_l\leq C_{l}\norm{f}_{\tilde{\mathcal{S}}(\RR^m),ml \dim\mathfrak{h}}\norm{\xi}_l, \qquad \forall\, 0\leq l\leq s.
\end{align*}
\end{enumerate}

\end{lemma}
\begin{proof} \eqref{for:2007}: Since $\mathfrak{u_i}$, $1\leq i\leq m$ are nilpotent, $(\text{ad}_\mathfrak{u_i})^{\dim\mathfrak{h}}=0$, $1\leq i\leq m$. For any $1\leq i\leq m$ and $r\in\RR$ set
\begin{align*}
 B_{r,i}=\sum_{j=0}^{\dim\mathfrak{h}-1}\text{\small$\frac{(-1)^jr^j}{j!}$}\text{ad}^{j}_\mathfrak{u_i}.
\end{align*}
Then we can write
\begin{align*}
  B_{\text{\tiny$t_1$},1}\cdots B_{\text{\tiny$t_m$},m}=\sum_{j\in\ZZ_{0,+}^{\dim\mathfrak{h}-1} }d_{j}
  t^j(\text{ad}_\mathfrak{u})^j
\end{align*}
(see \eqref{for:240} of Section \ref{sec:39}), where $d_{j}$ are constants only dependent on $j$ and $t=(t_1,\cdots,t_m)$. We recall notation \eqref{for:64}. We note that
\begin{align}\label{for:56}
 v\pi(t)=\pi(t)\text{Ad}_{\exp(-t)}(v)=\pi(t)(B_{\text{\tiny$t_1$},1}\cdots B_{\text{\tiny$t_m$},m})v,\quad \forall\,v\in \mathfrak{h}.
\end{align}
We suppose $f\in \mathcal{S}(\RR^m)$, $\vartheta\in \mathcal{H}^1$ and $\eta\in \mathcal{H}$. Then by \eqref{for:14} we have
\begin{align*}
&\bigl\langle v(\pi_{\mathfrak{u}}(f)\vartheta),\eta\bigl\rangle=\text{\tiny$\frac{1}{(\sqrt{2\pi})^m}$}\int_{\RR^m}\bigl\langle \hat{f}(t)v(\pi(t)\vartheta),\eta\rangle dt\notag\\
&\overset{\text{(1)}}{=}\text{\tiny$\frac{1}{(\sqrt{2\pi})^m}$}\int_{\RR^m}\bigl\langle \hat{f}(t)\pi(t)(B_{\text{\tiny$t_1$},1}\cdots B_{\text{\tiny$t_m$},m})v\vartheta,\,\eta\bigl\rangle dt\\
=\text{\tiny$\frac{1}{(\sqrt{2\pi})^m}$}&\sum_{j\in\ZZ_{0,+}^{\dim\mathfrak{h}-1} }d_{j} \int_{\RR^m}\bigl\langle \hat{f}(t)t^j\pi(t)
\big((\text{ad}_\mathfrak{u})^j(v)\vartheta\big),\,\eta\bigl\rangle dt\\
&\overset{\text{(2)}}{=}\sum_{j\in\ZZ_{0,+}^{\dim\mathfrak{h}-1} }c_{j} \bigl\langle\pi_{\mathfrak{u}}(f^{[j]})
  \big((\text{ad}_\mathfrak{u})^j(v)\vartheta\big),\eta\bigl\rangle
\end{align*}
Here in $(1)$ we use \eqref{for:56}; in $(2)$ we use \eqref{for:14}.

This shows that for any $v\in \mathfrak{h}$ and any $f\in \mathcal{S}(\RR^m)$ we have
\begin{align}\label{for:98}
  v\pi_{\mathfrak{u}}(f)=\sum_{j\in\ZZ_{0,+}^{\dim\mathfrak{h}-1} }c_{j}\pi_{\mathfrak{u}}(f^{[j]})
  (\text{ad}_\mathfrak{u})^j(v).
\end{align}
By arguments below \eqref{for:21}, we can extend \eqref{for:98} from $\mathcal{S}(\RR^m)$ to $\tilde{\mathcal{S}}(\RR^m)$. Then we complete the proof of  \eqref{for:2007}.

\smallskip
\eqref{for:2008}: Keeping using \eqref{for:2007}  we get the result.

\smallskip

\eqref{ob:3}: A direct consequence of \eqref{for:2007}.

\smallskip

\eqref{for:68}: By using \eqref{for:2008}, for any vectors $v_i\in \mathfrak{h}$, $1\leq i\leq l$, $l\leq s$ we have
\begin{align*}
&\norm{v_l\cdots v_2v_1(\pi_{\mathfrak{u}}(f)\xi)}\\
  &\leq\sum_{j_i\in\ZZ_{0,+}^{\dim\mathfrak{h}-1}}|c_{j_1,\cdots,j_l}|
  \Big\|\pi_{\mathfrak{u}}(f^{[\sum_{i=1}^lj_i]})
  \big((\text{ad}_\mathfrak{u})^{j_l}v_l)\cdots (\text{ad}_\mathfrak{u})^{j_1}v_1)\xi\big)\Big\|\\
  &\overset{\text{(1)}}{\leq}\sum_{j_i\in\ZZ_{0,+}^{\dim\mathfrak{h}-1}}|c_{j_1,\cdots,j_l}|\cdot\norm{f^{|\sum_{i=1}^lj_i|}}_{\text{\small$L^\infty(\RR^m)$}}\\
  &\cdot\big\|(\text{ad}_\mathfrak{u})^{j_l}v_l)\cdots (\text{ad}_\mathfrak{u})^{j_1}v_1)\xi\big\|\\
  &\leq C_{l}\norm{f}_{\tilde{\mathcal{S}}(\RR^m),ml \dim\mathfrak{h}}\norm{\xi}_l.
\end{align*}
(see \eqref{for:241} of Section \ref{sec:39}). Here in $(1)$ we use \eqref{for:21}. This  implies \eqref{for:68}. Hence we finish the proof.
\end{proof}
We see that the proof of Lemma \ref{le:2} heavily relies on the unipotency of $S$. From now on we always assume $S$ to be unipotent in $H$.
The following corollary of Lemma \ref{le:2} gives us the crucial estimates for
the directional smoothing operators.
\begin{corollary}\label{cor:1} Suppose $\xi\in \mathcal{H}^s$, $s\geq 0$, $a\geq1$ and $0\leq\ell\leq s$. Then:
\begin{enumerate}
\item\label{for:7} if $f\in \tilde{\mathcal{S}}(\RR^m)$, then
$\pi_{\mathfrak{u}}(f\circ a^{-1})\xi\in \mathcal{H}^s$ with estimates
\begin{align*}
  \norm{\pi_{\mathfrak{u}}(f\circ a^{-1})\xi}_r\leq C_{f,r}\norm{\xi}_r, \qquad \forall\, 0\leq r\leq s;
\end{align*}
\item \label{for:146} if $f$ is related to $\mathfrak{u}$, then for any $0\leq r\leq \ell$
  \begin{align*}
   \norm{\xi-\pi_{\mathfrak{u}}(f\circ a^{-1})\xi}_r\leq C_{\ell,f}\big(a^{-s}\norm{\xi}_s\big)^{\text{\tiny$1-\frac{r}{\ell}$}}\norm{\xi}^{\text{\tiny$\frac{r}{\ell}$}}_{\ell}.
  \end{align*}
\end{enumerate}
\end{corollary}
\emph{Note}. The constant in \eqref{for:146}  is independent of $s$.

\begin{proof}
\eqref{for:7}: From  \eqref{for:68} of Lemma \ref{le:2}, we have
\begin{align*}
  \norm{\pi_{\mathfrak{u}}(f\circ a^{-1})&\xi}_r\leq C_{r}\norm{f\circ a^{-1}}_{\tilde{\mathcal{S}}(\RR^m),mr \dim\mathfrak{h}}\norm{\xi}_r\\
  &\overset{\text{(1)}}{\leq}C_{r,1}\norm{f}_{\tilde{\mathcal{S}}(\RR^m),mr \dim\mathfrak{h}}\norm{\xi}_r\leq C_{f,r}\norm{\xi}_r.
\end{align*}
for any $0\leq r\leq s$. Here in $(1)$ we use the fact $a\geq1$.

\smallskip
\eqref{for:146}: We have
\begin{align*}
 &\norm{\xi-\pi_{\mathfrak{u}}(f\circ a^{-1})\xi}_r\notag\\
 &\overset{\text{(1)}}{\leq} C_{\ell} \norm{\xi-\pi_{\mathfrak{u}}(f\circ a^{-1})\xi}^{\text{\tiny$1-\frac{r}{\ell}$}}
 \norm{\xi-\pi_{\mathfrak{u}}(f\circ a^{-1})\xi}^{\text{\tiny$\frac{r}{\ell}$}}_{\ell}\notag\\
 &\overset{\text{(2)}}{\leq} C_{\ell}( C_fa^{-s}\norm{\xi}_s)^{\text{\tiny$1-\frac{r}{\ell}$}}(\norm{\xi}_{\ell}+\norm{\pi_{\mathfrak{u}}(f\circ a^{-1})\xi}_{\ell})^{\text{\tiny$\frac{r}{\ell}$}}\notag\\
 &\overset{\text{(3)}}{\leq} C_{\ell,f}(a^{-s}\norm{\xi}_s)^{\text{\tiny$1-\frac{r}{\ell}$}}(\norm{\xi}_{\ell}+C_{f,\ell}\norm{\xi}_s)^{\text{\tiny$\frac{r}{\ell}$}}\notag\\
 &\leq C_{\ell,f,1}\big(a^{-s}\norm{\xi}_s\big)^{\text{\tiny$1-\frac{r}{\ell}$}}\norm{\xi}^{\text{\tiny$\frac{r}{\ell}$}}_{\ell}
 \end{align*}
  Here in $(1)$ we use  interpolation inequalities (see \cite{Hamilton}); in $(2)$ we use \eqref{for:145} of Lemma \ref{le:7}; in $(3)$ we use
  \eqref{for:7}.  Hence we finish the proof.
\end{proof}

\subsection{Construction of smooth vectors}\label{sec:10} In this part  we show a  general construction of smooth vectors using
the directional  smoothing operators.  Recall that $S$ is unipotent and is isomorphic to $\RR^m$.

The following lemma shows that if a vector only loses smoothness along $S$-directions, then after applying the directional  smoothing operator along $S$, we will have a
globally smooth vector.
\begin{lemma}\label{cor:3}
Suppose $Q$ is a subgroup of $H$ such that $\mathfrak{h}=\text{Lie}(S)\oplus\text{Lie}(Q)$. Choose $f\in \mathcal{S}(\RR^m)$.  Then for any $\xi\in \mathcal{H}^s_{Q}$, $s\geq0$ and any $a\geq1$, the vector
\begin{align*}
 \xi'=\pi_{\mathfrak{u}}(f\circ a^{-1})\xi
\end{align*}
is in $\mathcal{H}^s$ with estimates
 \begin{align}\label{for:100}
   \norm{\xi'}_{l}\leq C_{f,l}(\norm{\xi}_{Q,l}+a^l\norm{\xi})
 \end{align}
for any $0\leq l\leq s$; and
\begin{align}\label{for:101}
   \norm{\xi'}_{Q,l}\leq C_{f,l}\norm{\xi}_{Q,l},\qquad \forall\, 0\leq l\leq s.
 \end{align}

\end{lemma}
\begin{proof} Let $\mathfrak{q}=\{w_1,\cdots, w_{\dim(\text{Lie}(Q))}\}$ be a basis of $\text{Lie}(Q)$. We recall that $\mathfrak{u}=\{\mathfrak{u}_1,\cdots,\mathfrak{u}_m\}$ is a basis of  $\text{Lie}(S)$. Denote by $\mathcal{U}(\text{Lie}(Q))$ the universal enveloping algebra of $\text{Lie}(Q)$,
with its usual filtration $\{\mathcal{U}_{n}(\text{Lie}(Q))\}_{n\geq0}$. Set
\begin{align*}
 \mathfrak{f}(f)=\{f^{[j]}\cdot (t\textrm{i})^k:\,j,\,k\in\ZZ_{0,+}^m\}.
\end{align*}
\emph{Step 1:} We show that: for any $q\geq1$ and any $v_i\in \mathfrak{q}$, $1\leq i\leq q$, we have
\begin{align*}
 &v_q\cdots v_2v_1\pi_{\mathfrak{u}}(f\circ a^{-1})=\sum_{j} c_j a^{\delta_j} \pi_{\mathfrak{u}}(f_j\circ a^{-1})u_j
 \end{align*}
where $c_j\in\RR$,  $\delta_j\leq0$, $f_j\in \mathfrak{f}(f)$ and $u_j\in \mathcal{U}_{q}(\text{Lie}(Q))$ for each $j$.

We prove by induction. By \eqref{for:2007} of Lemma \ref{le:2} we have
\begin{align}\label{for:69}
  v_1\pi_{\mathfrak{u}}(f\circ a^{-1})=\pi_{\mathfrak{u}}(f\circ a^{-1})v_1+\sum_{j}d_ja^{l_j}\pi_{\mathfrak{u}}(g_j\circ a^{-1})
  u_j'.
\end{align}
where $d_j\in\RR$, $l_j\leq  -1$, $g_j\in \mathfrak{f}(f)$ and $u_j'\in \mathfrak{h}$ for each $j$.

Since $\mathfrak{h}=\text{Lie}(S)\oplus\text{Lie}(Q)$, for each $j$ we can write
\begin{align}\label{for:70}
 u_j'=\sum_{i=1}^mb_{j,i}\mathfrak{u}_i+u_{j,1}
\end{align}
where $u_{j,1}\in \text{Lie}(Q)$. From \eqref{for:69} and \eqref{for:70} we have
\begin{align*}
  v_1\pi_{\mathfrak{u}}(f\circ a^{-1})&=\pi_{\mathfrak{u}}(f\circ a^{-1})v_1+\sum_{j}d_ja^{l_j}\pi_{\mathfrak{u}}(g_j\circ a^{-1})
  u_{j,1}\\
  &+\sum_{i=1}^m\sum_{j}b_{j,i}d_ja^{l_j}\pi_{\mathfrak{u}}(g_j\circ a^{-1})\mathfrak{u}_i\\
  &\overset{(1)}{=}\pi_{\mathfrak{u}}(f\circ a^{-1})v_1+\sum_{j}d_ja^{l_j}\pi_{\mathfrak{u}}(g_j\circ a^{-1})
  u_{j,1}\\
  &+\sum_{i=1}^m\sum_{j}b_{j,i}d_ja^{l_j+1}\pi_{\mathfrak{u}}(g_{j,i}\circ a^{-1}).
\end{align*}
Here in $(1)$ we use \eqref{for:41}, where $g_{j,i}(t)=g_j(t)(t_i \textrm{i})\in \mathfrak{f}(f)$.

Hence we finish the proof for the case of $q=1$. Assume it holds for $q=p$. Suppose $v_i\in \mathfrak{q}$, $1\leq i\leq p+1$. By the induction assumption, we have
\begin{align*}
 v_p\cdots v_2v_1\pi_{\mathfrak{u}}(f\circ a^{-1})=\sum_{j} c_j a^{\delta_j} \pi_{\mathfrak{u}}(f_j\circ a^{-1})u_j
 \end{align*}
where $c_j\in\RR$,  $\delta_j\leq0$, $f_j\in \mathfrak{f}(f)$ and $u_j\in \mathcal{U}_{p}(\text{Lie}(Q))$ for each $j$. Hence,
\begin{align}\label{for:71}
v_{p+1} v_p\cdots v_2v_1\pi_{\mathfrak{u}}(f\circ a^{-1})=\sum_{j} c_j a^{\delta_j} v_{p+1}\pi_{\mathfrak{u}}(f_j\circ a^{-1})u_j.
 \end{align}
Since $v_{p+1}\in \mathfrak{q}$, by the induction assumption for $q=1$, for each $j$ we have
\begin{align*}
 v_{p+1}\pi_{\mathfrak{u}}(f_j\circ a^{-1})=\sum_{i} d_{j,i} a^{\delta_{j,i}} \pi_{\mathfrak{u}}(f_{j,i}\circ a^{-1})u_{j,i}
\end{align*}
where $d_{j,i}\in\RR$, $\delta_{j,i}\leq 0$, $f_{j,i}\in \mathfrak{f}(f_j)$ and $u_{j,i}\in \text{Lie}(Q)$ for each $i$.

Since $f_j\in \mathfrak{f}(f)$ for each $j$, for each $f_{j,i}$ we can write
\begin{align*}
 f_{j,i}=\sum_k l_k f_{j,i,k}
\end{align*}
where $l_k\in\RR$ and $f_{j,i,k}\in \mathfrak{f}(f)$. Hence we have
\begin{align}\label{for:72}
 v_{p+1}\pi_{\mathfrak{u}}(f_j\circ a^{-1})=\sum_k\sum_{i} l_kd_{j,i} a^{\delta_{j,i}} \pi_{\mathfrak{u}}(f_{j,i,k}\circ a^{-1})u_{j,i},\quad \forall\,j.
\end{align}
It follows from \eqref{for:71} and \eqref{for:72} that
\begin{align*}
 v_{p+1}v_p&\cdots v_2v_1\pi_{\mathfrak{u}}(f\circ a^{-1})=\sum_k\sum_{i}\sum_{j} l_kc_j d_{j,i}a^{\delta_j+\delta_{j,i}}\pi_{\mathfrak{u}}(f_{j,i,k}\circ a^{-1})u_{j,i}u_j.
\end{align*}
We note that $\delta_j+\delta_{j,i}\leq 0$ and $u_{j,i}u_j\in \mathcal{U}_{p+1}(\text{Lie}(Q))$. Then we finish the proof for $q=p+1$. Hence we get the result.

\smallskip
\emph{Step 2:} We show that \eqref{for:101} holds.

For any $v_i\in \mathfrak{q}$, $1\leq i\leq l$, by using Step 1 we have
\begin{align*}
 &v_l\cdots v_2v_1\pi_{\mathfrak{u}}(f\circ a^{-1})=\sum_{j} c_j a^{\delta_j} \pi_{\mathfrak{u}}(f_j\circ a^{-1})u_j
 \end{align*}
where $c_j\in\RR$,  $\delta_j\leq0$, $f_j\in \mathfrak{f}(f)$ and $u_j\in \mathcal{U}_{l}(\text{Lie}(Q))$ for each $j$. It follows that
\begin{align*}
 \norm{v_l\cdots v_2v_1\xi'}\leq \sum_{j} |c_j|\norm{\pi_{\mathfrak{u}}(f_j\circ a^{-1})(u_j\xi)}\overset{(*)}{\leq} \sum_{j}C_{f,j} |c_j|\norm{u_j\xi}\leq C_{f,l}\norm{\xi}_{Q,l}.
\end{align*}
Here in $(*)$ we use \eqref{for:7} of Corollary \ref{cor:1}. This implies \eqref{for:101}.

\smallskip
\emph{Step 3:} We show that \eqref{for:100} holds.

From \eqref{for:63} of Lemma \ref{le:7} we have
\begin{align}\label{for:75}
 \norm{\xi'}_{S,l}\leq C_{f,l}a^l\norm{\xi},\qquad \forall\,l\geq0.
\end{align}
Since $\mathfrak{h}=\text{Lie}(S)\oplus\text{Lie}(Q)$, \eqref{for:75}, \eqref{for:101} and Theorem \ref{th:4} imply that $\xi'\in\mathcal{H}^s$ with estimates
 \begin{align*}
   \norm{\xi'}_{l}\leq C_{l}\norm{\xi'}_{S,l}+C_\ell\norm{\xi'}_{Q,l}\leq C_{f,l}(\norm{\xi}_{Q,l}+a^l\norm{\xi})
 \end{align*}
for any $0\leq l\leq s$. Hence we finish the proof.
\end{proof}
We will need the following corollary of Lemma \ref{cor:3}, if $\mathfrak{u}$ splits into two subsets.
\begin{corollary}\label{cor:4} Suppose $Q$ is a subgroup of $H$ such that $\mathfrak{h}=\text{Lie}(S)\oplus\text{Lie}(Q)$.  Also suppose $\mathfrak{o}_i$, $i=1,2$ are subsets of $\mathfrak{u}$ such that $\mathfrak{u}=\mathfrak{o}_1\cup \mathfrak{o}_2$ (we recall that $\mathfrak{u}=\{\mathfrak{u}_1,\cdots,\mathfrak{u}_m\}$ is a  basis of $\text{Lie}(S)$).
Choose $f_1$ related to $\mathfrak{o}_1$ and $f_2$ related to $\mathfrak{o}_2$ (see  \eqref{de:2} of Section \ref{sec:39}).
Then for any $\xi\in \mathcal{H}^s_{Q}$, $s\geq0$ and any $a>1$, the vector
\begin{align*}
 \xi'=\pi_{\mathfrak{o}_1}(f_1\circ a^{-1})\pi_{\mathfrak{o}_2}(f_2\circ a^{-1})\xi
\end{align*}
is in $\mathcal{H}^s$ with estimates
 \begin{align*}
   \norm{\xi'}_{l}\leq C_{f_1f_2,l}(\norm{\xi}_{Q,l}+a^l\norm{\xi})
 \end{align*}
for any $0\leq l\leq s$; and
\begin{align*}
   \norm{\xi'}_{Q,l}\leq C_{f_1f_2,l}\norm{\xi}_{Q,l},\qquad \forall\, 0\leq l\leq s.
 \end{align*}

\end{corollary}
\begin{proof} $f_1$ (resp. $f_2$) can be naturally extended to a function which is free on $\mathfrak{o}_2$ (resp. $\mathfrak{o}_1$) (see \eqref{de:2} of Section \ref{sec:39}).   We note that
\begin{align*}
 \pi_{\mathfrak{o}_2}(f_2\circ a^{-1})\pi_{\mathfrak{o}_1}(f_1\circ a^{-1})=\pi_{\mathfrak{u}}\big((f_2f_1)\circ a^{-1}\big).
\end{align*}
It is clear that $f_2f_1\in \mathcal{S}(\RR^m)$. Then the result follows from Lemma \ref{cor:3}.

\end{proof}
\subsection{Applications of directional smoothing operators} Throughout this subsection $(\pi,\mathcal{H})$ denotes a unitary representation of $\GG$. Recall Examples \ref{ex:4} and \ref{ex:5} in Section \ref{sec:50}. Let
\begin{align*}
 \mathbb{S}_{0}=\{S_{0,U},\,\exp(\mathfrak{V})\}\quad\text{and}\quad \mathbb{S}_{1}=\{S_{1,U},\exp(\mathfrak{V})\}.
\end{align*}
Then they are subgroups of $\GG$ with following forms:
\begin{gather*}
\mathbb{S}_{0}=\begin{pmatrix}M_{2,2} & \vline & M_{2,n-2}\\
\hline 0 &\vline &M_{n-2,n-2} \end{pmatrix}\times \mathbb{G}_2\times \cdots\times \mathbb{G}_k, \\
\mathbb{S}_{1}=\begin{pmatrix}
\begin{matrix}a&b\\
0&c
\end{matrix} &  \vline & M_{2,n-2}\\
 \hline  0& \vline & M_{n-2,n-2} \end{pmatrix}\times \mathbb{G}_2\times \cdots\times \mathbb{G}_k,
\end{gather*}
where $M_{m,k}$ denotes the set of $m\times k$ matrices and $a,b,c\in\RR$.

The following  corollary tells us how to construct a globally smooth vector from a vector which is only partially smooth on $S_{1,U}$.
  Choose $f_1$ related to $\mathfrak{V}$ (see \eqref{de:2} of Section \ref{sec:39}), $f_2$ related to $\mathfrak{W}$ and $f_3$ related to $\mathfrak{V}_1$. Then
$f_1$ is also related to $\mathfrak{U}$, and $f_3$ is also related to $\mathfrak{U}_i$ and $\mathfrak{V}_i$, $i=1,2$.

\begin{corollary}\label{cor:9} Suppose $a>1$, $\xi\in \mathcal{H}^s_{S_{1,U}}$, $s\geq 0$.
\begin{enumerate}
  \item\label{for:26} Let
  \begin{align*}
    \xi'=\pi_{\mathfrak{U}}(f_1\circ a^{-1})\pi_{\mathfrak{W}}(f_2\circ a^{-1})\pi_{\mathfrak{V}}(f_1\circ a^{-1})\xi.
  \end{align*}
  Then $\xi'\in\mathcal{H}^s$ with estimates
  \begin{align*}
   \norm{\xi'}_{l}&\leq C_{l,f_1,f_2}(\norm{\xi}_{S_{1},l}+a^l\norm{\xi}),\qquad 0\leq l\leq s.
  \end{align*}
  \item\label{for:28} Let
  \begin{align*}
   \xi''=\pi_{\mathfrak{U}^1}(f_3\circ a^{-1})\pi_{\mathfrak{U}^2}(f_3\circ a^{-1})\pi_{\mathfrak{W}}(f_2\circ a^{-1})\pi_{\mathfrak{V}^2}(f_3\circ a^{-1})\pi_{\mathfrak{V}^1}(f_3\circ a^{-1})\xi.
  \end{align*}
  Then $\xi''\in\mathcal{H}^s$ with estimates
\begin{align}
 \norm{\xi''}_{l}&\leq C_{l,f_2,f_{3}}(\norm{\xi}_{S_{1},l}+a^l\norm{\xi}),\qquad 0\leq l\leq s.
\end{align}
\end{enumerate}

\end{corollary}

\begin{proof} \eqref{for:26}: Let
\begin{align*}
  \xi_1=\pi_{\mathfrak{V}}(f_1\circ a^{-1})\xi\quad\text{and}\quad\xi_2=\pi_{\mathfrak{W}}(f_2\circ a^{-1})\xi_1.
\end{align*}
Firstly, we show that $\xi_1\in \mathcal{H}_{\mathbb{S}_{1}}^s$ with estimates
\begin{align}\label{for:79}
 \norm{\xi_1}_{\mathbb{S}_{1},l}&\leq C_{l,f_1}(\norm{\xi}_{S_{1},l}+a^l\norm{\xi}),\qquad 0\leq l\leq s.
\end{align}
To to so, let $H=\mathbb{S}_{1}$, $Q=S_{1,U}$ and $S=\exp(\mathfrak{V})$. Then the result follows from \eqref{for:100} of Lemma \ref{cor:3}.

Secondly, we show that $\xi_2\in \mathcal{H}_{\mathbb{S}_{0}}^s$ with estimates
\begin{align}\label{for:80}
 \norm{\xi_2}_{\mathbb{S}_{0},l}&\leq C_{l,f_1,f_2}(\norm{\xi}_{S_{1},l}+a^l\norm{\xi}),\qquad 0\leq l\leq s.
\end{align}
To to so, let $H=\mathbb{S}_{0}$, $Q=\mathbb{S}_{1}$ and $S=\exp(\mathfrak{W})$. By \eqref{for:100} of Lemma \ref{cor:3} we have
\begin{align*}
 \norm{\xi_2}_{\mathbb{S}_{0},l}&\leq C_{l,f_2}(\norm{\xi_1}_{\mathbb{S}_{1},l}+a^l\norm{\xi_1})\overset{(1)}{\leq} C_{l,f_2,f_1}(\norm{\xi}_{S_{1},l}+a^l\norm{\xi})
\end{align*}
for any $0\leq l\leq s$. Here in $(1)$ we use \eqref{for:79}. Hence we get \eqref{for:80}.

We note that
\begin{align*}
 \xi'=\pi_{\mathfrak{U}}(f_1\circ a^{-1})\xi_2.
\end{align*}
Finally, let $H=\GG$, $Q=\mathbb{S}_{0}$ and $S=\exp(\mathfrak{U})$.
By \eqref{for:100} of Lemma \ref{cor:3} we see that $\xi'\in \mathcal{H}^s$ with estimates
\begin{align*}
 \norm{\xi'}_{l}&\leq C_{l,f_1}(\norm{\xi_2}_{\mathbb{S}_{0},l}+a^l\norm{\xi_2})\overset{(1)}{\leq} C_{l,f_2,f_1}(\norm{\xi}_{S_{1},l}+a^l\norm{\xi})
\end{align*}
for any $0\leq l\leq s$. Here in $(1)$ we use \eqref{for:80}. Hence we get \eqref{for:26}.

\medskip
\eqref{for:28}: Let
\begin{align*}
  \xi_3=\pi_{\mathfrak{V}^2}(f_3\circ a^{-1})\pi_{\mathfrak{V}^1}(f_3\circ a^{-1})\xi\quad\text{and}\quad\xi_4=\pi_{\mathfrak{W}}(f_2\circ a^{-1})\xi_3.
\end{align*}
Let $H=\mathbb{S}_{1}$, $Q=S_{1,U}$, $S=\exp(\mathfrak{V})$, $\mathfrak{o}_i=\mathfrak{V}^i$, $i=1,2$. By Corollary \ref{cor:4}, we see that $\xi_3\in \mathcal{H}_{\mathbb{S}_{1}}^s$ with estimates
\begin{align}\label{for:59}
 \norm{\xi_3}_{\mathbb{S}_{1},l}&\leq C_{l,f_3}(\norm{\xi}_{S_{1},l}+a^l\norm{\xi}),\qquad 0\leq l\leq s.
\end{align}
Let $H=\mathbb{S}_{0}$, $Q=\mathbb{S}_{1}$ and $S=\exp(\mathfrak{W})$. By \eqref{for:100} of Lemma \ref{cor:3} we have
\begin{align}\label{for:87}
 \norm{\xi_4}_{\mathbb{S}_{0},l}&\leq C_{l,f_2}(\norm{\xi_3}_{\mathbb{S}_{1},l}+a^l\norm{\xi_3})\overset{(1)}{\leq} C_{l,f_2,f_3}(\norm{\xi}_{S_{1},l}+a^l\norm{\xi})
\end{align}
for any $0\leq l\leq s$. Here in $(1)$ we use \eqref{for:59}.

We note that
\begin{align*}
 \xi''=\pi_{\mathfrak{U}^1}(f_3\circ a^{-1})\pi_{\mathfrak{U}^2}(f_3\circ a^{-1})\xi_4.
\end{align*}
Let $H=\GG$, $Q=\mathbb{S}_{0,U}$, $S=\exp(\mathfrak{U})$, $\mathfrak{o}_i=\mathfrak{U}^i$, $i=1,2$. By Corollary \ref{cor:4}, we see that
$\xi''\in \mathcal{H}^s$ with estimates
\begin{align*}
 \norm{\xi''}_{l}&\leq C_{l,f_3,f_2}(\norm{\xi_4}_{\mathbb{S}_{0},l}+a^l\norm{\xi_4})\overset{(1)}{\leq} C_{l,f_2,f_1}(\norm{\xi}_{S_{1},l}+a^l\norm{\xi})
\end{align*}
for any $0\leq l\leq s$. Here in $(1)$ we use \eqref{for:87}. Hence we get \eqref{for:28}.
\end{proof}

The following corollary says that if a vector is partially smooth on $\mathcal{J}$ (see Section \eqref{sec:37}), then after applying directional smoothing operators, the new vectors
are still partially smooth on $\mathcal{J}$.

We recall  $f_2$ is related to $\mathfrak{W}$. Choose $f_3$ related to $\mathfrak{V}^1$. Then
$f_3$ is also related to $\mathfrak{V}^2$, $\mathfrak{U}^1$ and $\mathfrak{U}^2$.
\begin{corollary}\label{cor:10} Suppose $a>1$, $\eta\in \mathcal{H}^s_{\mathcal{J}}$, $s\geq 0$. Set
\begin{gather*}
 \eta_{i}=\pi_{\mathfrak{U}^{i}}(f_3\circ a^{-1})\eta,\quad \eta'_{i}=\pi_{\mathfrak{V}^{i}}(f_3\circ a^{-1})\eta, \quad \eta''=\pi_{\mathfrak{W}}(f_2\circ a^{-1})\eta,
\end{gather*}
$i=1,2$.   Then $\eta_{i},\,\eta_{i}',\,\eta''\in\mathcal{H}^s_{\mathcal{J}}$, $i=1,2$ with estimates
  \begin{align}\label{for:90}
       \max_{i=1,2}\{\norm{\eta_i}_{\mathcal{J},l}, \norm{\eta'_i}_{\mathcal{J},l},\norm{\eta''}_{\mathcal{J},l}\}\leq C_{l,f_2,f_{3}}\norm{\eta}_{\mathcal{J},l},\qquad \,\,0\leq l\leq s.
      \end{align}

\end{corollary}

\begin{proof}

To prove that \eqref{for:90} holds for $\eta_{i},\,\eta_{i}'$, $i=1,2$, let $H=\{Q,\,S\}$ where $Q=\mathcal{J}$, $S=\exp(\mathfrak{C}^{i})$,  $i=1,\,2$ where $\mathfrak{C}$ stands for $\mathfrak{U}$ or $\mathfrak{V}$. We note that $H=\mathcal{J}\ltimes \exp(\mathfrak{C}^{i})$. Then by using \eqref{for:101} of Lemma \ref{cor:3} we get the result.

To prove that \eqref{for:90} holds for $\eta''$, let $H=\{Q,\,S\}$, where $Q=\mathcal{J}$, $S=\exp(\mathfrak{W})$.  We note that $H=\mathcal{J}\times \exp(\mathfrak{W})$. Then by using \eqref{for:101} of  Lemma \ref{cor:3} we get the result.

\end{proof}

\section{Construction of global splittings}\label{sec:22}  $E$ is a generating set of $\text{Lie}(A)$ and $\{E_0,U\}\subset E$ (see \eqref{for:286} of Section \ref{sec:47} for detailed descriptions of $E_0$ and other notations appeared in the proof). Set
\begin{align}\label{for:99}
 \varrho=\max\{\lambda\beta+\lambda_1, \text{\small$2$}\text{\tiny$(\frac{5}{2}$}\dim\mathfrak{g}+1)+\sigma_0\}
\end{align}
(see \eqref{for:179} for the reference of $\beta$, Theorem \ref{th:2} for $\lambda_1$ and \eqref{for:215} of Section \ref{sec:47} for $\sigma_0$).

\subsection{Main estimate} We recall notations in \eqref{for:214} of Section \ref{sec:47} and Section \ref{sec:25}. The following theorem is the central part of the global splitting argument.
\begin{theorem}\label{cor:7}
Suppose $\mathfrak{p}_v\in \text{Vect}^\infty(\mathcal{X})$, $v\in E$ satisfying
$\text{Ave}(\mathfrak{p}_v)=0$.  Set
\begin{align*}
 \mathcal{L}_{v}\mathfrak{p}_{u}-\mathcal{L}_{u}\mathfrak{p}_{v}= \mathfrak{w}_{v,u}
\end{align*}
 for any $v,\,u\in E$. For any $a>1$, there exist $\Theta,\,\mathcal{R}_v\in \text{Vect}^\infty(\mathcal{X})$, $v\in E$ satisfying $\text{Ave}(\Theta)=0$ and $\text{Ave}(\mathcal{R}_v)=0$ (see \eqref{for:44}) such that
\begin{align*}
 \mathfrak{p}_v=\mathcal{L}_{v}\Theta+\mathcal{R}_v
\end{align*}
with estimates: for all $v\in E$
\begin{align*}
 \max\{\norm{\Theta}_{C^r},\norm{\mathcal{R}_v}_{C^r}\}&\leq C_{r}(a^{r+\varrho}\norm{\mathfrak{p}}_{C^\varrho}+\norm{\mathfrak{p}}_{C^{r+\varrho}}),\quad \forall\,r\geq0;
\end{align*}
and
\begin{align*}
\norm{\mathcal{R}_v}_{C^0}&\leq C\norm{\mathfrak{w}}_{C^\varrho}
+C_{\ell}a^{2\varrho}\norm{\mathfrak{w}}_{C^\varrho}^{1-\frac{\varrho}{\ell}}(\norm{\mathfrak{p}}_{C^{\ell+\varrho}})^{\frac{\varrho}{\ell}}\notag\\
 &+C_{\ell}a^{2\varrho}(a^{\text{\tiny$-s$}}
 \norm{\mathfrak{p}}_{C^s})^{1-\frac{\varrho}{\ell}}(\norm{\mathfrak{p}}_{C^{\ell+\varrho}})^{\frac{\varrho}{\ell}}\notag\\
 &+C_{\ell}a^{2\varrho}(a^{-s}\norm{\mathfrak{p}}_{C^s})^{(\text{\tiny$1-\frac{\varrho}{\ell}$})^2}
 (\norm{\mathfrak{p}}_{C^{\ell+\varrho}})^{\frac{\varrho}{\ell}(2-\frac{\varrho}{\ell})}
\end{align*}
for any $s\geq \ell>\varrho$,  where we denote by
\begin{align*}
 \norm{\mathfrak{p}}_{C^r}=\max_{v\in E}\{\norm{\mathfrak{p}_{v}}_{C^r}\},\,\,\norm{\mathfrak{w}}_{C^r}=\max_{u,\,v\in E}\{\norm{\mathfrak{w}_{v,u}}_{C^r}\},\quad\forall\,r\geq0.
\end{align*}

\end{theorem}

To prove Theorem \ref{cor:7}, it suffices to prove the following Theorem \ref{th:8}. In fact, using the Sobolev embedding theorem, we translate estimates for Theorem \ref{th:8} to those in $C^m$ norms in Theorem \ref{cor:7}, which are then used in our iteration process. We recall the definition of $(\pi,\mathfrak{g}(\mathcal{O}))$, the extended representation of $(\pi,\mathcal{O})$ (\eqref{for:218} of Section \ref{sec:47}) in Section \ref{sec:2}.

\begin{theorem}\label{th:8}
Suppose $\mathfrak{p}_v\in \mathfrak{g}(\mathcal{O})^{\infty}$, $v\in E$. Set
\begin{align}\label{for:95}
 (v+\textrm{ad}_{v})\mathfrak{p}_{u}-(u+\textrm{ad}_{u})\mathfrak{p}_{v}= \mathfrak{w}_{v,u}
\end{align}
for $v,\,u\in E$. Then for any $a>1$ there exist $\Theta\in \mathfrak{g}(\mathcal{O})^{\infty}$ and $\mathcal{R}_v\in \mathfrak{g}(\mathcal{O})^{\infty}$ for each $v\in E$ such that
\begin{align*}
 \mathfrak{p}_v=(v+\textrm{ad}_{v})\Theta+\mathcal{R}_v
\end{align*}
with estimates: for any $v\in E$
\begin{align}\label{for:156}
 \max\{\norm{\Theta}_{l+\beta},\,\norm{\mathcal{R}_v}_{l+\beta}\}&\leq C_{l}(\norm{\mathfrak{p}}_{l+\varrho}+a^{l+\varrho} \norm{\mathfrak{p}}_{\varrho}),\qquad l\geq 0;
\end{align}
and
\begin{align}\label{for:157}
\norm{\mathcal{R}_v}_{\beta}&\leq C\norm{\mathfrak{w}}_{\varrho}
+C_{\ell}a^{2\varrho}\norm{\mathfrak{w}}_{\varrho}^{1-\frac{\varrho}{\ell}}(\norm{\mathfrak{p}}_{\ell+\varrho})^{\frac{\varrho}{\ell}}\notag\\
 &+C_{\ell}a^{2\varrho}(a^{\text{\tiny$-s$}}
 \norm{\mathfrak{p}}_{s})^{1-\frac{\varrho}{\ell}}(\norm{\mathfrak{p}}_{\ell+\varrho})^{\frac{\varrho}{\ell}}\notag\\
 &+C_{\ell}a^{2\varrho}(a^{-s}\norm{\mathfrak{p}}_s)^{(\text{\tiny$1-\frac{\varrho}{\ell}$})^2}
 (\norm{\mathfrak{p}}_{\ell+\varrho})^{\frac{\varrho}{\ell}(2-\frac{\varrho}{\ell})}
\end{align}
for any $s\geq\ell>\varrho$ (see \eqref{for:99} of Section \ref{sec:47}),   where we denote by
\begin{align*}
 \norm{\mathfrak{p}}_r=\max_{v\in E}\{\norm{\mathfrak{p}_{v}}_r\}, \,\, \norm{\mathfrak{w}}_r=\max_{u,\,v\in E}\{\norm{\mathfrak{w}_{v,u}}_r\}\quad\text{ for any }r\geq0.
\end{align*}
\emph{Note}. The constants in \eqref{for:157} are independent of $s$.
\end{theorem}
\begin{remark}
 In \eqref{for:157}, we use $\norm{\mathcal{R}_v}_{\beta}$ to subsequently bound bound  $\norm{\mathcal{R}_v}_{C^0}$ via the Sobolev embedding theorem (see \eqref{for:179}). This is why
we estimate $\norm{\mathcal{R}_v}_{\beta}$ instead of
$\norm{\mathcal{R}_v}$. Similarity, we estimate $\norm{\Theta}_{l+\beta}$ and $\norm{\mathcal{R}_v}_{l+\beta}$ instead of $\norm{\Theta}_{l}$ and $\norm{\mathcal{R}_v}_{l}$.

The estimates for both the solution $\Theta$ and the error $\mathcal{R}$ differ from the classical KAM ones.
For the former, there is a new term $a^{l+\varrho} \norm{\mathfrak{p}}_{\varrho}$ (see \eqref{for:156}).
For the latter,  we use two Sobolev orders $\ell+\varrho$ and $s$ of $\mathfrak{p}$ simultaneously to estimate $\norm{\mathcal{R}_v}_{\beta}$ (see \eqref{for:157}).
In fact, \eqref{for:157} would look much simpler if we let $s=\ell+\varrho$ (see \eqref{for:247}).

In Section \ref{sec:8} we will see that the new term $a^{l+\varrho} \norm{\mathfrak{p}}_{\varrho}$, which arises from applying the directional smoothing operators,  poses new
challenges in showing convergence of the KAM iteration. To overcome this difficulty, we introduce the two-orders trick to estimate $\mathcal{R}$.
\end{remark}

\subsection{Proof of Theorem \ref{th:8} when $\GG\neq \GG_1$}\label{sec:19}
In this case, $E_0=\{\textbf{v}\}$ and $\textbf{u}=U$ (see \eqref{for:290} of \eqref{for:286} of Section \ref{sec:47}). The following properties and notations will be used in the proof:
\begin{enumerate}

  \item We recall the definitions of the sets $\mathfrak{U}$, $\mathfrak{V}$, and $\mathfrak{W}$ in \eqref{for:287} of Section \ref{sec:47} and recall Examples \ref{ex:1} and  \ref{ex:5}.
We also recall the key algebraic \hyperlink{o.1}{Property (P) in case I} in Section \ref{sec:43}.

\smallskip
  \item\label{for:283} Fix $f_1$ related to $\mathfrak{V}$ (see \eqref{de:2} of Section \ref{sec:39}), $f_2$ related to $\mathfrak{W}$.
Then
$f_1$ is also related to $\mathfrak{U} $.
\end{enumerate}
We recommend that readers recall Example \ref{ex:1} for a clearer understanding of notations and refer to Section \ref{sec:45} for a better grasp of the underlying ideas.

\smallskip
\noindent\textbf{\emph{Step $1$}: Existence of $S_{1,U}$ splitting for $\textbf{u}$.} From \eqref{for:95} we have
\begin{align}\label{for:46}
 (\textbf{u}+\textrm{ad}_{\textbf{u}})\mathfrak{p}_{\textbf{v}}-(\textbf{v}+\textrm{ad}_{\textbf{v}})\mathfrak{p}_{\textbf{u}}= \mathfrak{w}_{\textbf{u},\textbf{v}}.
\end{align}
By Theorem \ref{th:10}, the extended regular representation $(\pi, \mathfrak{g}(\mathcal{O}))$ has a spectral gap restricted to each simple factor of $\GG$.
This together with \eqref{for:280} of \hyperlink{o.1}{Property (P) in case I} allow us to  apply Proposition \ref{po:1} to \eqref{for:46} by letting $u=U=\textbf{u}$, $v=\textbf{v}$.  Then  there exists $\eta\in \mathfrak{g}(\mathcal{O})_{S_{1,\textbf{u}}}^{\infty}$ with estimates
\begin{align}\label{for:61}
 \norm{\eta}_{S_1,l}\leq C_{l}\norm{\mathfrak{p}_{U},\,\mathfrak{p}_{\textbf{v}}}_{l+\sigma_0},\quad \forall\,l\geq0
\end{align}
such that
\begin{align}
 \mathcal{E}_{\textbf{u}}&=\mathfrak{p}_{\textbf{u}}-(\textbf{u}+\textrm{ad}_\textbf{u})\eta\qquad\text{and}\notag\\
 \mathcal{E}_{\textbf{v}}&=\mathfrak{p}_{\textbf{v}}-(\textbf{v}+\textrm{ad}_\textbf{v})\eta\label{for:105}
\end{align}
with the estimate
\begin{align}\label{for:10}
 \norm{\mathcal{E}_{\textbf{v}},\,\mathcal{E}_{\textbf{u}}}\overset{(1)}{\leq} C\norm{ \mathfrak{w}_{\textbf{u},\textbf{v}}}_{\sigma_0}.
\end{align}
Here in $(1)$ we use \eqref{for:281} by letting $t=0$.

\smallskip

\textbf{\emph{Step $2$}: Construction of $\Theta$ and $\mathcal{R}_v$ and obtaining estimates. }  Set
\begin{align}\label{for:133}
 \mathcal{P}=\pi_{\mathfrak{U}}(f_1\circ a^{-1})\pi_{\mathfrak{W}}(f_2\circ a^{-1})\pi_{\mathfrak{V}}(f_1\circ a^{-1})
\end{align}
(see \eqref{for:283}).  Let us define $\Theta$ as
\begin{align*}
 \Theta&=\mathcal{P}\eta.
\end{align*}
Then we define $\mathcal{R}_v$ as
\begin{align}\label{for:102}
 \mathcal{R}_v&=\mathfrak{p}_{v}-(v+\textrm{ad}_{v})\Theta,\qquad \forall\,v\in E.
\end{align}
The following estimates follow from the definition
of $\Theta$ and \eqref{for:26} of Corollary \ref{cor:9}
\begin{align}\label{for:66}
  \norm{\Theta}_{l}&\leq C_{l}(\norm{\eta}_{S_{1},l}+a^l\norm{\eta})\overset{(1)}{\leq}
  C_{l}( \norm{\mathfrak{p}}_{l+\sigma_0}+a^l \norm{\mathfrak{p}}_{\sigma_0})
\end{align}
for any $l\geq0$. Here in $(1)$ we use \eqref{for:61}.

Then we have
\begin{align}\label{for:86}
 \norm{\mathcal{R}_v}_{l}\leq \norm{\mathfrak{p}_{v}}_{l}+\norm{\Theta}_{l+1}\overset{\text{\tiny$(1)$}}{\leq} C_{l}( \norm{\mathfrak{p}}_{l+\sigma_0+1}+a^{l+1} \norm{\mathfrak{p}}_{\sigma_0}).
\end{align}
for any $l\geq0$. Here in $(1)$ we use \eqref{for:66}.

It follows from  \eqref{for:66} and \eqref{for:86} that
\begin{align*}
 &\max\{\norm{\Theta}_{l+\beta},\,\norm{\mathcal{R}_v}_{l+\beta}\}\\
 &\leq C_{l}
 \max\big\{\norm{\mathfrak{p}}_{l+\beta+\sigma_0}+a^{l+\beta} \norm{\mathfrak{p}}_{\sigma_0},\,
 \norm{\mathfrak{p}}_{l+\beta+\sigma_0+1}+a^{l+\beta+1} \norm{\mathfrak{p}}_{\sigma_0}\big\}\\
 &\overset{\text{\tiny$(1)$}}{\leq}C_{l}(\norm{\mathfrak{p}}_{l+\varrho}+a^{l+\varrho} \norm{\mathfrak{p}}_{\varrho})
\end{align*}
for any $l\geq0$. Here in $(1)$ we use the definition of $\varrho$ in  \eqref{for:99}. Hence we get \eqref{for:156}.

\smallskip
\textbf{\emph{Step $3$}: Estimates for $\norm{\mathcal{R}_\textbf{v}}$.} Before we estimate $\mathcal{R}_\textbf{v}$,  we need the following lemma:
 \begin{lemma}\label{for:106}
 $\mathcal{P}\textbf{v}=\textbf{v}\mathcal{P}$.
 \end{lemma}
\begin{proof} From \eqref{for:362} of \hyperlink{o.1}{Property (P) in case I}, keeping using \eqref{ob:3} of Lemma \ref{le:2}, we have
\begin{align*}
 \textbf{v}\mathcal{P}&=\textbf{v}\pi_{\mathfrak{U}}(f_1\circ a^{-1})\pi_{\mathfrak{W}}(f_2\circ a^{-1})\pi_{\mathfrak{V}}(f_1\circ a^{-1})\\
 &=\pi_{\mathfrak{U}}(f_1\circ a^{-1})\textbf{v}\pi_{\mathfrak{W}}(f_2\circ a^{-1})\pi_{\mathfrak{V}}(f_1\circ a^{-1})\\
 &=\pi_{\mathfrak{U}}(f_1\circ a^{-1})\pi_{\mathfrak{W}}(f_2\circ a^{-1})\textbf{v}\pi_{\mathfrak{V}}(f_1\circ a^{-1})\\
 &=\pi_{\mathfrak{U}}(f_1\circ a^{-1})\pi_{\mathfrak{W}}(f_2\circ a^{-1})\pi_{\mathfrak{V}}(f_1\circ a^{-1})\textbf{v}\\
 &=\mathcal{P}\textbf{v}.
\end{align*}
Then we finish the proof.
\end{proof}
By applying the operator $\mathcal{P}$ to both sides of \eqref{for:105} we have
\begin{align}\label{for:57}
 \mathcal{P}(\mathcal{E}_{\textbf{v}})&=\mathcal{P}(\mathfrak{p}_{\textbf{v}})-\mathcal{P}\big((\textbf{v}+\textrm{ad}_\textbf{v})\eta\big)\overset{\text{\tiny$(1)$}}{=}
 \mathcal{P}(\mathfrak{p}_{\textbf{v}})-(\textbf{v}+\textrm{ad}_\textbf{v})\mathcal{P}(\eta)\notag\\
 &=\mathcal{P}(\mathfrak{p}_{\textbf{v}})-(\textbf{v}+\textrm{ad}_\textbf{v})\Theta.
\end{align}
Here in $(1)$ we use Lemma \ref{for:106} and \eqref{for:2004} of Section \ref{sec:2}. It follows that
\begin{align*}
 \mathcal{R}_\textbf{v}&=\mathfrak{p}_{\textbf{v}}-(\textbf{v}+\textrm{ad}_\textbf{v})\Theta=\big(\mathfrak{p}_{\textbf{v}}-\mathcal{P}(\mathfrak{p}_{\textbf{v}})\big)
 +\big(\mathcal{P}(\mathfrak{p}_{\textbf{v}})-(\textbf{v}+\textrm{ad}_\textbf{v})\Theta\big)\\
 &\overset{\text{\tiny$(1)$}}{=}\big(\mathfrak{p}_{\textbf{v}}-\mathcal{P}(\mathfrak{p}_{\textbf{v}})\big)+\mathcal{P}(\mathcal{E}_{\textbf{v}}).
\end{align*}
Here in $(1)$ we use \eqref{for:57}.

Then we have
\begin{align}\label{for:197}
 \norm{\mathcal{R}_\textbf{v}}&\leq \norm{\mathfrak{p}_{\textbf{v}}-\mathcal{P}(\mathfrak{p}_{\textbf{v}})}+\norm{\mathcal{P}(\mathcal{E}_{\textbf{v}})}
 \overset{\text{\tiny$(1)$}}{\leq} \norm{\mathfrak{p}_{\textbf{v}}-\mathcal{P}(\mathfrak{p}_{\textbf{v}})}+C\norm{\mathcal{E}_{\textbf{v}}}\notag\\
 &\overset{\text{\tiny$(2)$}}{\leq}\norm{\mathfrak{p}_{\textbf{v}}-\mathcal{P}(\mathfrak{p}_{\textbf{v}})}+C\norm{ \mathfrak{w}_{\textbf{v},U}}_{\sigma_0}.
\end{align}
Here in $(1)$ we use \eqref{for:7} of Corollary \ref{cor:1}; in $(2)$ we use \eqref{for:10}.

Now we need to bound $\norm{\mathfrak{p}_{\textbf{v}}-\mathcal{P}(\mathfrak{p}_{\textbf{v}})}$. We note that
\begin{align}\label{for:43}
 \mathfrak{p}_{\textbf{v}}-\mathcal{P}(\mathfrak{p}_{\textbf{v}})&=\big(\mathfrak{p}_{\textbf{v}}-\pi_{\mathfrak{U}}(f_1\circ a^{-1})\mathfrak{p}_{\textbf{v}}\big)\notag\\
 &+\pi_{\mathfrak{U}}(f_1\circ a^{-1})\big(\mathfrak{p}_{\textbf{v}}-\pi_{\mathfrak{W}}(f_2\circ a^{-1})\mathfrak{p}_{\textbf{v}}\big)\notag\\
 &+\pi_{\mathfrak{U}}(f_1\circ a^{-1})\pi_{\mathfrak{W}}(f_2\circ a^{-1})\big(\mathfrak{p}_{\textbf{v}}-\pi_{\mathfrak{V}}(f_1\circ a^{-1})\mathfrak{p}_{\textbf{v}}\big).
\end{align}
Hence we have
\begin{align}\label{for:9}
 \|\mathfrak{p}_{\textbf{v}}-\mathcal{P}(\mathfrak{p}_{\textbf{v}})\|&\leq\|\mathfrak{p}_{\textbf{v}}-\pi_{\mathfrak{U}}(f_1\circ a^{-1})\mathfrak{p}_{\textbf{v}}\|\notag\\
 &+\big\|\pi_{\mathfrak{U}}(f_1\circ a^{-1})\big(\mathfrak{p}_{\textbf{v}}-\pi_{\mathfrak{W}}(f_2\circ a^{-1})\mathfrak{p}_{\textbf{v}}\big)\big\|\notag\\
 &+\Big\|\pi_{\mathfrak{U}}(f_1\circ a^{-1})\pi_{\mathfrak{W}}(f_2\circ a^{-1})\big(\mathfrak{p}_{\textbf{v}}-\pi_{\mathfrak{V}}(f_1\circ a^{-1})\mathfrak{p}_{\textbf{v}}\big)\Big\|\notag\\
 &\overset{\text{\tiny$(1)$}}{\leq}\|\mathfrak{p}_{\textbf{v}}-\pi_{\mathfrak{U}}(f_1\circ a^{-1})\mathfrak{p}_{\textbf{v}}\|+
 C\norm{\mathfrak{p}_{\textbf{v}}-\pi_{\mathfrak{W}}(f_2\circ a^{-1})\mathfrak{p}_{\textbf{v}}}\notag\\
 &+C\norm{\mathfrak{p}_{\textbf{v}}-\pi_{\mathfrak{V}}(f_1\circ a^{-1})\mathfrak{p}_{\textbf{v}}}\overset{\text{\tiny$(2)$}}{\leq} Ca^{\text{\tiny$-s$}}\norm{\mathfrak{p}_{\textbf{v}}}_{s}.
\end{align}
Here in $(1)$ we use \eqref{for:7} of Corollary \ref{cor:1}; in $(2)$ we use \eqref{for:145} of   Lemma  \ref{le:7}.

It follows from \eqref{for:197} and \eqref{for:9} that
\begin{align}\label{for:65}
  \norm{\mathcal{R}_\textbf{v}}&\leq C\norm{ \mathfrak{w}_{\textbf{v},U}}_{\sigma_0}+Ca^{\text{\tiny$-s$}}\norm{\mathfrak{p}}_s.
  \end{align}

\smallskip

\textbf{\emph{Step $4$}: Estimates for $\norm{\mathcal{R}_v}_{\beta}$, $v\in E$.} We use the higher rank trick in this part. We recall the twisted cocycle equation over $v$ and $\textbf{v}$ from \eqref{for:95}:
\begin{align*}
 (v+\textrm{ad}_{v})\mathfrak{p}_{\textbf{v}}-(\textbf{v}+\textrm{ad}_{\textbf{v}})\mathfrak{p}_{v}= \mathfrak{w}_{v,\textbf{v}}.
\end{align*}
We substitute  the expressions for $\mathfrak{p}_{\textbf{v}}$ and $\mathfrak{p}_{v}$
from \eqref{for:102} respectively into the above equation. Then we get
\begin{align*}
 (v+\textrm{ad}_{v})\big(\mathcal{R}_\textbf{v}+(\textbf{v}+\textrm{ad}_\textbf{v})\Theta\big)
 -(\textbf{v}+\textrm{ad}_{\textbf{v}})\big(\mathcal{R}_v+(v+\textrm{ad}_v)\Theta\big)= \mathfrak{w}_{v,\textbf{v}}
\end{align*}
for each $v\in E$. By using $[\textbf{v},v]=0$ we have
\begin{align}\label{for:132}
 (\textbf{v}+\textrm{ad}_{\textbf{v}})\mathcal{R}_v= -\mathfrak{w}_{v,\textbf{v}}+(v+\textrm{ad}_{v})\mathcal{R}_{\textbf{v}},\qquad \forall\,v\in E.
\end{align}
It follows from Theorem \ref{th:2}   that
\begin{align}\label{for:76}
 \norm{\mathcal{R}_v}_\beta&\leq C\norm{-\mathfrak{w}_{v,\textbf{v}}+(v+\textrm{ad}_{v})\mathcal{R}_{\textbf{v}}}_{\lambda\beta+\lambda_1}\notag\\
 &\overset{\text{\tiny$(0)$}}{\leq} C\norm{\mathfrak{w}}_{\varrho}+\norm{\mathcal{R}_{\textbf{v}}}_{\varrho}\notag\\
 &\overset{\text{\tiny$(1)$}}{\leq} C\norm{\mathfrak{w}}_{\varrho}+C_{\ell}
\norm{\mathcal{R}_{\textbf{v}}}^{1-\frac{\varrho}{\ell}}
(\norm{\mathcal{R}_{\textbf{v}}}_{\ell})^{\frac{\varrho}{\ell}}\notag\\
 &\overset{\text{\tiny$(2)$}}{\leq} C\norm{\mathfrak{w}}_{\varrho}+C_{\ell}(\norm{\mathfrak{w}}_{\sigma_0}+a^{\text{\tiny$-s$}}
 \norm{\mathfrak{p}}_{s})^{1-\frac{\varrho}{\ell}}( \norm{\mathfrak{p}}_{\ell+\sigma_0+1}+a^{\ell+1} \norm{\mathfrak{p}}_{\sigma_0})^{\frac{\varrho}{\ell}}\notag\\
 &\overset{\text{\tiny$(0)$}}{\leq} C\norm{\mathfrak{w}}_{\varrho}+C_{\ell}(\norm{\mathfrak{w}}_{\varrho}+a^{\text{\tiny$-s$}}
 \norm{\mathfrak{p}}_{s})^{1-\frac{\varrho}{\ell}}( \norm{\mathfrak{p}}_{\ell+\varrho}+a^{\ell+1} \norm{\mathfrak{p}}_{\varrho})^{\frac{\varrho}{\ell}}\notag\\
 &\leq C\norm{\mathfrak{w}}_{\varrho}+C_{\ell,1}a^{2\varrho}(\norm{\mathfrak{w}}_{\varrho}+a^{\text{\tiny$-s$}}
 \norm{\mathfrak{p}}_{s})^{1-\frac{\varrho}{\ell}}(\norm{\mathfrak{p}}_{\ell+\varrho})^{\frac{\varrho}{\ell}}\notag\\
 &\overset{\text{\tiny$(3)$}}{\leq} C\norm{\mathfrak{w}}_{\varrho}+C_{\ell,1}a^{2\varrho}\norm{\mathfrak{w}}_{\varrho}^{1-\frac{\varrho}{\ell}}(\norm{\mathfrak{p}}_{\ell+\varrho})^{\frac{\varrho}{\ell}}
 +C_{\ell,1}a^{2\varrho}(a^{\text{\tiny$-s$}}
 \norm{\mathfrak{p}}_{s})^{1-\frac{\varrho}{\ell}}(\norm{\mathfrak{p}}_{\ell+\varrho})^{\frac{\varrho}{\ell}}
\end{align}
for any $\ell\geq \varrho$.  Here in $(0)$ we use the definition of $\varrho$ in  \eqref{for:99}; in $(1)$ we use interpolation inequalities; and in $(2)$ we use \eqref{for:65} to estimate
$\norm{\mathcal{R}_{\textbf{v}}}$ and use \eqref{for:86} to estimate $\norm{\mathcal{R}_{\textbf{v}}}_{\ell}$; in $(3)$ we recall the inequality:
\begin{align}\label{for:217}
 (x+y)^c\leq x^c+y^c,\qquad \forall\, x,\,y>0,\,\,\,0<c<1.
\end{align}
\eqref{for:157} is from \eqref{for:76} (an extra term is from \eqref{for:33} of Section \ref{sec:20} as \eqref{for:157} is the maximum of \eqref{for:76} and \eqref{for:33}).  Then we finish the proof.

\subsection{Proof of Theorem \ref{th:8} when $\GG=\GG_1$}\label{sec:20}

In this case, $E_0=\{\textbf{u}_1,\,\textbf{u}_2\}$ and $\textbf{u}_3=U$ (see \eqref{for:291} of \eqref{for:286} of Section \ref{sec:47}).
 The following properties and notations will be used in the proof:
\begin{enumerate}
  \item We recall the definitions of the sets $\mathfrak{U}^i$, $\mathfrak{V}^i$, $i=1,\,2$ and $\mathfrak{W}$ in Section \ref{sec:43} and we call Example \ref{ex:2}.
We also recall the key algebraic \hyperlink{o.3}{Property (P) in case II} in Section \ref{sec:37}.

\smallskip

  \item\label{for:292} We recall Example \ref{ex:3} in Section \ref{sec:24}. We note that
      \begin{gather*}
 \mathcal{J}\subset L=\{C(G_{\textbf{u}_1}),G_{\textbf{u}_1}\}\cap S_{0,\textbf{u}_3}= S_{0,\textbf{u}_1}\cap S_{0,\textbf{u}_3}\,\text{ and }\\
 \mathcal{J}\subset S_{1,\textbf{u}_3}
 \end{gather*}
 ($\mathcal{J}$ is defined in \eqref{for:288} of \ref{sec:47} and described in Example \ref{ex:5} of Section \ref{sec:50}). Thus we have $\mathcal{J}\subset L\cap S_{1,\textbf{u}_3}$.
  \smallskip

  \item Fix $f_2$ related to $\mathfrak{W}$ (see \eqref{de:2} of Section \ref{sec:39}) and $f_3$ related to $\mathfrak{V}^1$. Then
$f_3$ is also related to $\mathfrak{V}^2$, $\mathfrak{U}^1$ and $\mathfrak{U}^2$.
\end{enumerate}
We recommend that readers recall Example \ref{ex:2} for a clearer understanding of notations and refer to Section \ref{sec:48} for a better grasp of the underlying ideas.

\smallskip
\textbf{\emph{Step $1$}: Existence of $S_{1,\textbf{u}_3}$ splitting for $\textbf{u}_1\in E_0$.} From \eqref{for:95} we have
\begin{align}\label{for:162}
(\textbf{u}_1+\textrm{ad}_{\textbf{u}_1})\mathfrak{p}_{\textbf{u}_3}-(\textbf{u}_3+\textrm{ad}_{\textbf{u}_3})\mathfrak{p}_{\textbf{u}_1}= \mathfrak{w}_{\textbf{u}_1,\textbf{u}_3}.
\end{align}
By Theorem \ref{th:10}, the extended regular representation $(\pi, \mathfrak{g}(\mathcal{O}))$ has a spectral gap.
This together with \eqref{for:284} of \hyperlink{o.3}{Property (P) in case II} allow us to  apply Proposition \ref{po:1} to \eqref{for:162} by letting $u=U=\textbf{u}_3$, $v=\textbf{u}_1$.
Then there exists $\eta\in \mathfrak{g}(\mathcal{O})_{S_{1,\textbf{u}_3}}^{\infty}$ with estimates
\begin{align}\label{for:58}
 \norm{\eta}_{S_1,l}\leq C_{l}\norm{\mathfrak{p}_{\textbf{u}_3},\,\mathfrak{p}_{\textbf{u}_1}}_{l+\sigma_0},\quad \forall\,l\geq0
\end{align}
such that
\begin{align}
\mathfrak{p}_{\textbf{u}_3}&=\mathcal{E}_{\textbf{u}_3}+(\textbf{u}_3+\textrm{ad}_{\textbf{u}_3})\eta, \notag\\
  \mathfrak{p}_{\textbf{u}_1}&=\mathcal{E}_{\textbf{u}_1}+(\textbf{u}_1+\textrm{ad}_{\textbf{u}_1})\eta \label{for:104}
\end{align}
with estimates
\begin{align}\label{for:152}
 \norm{\mathcal{E}_{\textbf{u}_3},\,\mathcal{E}_{\textbf{u}_1}}_{\mathcal{J},l}\overset{(1)}{\leq} \norm{\mathcal{E}_{\textbf{u}_3},\,\mathcal{E}_{\textbf{u}_1}}_{L\cap S_{1,\textbf{u}_3},l}\leq C_l\norm{ \mathfrak{w}_{\textbf{u}_1,\textbf{u}_3}}_{l+\sigma_0},\quad \forall\,l\geq0.
\end{align}
Here in $(1)$ we use \eqref{for:292}.

\smallskip
\textbf{\emph{Step $2$}: Construction of $\Theta$ and $\mathcal{R}_v$, $v\in E$.} Set
\begin{align}\label{for:154}
 \mathcal{P}=\pi_{\mathfrak{U}^1}(f_3\circ a^{-1})\pi_{\mathfrak{U}^2}(f_3\circ a^{-1})\pi_{\mathfrak{W}}(f_2\circ a^{-1})\pi_{\mathfrak{V}^2}(f_3\circ a^{-1})\pi_{\mathfrak{V}^1}(f_3\circ a^{-1}).
\end{align}
Let us define $\Theta$ as
\begin{align*}
 \Theta&=\mathcal{P}\eta.
\end{align*}
Then we define $\mathcal{R}_v$ as
\begin{align}\label{for:103}
 \mathcal{R}_v&=\mathfrak{p}_{v}-(v+\textrm{ad}_{v})\Theta,\qquad \forall\,v\in E.
\end{align}
 The following estimates follow from the definition
of $\Theta$ and \eqref{for:28} of Corollary \ref{cor:9}
\begin{align}\label{for:110}
  \norm{\Theta}_{l}&\leq C_{l}(\norm{\eta}_{S_{1},l}+a^l\norm{\eta})\overset{\text{\tiny$(1)$}}{\leq}
  C_{l}( \norm{\mathfrak{p}}_{l+\sigma_0}+a^l \norm{\mathfrak{p}}_{\sigma_0})
\end{align}
for any $l\geq0$. Here in $(1)$ we use \eqref{for:58}.

Then we have
\begin{align}\label{for:112}
 \norm{\mathcal{R}_v}_{l}\leq \norm{\mathfrak{p}_{v}}_{l}+\norm{\Theta}_{l+1}\overset{\text{\tiny$(1)$}}{\leq} C_{l}( \norm{\mathfrak{p}}_{l+\sigma_0+1}+a^{l+1} \norm{\mathfrak{p}}_{\sigma_0})
\end{align}
for any $l\geq0$. Here in $(1)$ we use \eqref{for:110}.

It follows from  \eqref{for:110} and \eqref{for:112} that
\begin{align*}
 &\max\{\norm{\Theta}_{l+\beta},\,\norm{\mathcal{R}_v}_{l+\beta}\}\\
 &\leq C_{l}
 \max\big\{\norm{\mathfrak{p}}_{l+\beta+\sigma_0}+a^{l+\beta} \norm{\mathfrak{p}}_{\sigma_0},\,
 \norm{\mathfrak{p}}_{l+\beta+\sigma_0+1}+a^{l+\beta+1} \norm{\mathfrak{p}}_{\sigma_0}\big\}\\
 &\overset{\text{\tiny$(1)$}}{\leq}C_{l}(\norm{\mathfrak{p}}_{l+\varrho}+a^{l+\varrho} \norm{\mathfrak{p}}_{\varrho})
\end{align*}
for any $l\geq0$. Here in $(1)$ we use the definition of $\varrho$ in  \eqref{for:99}. Hence we get \eqref{for:156}.

\smallskip

\textbf{\emph{Step $3$}: Construction of $\eta_i$ and $\mathfrak{R}_{\textbf{u}_i,j}$.} Let us define $\mathcal{P}_i$, $i=1,2,3$ as
\begin{align}\label{for:2009}
 \mathcal{P}_1=\pi_{\mathfrak{V}^1}(f_3\circ a^{-1}),\quad&\mathcal{P}_2=\pi_{\mathfrak{U}^2}(f_3\circ a^{-1})\pi_{\mathfrak{W}}(f_2\circ a^{-1})\pi_{\mathfrak{V}^2}(f_3\circ a^{-1})\notag\\
 &\mathcal{P}_3=\pi_{\mathfrak{U}^1}(f_3\circ a^{-1});
\end{align}
and define $\eta_i$, $i=1,2$ as
, $i=1,2$ as
\begin{align}
\eta_1&=\mathcal{P}_1\eta, \quad \eta_2=\mathcal{P}_2\eta_1=\mathcal{P}_2\mathcal{P}_1\eta.\label{for:85}
\end{align}
It is clear that
\begin{align}\label{for:2010}
  \Theta=\mathcal{P}_3\eta_2=\mathcal{P}_3\mathcal{P}_2\mathcal{P}_1\eta.
\end{align}
Then we define
\begin{align}\label{for:115}
 \mathfrak{R}_{\textbf{u}_i,j}=\mathfrak{p}_{\textbf{u}_i}-(\textbf{u}_i+\textrm{ad}_{\textbf{u}_i})\eta_j,\quad i=1,2,\,\,j=1,2.
\end{align}
From \eqref{for:363} of \hyperlink{o.3}{Property (P) in case II}, by using \eqref{ob:3} of Lemma \ref{le:2} we have
\begin{align}
  \mathcal{P}_1\textbf{u}_1&=\textbf{u}_1\mathcal{P}_1;\qquad\text{and}\label{for:134}\\
  \mathcal{P}_2\textbf{u}_2&=\textbf{u}_2\mathcal{P}_2;\qquad\text{and}\label{for:135}\\
  \mathcal{P}_3\textbf{u}_1&=\textbf{u}_1\mathcal{P}_3.\label{for:161}
\end{align}
At the end of part, we define a new function, whose estimates will be frequently used later:
\begin{align}\label{for:243}
 \Psi(l)&=\norm{\mathfrak{p}_{\textbf{u}_1}-\pi_{\mathfrak{V}^1}(f_3\circ a^{-1})\mathfrak{p}_{\textbf{u}_1}}_{l}+\norm{\mathfrak{p}_{\textbf{u}_2}-\pi_{\mathfrak{U}^2}(f_3\circ a^{-1})\mathfrak{p}_{\textbf{u}_2}}_l\notag\\
 &+
 \norm{\mathfrak{p}_{\textbf{u}_2}-\pi_{\mathfrak{W}}(f_2\circ a^{-1})\mathfrak{p}_{\textbf{u}_2}}_l
 +
 \norm{\mathfrak{p}_{\textbf{u}_2}-\pi_{\mathfrak{V}^2}(f_3\circ a^{-1})\mathfrak{p}_{\textbf{u}_2}}_l\notag\\
 &+\norm{\mathfrak{p}_{\textbf{u}_1}-\pi_{\mathfrak{U}^1}(f_3\circ a^{-1})\mathfrak{p}_{\textbf{u}_1}}_{l}.
\end{align}
Then by \eqref{for:146} of Corollary \ref{cor:1}, we have
\begin{align}\label{for:148}
  \Psi(\varrho)\leq C\big(a^{-s}\norm{\xi}_s\big)^{\text{\tiny$1-\frac{\varrho}{\ell}$}}\norm{\xi}^{\text{\tiny$\frac{\varrho}{\ell}$}}_{\ell}
\end{align}
for any $\ell\geq\varrho$.

\smallskip
\textbf{\emph{Step $4$}: Estimates for $\mathfrak{R}_{\textbf{u}_1,1}$. } By applying the operator $\mathcal{P}_1=\pi_{\mathfrak{V}^1}(f_3\circ a^{-1})$ to both sides of \eqref{for:104} we have
\begin{align}\label{for:74}
\mathcal{P}_1\mathfrak{p}_{\textbf{u}_1}&=
\mathcal{P}_1\big((\textbf{u}_1+\textrm{ad}_{\textbf{u}_1})\eta\big)+\mathcal{P}_1\mathcal{E}_{\textbf{u}_1}
\overset{\text{\tiny$(1)$}}{=}(\textbf{u}_1+\textrm{ad}_{\textbf{u}_1})(\mathcal{P}_1\eta)+\mathcal{P}_1\mathcal{E}_{\textbf{u}_1}\notag\\
&=(\textbf{u}_1+\textrm{ad}_{\textbf{u}_1})\eta_1+\mathcal{P}_1\mathcal{E}_{\textbf{u}_1}.
\end{align}
Here in $(1)$ we use \eqref{for:134}.

It follows that
\begin{align*}
\mathfrak{R}_{\textbf{u}_1,1}&=\mathfrak{p}_{\textbf{u}_1}-(\textbf{u}_1+\textrm{ad}_{\textbf{u}_1})\eta_1\\
 &=
 \big(\mathfrak{p}_{\textbf{u}_1}-\mathcal{P}_1\mathfrak{p}_{\textbf{u}_1}\big)
 +\big(\mathcal{P}_1\mathfrak{p}_{\textbf{u}_1}-(\textbf{u}_1+\textrm{ad}_{\textbf{u}_1})\eta_1\big)\notag\\
 &\overset{\text{\tiny$(1)$}}{=}\big(\mathfrak{p}_{\textbf{u}_1}-\mathcal{P}_1\mathfrak{p}_{\textbf{u}_1}\big)
 +\mathcal{P}_1\mathcal{E}_{\textbf{u}_1}.\notag
\end{align*}
Here in $(1)$ we use notation \eqref{for:74}.

Then we have
\begin{align}\label{for:120}
 \norm{\mathfrak{R}_{\textbf{u}_1,1}}_{\mathcal{J},l}
 &\leq \norm{\mathfrak{p}_{\textbf{u}_1}-\mathcal{P}_1\mathfrak{p}_{\textbf{u}_1}}_{l}
 +\norm{\mathcal{P}_1\mathcal{E}_{\textbf{u}_1}}_{\mathcal{J},l}\notag\\
 &\overset{\text{\tiny$(1)$}}{\leq}\Psi(l)+C_{l}\norm{\mathcal{E}_{\textbf{u}_1}}_{\mathcal{J},l}
 \overset{\text{\tiny$(2)$}}{\leq} \Psi(l)+C_l\norm{ \mathfrak{w}_{\textbf{u}_1,U}}_{l+\sigma_0}.
\end{align}
for any $l\geq0$. Here in $(1)$ we use \eqref{for:90} of Corollary \ref{cor:10}; in
$(2)$ we use \eqref{for:152}.

\smallskip
\textbf{\emph{Step $5$}: Estimates for $\mathfrak{R}_{\textbf{u}_2,1}$. } Next, a natural thing that comes to mind is: we apply $\mathcal{P}_2$ on each side
of the equation
\begin{align}\label{for:96}
 \mathfrak{R}_{\textbf{u}_1,1}=\mathfrak{p}_{\textbf{u}_1}-(\textbf{u}_1+\textrm{ad}_{\textbf{u}_1})\eta_1
\end{align}
over $\textbf{u}_1$. Thus, we could construct a new almost twisted coboundary which could be solved by $\eta_2=\mathcal{P}_2\eta_1$.    However,
it seems not hopeful as $\mathcal{P}_2$ does not commute with $\textbf{u}_1$. Instead, \eqref{for:135} suggests that we use the following equation over $\textbf{u}_2$:
\begin{align}\label{for:108}
 \mathfrak{R}_{\textbf{u}_2,1}&=\mathfrak{p}_{\textbf{u}_2}-(\textbf{u}_2+\textrm{ad}_{\textbf{u}_2})\eta_1.
\end{align}
To do so, we need to estimate $\mathfrak{R}_{\textbf{u}_2,1}$ at first. We use again the higher rank trick as what we did in step 4 of Section \ref{sec:19}.
We recall the twisted cocycle equation over $\textbf{u}_1$ and $\textbf{u}_2$ from \eqref{for:95}:
\begin{align}
 (\textbf{u}_1+\textrm{ad}_{\textbf{u}_1})\mathfrak{p}_{\textbf{u}_2}-(\textbf{u}_2+\textrm{ad}_{\textbf{u}_2})\mathfrak{p}_{\textbf{u}_1}= \mathfrak{w}_{\textbf{u}_1,\textbf{u}_2}.\label{for:150}
\end{align}
We substitute the expressions for $\mathfrak{p}_{\textbf{u}_i}$, $i=1,2$ in \eqref{for:96} and  \eqref{for:108} respectively  into \eqref{for:150}. Then we have
\begin{align}
 &(\textbf{u}_1+\textrm{ad}_{\textbf{u}_1})\big(\mathfrak{R}_{\textbf{u}_2,1}+(\textbf{u}_2+\textrm{ad}_{\textbf{u}_2})\eta_1\big)\notag\\
 &-(\textbf{u}_2+\textrm{ad}_{\textbf{u}_2})\big(\mathfrak{R}_{\textbf{u}_1,1}+(\textbf{u}_1+\textrm{ad}_{\textbf{u}_1})\eta_1\big)= \mathfrak{w}_{\textbf{u}_1,\textbf{u}_2}\notag\\
 \overset{\text{(1)}}{\Longrightarrow}&(\textbf{u}_1+\textrm{ad}_{\textbf{u}_1})\mathfrak{R}_{\textbf{u}_2,1}= (\textbf{u}_2+\textrm{ad}_{\textbf{u}_2})\mathfrak{R}_{\textbf{u}_1,1}+\mathfrak{w}_{\textbf{u}_1,\textbf{u}_2}.\label{for:116}
\end{align}
Here in $(1)$ we use $[\textbf{u}_1,\textbf{u}_2]=0$.

Then we apply Corollary \ref{cor:6} to \eqref{for:116} to estimate $\mathfrak{R}_{\textbf{u}_2,1}$. Since $G_{\textbf{u}_2}\subset C(G_{\textbf{u}_1})$ (see \eqref{for:364} of \hyperlink{o.3}{Property (P) in case II}), let $H=G_{\textbf{u}_2}$ and $u=\textbf{u}_1$. Then $\mathcal{J}=\{H,G_{u}\}$. Hence we have
\begin{align}\label{for:130}
 \norm{\mathfrak{R}_{\textbf{u}_2,1}}_{\mathcal{J},l}&\leq C_{l}\|(\textbf{u}_2+\textrm{ad}_{\textbf{u}_2})\mathfrak{R}_{\textbf{u}_1,1}
 +\mathfrak{w}_{\textbf{u}_1,\textbf{u}_2}\|_{\mathcal{J},l+\text{\small$\frac{5}{2}$}\dim\mathfrak{g}}\notag\\
 &\leq C_{l}\|\mathfrak{R}_{\textbf{u}_1,1}\|_{\mathcal{J},l+\text{\small$\frac{5}{2}$}\dim\mathfrak{g}+1}
 +C_l\norm{\mathfrak{w}_{\textbf{u}_1,\textbf{u}_2}}_{l+\text{\small$\frac{5}{2}$}\dim\mathfrak{g}}\notag\\
 &\overset{\text{\tiny$(1)$}}{\leq} C_l\Psi(l+\text{\tiny$\frac{5}{2}$}\dim\mathfrak{g}+1)+C_l\norm{ \mathfrak{w}}_{l+\text{\small$\frac{5}{2}$}\dim\mathfrak{g}+\sigma_0}
 \end{align}
for any $l\geq0$. Here in $(1)$ we use \eqref{for:120}.

\smallskip
\textbf{\emph{Step $5$}: Estimates for $\mathfrak{R}_{\textbf{u}_2,2}$.} Now we use the equation \ref{for:108} to get $\eta_2$, a new approximate solution.
By applying the operator
\begin{align*}
\mathcal{P}_2=\pi_{\mathfrak{U}^2}(f_3\circ a^{-1})\pi_{\mathfrak{W}}(f_2\circ a^{-1})\pi_{\mathfrak{V}^2}(f_3\circ a^{-1})
\end{align*}
on both sides of \eqref{for:108} we have
\begin{align}\label{for:155}
\mathcal{P}_2\mathfrak{R}_{\textbf{u}_2,1}&=\mathcal{P}_2\mathfrak{p}_{\textbf{u}_2}-\mathcal{P}_2\big((\textbf{u}_2+\textrm{ad}_{\textbf{u}_2})\eta_1\big)\overset{\text{\tiny$(1)$}}{=}\mathcal{P}_2\mathfrak{p}_{\textbf{u}_2}
-(\textbf{u}_2+\textrm{ad}_{\textbf{u}_2})(\mathcal{P}_2\eta_1)\notag\\
 &=\mathcal{P}_2\mathfrak{p}_{\textbf{u}_2}-(\textbf{u}_2+\textrm{ad}_{\textbf{u}_2})\eta_2.
\end{align}
Here in $(1)$ we use \eqref{for:135}.

It follows that
\begin{align*}
 \mathfrak{R}_{\textbf{u}_2,2}&=\mathfrak{p}_{\textbf{u}_2}-(\textbf{u}_2+\textrm{ad}_{\textbf{u}_2})\eta_2\\
 &=
 \big(\mathfrak{p}_{\textbf{u}_2}-\mathcal{P}_2\mathfrak{p}_{\textbf{u}_2}\big)+\big(\mathcal{P}_2\mathfrak{p}_{\textbf{u}_2}-(\textbf{u}_2+\textrm{ad}_{\textbf{u}_2})\eta_2\big)\\
 &\overset{\text{\tiny$(1)$}}{=}\big(\mathfrak{p}_{\textbf{u}_2}-\mathcal{P}_2\mathfrak{p}_{\textbf{u}_2}\big)+\mathcal{P}_2\mathfrak{R}_{\textbf{u}_2,1}.
\end{align*}
Here in $(1)$ we use \eqref{for:155}.

To estimate $\norm{\mathfrak{R}_{\textbf{u}_2,2}}_{\mathcal{J},l}$, we need to bound
\begin{align*}
 &\norm{\mathfrak{p}_{\textbf{u}_2}-\mathcal{P}_2\mathfrak{p}_{\textbf{u}_2}}_{l}\quad\text{and}\quad\norm{\mathcal{P}_2\mathfrak{R}_{\textbf{u}_2,1}}_{\mathcal{J},l}
\end{align*}
respectively. For the former, similar to \eqref{for:43}, firstly, we rewrite it as
\begin{align*}
\mathfrak{p}_{\textbf{u}_2}-\mathcal{P}_2\mathfrak{p}_{\textbf{u}_2}= &\mathfrak{p}_{\textbf{u}_2}-\pi_{\mathfrak{U}^2}(f_3\circ a^{-1})\pi_{\mathfrak{W}}(f_2\circ a^{-1})\pi_{\mathfrak{V}^2}(f_3\circ a^{-1})\mathfrak{p}_{\textbf{u}_2}\\
 &= \mathfrak{p}_{\textbf{u}_2}-\pi_{\mathfrak{U}^2}(f_3\circ a^{-1})\mathfrak{p}_{\textbf{u}_2}\\
 &+\pi_{\mathfrak{U}^2}(f_3\circ a^{-1})\big(\mathfrak{p}_{\textbf{u}_2}-\pi_{\mathfrak{W}}(f_2\circ a^{-1})\mathfrak{p}_{\textbf{u}_2}\big)\\
 &+\pi_{\mathfrak{U}^2}(f_3\circ a^{-1})\pi_{\mathfrak{W}}(f_2\circ a^{-1})\big(\mathfrak{p}_{\textbf{u}_2}-\pi_{\mathfrak{V}^2}(f_3\circ a^{-1})\mathfrak{p}_{\textbf{u}_2}\big).
\end{align*}
It follows that
\begin{align*}
 \norm{\mathfrak{p}_{\textbf{u}_2}-\mathcal{P}_2\mathfrak{p}_{\textbf{u}_2}}_{l}&\leq
 \norm{\mathfrak{p}_{\textbf{u}_2}-\pi_{\mathfrak{U}^2}(f_3\circ a^{-1})\mathfrak{p}_{\textbf{u}_2}}_l\\
 &+\norm{\pi_{\mathfrak{U}^2}(f_3\circ a^{-1})\big(\mathfrak{p}_{\textbf{u}_2}-\pi_{\mathfrak{W}}(f_2\circ a^{-1})\mathfrak{p}_{\textbf{u}_2}\big)}_l\\
 &+\norm{\pi_{\mathfrak{U}^2}(f_3\circ a^{-1})\pi_{\mathfrak{W}}(f_2\circ a^{-1})\big(\mathfrak{p}_{\textbf{u}_2}-\pi_{\mathfrak{V}^2}(f_3\circ a^{-1})\mathfrak{p}_{\textbf{u}_2}\big)}_l\\
 &\overset{(1)}{\leq}\norm{\mathfrak{p}_{\textbf{u}_2}-\pi_{\mathfrak{U}^2}(f_3\circ a^{-1})\mathfrak{p}_{\textbf{u}_2}}_l
 +C_l
 \norm{\mathfrak{p}_{\textbf{u}_2}-\pi_{\mathfrak{W}}(f_2\circ a^{-1})\mathfrak{p}_{\textbf{u}_2}}_l\\
 &+C_l
 \norm{\mathfrak{p}_{\textbf{u}_2}-\pi_{\mathfrak{V}^2}(f_3\circ a^{-1})\mathfrak{p}_{\textbf{u}_2}}_l\\
 &\overset{(2)}{\leq} C_{l}\Psi(l).
\end{align*}
for any $l\geq0$.  Here in $(1)$ we use \eqref{for:7} of Corollary \ref{cor:1}; in $(2)$ we recall the definition of $\Psi$, see \eqref{for:243}.

For the latter, by \eqref{for:90} of Corollary \ref{cor:10} we have
\begin{align*}
  \norm{\mathcal{P}_2\mathfrak{R}_{\textbf{u}_2,1}}_{\mathcal{J},l}&\leq C_{l,1}\norm{\mathfrak{R}_{\textbf{u}_2,1}}_{\mathcal{J},l}\\
  &\overset{\text{\tiny$(1)$}}{\leq}C_l\Psi(l+\text{\tiny$\frac{5}{2}$}\dim\mathfrak{g}+1)+C_l\norm{ \mathfrak{w}}_{l+\text{\small$\frac{5}{2}$}\dim\mathfrak{g}+\sigma_0}
\end{align*}
for any $l\geq0$. Here in $(1)$ we use \eqref{for:130}.

Hence we have
\begin{align}\label{for:138}
 \norm{\mathfrak{R}_{\textbf{u}_2,2}}&_{\mathcal{J},l}\leq \norm{\mathfrak{p}_{\textbf{u}_2}-\mathcal{P}_2\mathfrak{p}_{\textbf{u}_2}}_{l}+\norm{\mathcal{P}_2\mathfrak{R}_{\textbf{u}_2,1}}_{\mathcal{J},l}\notag\\
 &\leq C_l\Psi(l+\text{\tiny$\frac{5}{2}$}\dim\mathfrak{g}+1)+C_l\norm{ \mathfrak{w}}_{l+\text{\small$\frac{5}{2}$}\dim\mathfrak{g}+\sigma_0}
\end{align}
for any $l\geq0$.

\smallskip
\textbf{\emph{Step $6$}: Estimates for $\mathfrak{R}_{\textbf{u}_1,2}$.}  Now it is clear what we should do next. We will use the equation
\begin{align}\label{for:107}
\mathfrak{R}_{\textbf{u}_1,2}&=\mathfrak{p}_{\textbf{u}_1}-(\textbf{u}_1+\textrm{ad}_{\textbf{u}_1})\eta_2
\end{align}
instead  of
\begin{align}\label{for:114}
 \mathfrak{R}_{\textbf{u}_2,2}&=\mathfrak{p}_{\textbf{u}_2}-(\textbf{u}_2+\textrm{ad}_{\textbf{u}_2})\eta_2
\end{align}
to apply $\mathcal{P}_3$ to get $\Theta$. Before that, we still need to estimate $\mathfrak{R}_{\textbf{u}_1,2}$. We use again the twisted cocycle equation \eqref{for:150}.
We substitute the expressions for $\mathfrak{p}_{\textbf{u}_i}$, $i=1,2$ in \eqref{for:107} and \eqref{for:114} respectively  into \eqref{for:150}. Then similar to \eqref{for:116} we have
\begin{align}
 &(\textbf{u}_1+\textrm{ad}_{\textbf{u}_1})\big(\mathfrak{R}_{\textbf{u}_2,2}+(\textbf{u}_2+\textrm{ad}_{\textbf{u}_2})\eta_2\big)\notag\\
 &-(\textbf{u}_2+\textrm{ad}_{\textbf{u}_2})\big(\mathfrak{R}_{\textbf{u}_1,2}+(\textbf{u}_1+\textrm{ad}_{\textbf{u}_1})\eta_2\big)= \mathfrak{w}_{\textbf{u}_1,\textbf{u}_2}\notag\\
 \overset{\text{(1)}}{\Longrightarrow}&(\textbf{u}_2+\textrm{ad}_{\textbf{u}_2})\mathfrak{R}_{\textbf{u}_1,2}
 =(\textbf{u}_1+\textrm{ad}_{\textbf{u}_1})\mathfrak{R}_{\textbf{u}_2,2}-\mathfrak{w}_{\textbf{u}_1,\textbf{u}_2}.\label{for:131}
\end{align}
Here in $(1)$ we use $[\textbf{u}_1,\textbf{u}_2]=0$.

Again we apply Corollary \ref{cor:6} to \eqref{for:131} to estimate $\mathfrak{R}_{\textbf{u}_1,2}$. Since  $G_{\textbf{u}_1}\subset C(G_{\textbf{u}_2})$ (see \eqref{for:364} of \hyperlink{o.3}{Property (P) in case II}), let $H=G_{\textbf{u}_1}$ and $u=\textbf{u}_2$. Then $\mathcal{J}=\{H,G_{u}\}$. Hence we have
\begin{align}\label{for:139}
 \norm{\mathfrak{R}_{\textbf{u}_1,2}}&\leq C\|(\textbf{u}_1+\textrm{ad}_{\textbf{u}_1})\mathfrak{R}_{\textbf{u}_2,2}
 -\mathfrak{w}_{\textbf{u}_1,\textbf{u}_2}\|_{\mathcal{J},\text{\small$\frac{5}{2}$}\dim\mathfrak{g}}\notag\\
 &\leq C\|\mathfrak{R}_{\textbf{u}_2,2}\|_{\mathcal{J},\text{\small$\frac{5}{2}$}\dim\mathfrak{g}+1}
 +C\norm{\mathfrak{w}_{\textbf{u}_1,\textbf{u}_2}}_{\mathcal{J},\text{\small$\frac{5}{2}$}\dim\mathfrak{g}}\notag\\
 &\overset{\text{\tiny$(1)$}}{\leq} C\Psi(\text{\tiny$2$}(\text{\tiny$\frac{5}{2}$}\dim\mathfrak{g}+1))+C\norm{ \mathfrak{w}}_{2(\text{\small$\frac{5}{2}$}\dim\mathfrak{g})+\sigma_0}\notag\\
 &\overset{\text{\tiny$(2)$}}{\leq}C\Psi(\varrho)+C\norm{ \mathfrak{w}}_{\varrho}.
 \end{align}
Here in $(1)$ we use \eqref{for:138}; in $(2)$ we use the definition of $\varrho$ in  \eqref{for:99}.

\smallskip
\emph{ Note}.  We only estimate $L^2$ norm of $\mathfrak{R}_{\textbf{u}_1,2}$, instead of the higher order norms on $\mathcal{J}$ like former ones. The reason is
this is the last time we use the higher rank trick to switch between equations over $\textbf{u}_1$ and $\textbf{u}_2$.

\smallskip
\textbf{\emph{Step $7$}: Estimates for $\mathcal{R}_{\textbf{u}_1}$.} Now we use the equation \ref{for:107} to get $\Theta$.
By applying the operator $\mathcal{P}_3=\pi_{\mathfrak{U}^1}(f_3\circ a^{-1})$ on both sides of \ref{for:107} we have
\begin{align}\label{for:163}
\mathcal{P}_3\mathfrak{R}_{\textbf{u}_1,2}&=\mathcal{P}_3\mathfrak{p}_{\textbf{u}_1}
-\mathcal{P}_3\big((\textbf{u}_1+\textrm{ad}_{\textbf{u}_1})\eta_2\big)\overset{\text{\tiny$(1)$}}{=}\mathcal{P}_3\mathfrak{p}_{\textbf{u}_1}
-(\textbf{u}_1+\textrm{ad}_{\textbf{u}_1})(\mathcal{P}_3\eta_2)\notag\\
&=\mathcal{P}_3\mathfrak{p}_{\textbf{u}_1}
-(\textbf{u}_1+\textrm{ad}_{\textbf{u}_1})\Theta.
\end{align}
Here in $(1)$ we use \eqref{for:161}.

It follows that
\begin{align*}
 \mathcal{R}_{\textbf{u}_1}&=\mathfrak{p}_{\textbf{u}_1}-(\textbf{u}_1+\textrm{ad}_{\textbf{u}_1})\Theta\\
 &=
 \big(\mathfrak{p}_{\textbf{u}_1}-\mathcal{P}_3\mathfrak{p}_{\textbf{u}_1}\big)+\big(\mathcal{P}_3\mathfrak{p}_{\textbf{u}_1}-(\textbf{u}_1+\textrm{ad}_{\textbf{u}_1})\Theta\big)\\
 &\overset{\text{\tiny$(1)$}}{=}\big(\mathfrak{p}_{\textbf{u}_1}-\mathcal{P}_3\mathfrak{p}_{\textbf{u}_1}\big)+\mathcal{P}_3\mathfrak{R}_{\textbf{u}_1,2}.
\end{align*}
Here in $(1)$ we use \eqref{for:163}.

Hence we have
\begin{align}\label{for:143}
 \norm{\mathcal{R}_{\textbf{u}_1}}&\leq \norm{\mathfrak{p}_{\textbf{u}_1}-\mathcal{P}_3\mathfrak{p}_{\textbf{u}_1}}+\norm{\mathcal{P}_3\mathfrak{R}_{\textbf{u}_1,2}}\notag\\
 &\overset{\text{\tiny$(1)$}}{\leq} \Psi(0)+C\norm{\mathfrak{R}_{\textbf{u}_1,2}}\overset{\text{\tiny$(2)$}}{\leq}C\Psi(\varrho)+C\norm{ \mathfrak{w}}_{\varrho}.
\end{align}
Here in $(1)$ we use \eqref{for:90} of Corollary \ref{cor:10}; in $(2)$ we use \eqref{for:139}.

\smallskip
\textbf{\emph{Step $8$}: Estimates for $\norm{\mathcal{R}_v}_{\beta}$, $v\in E$.} We follow exactly the same way as step 4 in Section \ref{sec:19}.
We recall the twisted cocycle equation over $v$ and $\textbf{u}_1$ from \eqref{for:95}:
\begin{align*}
 (v+\textrm{ad}_{v})\mathfrak{p}_{\textbf{u}_1}-(\textbf{u}_1+\textrm{ad}_{\textbf{u}_1})\mathfrak{p}_{v}= \mathfrak{w}_{v,\textbf{u}_1}.
\end{align*}
We substitute  the expressions for $\mathfrak{p}_{\textbf{u}_1}$ and $\mathfrak{p}_{v}$
from \eqref{for:103} respectively into the above equation. Similar to \eqref{for:132}, we get
\begin{align*}
 (\textbf{u}_1+\textrm{ad}_{\textbf{u}_1})\mathcal{R}_v= -\mathfrak{w}_{v,\textbf{u}_1}+(v+\textrm{ad}_{v})\mathcal{R}_{\textbf{u}_1},\qquad \forall\,v\in E.
\end{align*}
It follows from Theorem \ref{th:2}   that
\begin{align}\label{for:33}
 \norm{\mathcal{R}_v}_\beta&\leq C\norm{-\mathfrak{w}_{v,\textbf{u}_1}+(v+\textrm{ad}_{v})\mathcal{R}_{\textbf{u}_1}}_{\lambda\beta+\lambda_1}\notag\\
 &\overset{\text{\tiny$(0)$}}{\leq} C\norm{\mathfrak{w}}_{\varrho}+\norm{\mathcal{R}_{\textbf{u}_1}}_{\varrho}\notag\\
 &\overset{\text{\tiny$(1)$}}{\leq} C\norm{\mathfrak{w}}_{\varrho}+C_{\ell}
\norm{\mathcal{R}_{\textbf{u}_1}}^{1-\frac{\varrho}{\ell}}
(\norm{\mathcal{R}_{\textbf{u}_1}}_{\ell})^{\frac{\varrho}{\ell}}\notag\\
 &\overset{\text{\tiny$(2)$}}{\leq} C\norm{\mathfrak{w}}_{\varrho}+C_{\ell}(\Psi(\varrho)
 +\norm{ \mathfrak{w}}_{\varrho})^{1-\frac{\varrho}{\ell}}( \norm{\mathfrak{p}}_{\ell+\sigma_0+1}+a^{\ell+1} \norm{\mathfrak{p}}_{\sigma_0})^{\frac{\varrho}{\ell}}\notag\\
 &\overset{\text{\tiny$(0)$}}{\leq} C\norm{\mathfrak{w}}_{\varrho}+C_{\ell}(\Psi(\varrho)
 +\norm{ \mathfrak{w}}_{\varrho})^{1-\frac{\varrho}{\ell}}( \norm{\mathfrak{p}}_{\ell+\varrho}+a^{\ell+1} \norm{\mathfrak{p}}_{\varrho})^{\frac{\varrho}{\ell}}\notag\\
 &\leq C\norm{\mathfrak{w}}_{\varrho}+C_{\ell}a^{2\varrho}(\Psi(\varrho)
 +\norm{ \mathfrak{w}}_{\varrho})^{1-\frac{\varrho}{\ell}}( \norm{\mathfrak{p}}_{\ell+\varrho})^{\frac{\varrho}{\ell}}\notag\\
 &\overset{\text{\tiny$(3)$}}{\leq}C\norm{\mathfrak{w}}_{\varrho}+C_{\ell}a^{2\varrho}(\Psi(\varrho))^{1-\frac{\varrho}{\ell}}( \norm{\mathfrak{p}}_{\ell+\varrho})^{\frac{\varrho}{\ell}}
 +C_{\ell}a^{2\varrho}(\norm{ \mathfrak{w}}_{\varrho})^{1-\frac{\varrho}{\ell}}( \norm{\mathfrak{p}}_{\ell+\varrho})^{\frac{\varrho}{\ell}}\notag\\
 &\overset{\text{\tiny$(4)$}}{\leq}C\norm{\mathfrak{w}}_{\varrho}+C_{\ell}a^{2\varrho}\big((a^{-s}\norm{\mathfrak{p}}_s)^{\text{\tiny$1-\frac{\varrho}{\ell}$}}
 \norm{\mathfrak{p}}^{\text{\tiny$\frac{\varrho}{\ell}$}}_{\ell} \big)^{1-\frac{\varrho}{\ell}}( \norm{\mathfrak{p}}_{\ell+\varrho})^{\frac{\varrho}{\ell}}\notag\\
 &+C_{\ell}a^{2\varrho}(\norm{ \mathfrak{w}}_{\varrho})^{1-\frac{\varrho}{\ell}}( \norm{\mathfrak{p}}_{\ell+\varrho})^{\frac{\varrho}{\ell}}\notag\\
 &\leq C\norm{\mathfrak{w}}_{\varrho}+C_{\ell}a^{2\varrho}(a^{-s}\norm{\mathfrak{p}}_s)^{(\text{\tiny$1-\frac{\varrho}{\ell}$})^2}
 (\norm{\mathfrak{p}}_{\ell+\varrho})^{\frac{\varrho}{\ell}(2-\frac{\varrho}{\ell})} \notag\\
 &+C_{\ell}a^{2\varrho}(\norm{ \mathfrak{w}}_{\varrho})^{1-\frac{\varrho}{\ell}}( \norm{\mathfrak{p}}_{\ell+\varrho})^{\frac{\varrho}{\ell}}.
\end{align}
Here in $(0)$ we use the definition of $\varrho$ in  \eqref{for:99}; in $(1)$ we use interpolation inequalities; and in $(2)$ we use \eqref{for:143} to estimate
$\norm{\mathcal{R}_{\textbf{u}_1}}$ and use \eqref{for:112} to estimate $\norm{\mathcal{R}_{\textbf{u}_1}}_{\ell}$; in $(3)$ we use \eqref{for:217}; in $(4)$ we use \eqref{for:148}.

Finally, \eqref{for:157} follows from \eqref{for:76} and \eqref{for:33}. Hence we finish the proof.

\section{Proof of Theorem \ref{th:13} and Theorem \ref{th:11}}\label{sec:8}

\subsection{Iterative step and the error estimate}\label{sec:32} In this part we show that given a perturbation
of the action $\alpha_A$ satisfying a certain set of conditions, one constructs a conjugacy
such that the new action satisfies another set of conditions. We recall notations in
Section \ref{sec:25}. Suppose $\alpha_A$ is generated by vectors fields $E=\{E_1,E_2,\cdots\}$ as described in \eqref{for:286} of Section \ref{sec:47}.

\begin{proposition}\label{po:5} There exists $0<\bar{c}<1$ such that the following holds: for any perturbation $\tilde{\alpha}_A$ of $\alpha_A$ generated by $C^\infty$ vector
fields $\tilde{E}=E+\mathfrak{p}$,
where $\norm{\mathfrak{p}}_{C^\varrho}\leq \bar{c}$ (see \eqref{for:99}), and for any $a,\,b>1$,  there is a linear map $\mathcal{T}$ on $\text{Lie}(A)$, $g\in \GG$ and $\mathfrak{h}\in \text{Vect}^\infty(\mathcal{X})$ such that
for
\begin{align*}
 h&=\Psi_g^{-1}\circ\exp(\mathfrak{h})\quad\text{and }\quad\tilde{E}^{(1)}=\mathcal{T}(h_*\tilde{E})=E+\mathfrak{p}^{(1)}
\end{align*}
where $\Psi_g$ denotes the diffeomorphism on $\mathcal{X}$ induced by the left translation of $g$ on $\GG$, we have:
\begin{enumerate}
  \item \label{for:180}  for any $r\geq \varrho$
\begin{align*}
 \norm{\mathfrak{h}}_{C^r}&\leq
 C_{r}(a^{r}b^\varrho\norm{\mathfrak{p}}_{C^{\varrho}}+b^\varrho\norm{\mathfrak{p}}_{C^r});
\end{align*}

  \item \label{for:182} $\norm{\mathcal{T}-I}+\norm{g-I}\leq C\norm{\mathfrak{p}}_{C^0}\leq C\bar{c}$; and
  \begin{align*}
   d(h,I)_{C^r}\leq C_r(a^{r}b^\varrho\norm{\mathfrak{p}}_{C^{\varrho}}+b^\varrho\norm{\mathfrak{p}}_{C^r})
  \end{align*}
for any $r\geq\varrho$;  we suppose $a,\,b$ are well chosen such that
 \begin{align*}
   C(a^{\varrho}b^\varrho\norm{\mathfrak{p}}_{C^{\varrho}}+b^\varrho\norm{\mathfrak{p}}_{C^{\varrho}})< \bar{c},
 \end{align*}
then $h$ is invertible;

\item\label{for:168}  the estimate for $\norm{\mathfrak{p}^{(1)}}_{C^0}$ holds:
       \begin{align}\label{for:144}
\norm{\mathfrak{p}^{(1)}}&_{C^0}\leq Ca^{2\varrho}b^{2\varrho}\norm{\mathfrak{p}}^2_{C^{\varrho+1}}
+C_{\ell}a^{2\varrho}(\norm{\mathfrak{p}}_{C^{\varrho+1}})^{2(1-\frac{\varrho}{\ell})}(\norm{\mathfrak{p}}_{C^{\ell+\varrho}})^{\frac{\varrho}{\ell}}\notag\\
 &+C_{\ell}a^{2\varrho}(a^{\text{\tiny$-s$}}
 \norm{\mathfrak{p}}_{C^s})^{1-\frac{\varrho}{\ell}}(\norm{\mathfrak{p}}_{C^{\ell+\varrho}})^{\frac{\varrho}{\ell}}\notag\\
 &+C_{\ell}a^{2\varrho}(a^{-s}\norm{\mathfrak{p}}_{C^s})^{(\text{\tiny$1-\frac{\varrho}{\ell}$})^2}
 (\norm{\mathfrak{p}}_{C^{\ell+\varrho}})^{\frac{\varrho}{\ell}(2-\frac{\varrho}{\ell})}\notag\\
 &+C_\ell b^{\text{\tiny$-\ell+1$}}a^{\ell+\varrho}\norm{\mathfrak{p}}_{C^\varrho}+C_\ell b^{\text{\tiny$-\ell+1$}}\norm{\mathfrak{p}}_{C^{\ell+\varrho}}
\end{align}
for any $s\geq\ell>\varrho$;

 \item  \label{for:181} for any $r\geq\varrho$ we have:
 \begin{align*}
  \norm{\mathfrak{p}^{(1)}}_{C^r}\leq C_{r}(a^{r}b^\varrho \norm{\mathfrak{p}}_{C^{\varrho}}+b^\varrho\norm{\mathfrak{p}}_{C^r}+1).
 \end{align*}
 \end{enumerate}

\emph{Note}. The constants in \eqref{for:144} are independent of $s$.
\end{proposition}

From Theorem \ref{cor:7}, we see that there is a fixed loss of regularity when solving the almost cocycle equations.
To overcome this fixed loss of
regularity at each step of the iteration process, it is standard (see  \cite{Zehnder})
to use the smoothing operators $\mathfrak{s}_{b}$. Consequently, one more parameter $b$ is introduced in comparison to Theorem \ref{cor:7}.

\begin{proof} \emph{Construction and estimates for $\mathcal{T}$ and $g$}: We point out that $\text{Ave}(\mathfrak{p})$ determines the coordinate change $\mathcal{T}$ and the inner automorphism of the vector fields. Let $E'=\text{Ave}(\mathfrak{p})+E$.
\begin{align}\label{for:164}
 \norm{\mathcal{M}(E')}&=\Big\|\mathcal{M}\big(\text{Ave}(\mathfrak{p})+E\big)\Big\|\notag\\
 &\leq \big\|\mathcal{M}\big(\text{Ave}(\mathfrak{p})\big)\big\|+\big\|\mathcal{M}(E)\big\|
 +\big\|M\big(\text{Ave}(\mathfrak{p})\big)\big\|\notag\\
 &\overset{\text{\tiny$(1)$}}{\leq} C\norm{\mathfrak{p}}_{C^{0}}\norm{\mathfrak{p}}_{C^{1}}.
\end{align}
Here in $(1)$ we use \eqref{for:113}, $\mathcal{M}(E)=0$ and Lemma \ref{le:21}.

Hence we have
\begin{align}\label{for:136}
  \norm{E'-E}&+\norm{\mathcal{M}(E')}\overset{\text{\tiny$(1)$}}{\leq} \norm{\text{Ave}(\mathfrak{p})}+C\norm{\mathfrak{p}}_{C^{0}}\norm{\mathfrak{p}}_{C^{1}}\notag\\
  &\leq \norm{\mathfrak{p}}_{C^{0}}+C\norm{\mathfrak{p}}_{C^{0}}\norm{\mathfrak{p}}_{C^{1}} \notag\\
  &\leq\bar{c}+C\bar{c}^2\overset{\text{\tiny$(2)$}}{\leq}\delta
\end{align}
Here in $(1)$ we use \eqref{for:164}; $(2)$ holds if $\bar{c}$ is sufficiently small ($\delta$ is defined in Definition \ref{de:1}).

\eqref{for:136} shows that  we can apply  Proposition \ref{po:2} to the $\norm{\text{Ave}(\mathfrak{p})}$-perturbation $E'$. Then
there exists a linear map $\mathcal{T}$ on $\text{Lie}(A)$ and $g\in \GG$ such that:
\begin{align}\label{for:165}
\norm{\mathcal{T}-I}+\norm{g-I}\leq C\norm{\text{Ave}(\mathfrak{p})}\leq C\norm{\mathfrak{p}}_{C^0}
\end{align}
and
\begin{align}\label{for:121}
 \norm{\mathcal{T}E'-\text{Ad}&_{g}E}\leq C\big\|\mathcal{M}(E')\big\|+C\norm{\text{Ave}(\mathfrak{p})}^2\overset{\text{\tiny$(1)$}}{\leq} C\norm{\mathfrak{p}}_{C^{0}}\norm{\mathfrak{p}}_{C^{1}}.
\end{align}
Here in $(1)$ we use \eqref{for:164}.

\medskip

\emph{Construction and estimates for $\mathfrak{h}$ and $h$}: Set $\mathfrak{p}_v^o=\mathfrak{p}_v-\text{Ave}(\mathfrak{p}_v)$, for any $v\in E$  and
 \begin{align}
  \mathcal{L}_{v}\mathfrak{p}^o_{u}&-\mathcal{L}_{u}\mathfrak{p}^o_{v}= \mathfrak{w}_{u,v},\qquad v,\,u\in E.\label{for:94}
 \end{align}
Let $\norm{\mathfrak{p}^o}_{C^r}=\max_{v\in E}\{\norm{\mathfrak{p}^o_{v}}_{C^r}\}$ and $\norm{\mathfrak{w}}_{C^r}=\max_{u,\,v\in E}\{\norm{\mathfrak{w}_{v,u}}_{C^r}\}$.

Next, we will apply Theorem \ref{cor:7} to the almost twisted cocycle \eqref{for:94}. Before that, we need to estimate $\norm{\mathfrak{p}^o}_{C^r}$ and $\norm{\mathfrak{w}}_{C^r}$. It is clear that
\begin{align}\label{for:137}
 \norm{\mathfrak{p}^o}_{C^{r}}\leq 2\norm{\mathfrak{p}}_{C^{r}},\qquad \forall\,r\geq0;
\end{align}
and for any $r\geq0$ we have:
\begin{align}\label{for:119}
  \norm{\mathfrak{w}}_{C^r}&=\big\|M(\mathfrak{p}^o)\big\|_{C^r}\leq C_r\big(\big\|M(\mathfrak{p})\big\|_{C^r}
  +C\Big\|M\big(\text{Ave}(\mathfrak{p})\big)\Big\|\notag\\
  &\overset{\text{\tiny$(1)$}}{\leq}C_r\norm{\mathfrak{p}}_{C^{0}}\norm{\mathfrak{p}}_{C^{r+1}}.
 \end{align}
Here in $(1)$ we use Lemma \ref{le:21}.

From \eqref{for:94} by Theorem \ref{cor:7} we see that for any $a>1$ there exist  $\mathfrak{h}^o,\,\mathcal{R}_v\in \text{Vect}^\infty(\mathcal{X})$, $v\in E$
such that
\begin{align}\label{for:167}
 \mathfrak{p}^o_v=\mathcal{L}_{v}\mathfrak{h}^o+\mathcal{R}_v,\qquad v\in E
\end{align}
with estimates: for any $r\geq0$ any $v\in E$
\begin{align}\label{for:118}
 \norm{\mathfrak{h}^o, \mathcal{R}_v}_{C^r}&\leq C_{r}(a^{r+\varrho}\norm{\mathfrak{p}^o}_{C^\varrho}+\norm{\mathfrak{p}^o}_{C^{r+\varrho}})\notag\\
 &\overset{\text{\tiny$(1)$}}{\leq} C_{r}(a^{r+\varrho}\norm{\mathfrak{p}}_{C^\varrho}+\norm{\mathfrak{p}}_{C^{r+\varrho}});
\end{align}
and for any $s\geq\ell>\varrho$
\begin{align}\label{for:124}
 \norm{\mathcal{R}_v}_{C^0}&\leq C\norm{\mathfrak{w}}_{C^\varrho}
+C_{\ell}a^{2\varrho}\norm{\mathfrak{w}}_{C^\varrho}^{1-\frac{\varrho}{\ell}}(\norm{\mathfrak{p}^o}_{C^{\ell+\varrho}})^{\frac{\varrho}{\ell}}\notag\\
 &+C_{\ell}a^{2\varrho}(a^{\text{\tiny$-s$}}
 \norm{\mathfrak{p}^o}_{C^s})^{1-\frac{\varrho}{\ell}}(\norm{\mathfrak{p}^o}_{C^{\ell+\varrho}})^{\frac{\varrho}{\ell}}\notag\\
 &+C_{\ell}a^{2\varrho}(a^{-s}\norm{\mathfrak{p}^o}_{C^s})^{(\text{\tiny$1-\frac{\varrho}{\ell}$})^2}
 (\norm{\mathfrak{p}^o}_{C^{\ell+\varrho}})^{\frac{\varrho}{\ell}(2-\frac{\varrho}{\ell})}\notag\\
&\overset{\text{\tiny$(1,2)$}}{\leq} C\norm{\mathfrak{p}}_{C^{\varrho+1}}^2
+C_{\ell}a^{2\varrho}(\norm{\mathfrak{p}}_{C^{\varrho+1}})^{2(1-\frac{\varrho}{\ell})}(\norm{\mathfrak{p}}_{C^{\ell+\varrho}})^{\frac{\varrho}{\ell}}\notag\\
 &+C_{\ell}a^{2\varrho}(a^{\text{\tiny$-s$}}
 \norm{\mathfrak{p}}_{C^s})^{1-\frac{\varrho}{\ell}}(\norm{\mathfrak{p}}_{C^{\ell+\varrho}})^{\frac{\varrho}{\ell}}\notag\\
 &+C_{\ell}a^{2\varrho}(a^{-s}\norm{\mathfrak{p}}_{C^s})^{(\text{\tiny$1-\frac{\varrho}{\ell}$})^2}
 (\norm{\mathfrak{p}}_{C^{\ell+\varrho}})^{\frac{\varrho}{\ell}(2-\frac{\varrho}{\ell})}.
 \end{align}
 Here in $(1)$ we use \eqref{for:137}; in $(2)$ we use \eqref{for:119}. Let
\begin{align}\label{for:166}
 \mathfrak{h}=\mathfrak{s}_{b}\mathfrak{h}^o
\end{align}
where $\mathfrak{s}_{b}$ is as defined in Section \ref{sec:26}. Hence we have
\begin{align}\label{for:160}
 \norm{\mathfrak{h}}_{C^r}\overset{\text{\tiny$(1)$}}{\leq}  C_rb^\varrho\norm{\mathfrak{h}^o}_{C^{r-\varrho}}\overset{\text{\tiny$(2)$}}{\leq} C_{r}b^\varrho(a^{r}\norm{\mathfrak{p}}_{C^{\varrho}}+\norm{\mathfrak{p}}_{C^r}),\quad
 \forall\, r\geq \varrho.
\end{align}
Here in $(1)$ we use  \eqref{for:111}; in $(2)$ we use \eqref{for:118}.

Then we have
\begin{align*}
  d(h,I)_{C^r} &\leq C\norm{g-I}+C_r\norm{\mathfrak{h}}_{C^r}
  \overset{\text{\tiny$(1)$}}{\leq} C_{r}b^\varrho(a^{r}\norm{\mathfrak{p}}_{C^{\varrho}}+\norm{\mathfrak{p}}_{C^r})
\end{align*}
for any $r\geq \varrho$.  Here in $(1)$ we use \eqref{for:165}, \eqref{for:160} and $b>1$.

\medskip

\emph{Estimate for $\norm{\mathfrak{p}^{(1)}}_{C^0}$}: We set $h_1=\exp(\mathfrak{h})$. We suppose $a,\,b$ are well chosen such that
 \begin{align*}
  d(h,I)_{C^\varrho}\leq Cb^\varrho(a^{\varrho}\norm{\mathfrak{p}}_{C^{\varrho}}+\norm{\mathfrak{p}}_{C^\varrho})< \bar{c}.
 \end{align*}
Then $h$ is inventible as we assume that $\bar{c}$ is sufficiently small. For $r\geq 0$, we have
\begin{align}\label{for:122}
 \norm{\mathfrak{p}^{(1)}}_{C^r}&=\Big\|\mathcal{T}\big(\text{Ad}_{g^{-1}}(h_1)_*\tilde{E}\big)-E\Big\|_{C^r}
 \overset{\text{\tiny$(1)$}}{\leq}C\Big\|\mathcal{T}\big((h_1)_*\tilde{E}\big)-\text{Ad}_{g}E\Big\|_{C^r}\notag\\
 &\leq C\Big\|\mathcal{T}\big((h_1)_*\tilde{E}\big)-\mathcal{T}E'\Big\|_{C^r}+C\|\mathcal{T}E'-\text{Ad}_{g}E\|\notag\\
&\overset{\text{\tiny$(1,2)$}}{\leq} C_1\norm{(h_1)_*\tilde{E}-E'}_{C^r}+C_1\norm{\mathfrak{p}}_{C^{0}}\norm{\mathfrak{p}}_{C^{1}}.
\end{align}
Here in $(1)$ we use \eqref{for:165}; in $(2)$ we use \eqref{for:121}.

\eqref{for:122} shows that to estimate $\norm{\mathfrak{p}^{(1)}}_{C^0}$, it suffices to estimate the
$C^0$ norm of
\begin{align*}
 W=(h_1)_*\tilde{E}-E'=(h_1)_*(E+\mathfrak{p})
 -(\text{Ave}(\mathfrak{p})+E).
\end{align*}
Then for each $v\in E$, we have
\begin{align*}
 W_v=\mathcal{R}_v+\mathcal{E}_v+W_{L,v},
\end{align*}
where
\begin{align*}
 W_{L,v}=(h_1)_*(v+\mathfrak{p}_v)-(v+\mathfrak{p}_v)-[\mathfrak{h},v+\mathfrak{p}_v]
\end{align*}
is the error from linearization; and
\begin{align*}
 \mathcal{E}_v=[\mathfrak{h}-\mathfrak{h}^o,v+\mathfrak{p}_v]+[\mathfrak{h}^o,\mathfrak{p}_v]
\end{align*}
is the error coming from solving the linearized equation only approximately.

We have
\begin{align}\label{for:123}
  \norm{W_{L,v}}_{C^0}&\leq C\norm{\mathfrak{h}}_{C^2}^2\norm{E+\mathfrak{p}}_{C^2}
\overset{\text{\tiny$(1)$}}{\leq} C_1\norm{\mathfrak{h}}_{C^2}^2\notag\\
   &\overset{\text{\tiny$(2)$}}{\leq} C_2\big(a^{\varrho}b^\varrho\norm{\mathfrak{p}}_{C^{\varrho}}+b^\varrho\norm{\mathfrak{p}}_{C^\varrho}\big)^2
   \leq 4C_2 a^{2\varrho}b^{2\varrho}\norm{\mathfrak{p}}^2_{C^{\varrho}}.
 \end{align}
 Here in $(1)$ we use the assumption $\norm{\mathfrak{p}}_{C^{\varrho}}\leq\bar{c}$;
 in $(2)$ we use \eqref{for:160}.

 Further, we have
  \begin{align}\label{for:125}
  \norm{\mathcal{E}_v}_{C^0}=&\norm{[\mathfrak{h}-\mathfrak{h}^o,v+\mathfrak{p}_v]}_{C^0}+\norm{[\mathfrak{h}^o,\mathfrak{p}_v]}_{C^0}\notag\\
  &\overset{\text{\tiny$(1)$}}{\leq} \norm{\mathfrak{h}-\mathfrak{h}^o}_{C^1}(\norm{\mathfrak{p}}_{C^1}+C)+C\norm{\mathfrak{h}^o}_{C^1}\norm{\mathfrak{p}}_{C^1}\notag\\
  &\overset{\text{\tiny$(2)$}}{\leq} C_\ell b^{\text{\tiny$-\ell+1$}}\norm{\mathfrak{h}^o}_{C^\ell}+C\norm{\mathfrak{h}^o}_{C^1}\norm{\mathfrak{p}}_{C^1}\notag\\
 &\overset{\text{\tiny$(3)$}}{\leq} C_\ell b^{\text{\tiny$-\ell+1$}}(a^{\ell+\varrho}\norm{\mathfrak{p}}_{C^\varrho}+\norm{\mathfrak{p}}_{C^{\ell+\varrho}})
 +Ca^{1+\varrho}\norm{\mathfrak{p}}_{C^{\varrho}}\norm{\mathfrak{p}}_{C^1}\notag\\
 &+C\norm{\mathfrak{p}}_{C^{\varrho+1}}\norm{\mathfrak{p}}_{C^1}\notag\\
 &\leq  C_\ell b^{\text{\tiny$-\ell+1$}}a^{\ell+\varrho}\norm{\mathfrak{p}}_{C^\varrho}+C_\ell b^{\text{\tiny$-\ell+1$}}\norm{\mathfrak{p}}_{C^{\ell+\varrho}}
 +2Ca^{1+\varrho}\norm{\mathfrak{p}}^2_{C^{\varrho+1}}
\end{align}
for any $\ell>\varrho$. Here in $(1)$ we use \eqref{for:113}; in $(2)$  we use \eqref{for:117} and the assumption $\norm{\mathfrak{p}}_{C^{\varrho}}\leq\bar{c}$;
in $(3)$ we use \eqref{for:118}.

Then as a direct consequence of \eqref{for:124}, \eqref{for:122}, \eqref{for:123}, \eqref{for:125} we have:
\begin{align*}
  \norm{\mathfrak{p}^{(1)}}_{C^0}&\leq \max_{v\in E}\big(\norm{\mathcal{R}_v}_{C^0}+\norm{W_{L,v}}_{C^0}+\norm{\mathcal{E}_v}_{C^0}\big)+C\norm{\mathfrak{p}}_{C^{0}}\norm{\mathfrak{p}}_{C^{1}},
 \end{align*}
which gives \eqref{for:144}.

\medskip

\emph{Estimate for $\norm{\mathfrak{p}^{(1)}}_{C^r}$, $r\geq1$}: In this part,   we only need to have a ``linear" bound
with respect to the corresponding norm of the old error $\norm{\mathfrak{p}}_{C^r}$. From \eqref{for:122} we have
\begin{align*}
 \norm{\mathfrak{p}^{(1)}}_{C^r}& \leq C\norm{(h_1)_*\tilde{E}}_{C^r}+\norm{E'}+C\norm{\mathfrak{p}}_{C^{0}}\norm{\mathfrak{p}}_{C^{1}}\\
 &\overset{\text{\tiny$(1)$}}{\leq} C\norm{(h_1)_*\tilde{E}}_{C^r}+C\leq C_r(\norm{h_1}_{C^r}+\norm{\mathfrak{p}}_{C^r}+1)\\
 &\overset{\text{\tiny$(2)$}}{\leq} C_{r}( a^{r}b^\varrho\norm{\mathfrak{p}}_{C^{\varrho}}+b^\varrho\norm{\mathfrak{p}}_{C^r}+1)
\end{align*}
for any $r\geq\varrho$. Here in $(1)$ we use the assumption that $\norm{\mathfrak{p}}_{C^\varrho}$ is sufficiently small; in $(2)$ we use \eqref{for:160}. Hence we get the \eqref{for:181}.
\end{proof}
The estimate of $\norm{\mathfrak{p}^{(1)}}_{C^0}$ from Proposition \ref{po:5} is simplified in the following corollary under some additional assumptions that will be all met during the iterative step.

\begin{corollary}\label{le:14} If $a\leq b^{\frac{1}{2}}$ and $\norm{\mathfrak{p}}_{C^{\varrho+1}}<1$, then
\begin{enumerate}
  \item\label{for:153}  if $s=\ell+\varrho$, \eqref{for:144} of Proposition \ref{po:5} can be simplified as
  \begin{align*}
   \norm{\mathfrak{p}^{(1)}}&_{C^0}\leq C_{\ell}b^{3\varrho}(\norm{\mathfrak{p}}_{C^{\varrho+1}})^{2(1-\frac{\varrho}{\ell})}\big((\norm{\mathfrak{p}}_{C^{\ell+\varrho}})^{\frac{\varrho}{\ell}}+1\big)\notag\\
 &+4C_{\ell}b^{-\frac{\ell}{2}+2\varrho}
 \norm{\mathfrak{p}}_{C^{\ell+\varrho}};
  \end{align*}
  \item\label{for:15} if $a^{-s}\norm{\mathfrak{p}}_{C^s}<1$ and $\norm{\mathfrak{p}}_{C^{\ell+\varrho}}<y$ with $y>1$, \eqref{for:144} can be simplified as
  \begin{align*}
\norm{\mathfrak{p}^{(1)}}&_{C^0}\leq C_{\ell}b^{3\varrho}(\norm{\mathfrak{p}}_{C^{\varrho+1}})^{2(1-\frac{\varrho}{\ell})}(y^{\frac{\varrho}{\ell}}+1)\notag\\
 &+2C_{\ell}b^{\varrho}(a^{-s}\norm{\mathfrak{p}}_{C^s})^{(\text{\tiny$1-\frac{\varrho}{\ell}$})^2}
 y^{\frac{\varrho}{\ell}(2-\frac{\varrho}{\ell})}\notag\\
 &+2C_\ell b^{\text{\tiny$-\frac{\ell}{2}+2\varrho$}}y.
\end{align*}
\end{enumerate}
\end{corollary}
\begin{proof}
\eqref{for:153}: Let $s=\ell+\varrho$ in \eqref{for:144}. Then we have
\begin{align*}
\norm{\mathfrak{p}^{(1)}}&_{C^0}\overset{\text{\tiny$(1)$}}{\leq} Ca^{6\varrho}\norm{\mathfrak{p}}_{C^{\varrho+1}}^2
+C_{\ell}a^{2\varrho}(\norm{\mathfrak{p}}_{C^{\varrho+1}})^{2(1-\frac{\varrho}{\ell})}(\norm{\mathfrak{p}}_{C^{\ell+\varrho}})^{\frac{\varrho}{\ell}}\notag\\
 &+C_{\ell}a^{2\varrho}(a^{\text{\tiny$-\ell-\varrho$}})^{1-\frac{\varrho}{\ell}}
 \norm{\mathfrak{p}}_{C^{\ell+\varrho}}\notag\\
 &+C_{\ell}a^{2\varrho}(a^{\text{\tiny$-\ell-\varrho$}}
 )^{(1-\frac{\varrho}{\ell})^2}
 \norm{\mathfrak{p}}_{C^{\ell+\varrho}}\notag\\
  &+C_\ell a^{\text{\tiny$-\ell+2+\varrho$}}\norm{\mathfrak{p}}_{C^\varrho}+C_\ell a^{\text{\tiny$-2\ell+2$}}\norm{\mathfrak{p}}_{C^{\ell+\varrho}}\notag\\
  &\overset{\text{\tiny$(2)$}}{\leq} C_{\ell}a^{6\varrho}(\norm{\mathfrak{p}}_{C^{\varrho+1}})^{2(1-\frac{\varrho}{\ell})}\big((\norm{\mathfrak{p}}_{C^{\ell+\varrho}})^{\frac{\varrho}{\ell}}+1\big)\notag\\
 &+4C_{\ell}a^{-\ell+4\varrho}
 \norm{\mathfrak{p}}_{C^{\ell+\varrho}}\\
 &\overset{\text{\tiny$(3)$}}{\leq} C_{\ell}b^{3\varrho}(\norm{\mathfrak{p}}_{C^{\varrho+1}})^{2(1-\frac{\varrho}{\ell})}\big((\norm{\mathfrak{p}}_{C^{\ell+\varrho}})^{\frac{\varrho}{\ell}}+1\big)\notag\\
 &+4C_{\ell}b^{-\frac{\ell}{2}+2\varrho}
 \norm{\mathfrak{p}}_{C^{\ell+\varrho}}
\end{align*}
Here in $(1)$ we use $1=(1-\frac{\varrho}{\ell})^2+\frac{\varrho}{\ell}(2-\frac{\varrho}{\ell})$;
in $(2)$ we use $a>1$, $\varrho\geq2$ and $\norm{\mathfrak{p}}_{C^{\varrho+1}}<1$; in $(3)$ we use $a\leq b^{\frac{1}{2}}$.

\smallskip
\eqref{for:15}: By \eqref{for:144} we have
 \begin{align*}
\norm{\mathfrak{p}^{(1)}}&_{C^0}\overset{(1)}{\leq} Cb^{3\varrho}(\norm{\mathfrak{p}}_{C^{\varrho+1}})^{2(1-\frac{\varrho}{\ell})}
+C_{\ell}b^{\varrho}(\norm{\mathfrak{p}}_{C^{\varrho+1}})^{2(1-\frac{\varrho}{\ell})}y^{\frac{\varrho}{\ell}}\notag\\
 &+C_{\ell}b^{\varrho}(a^{\text{\tiny$-s$}}
 \norm{\mathfrak{p}}_{C^s})^{(1-\frac{\varrho}{\ell})^2}y^{\frac{\varrho}{\ell}}\notag\\
 &+C_{\ell}b^{\varrho}(a^{-s}\norm{\mathfrak{p}}_{C^s})^{(\text{\tiny$1-\frac{\varrho}{\ell}$})^2}
 y^{\frac{\varrho}{\ell}(2-\frac{\varrho}{\ell})}\notag\\
 &+C_\ell b^{\text{\tiny$-\frac{\ell}{2}+1+\frac{\varrho}{2}$}}\norm{\mathfrak{p}}_{C^\varrho}+C_\ell b^{\text{\tiny$-\ell+1$}}\norm{\mathfrak{p}}_{C^{\ell+\varrho}}\notag\\
 &\overset{(2)}{\leq} C_{\ell}b^{3\varrho}(\norm{\mathfrak{p}}_{C^{\varrho+1}})^{2(1-\frac{\varrho}{\ell})}(y^{\frac{\varrho}{\ell}}+1)\notag\\
 &+2C_{\ell}b^{\varrho}(a^{-s}\norm{\mathfrak{p}}_{C^s})^{(\text{\tiny$1-\frac{\varrho}{\ell}$})^2}
 y^{\frac{\varrho}{\ell}(2-\frac{\varrho}{\ell})}\notag\\
 &+2C_\ell b^{\text{\tiny$-\frac{\ell}{2}+2\varrho$}}y.
\end{align*}
Here in $(1)$ we use $a\leq b^{\frac{1}{2}}$, $\norm{\mathfrak{p}}_{C^{\varrho+1}}<1$ and $a^{-s}\norm{\mathfrak{p}}_{C^s}<1$; in $(2)$
we use $b>1$, $y>1$ and $\ell>\varrho>2$.
\end{proof}

\subsection{Setting up the iterative process }\label{sec:6} We consider the action $\alpha_A$ as described  in Theorem \ref{th:13} or \ref{th:11}. Recall notations in Section \ref{sec:32}.  Assuming $\tilde{\alpha}_A$ is a perturbation of $\alpha_A$ generated by $C^\infty$ vector
fields $\tilde{E}=E+\mathfrak{p}$,
where $\mathfrak{p}=\{\mathfrak{p}_1,\,\mathfrak{p}_2,\cdots \}$ are all small in some $C^\ell$ norm ($\ell$ is fixed and is determined from \eqref{for:170} to \eqref{for:190}).

In the following, we establish an iterative scheme and show the convergence of the process to a $C^\infty$
conjugacy between the initial perturbation $\tilde{\alpha}_A$ and $\alpha_A$ up to a coordinate change. To set up the iterative process we first pick up $1<\gamma<2$.  Then there exists $\kappa$ such that
\begin{align*}
 2\gamma<\kappa<\gamma^2+1.
\end{align*}
Fix
\begin{align*}
 0<\eta<\min\{\frac{1}{2},\,1-\text{\small$\frac{\gamma}{2}$},\,\gamma^2-(\kappa-1),\,\kappa-2\gamma,\,\gamma-1\}.
\end{align*}
Let $\ell >\varrho$ (see \eqref{for:99} of Section \ref{sec:47} for definition of $\varrho$) be sufficiently large such that
\begin{gather}
1-\text{\small$\frac{\varrho(1+\gamma)}{\ell}$}>\text{\small$\frac{1}{2}$}+\eta,\quad
1-\text{\tiny$\text{\small$\frac{3\kappa\varrho}{\ell}$}$}-\text{\small$\frac{(1+\gamma)\varrho}{\ell}$}> \text{\small$\frac{\gamma}{2}$}+\eta\label{for:170}\\
\text{\small$\frac{(3\kappa+1+\gamma)\varrho}{\ell}$}+\kappa-1<\gamma^2-\eta\label{for:171}\\
1-\text{\small$\frac{(\varrho+1)(1+\gamma)}{\ell}$}>\text{\small$\frac{1}{2}$}+\eta\label{for:175}\\
-\text{\small$\frac{6\kappa\varrho}{\ell}$}+2\big(1-\text{\small$\frac{(1+\gamma)(1+\varrho)}{\ell}$}\big)\big(1-\text{\tiny$\frac{\varrho}{\text{\tiny$\ell$}}$}\big)
-\frac{\gamma\varrho}{\ell}>\gamma+\eta \label{for:178}\\
\kappa-\gamma
-\text{\small$\text{\small$\frac{4\kappa\varrho}{\text{\small$\ell$}}$}$}>\gamma+\eta\label{for:173}\\
\text{\small$\frac{2\kappa\varrho}{\ell}$}+2(\gamma-1)+2\gamma<2\gamma^2 \label{for:158}\\
-\text{\small$\frac{2\varrho(1+\gamma)}{\ell}$}+2\gamma-1>0\label{for:213}\\
-\text{\small$\frac{2\kappa\varrho}{\ell}$}
+(2\gamma-1-\text{\small$\frac{2\varrho(1+\gamma)}{\ell}$})(\text{\tiny$1-\frac{\varrho}{\ell}$})^2-\gamma\frac{\varrho}{\ell}(2-\frac{\varrho}{\ell})
>\gamma+\eta\label{for:190}.
\end{gather}
Next, we show that the choice of $\ell$ satisfying all these constraints is possible. From \eqref{for:170} to \eqref{for:190} by letting $\ell\to\infty$ these inequalities become
\begin{gather*}
 (10.17)\to\big(1>\text{\small$\frac{1}{2}$}+\eta,\quad 1>\text{\small$\frac{\gamma}{2}$}+\eta\big),\quad (10.18)\to\big(\kappa-1<\gamma^2-\eta \big)\\
 (10.19)\to\big(1>\frac{1}{2}+\eta \big) \quad  (10.20)\to\big(2>\gamma+\eta \big),\quad (10.21)\to\big( \kappa-\gamma>\gamma+\eta \big),\\
  (10.22)\to\big( 2(\gamma-1)+2\gamma<2\gamma^2 \big),\quad (10.23)\to\big( 2\gamma-1>0 \big),\\
  (10.24)\to\big( 2\gamma-1>\gamma+\eta \big).
\end{gather*}
All the above  inequalities hold either automatically or as a direct consequence of assumptions. Thus \eqref{for:170} to \eqref{for:190} hold if we choose $\ell$ big enough.

We fix an increasing sequence $\beta_n\to \infty$ with $\beta_1>2\ell$. We construct $\mathfrak{p}^{(n)}$, $h_n$ and $\mathcal{T}_{n}$ inductively as follows. Set
\begin{align*}
 \mathfrak{p}^{(0)}=\mathfrak{p},\quad h_{0}=I, \quad\mathcal{T}_{0}=I, \quad\text{and}\quad\epsilon_n=\epsilon^{\text{\tiny$\gamma^n$}}
\end{align*}
where $0<\epsilon^{\text{\tiny$\frac{1}{2}$}}<\bar{c}$ is sufficiently small so that the following holds
\begin{align*}
\norm{\mathfrak{p}^{(0)}}_{C^0}\leq \epsilon_0=\epsilon,\quad \norm{\mathfrak{p}^{(0)}}_{C^{\ell+\varrho}}\leq \epsilon_0^{-\gamma}, \quad d(h_0,I)_{C^1}<\epsilon_0^{\text{\tiny$\frac{1}{2}$}},
\quad \norm{\mathcal{T}_{0}-I}<\epsilon_0^{\text{\tiny$\frac{1}{2}$}}.
\end{align*}
Suppose  inductively that $\tilde{E}^{(n)}=E+\mathfrak{p}^{(n)}$ and
\begin{align}\label{for:149}
 \norm{\mathfrak{p}^{(n)}}_{C^0}\leq \epsilon_n, \quad \norm{\mathfrak{p}^{(n)}}_{C^{\ell+\varrho}}\leq \epsilon^{-\gamma}_n,\notag\\
 \norm{\mathfrak{p}^{(n)}}_{C^{\beta_m}}<K_m^n\epsilon^{\text{\tiny$-2\gamma$}}_n(\norm{\mathfrak{p}^{(m-1)}}_{C^{\beta_m}}+1)
 \end{align}
for any $m\leq n$; and $K_m$ is a constant dependent only on $m$.

By interpolation inequalities we have: for any $0\leq r\leq \ell+\varrho$
\begin{align}\label{for:183}
 \norm{\mathfrak{p}^{(n)}}_{C^r}&\leq C_{\ell} (\norm{\mathfrak{p}^{(n)}}_{C^0})^{\text{\small$\frac{\ell+\varrho-r}{\ell+\varrho}$}}(\norm{\mathfrak{p}^{(n)}}_{C^{\ell+\varrho}})^{\text{\small$\frac{r}{\ell+\varrho}$}}\leq C_{\ell}\epsilon_n^{1-\text{\small$\frac{(1+\gamma)r}{\ell+\varrho}$}}\notag\\
 &<C_{\ell}\epsilon_n^{1-\text{\small$\frac{(1+\gamma)r}{\ell}$}}.
\end{align}
\subsection{Convergence } In this subsection, by induction we prove that all the bounds in \eqref{for:149} are valid
for any $n\in\NN$.
\begin{proposition}\label{po:3} Suppose $n\geq0$ and all the bounds in \eqref{for:149} hold for $n$.
Then there is a linear map $\mathcal{T}_{n+1}$ on $\text{Lie}(A)$, $g_{n+1}\in \GG$ and $\mathfrak{h}_{n+1}\in \text{Vect}^\infty(\mathcal{X})$ such that
for
\begin{gather*}
 h_{n+1}=\Psi_{g_{n+1}}^{-1}\exp(\mathfrak{h}_{n+1})\qquad\text{and }\\
 \tilde{E}^{(n+1)}=\mathcal{T}_{n+1}\big((h_{n+1})_*\tilde{E}^{(n)}\big)=E+\mathfrak{p}^{(n+1)},
\end{gather*}
we have:
\begin{enumerate}
  \item\label{for:176} $\norm{\mathcal{T}_{n+1}-I}+\norm{g_{n+1}-I}\leq \epsilon_n^{\text{\tiny$\frac{1}{2}$}}$;

  \smallskip
  \item\label{for:177} $\max\{\norm{\mathfrak{h}_{n+1}}_{C^1}, d(h_{n+1},I)_{C^1}\}\leq \epsilon_{n+1}^{\text{\tiny$\frac{1}{2}$}}$;

  \smallskip
  \item\label{for:187} $\norm{\mathfrak{p}^{(n+1)}}_{C^{\ell+\varrho}}\leq \epsilon^{-\gamma}_{n+1}$;

  \smallskip

  \item\label{for:188} $\norm{\mathfrak{p}^{(n+1)}}_{C^0}\leq \epsilon_{n+1}$;

  \smallskip

  \item\label{for:189} for any $m\leq n+1$ we have
  \begin{align*}
   &\max\{\norm{\mathfrak{p}^{(n+1)}}_{C^{\beta_m}},\,d(h_{n+1},I)_{C^{\beta_m}}\}\\
   &<K_m^{n+1}\epsilon^{\text{\tiny$-2\gamma$}}_{n+1}(\norm{\mathfrak{p}^{(m-1)}}_{C^{\beta_m}}+1)
  \end{align*}
  where $K_m$ is a constant dependent only on $m$;

  \smallskip

  \item \label{for:193} for any $m\leq n+1$ we have
  \begin{align*}
 d(h_{n+1},I)_{C^{\frac{\beta_m}{9}}}\leq C_m(\norm{\mathfrak{p}^{(m-1)}}_{C^{\beta_m}}+1)^{\frac{1}{9}}K_m^{\frac{(n+1)}{9}}\epsilon_{n+1}^{\frac{4-2\gamma}{9}}.
\end{align*}

\end{enumerate}
\end{proposition}

\subsubsection{Proof strategy}\label{re:5} The proof is based on Proposition \ref{po:5}.
 First, we briefly explain how the constants $\gamma,\,a,\,b,\,s$ are chosen to ensure the induction works for $C^0$ and $C^\ell$ norms.
$\gamma$ determines the size of the new error, which is at best to be quadratically small for $C^0$ norm. So we let $1<\gamma<2$.

 Let $s=\ell+\varrho$ and $b=a^2$.
By  \eqref{for:181} of Proposition \ref{po:5},  the main part for $\norm{\mathfrak{p}^{(n+1)}}_{C^{\ell}}$ is $b^{\frac{\ell}{2}}\norm{\mathfrak{p}^{(n)}}_{C^{\varrho}}$.
Then we should have
\begin{align}\label{for:265}
 b^{\frac{\ell}{2}}\norm{\mathfrak{p}^{(n)}}_{C^{\varrho}}\overset{\text{\small$(*)$}}{\leq}b^{\frac{\ell}{2}}\epsilon_n^{1-\text{\small$\frac{\varrho(1+\gamma)}{\ell}$}}
 <\epsilon_{n+1}^{-\gamma}=\epsilon_{n}^{-\gamma^2}
\end{align}
Here in $(*)$ we use \eqref{for:183}.
By \eqref{for:153}  of Corollary \ref{le:14}, the main part for $\norm{\mathfrak{p}^{(n+1)}}_{C^{0}}$ is
 $b^{-\frac{\ell}{2}}\norm{\mathfrak{p}^{(n)}}_{C^{\ell+\varrho}}=b^{-\frac{\ell}{2}}\epsilon^{-\gamma}_n$. Then we should have
\begin{align}\label{for:266}
 b^{-\frac{\ell}{2}}\epsilon^{-\gamma}_n<\epsilon_{n+1}=\epsilon_{n}^\gamma.
\end{align}
\eqref{for:265} and \eqref{for:266} give
\begin{align*}
  \epsilon^{-2\gamma}_n <b^{\frac{\ell}{2}}<\epsilon_{n}^{-\gamma^2-1+\text{\small$\frac{\varrho(1+\gamma)}{\ell}$}}.
\end{align*}
We note that for sufficiently large $\ell$,
\begin{align*}
  \epsilon^{-2\gamma}_n <\epsilon_{n}^{-\gamma^2-1+\text{\small$\frac{\varrho(1+\gamma)}{\ell}$}}
\end{align*}
 holds  if $\gamma>1$. Hence, we choose
 \begin{align*}
 2\gamma<\kappa<\gamma^2+1\quad\text{and}\quad b=\epsilon^{-\text{\small$\frac{2\kappa}{\ell}$}}_n.
 \end{align*}
 Second, we briefly explain how to choose $a$ and $s$ (while keeping $b$ fixed) to obtain a desired estimate of $\norm{\mathfrak{p}^{(n+1)}}_{C^{m}}$ for any $m>\ell$. By  \eqref{for:181} of Proposition \ref{po:5},  the main part for $\norm{\mathfrak{p}^{(n+1)}}_{C^{m}}$ is $a^{m}\norm{\mathfrak{p}^{(n)}}_{C^{\varrho}}$, which diverges as $m\to\infty$.
To overcome this difficulty, we introduce a new parameter $\tau_m$ (see \eqref{for:184}), which compares the increasing speed of $\norm{\mathfrak{p}^{(n)}}_{C^{\beta_m}}$ and $a^{\beta_m}\norm{\mathfrak{p}^{(n)}}_{C^{\varrho}}$, thus enabling us to choose the right $a$ and $s$ to carry out the inductive procedure.

\noindent \textbf{Case 1:} $\tau_m>b^{\frac{1}{2}}$. \quad In this case,  $a^{\beta_m}\norm{\mathfrak{p}^{(n)}}_{C^{\varrho}}$ grows slower than $\norm{\mathfrak{p}^{(n)}}_{C^{\beta_m}}$. Then we just let
 \[
a = b^{\frac{1}{2}} \quad \text{and} \quad s = \ell + \varrho.
\]
 \textbf{Case 2:} $\tau_m\leq b^{\frac{1}{2}}$. \quad To prevent $a^{\beta_m}\|\mathfrak{p}^{(n)}\|_{C^{\varrho}}$ from exceeding $\|\mathfrak{p}^{(n+1)}\|_{C^{\beta_m}}$, we choose
   \begin{align*}
    a=\tau_m.
   \end{align*}
    However, estimating $\|\mathfrak{p}^{(n+1)}\|_{C^{0}}$ by taking $s = \ell + \varrho$ might be risky,   as
$a^{-(\ell+\varrho)}\norm{\mathfrak{p}^{{(n)}}}_{C^{\ell+\varrho}}$ may not be small.  A key observation is if $\tau_m\leq b^{\frac{1}{2}}$, then there is $1\leq p\leq m$ such that
$a^{-\beta_p}\norm{\mathfrak{p}^{{(n)}}}_{C^{\beta_p}}$ is sufficiently small (see \eqref{for:159}). Then we let
\begin{align*}
  s=\beta_p.
\end{align*}
Note that in this situation $s\gg \ell$. Another important point is that the constants in \eqref{for:151} are independent of $s$. This ensures that the induction still work for the $C^0$ norm.
This is how the new scheme work and where the parameter $s$ and $a$ play the crucial role  (with
$b$ kept fixed).

\subsubsection{Proof of Proposition \ref{po:3}} Let $b=\epsilon^{-\text{\small$\frac{2\kappa}{\ell}$}}_n$. Set
\begin{align}\label{for:184}
 \tau_m=\big(\norm{\mathfrak{p}^{(n)}}_{C^{\beta_m}}\norm{\mathfrak{p}^{(n)}}_{C^\varrho}^{-1}\epsilon^{\text{\tiny$-2(\gamma-1)$}}_n\big)^{\frac{1}{\beta_m}},\quad 1\leq m\leq n
\end{align}
and $\tau=\min_{1\leq i\leq n}\{\tau_i\}$. We point out that $\tau$ and $b$ are both dependent on $n$.

The below two lemmas establish Proposition \ref{po:3}  in the case of $\tau>b^{\frac{1}{2}}$ (see Lemma \ref{le:4}) and $\tau\leq b^{\frac{1}{2}}$ (see Lemma \ref{le:12}) respectively.

\begin{lemma}\label{le:4} Proposition \ref{po:3} holds if $\tau>b^{\frac{1}{2}}$.
\end{lemma}
\begin{proof}
By \eqref{for:183} we have
\begin{align}
 \norm{\mathfrak{p}^{(n)}}_{C^\varrho}\leq C_{\ell}\epsilon_n^{1-\text{\small$\frac{\varrho(1+\gamma)}{\ell}$}}<
 C_\ell\epsilon_n^{\text{\tiny$\frac{1}{2}$}+\eta}\overset{\text{\small$(*)$}}{<} \epsilon_n^{\text{\tiny$\frac{1}{2}$}}<\bar{c},
\end{align}
which allows us to apply Proposition \ref{po:5} to obtain  the new iterates $\mathfrak{p}^{(n+1)}$, $h_{n+1}$. Here in $(*)$ we use \eqref{for:170}.

Set $a=b^{\frac{1}{2}}=\epsilon^{-\text{\small$\frac{\kappa}{\ell}$}}_n$ and $s=\ell+\varrho$.

\smallskip

\eqref{for:176}:  By \eqref{for:182} of Proposition \ref{po:5} we have
\begin{align}
 \norm{\mathcal{T}_{n+1}-I}+\norm{g_{n+1}-I}&\leq C\norm{\mathfrak{p}^{(n)}}_{C^0} \leq \epsilon_n^{\text{\tiny$\frac{1}{2}$}} \label{for:169}.
\end{align}

\eqref{for:177}: By \eqref{for:180} and \eqref{for:182} of Proposition \ref{po:5} we have
\begin{align}\label{for:81}
 \max\{\norm{\mathfrak{h}&_{n+1}}_{C^1}, d(h_{n+1},I)_{C^1}\}\leq C a^{\text{\tiny$\varrho$}}b^{\text{\tiny$\varrho$}}\norm{\mathfrak{p}^{(n)}}_{C^{\varrho}}
 \overset{\text{\small$(\heartsuit)$}}{\leq} C b^{\text{\tiny$\frac{3\varrho}{2}$}}\norm{\mathfrak{p}^{(n)}}_{C^{\varrho}}\notag\\
 &\overset{\text{\small$(*)$}}{\leq} C_\ell\epsilon^{-\text{\small$\frac{3\kappa\varrho}{\ell}$}}_n\epsilon_n^{1-\text{\small$\frac{(1+\gamma)\varrho}{\ell}$}}
 \overset{\text{\small$(**)$}}{\leq} C_\ell\epsilon_n^{\text{\tiny$\frac{\gamma}{2}+\eta$}}<\epsilon_n^{\text{\tiny$\frac{\gamma}{2}$}}=\epsilon_{n+1}^{\text{\tiny$\frac{1}{2}$}}.
\end{align}
Here in $(*)$ we use \eqref{for:183}; in $(**)$ we use \eqref{for:170}.

\smallskip

\emph{Note}. Inequality  $\heartsuit$ still holds if we assume $a\leq b^{\frac{1}{2}}$, which will be used in the proof of Lemma \ref{le:12}.

\smallskip

\eqref{for:187}:  By \eqref{for:181} of Proposition \ref{po:5} we have
\begin{align}\label{for:174}
  \norm{\mathfrak{p}^{(n+1)}}_{C^{\ell+\varrho}}&\leq C_{\ell}(a^{\ell+\varrho}b^\varrho \norm{\mathfrak{p}}_{C^{\varrho}}+b^\varrho\norm{\mathfrak{p}^{(n)}}_{C^\ell}+1)\notag\\
  &\overset{\text{\small$(\heartsuit)$}}{\leq} C_{\ell}(b^{\frac{\ell+3\varrho}{2}} \norm{\mathfrak{p}}_{C^{\varrho}}+b^\varrho\norm{\mathfrak{p}^{(n)}}_{C^\ell}+1)\notag\\
  &\overset{\text{\small$(*)$}}{\leq} C_{\ell}(\epsilon^{-\text{\small$\frac{\kappa(\ell+3\varrho)}{\ell}$}}_n
 \epsilon_n^{1-\text{\small$\frac{(1+\gamma)\varrho}{\ell}$}}+
\epsilon^{-\text{\small$\frac{2\kappa\varrho}{\ell}$}}_n\epsilon^{-\gamma}_n+1)\notag\\
&\overset{\text{\small$(\diamond)$}}{<} C_{\ell}(2
\epsilon^{-\text{\small$\frac{(3\kappa+1+\gamma)\varrho}{\ell}$}}_n\epsilon^{1-\kappa}_n+1)
\overset{\text{\small$(**)$}}{<}4C_\ell\epsilon_n^{\text{\tiny$-\gamma^2$}+\eta}\notag\\
&<\epsilon_n^{\text{\tiny$-\gamma^2$}}=(\epsilon_{n+1})^{-\gamma}.
 \end{align}
Here in $(*)$ we use \eqref{for:183}; in $(\diamond)$ we note that $\gamma<\kappa-1$; in $(**)$ we use \eqref{for:171}.

\smallskip

\emph{Note}. Inequality  $\heartsuit$ still holds if we assume $a\leq b^{\frac{1}{2}}$, which will be used in the proof of Lemma \ref{le:12}.

\smallskip

\eqref{for:188}: We note that
\begin{align}\label{for:151}
 \norm{\mathfrak{p}^{(n)}}_{C^{\varrho+1}}\overset{\text{\tiny$(*)$}}{<}C_{\ell}\epsilon_n^{1-\text{\small$\frac{(1+\gamma)(\varrho+1)}{\ell}$}}
 \overset{\text{\tiny$(\diamondsuit)$}}{<} C_{\ell}\epsilon_n^{\text{\tiny$\frac{1}{2}$}+\eta}<\epsilon_n^{\text{\tiny$\frac{1}{2}$}}<1.
\end{align}
Here in $(*)$  we use \eqref{for:183} to estimate $\norm{\mathfrak{p}^{(n)}}_{C^{\varrho+1}}$; in $\text{\tiny$(\diamondsuit)$}$ we use \eqref{for:175}.

It follows from \eqref{for:153} of Corollary \ref{le:14} that
\begin{align}\label{for:191}
\norm{\mathfrak{p}^{(n+1)}}_{C^0}
 &\overset{\text{\small$(*)$}}{\leq} 2C_{\ell}\epsilon^{-\text{\small$\frac{6\kappa\varrho}{\ell}$}}_n
 \big(\epsilon_n^{1-\text{\small$\frac{(1+\gamma)(\varrho+1)}{\ell}$}}\big)^{2(1-\frac{\varrho}{\ell})}( \epsilon^{-\gamma}_n)^{\frac{\varrho}{\ell}}\notag\\
 &+4C_{\ell}\epsilon^{-\text{\small$\frac{\kappa(-\ell+4\varrho)}{\ell}$}}_n\epsilon^{-\gamma}_n\notag\\
 &\overset{\text{\small$(**)$}}{\leq} 6C_\ell\epsilon_n^{\gamma+\eta}<\epsilon_n^{\text{\tiny$\gamma$}}=\epsilon_{n+1}
 \end{align}
 Here in $(*)$ we use \eqref{for:183} to estimate $\norm{\mathfrak{p}^{(n)}}_{C^{\varrho+1}}$; in $(**)$ we use \eqref{for:178} and \eqref{for:173}.

\smallskip

\eqref{for:189}:  By \eqref{for:182} and \eqref{for:181} of Proposition \ref{po:5}, for any $1\leq m\leq n+1$ we have
\begin{align}\label{for:172}
 &\max\{\norm{\mathfrak{p}^{(n+1)}}_{C^{\beta_m}},\,d(h_{n+1},I)_{C^{\beta_m}}\}\notag\\
 &\leq C_m(a^{\beta_m}b^\varrho\norm{\mathfrak{p}^{(n)}}_{C^{\varrho}}+b^\varrho\norm{\mathfrak{p}^{(n)}}_{C^{\beta_m}}+1).
 \end{align}
 For $m=n+1$, let $K_{n+1}=2C_{n+1}a^{\beta_{n+1}}b^\varrho$, then
 \begin{align*}
 &\max\{\norm{\mathfrak{p}^{(n+1)}}_{C^{\beta_{n+1}}},\,d(h_{n+1},I)_{C^{\beta_{n+1}}}\}\leq K_{n+1}(\norm{\mathfrak{p}^{(n)}}_{C^{\beta_{n+1}}}+1).
 \end{align*}
If  $1\leq m\leq n$, using \eqref{for:172} we have
 \begin{align}\label{for:192}
 &\max\{\norm{\mathfrak{p}^{(n+1)}}_{C^{\beta_m}},\,d(h_{n+1},I)_{C^{\beta_m}}\}\notag\\
 &\overset{\text{\tiny$(o)$}}{\leq} C_{m}
(\tau_m^{\beta_m}\epsilon^{-\text{\small$\frac{2\kappa\varrho}{\ell}$}}_n\norm{\mathfrak{p}^{(n)}}_{C^{\varrho}}+1)+C_{m}
\epsilon^{-\text{\small$\frac{2\kappa\varrho}{\ell}$}}_n\norm{\mathfrak{p}^{(n)}}_{C^{\beta_m}}\notag\\
&\overset{\text{\tiny$(\diamond)$}}{=} C_{m}
(\epsilon^{-\text{\small$\frac{2\kappa\varrho}{\ell}$}}_n\norm{\mathfrak{p}^{(n)}}_{C^{\beta_m}}\epsilon^{\text{\tiny$-2(\gamma-1)$}}_n+1)+C_{m}
\epsilon^{-\text{\small$\frac{2\kappa\varrho}{\ell}$}}_n\norm{\mathfrak{p}^{(n)}}_{C^{\beta_m}}\notag\\
&\leq 2C_{m}\epsilon^{-\text{\small$\frac{2\kappa\varrho}{\ell}$}}_n\epsilon^{\text{\tiny$-2(\gamma-1)$}}_n (\norm{\mathfrak{p}^{(n)}}_{C^{\beta_m}}+1)\notag\\
&\overset{\text{\tiny$(*)$}}{\leq} 4C_{m}\epsilon^{-\text{\small$\frac{2\kappa\varrho}{\ell}$}}_n\epsilon^{\text{\tiny$-2(\gamma-1)$}}_n K_m^n\epsilon^{\text{\tiny$-2\gamma$}}_n(\norm{\mathfrak{p}^{(m-1)}}_{C^{\beta_m}}+1)\notag\\
& \overset{\text{\tiny$(**)$}}{\leq} 4C_{m}K_m^n\epsilon^{\text{\tiny$-2\gamma$}}_{n+1}(\norm{\mathfrak{p}^{(m-1)}}_{C^{\beta_m}}+1)\notag\\
&=K_m^{n+1}\epsilon^{\text{\tiny$-2\gamma$}}_{n+1}(\norm{\mathfrak{p}^{(m-1)}}_{C^{\beta_m}}+1).
\end{align}
Here in $(o)$ we use the fact $\tau_m\geq\tau>a$; in $(\diamond)$ we use \eqref{for:184}; in $(*)$ we use induction assumption; in $(**)$ we use \eqref{for:158}.

\smallskip
\eqref{for:193}:  By interpolation inequalities, for $m\leq n+1$ we have
\begin{align}\label{for:194}
 d(h_{n+1},I)_{C^{\frac{\beta_m}{9}}}&\leq C_md(h_{n+1},I)_{C^{0}}^{\frac{8}{9}}d(h_{n+1},I)_{C^{\beta_m}}^{\frac{1}{9}}\notag\\
 &\overset{\text{\tiny$(*)$}}{\leq} C_m(\norm{\mathfrak{p}^{(m-1)}}_{C^{\beta_m}}+1)^{\frac{1}{9}}K_m^{\frac{(n+1)}{9}}\epsilon_{n+1}^{\frac{4-2\gamma}{9}}.
\end{align}
Here in $(*)$ we use \eqref{for:81} and \eqref{for:192}.

\end{proof}

\begin{lemma}\label{le:12} Proposition \ref{po:3} holds if $\tau\leq b^{\frac{1}{2}}$.
\end{lemma}
\begin{proof}
Choose $1\leq p\leq n$ such that $\tau_p=\tau$.  Set $a=\tau$ and $s=\beta_p$.

\smallskip
\eqref{for:176}, \eqref{for:177} and \eqref{for:187}: Since $\tau=a\leq b^{\frac{1}{2}}= \epsilon^{-\text{\small$\frac{\kappa}{\ell}$}}_n$, the estimates for $g_{n+1}$ and  $\mathcal{T}_{n+1}$ (see \eqref{for:169}), $\norm{\mathfrak{h}_{n+1}}_{C^{1}}$ and $\norm{h_{n+1}-I}_{C^1}$ (see \eqref{for:81}), $\norm{\mathfrak{p}^{(n+1)}}_{C^{\ell+\varrho}}$ (see \eqref{for:174}) still hold.

\smallskip
\eqref{for:188}: We use \eqref{for:15} of Corollary \ref{le:14}. Next, we estimate $a^{-s}\norm{\mathfrak{p}^{(n)}}_{C^s}$ which is an essential component for the estimate:
\begin{align}\label{for:159}
 a^{-s}\norm{\mathfrak{p}^{(n)}}_{C^s}&=\tau_p^{-\beta_p}\norm{\mathfrak{p}^{(n)}}_{C^{\beta_p}}
 \overset{\text{\tiny$(*)$}}{=}\norm{\mathfrak{p}^{(n)}}_{C^\varrho}\epsilon^{\text{\tiny$2(\gamma-1)$}}_n\notag\\
 &\overset{\text{\tiny$(**)$}}{\leq} C_{\ell}\epsilon_n^{1-\text{\small$\frac{\varrho(1+\gamma)}{\ell}$}}\epsilon^{\text{\tiny$2(\gamma-1)$}}_n
 =C_{\ell}\epsilon_n^{-\text{\small$\frac{\varrho(1+\gamma)}{\ell}$}}\epsilon^{\text{\tiny$2\gamma-1$}}_n\notag\\
 &\overset{\text{\tiny$(\diamond)$}}{<}\epsilon_n^{-\text{\small$\frac{2\varrho(1+\gamma)}{\ell}$}}\epsilon^{\text{\tiny$2\gamma-1$}}_n\overset{\text{\tiny$(***)$}}{<}1.
\end{align}
here in $(*)$ we use  \eqref{for:184} and in $(**)$ we use \eqref{for:183}; in $(\diamond)$ we use
$C_\ell\epsilon_n^{\text{\small$\frac{\varrho(1+\gamma)}{\ell}$}}<1$ if $\epsilon$ is chosen sufficiently small (note that $\epsilon$ is chosen after $\ell$ is chosen); in
$(***)$ we use \eqref{for:213}.

It follows from \eqref{for:15} of Corollary \ref{le:14} and \eqref{for:159} that
\begin{align*}
 \norm{\mathfrak{p}^{(n+1)}}_{C^{0}}&\leq 2C_{\ell}\epsilon^{-\text{\small$\frac{6\kappa\varrho}{\ell}$}}_n
 \big(\epsilon_n^{1-\text{\small$\frac{(1+\gamma)(\varrho+1)}{\ell}$}}\big)^{2(1-\frac{\varrho}{\ell})}( \epsilon^{-\gamma}_n)^{\frac{\varrho}{\ell}}\notag\\
&+2C_{\ell}\epsilon^{-\text{\small$\frac{2\kappa\varrho}{\ell}$}}_n(\epsilon_n^{-\text{\small$\frac{2\varrho(1+\gamma)}{\ell}$}}
 \epsilon^{\text{\tiny$2\gamma-1$}}_n)^{(\text{\tiny$1-\frac{\varrho}{\ell}$})^2}
 (\epsilon_n^{-\gamma})^{\frac{\varrho}{\ell}(2-\frac{\varrho}{\ell})}\notag\\
 &+2C_\ell \epsilon_n^{-\text{\small$\frac{2\kappa}{\ell}$}(\text{\tiny$-\frac{\ell}{2}+2\varrho$})}\epsilon^{-\gamma}_n\\
 &\overset{\text{\tiny$(*)$}}{\leq} 2C_\ell\epsilon_n^{\gamma+\eta}\\
 &+2C_{\ell}\epsilon_n^{-\text{\small$\frac{2\kappa\varrho}{\ell}$}}
 (\epsilon_n^{-\text{\small$\frac{2\varrho(1+\gamma)}{\ell}$}}\epsilon^{\text{\tiny$2\gamma-1$}}_n)^{(\text{\tiny$1-\frac{\varrho}{\ell}$})^2}
 (\epsilon_n^{-\gamma})^{\frac{\varrho}{\ell}(2-\frac{\varrho}{\ell})}\\
 &+2C_\ell\epsilon_n^{\gamma+\eta}\\
 &\overset{\text{\tiny$(\diamond)$}}{\leq} 2C_\ell\epsilon_n^{\gamma+\eta}+2C_\ell\epsilon_n^{\gamma+\eta}
 +2C_\ell\epsilon_n^{\gamma+\eta}
 <\epsilon_n^{\text{\tiny$\gamma$}}=\epsilon_{n+1}.
\end{align*}
Here in $(*)$ we use \eqref{for:178}, \eqref{for:173} and $2-\frac{\varrho}{\ell}>1$; in $(\diamond)$ we use \eqref{for:190}

\smallskip
\eqref{for:189}: By \eqref{for:182}, \eqref{for:181} for any $m\leq n+1$ we have
\begin{align*}
 &\max\{\norm{\mathfrak{p}^{(n+1)}}_{C^{\beta_m}},\,\norm{h_n-I}_{C^{\beta_m}}\}\notag\\
 &\leq C_m(a^{\beta_m}b^\varrho\norm{\mathfrak{p}^{(n)}}_{C^{\varrho}}+b^\varrho\norm{\mathfrak{p}^{(n)}}_{C^{\beta_m}}+1)\\
 &\overset{\text{\tiny$(*)$}}{\leq} C_{m}
(\epsilon^{-\text{\small$\frac{2\kappa\varrho}{\ell}$}}_n\tau_m^{\beta_m}\norm{\mathfrak{p}^{(n)}}_{C^{\varrho}}+1)+C_{m}
\epsilon^{-\text{\small$\frac{2\kappa\varrho}{\ell}$}}_n\norm{\mathfrak{p}^{(n)}}_{C^{\beta_m}}.
 \end{align*}
Here in $(*)$ we use $\tau_m\geq\tau=a$. By the same arguments as in \eqref{for:192}, we still get
\begin{align*}
 &\max\{\norm{\mathfrak{p}^{(n+1)}}_{C^{\beta_m}},\,d(h_{n+1},I)_{C^{\beta_m}}\}\\
 &\leq K_m^{n+1}\epsilon^{\text{\tiny$-2\gamma$}}_{n+1}(\norm{\mathfrak{p}^{(m-1)}}_{C^{\beta_m}}+1).
 \end{align*}

 \smallskip
\eqref{for:193}: Once \eqref{for:177} and \eqref{for:189} are proved, the result follows exactly the same way as \eqref{for:194} is obtained.

\smallskip

Thus we complete the proof of the lemma.

\end{proof}

\subsection{Proof of Theorem \ref{th:13} and Theorem \ref{th:11}}
Proposition \ref{po:3} shows that  we can obtain an infinite sequence $\mathfrak{p}^{(n)}$ inductively. Set
\begin{align*}
 H_n=h_n\circ \cdots \circ h_0\quad\text{and}\quad \iota_n=\mathcal{T}_n\circ \cdots \circ \mathcal{T}_0.
\end{align*}
Then \eqref{for:177} of Proposition \ref{po:3} shows that $H_n$ converges in $C^1$ topology to a $C^1$ conjugacy $h$ between $\tilde{\alpha}_A$ and $\alpha_A$; moreover, \eqref{for:193} of Proposition \ref{po:3} shows that
 the convergence of the sequence $H_n$ holds in $C^{\frac{\beta_m}{9}}$ for any $m\in\NN$. Hence we see that
 $h$ is of class $C^{\infty}$. \eqref{for:176} of Proposition \ref{po:3} shows that $\iota_n$ converges to an invertible   linear map $\iota$ of $\text{Lie}(A)$.  The convergence step shows that:
\begin{align*}
  h\circ\tilde{\alpha}_A\big(\exp(t(\iota E_i)),h^{-1}x\big)=\alpha_{A}(\exp(tE_i),x).
\end{align*}
for all $x\in \mathcal{X}$, $t\in\RR$, $1\leq i\leq d$.

Let $\mathfrak{i}$ be the group isomorphism of $A$ induced by $\iota$.  We also have
\begin{align*}
  h\circ\tilde{\alpha}_A(\mathfrak{i}(\textbf{a}),h^{-1}x)=\alpha_{A}(\textbf{a},x),\quad \text{for all }\textbf{a}\in A,\,x\in \mathcal{X}.
\end{align*}
This completes the proof of Theorem \ref{th:11} and Theorem \ref{th:13}.

\section{Proof of Corollaries to  Theorem \ref{th:13} and Theorem \ref{th:11}}

\subsection{Proof of Corollary \ref{cor:8}}\label{sec:41} It suffices to check the conditions of Theorem \ref{th:11}.   By Proposition \ref{po:2},
$\alpha_A$ is geometrically stable. It is harmless to assume that a basis of $\text{Lie}(A)$ is : $\{\mathfrak{u}_{i,j}:i\in 2\NN-1,j\in 2\NN\}$,
see \eqref{for:22} of Section \ref{sec:47}, the description of $\text{Lie}(A)$. Let $v_1=\mathfrak{u}_{1,2}$, $v_2=\mathfrak{u}_{3,4}$ and $v_3=\mathfrak{u}_{5,6}$. It is clear that they embed in a subalgebra isomorphic to $\mathfrak{sl}(2,\RR)\times \mathfrak{sl}(2,\RR)\times \mathfrak{sl}(2,\RR)$. Thus we complete the proof.

\subsection{Proof of Corollary \ref{cor:2}} It suffices to check the conditions of Theorem \ref{th:13}. Since any maximal abelian subgroup in $SL(n,\RR)$, $n\geq 4$
is unipotent \cite{Malcev}, $A$ is unipotent as each $A_i$, $1\leq i\leq k$ is unipotent. Then $\alpha_A$ is parabolic.   By Proposition \ref{po:2},
$\alpha_A$ is geometrically stable. It is harmless to assume that a basis of $\text{Lie}(A_1)$ is : $\{\mathfrak{u}_{i,j}:i\in 2\NN-1,j\in 2\NN\}$,
see \eqref{for:22} of Section \ref{sec:47}, the description of $\text{Lie}(A)$. Let $\textbf{v}=\mathfrak{u}_{1,2}$ and fix $\textbf{u}\in \text{Lie}(A_2)$.
It is clear that $\textbf{v}$ and $\textbf{u}$ satisfy the assumption in Theorem \ref{th:13}. Thus we complete the proof.

\subsection{Proof of Corollary \ref{cor:13}} We check the conditions of Theorem \ref{th:11}. By Proposition \ref{po:2} $\alpha_A$ is geometrically stable. As $n\geq 7$, from  arguments in Section \ref{sec:41} we see that there are elements $v_i\in A_1$, $1\leq i\leq3$ such that they embed in a subalgebra isomorphic to $\mathfrak{sl}(2,\RR)\times \mathfrak{sl}(2,\RR)\times \mathfrak{sl}(2,\RR)$. This completes the proof.

\subsection{Proof of Corollary \ref{cor:11}} From Theorem \ref{th:8} we see that there is a desired splitting for $\alpha_A$. Consequently, weak local rigidity follows immediately
from a standard argument, see \cite{Damjanovic2}, \cite{DT}.

\appendix
\section{Proof of Theorem \ref{th:2}}\label{sec:12}

Below, we recall  a conclusion from \cite{ramirez2009cocycles} for cohomological equations over unipotent flows.
\begin{lemma} (Theorem B' of \cite{ramirez2009cocycles})
Suppose $v\in \mathfrak{G}^1$ is nilpotent. Then there exist a set of vectors $\{u_i: 1\leq i\leq k\}$ in $\mathfrak{G}^1$ whose commutators span $\mathfrak{G}$
such that: for any unitary representation $(\pi,\mathcal{H})$ of $G$,  if the restriction of $\pi$ to each
simple factor of $G$ has a spectral gap, then for any $f\in \mathcal{H}^\infty$ satisfying the cohomological equation $vf=g$, we have
\begin{align*}
    \norm{u_i^mf}\leq C_m\norm{g}_{m+2}, \qquad 1\leq i\leq k,\,\,m\geq0.
\end{align*}

\end{lemma}
The next result provides global estimates for the solution of the regular representations, which is a direct consequence of  the above lemma, Theorem \ref{th:10} and Theorem \ref{th:5}:
\begin{theorem}\label{th:1}
Suppose $v\in \mathfrak{G}^1$ is nilpotent. If $\Gamma$ is a cocompact irreducible lattice and $\mathcal{H}=L^2_0(G/\Gamma)$, then there are constants $s_1>0$ and $s_2\geq2$ dependent only on $G$ and $\Gamma$ such that if $f\in \mathcal{H}^\infty$ satisfying the cohomological equation $vf=g$, we have
\begin{align*}
 \norm{f}_t\leq C_{t}\norm{g}_{s_2t+s_1},\qquad t\geq0.
\end{align*}

\end{theorem}
Now we proceed to the proof of Theorem \ref{th:2}. Choose a basis in which $\textrm{ad}_{v}$ has its Jordan normal form. Let $J_v=(z_{i,j})$ be an $m\times m$ matrix which consists of blocks of $\textrm{ad}_{v}$; i.e., $z_{i,i}=0$, and  $z_{i,i+1}=*_i\in \{0,\,1\}$ for
all $i=1,\cdots,m-1$.  Let the coordinate functions of $\mathfrak{u}$ and $\mathfrak{v}$ be $\mathfrak{u}_i$ and $\mathfrak{v}_i$, $1\leq i\leq m$ respectively.

Then the $m$-th equation of \eqref{for:20} is $v\mathfrak{u}_m=\mathfrak{v}_m$. Then the estimates
\begin{align}\label{for:202}
 \norm{\mathfrak{u}_m}_{t}\leq C_t\norm{\mathfrak{v}_m}_{s_2t+s_1},\qquad t\geq0
\end{align}
follow from Theorem \ref{th:1}.  The $(m-1)$-th equation in \eqref{for:12} is
\begin{align*}
 v\mathfrak{u}_{m-1}+*_{m-1}\mathfrak{u}_{m}=\mathfrak{v}_{m-1}.
\end{align*}
Then we obtain $v\mathfrak{u}_{m-1}=\mathfrak{v}_{m-1}-*_{m-1}\mathfrak{u}_{m}$. By Theorem \ref{th:1} and \eqref{for:202}, the following estimates hold:
\begin{align*}
 \norm{\mathfrak{u}_{m-1}}_{t}\leq C_t\norm{\mathfrak{v}_{m-1}-*_{m-1}\mathfrak{u}_{m}}_{s_2t+s_1}\leq C_t\norm{\mathfrak{v}}_{s_2^2t+(s_2+1)s_1}.
\end{align*}
Set $p_{0}(s_2,s_1)=s_1$. We can obtain a sequence $p_1(s_2,s_1),\cdots,p_{m-1}(s_2,s_1)$ using a recursive rule:
\begin{align}\label{for:51}
p_{i+1}(s_2,s_1)=s_2\cdot p_{i}(s_2,s_1)+s_1.
\end{align}
Inductively, we can show that for $1\leq k\leq m-1$ we have
\begin{align*}
 \norm{\mathfrak{u}_{k}}_{t}\leq C_t\norm{\mathfrak{v}}_{s_2^{m-k+1}t+p_{m-k}(s_2,s_1)},\qquad t\geq0.
\end{align*}
Hence we finish the proof on $J_v$.  If repeated for all Jordan blocks we get the result. It is clear that $\lambda_1$ is the maximum of $p_{m-1}(s_2,s_1)$ where $p_{m-1}$ ranges over all Jordan blocks and $\lambda\leq\dim\mathfrak{G}$. Hence we finish the proof.



\section{Proof for extended representations}\label{sec:23}
\subsection{Proof of Corollary \ref{cor:6}} \label{sec:31}
 Choose a basis in which $\textrm{ad}_{u}$ has its Jordan normal form.  We use $J_u=(w_{i,j})$ to denote an $m\times m$ matrix which consists of blocks of $\textrm{ad}_{u}$; i.e., let $w_{i,i}=0$ for all $i=1,\cdots,m$ (we note that $u$ is nilpotent) and $w_{i,i+1}=*_i\in \{0,\,1\}$ for
all $i=1,\cdots,m-1$. The $m$-th equation of \eqref{for:17} is
\begin{align}\label{for:91}
 u\Theta_m=\Omega_m;
\end{align}
and the $k$-th equation, $1\leq k\leq m-1$ in \eqref{for:17} is
\begin{align}\label{for:92}
 u\Theta_{k}+*_{k}\Theta_{k+1}=\Omega_{k},
\end{align}
where $\Theta_{k}$ and $\Omega_{k}$ are coordinate functions of $\Theta$ and $\Omega$ respectively.

From \eqref{for:91} by \eqref{for:140} of Lemma \ref{le:1} we have
\begin{align*}
 \norm{\Theta_m}_{\{H,G_u\},t}\leq C_{t} \norm{\Omega_m}_{\{H,G_u\},t+\frac{5}{2}}
\end{align*}
for any $0\leq t\leq s-\text{\small$\frac{5}{2}$}$.

We proceed by induction. Fix $1\leq k\leq m-1$.  Suppose for any $k+1\leq j\leq m$ we have
\begin{align}\label{for:208}
 \norm{\Theta_{j}}_{\{H,G_u\},t}\leq C_t\norm{\Omega}_{\{H,G_u\},t+(m-j+1)\frac{5}{2}}
\end{align}
for any $0\leq t\leq s-(m-j+1)\text{\small$\frac{5}{2}$}$.

From \eqref{for:92} we have
\begin{align*}
 \norm{\Theta_k}_{\{H,G_u\},t}\overset{\text{\tiny$(1)$}}{\leq} C_{t} \norm{\Omega_{k}-*_{k}\Theta_{k+1}}_{\{H,G_u\},t+\frac{5}{2}}\overset{\text{\tiny$(2)$}}{\leq} C_t\norm{\Omega}_{\{H,G_u\},t+(m-k+1)\frac{5}{2}}
\end{align*}
for any $0\leq t\leq s-(m-k+1)\text{\small$\frac{5}{2}$}$. Here in $(1)$ we use \eqref{for:140} of Lemma \ref{le:1}; in $(2)$ we use \eqref{for:208}.

Then we proved the case of $k$ and thus finish the proof on the block $J_u$.  By repeating the above arguments for all Jordan blocks we get the result by noting that the size of each block is less than $\dim\mathfrak{g}$.

\subsection{Proof of Corollary \ref{le:6}} \label{sec:35}
Choose a basis for $\text{ad}_u$ and fix an $m\times m$ matrix $J_{u}$ as described in Section \ref{sec:31}.
Let $J_v=(z_{k,j})$ be the corresponding blocks of $\textrm{ad}_v$: $z_{i,i}=0$
for all $1\leq i\leq m$ and $z_{k,j}=0$ for all $m\geq k>j\geq1$.
Since $\textrm{ad}_{u}$ and $\textrm{ad}_v$ commute, we have:
\begin{align}\label{for:78}
 *_jz_{k,j}=*_kz_{k+1,j+1}
\end{align}
for all $1\leq k\leq m-1$ and $k+1\leq j \leq m-1$.

 \eqref{for:18} splits into $m$ equations. For the $m$-th equation we have
\begin{align}\label{for:27}
v\Omega_m-u\Psi_m=\mathfrak{w}_m;
\end{align}
and for every $k=1,\cdots,m-1$ we have the following equation:
\begin{align}\label{for:39}
\big(v\Omega_k+\sum_{\text{\tiny$k+1\leq j\leq m$}}z_{k,j}\Omega_j\big)-(u\Psi_k+*_k\Psi_{k+1})=\mathfrak{w}_k.
\end{align}
Since $\mathcal{D}^l(\Omega_m)=\Omega_m$, by \eqref{for:11} of Lemma \ref{le:1} we see that the equation
\begin{align}\label{for:24}
  u\eta_m=\Omega_m
\end{align}
has a solution $\eta_m\in \mathcal{H}$ satisfying $\mathcal{D}^l(\eta_m)=\eta_m$ with estimates
      \begin{align*}
\norm{Y^j\eta_m}_{G_u,t}\leq C_{j,t} \max_{0\leq i\leq j}\{\norm{Y^i\Omega_m}_{G_u,t+\frac{3}{2}}\}
\end{align*}
for any $t\leq \sigma$, $j\geq0$, where $Y$ stands for $X_u,\,u$ or $Y\in \mathcal{C}(\mathfrak{g}_u)$.

From \eqref{for:27} and \eqref{for:24}, noting that $[v,u]=0$, we have
\begin{align*}
 u\mathcal{R}_m=-\mathfrak{w}_m.
\end{align*}
where $\mathcal{R}_m=\Psi_m-v\eta_m$.

Since $\mathcal{D}^l(\mathfrak{w}_m)=\mathfrak{w}_m$, it follows from \eqref{for:11} of Lemma \ref{le:1} that
\begin{align*}
\norm{Y^j\mathcal{R}_m}_{G_u,t}\leq C_{j,t} \max_{0\leq i\leq j}\{\norm{Y^i\mathfrak{w}_m}_{G_u,t+\frac{3}{2}}\},
\end{align*}
for any $t\leq \sigma$ and $j\geq0$, where $Y$ stands for $X_u,\,u$ or $Y\in \mathcal{C}(\mathfrak{g}_u)$.

Now we proceed by induction. Fix $k$ between $1$ and $m-1$ and assume that for all $k+1\leq r\leq m$ we already have the the following
\begin{align}\label{for:38}
 \Psi_r&=v\mathfrak{\eta}_r+\sum_{\text{\tiny$r+1\leq l\leq m$}}z_{r,l}\mathfrak{\eta}_l+\mathcal{R}_r, \notag\\
 \Omega_r&=u\eta_r+*_r\eta_{r+1},
\end{align}
where $\eta_r$ satisfies $\mathcal{D}^l(\eta_r)=\eta_r$,  with the estimates: for any $t\leq \sigma-\text{\small$\frac{3(m-r+1)}{2}$}$ and $j\geq0$
\begin{align}\label{for:42}
\norm{Y^j\eta_r}_{G_u,t}&\leq C_{j,t} \max_{\substack{j\leq i\leq m,\\ 0\leq p\leq j}}\{\norm{Y^p\Omega_i}_{G_u,t+\frac{3(m-r+1)}{2}}\}; \quad\text{and}\\
\norm{Y^j\mathcal{R}_r}_{G_u,t}&\leq C_{j,t} \max_{\substack{j\leq i\leq m,\\ 0\leq p\leq j}}\{\norm{Y^p\mathfrak{w}_i}_{G_u,t+\frac{3(m-r+1)}{2}}\}
\end{align}
where $Y$ stands for $X_u,\,u$ or $Y\in \mathcal{C}(\mathfrak{g}_u)$.

We substitute  the expressions for $\Omega_r$ and $v_r$ for all $k+1\leq r\leq m$ from \eqref{for:38}  into \eqref{for:39}. Then we have
\begin{align*}
&v(\Omega_k-*_{k}\eta_{k+1})-u(\Psi_k-\sum_{\text{\tiny$k+1\leq r\leq m$}}z_{k,r}\eta_r)+\mathcal{R}=\mathfrak{w}_k+*_k\mathcal{R}_{k+1}.
\end{align*}
where
\begin{align*}
 \mathcal{R}=\sum_{r=k+1}^m*_rz_{k,r}\eta_{r+1}-*_k\sum_{r=k+2}^mz_{k+1,r}\eta_r.
\end{align*}
From \eqref{for:78} we see that $\mathcal{R}=0$. Hence we have
\begin{align}\label{for:25}
&v(\Omega_k-*_{k}\eta_{k+1})-u(\Psi_k-\sum_{\text{\tiny$k+1\leq j\leq m$}}z_{k,j}\eta_j)=\mathfrak{w}_k+*_k\mathcal{R}_{k+1}.
\end{align}
By \eqref{for:11} of Lemma \ref{le:1} the equation
\begin{align}\label{for:30}
 u\eta_{k}=\Omega_k-*_{k}\eta_{k+1}
\end{align}
has a solution $\eta_{k}\in\mathcal{H}$ satisfying $\mathcal{D}^l(\eta_k)=\eta_k$ with estimates: for any $j\geq0$
\begin{align*}
\norm{Y^j\eta_k}_{G_u,t}&\leq C_{j,t} \max_{0\leq p\leq j}\{\norm{Y^p(\Omega_k-*_{k}\eta_{k+1})}_{G_u,t+\frac{3}{2}}\}\\
&\overset{\text{(1)}}{\leq}
C_{j,t} \max_{\substack{k\leq i\leq m,\\0\leq p\leq j}}\{\norm{Y^p\Omega_i}_{G_u,t+\frac{3(m-k+1)}{2}}\}
\end{align*}
for any $t\leq \sigma-\text{\small$\frac{3(m-k+1)}{2}$}$, where $Y$ stands for $X_u,\,u$ or $Y\in \mathcal{C}(\mathfrak{g}_u)$. Here in $(1)$ we use
\eqref{for:42}.

From \eqref{for:25} and \eqref{for:30} we have
\begin{align*}
&u\mathcal{R}_k
=-(\mathfrak{w}_k+*_k\mathcal{R}_{k+1}).
\end{align*}
where
\begin{align*}
 \mathcal{R}_k=\Psi_k-v\eta_{k}-\sum_{\text{\tiny$k+1\leq j\leq m$}}z_{k,j}\eta_j.
\end{align*}
It follows from \eqref{for:11} of Lemma \ref{le:1} that: for any $j\geq0$
\begin{align*}
\norm{Y^j\mathcal{R}_k}_{G_u,t}&\leq C_{j,t} \max_{0\leq p\leq j}\{\norm{Y^p\big(\mathfrak{w}_k+*_k\mathcal{R}_{k+1}\big)}_{G_u,t+\frac{3}{2}}\}\\
&\overset{\text{(2)}}{\leq}
C_{j,t} \max_{\substack{k\leq i\leq m,\\0\leq p\leq j}}\{\norm{Y^p\mathfrak{w}_i}_{G_u,t+\frac{3(m-k+1)}{2}}\}
\end{align*}
for any $t\leq \sigma-\text{\small$\frac{3(m-k+1)}{2}$}$, where $Y$ stands for $X_u,\,u$ or $Y\in \mathcal{C}(\mathfrak{g}_u)$.

Then we proved the case of $k$ and thus finish the proof on the block $J_u$.  By repeating the above arguments for all Jordan blocks we get the result. It is clear that the size of each block is less than $\dim\mathfrak{g}$. Set
\begin{align*}
 \eta=(\eta_1,\cdots,\eta_{\dim\mathfrak{g}})\quad\text{and}\quad\mathcal{R}=(\mathcal{R}_1,\cdots,\mathcal{R}_{\dim\mathfrak{g}}).
\end{align*}
Then we see that $\eta$ and $\mathcal{R}$ satisfy equation \ref{for:83} with estimates:
for $j\geq0$
\begin{align*}
\norm{Y^j\eta}&\leq C_{j} \max_{0\leq p\leq j}\{\norm{Y^p\Omega}_{G_u,\text{\tiny$\frac{3}{2}$}\dim\mathfrak{g}}\}; \quad\text{and}\\
\norm{Y^j\mathcal{R}}&\leq C_{j} \max_{0\leq p\leq j}\{\norm{Y^p\mathfrak{w}}_{G_u,\text{\tiny$\frac{3}{2}$}\dim\mathfrak{g}}\}
\end{align*}
where $Y$ stands for $X_u,\,u$ or $Y\in \mathcal{C}(\mathfrak{g}_u)$. Hence \eqref{for:88} and \eqref{for:89}
follow from the above estimates and Theorem \ref{th:4}. Then we finish the proof.

\subsection{Proof of Corollary \ref{le:11}}\label{sec:36}

 We follow the notations and proof line of that of Lemma \ref{le:6}. \eqref{for:6} splits into $m$ equations (under the basis as described in the proof of Lemma \ref{le:6}). For the $m$-th equation we have \eqref{for:27}. By Lemma \ref{le:9} there exists $\eta_m\in \mathcal{H}_{S_0}^{\infty}$ satisfying $\mathcal{D}^l(\eta_m)=0$ with estimates
\begin{align*}
 \norm{\eta_m}_{S_0,t}\leq C_{t}\norm{\Omega_m}_{S_0,t+6+\text{\tiny$\frac{l}{2}$}}
\end{align*}
for any $t\geq0$,  such that
\begin{align*}
 \Omega_m&=u\eta_m+\mathcal{R}_{1,m},\qquad\text{and}\\
 \Psi_m&=v\eta_m+\mathcal{R}_{2,m}
\end{align*}
with estimates
\begin{align*}
 \norm{\mathcal{R}_{1,m},\,\mathcal{R}_{2,m}}_{L_1,t}\leq C_{t}\norm{\mathfrak{w}_m}_{L_1,t+6+\text{\tiny$\frac{l}{2}$}},\qquad t\geq0.
\end{align*}
Next we still proceed by induction.  We note that for every $k=1,\cdots,m-1$ \eqref{for:39} still holds.  Fix $k$ between $1$ and $m-1$ and assume that for all $j=k+1,\cdots,m$ we already have the the following
\begin{align}\label{for:37}
 \Psi_j&=v\mathfrak{\eta}_j+\sum_{\text{\tiny$j+1\leq l\leq m$}}z_{j,l}\mathfrak{\eta}_l+\mathcal{R}_{2,j}, \notag\\
 \Omega_j&=u\eta_j+*_j\eta_{j+1}+\mathcal{R}_{1,j},
\end{align}
where $\eta_j$ satisfies $\mathcal{D}^l(\eta_j)=0$ with the estimates: for any $t\geq0$
\begin{align}\label{for:48}
\norm{\eta_j}_{S_0,t}&\leq C_{t} \max_{j\leq i\leq m}\{\norm{f_i}_{S_0,t+(6+\frac{l}{2})(m-j+1)}\},
\end{align}
and
\begin{align}\label{for:49}
  \norm{\mathcal{R}_{1,j},\,\mathcal{R}_{2,j}}_{L_1,t}&\leq C_{t} \max_{j\leq i\leq m}\{\norm{h_i}_{L_1,t+(6+\frac{l}{2})(m-j+1)}\}.
\end{align}
We substitute the expressions for $\Omega_j$ and $g_j$ for all $k+1\leq j\leq m$ from \eqref{for:37}  into \eqref{for:39}. Then we have
\begin{align*}
&v(\Omega_k-*_{k}\eta_{k+1})-u(\Psi_k-\sum_{\text{\tiny$k+1\leq j\leq m$}}z_{k,j}\eta_j)+\mathcal{R}\notag\\
&=\mathfrak{w}_k-\sum_{\text{\tiny$k+1\leq j\leq m$}}z_{k,j}\mathcal{R}_{1,j}+*_k\mathcal{R}_{2,k+1}.
\end{align*}
where
\begin{align*}
\mathcal{R}=\sum_{\text{\tiny$k+1\leq j\leq m$}}*_jz_{k,j}\eta_{j+1}-*_k\sum_{\text{\tiny$k+2\leq l\leq m$}}z_{k+1,l}\mathfrak{\eta}_l.
\end{align*}
From \eqref{for:78} we see that $\mathcal{R}=0$. Hence we have
\begin{align*}
&v(\Omega_k-*_{k}\eta_{k+1})-u(\Psi_k-\sum_{\text{\tiny$k+1\leq j\leq m$}}z_{k,j}\eta_j)\notag\\
&=\mathfrak{w}_k-\sum_{\text{\tiny$k+1\leq j\leq m$}}z_{k,j}\mathcal{R}_{1,j}+*_k\mathcal{R}_{2,k+1}.
\end{align*}
By Lemma \ref{le:9} there exists $\eta_k\in \mathcal{H}$ satisfying $\mathcal{D}^l(\eta_k)=0$ with estimates
\begin{align*}
 \norm{\eta_k}_{S_0,t}&\leq C_{t}\norm{\Omega_k-*_{k}\eta_{k+1}}_{S_0,t+6+\text{\tiny$\frac{l}{2}$}}\\
 &\overset{\text{(1)}}{\leq}
C_{t} \max_{k\leq i\leq m}\{\norm{\Omega_i}_{S_0,t+(6+\text{\tiny$\frac{l}{2}$})(m-k+1)}\}
\end{align*}
for any $t\geq0$, such that
\begin{align*}
 \Omega_k-*_{k}\eta_{k+1}&=u\eta_k+\mathcal{R}_{1,k},\qquad\text{and}\\
 \Psi_k-\sum_{\text{\tiny$k+1\leq j\leq m$}}&z_{k,j}\eta_j=v\eta_k+\mathcal{R}_{2,k}
\end{align*}
with estimates
\begin{align*}
 \norm{\mathcal{R}_{1,k},\,\mathcal{R}_{2,k}}_{L_1,t}&\leq C_{t}\norm{\mathfrak{w}_k-\sum_{\text{\tiny$k+1\leq j\leq m$}}z_{k,j}\mathcal{R}_{1,j}+*_k\mathcal{R}_{2,k+1}}_{L_1,t+6+\text{\tiny$\frac{l}{2}$}}\\
 &\overset{\text{(2)}}{\leq}C_{t} \max_{k\leq i\leq m}\{\norm{\mathfrak{w}_i}_{L_1,t+(6+\frac{l}{2})(m-k+1)}\}
\end{align*}
for any $t\geq0$. Here in $(1)$ we use \eqref{for:48}; in $(2)$ we use \eqref{for:49}.

Then we proved the case of $k$ and thus finish the proof on the block $J_u$.  By repeating the above arguments for all Jordan blocks we get the result. It is clear that the size of each block is less than $\dim\mathfrak{g}$.  Hence we finish the proof.

\section{Proof of Proposition \ref{po:2}}\label{sec:7}

\subsection{$\alpha_A$ in Corollary \ref{cor:8}}\label{sec:40} We show that: let $A$ be a maximal abelian subgroup of $\GG$ of type $A_n$, $n\geq3$, then $A$ is  geometrically stable. In fact, the proofs for different types of $\GG$ are very similar. After minor modifications the proof for type $A_n$ still works for other types of groups.

For any $v=(v_{i,j})\in \mathfrak{sl}(n,\RR)$, let $\norm{v}=\max|v_{i,j}|$.  We use $\mathfrak{u}_{i,j}$ to denote the $n\times n$ matrix with all entries $0$ except the $(i,j)$ entry to be $1$.

We say that $(i,j)$ is a pair if $i\neq j$. We say that a pair $(i,j)$ is \emph{good} if $i$ is odd and $j$ is even.  By Remark \ref{re:7} we can assume that $A$ is spanned by $\mathfrak{u}_{i,j}$, where $(i,j)$ is good. Suppose $E'=\{u'_{i,j}\}$ is a $c$-perturbation of $E$ satisfying $c+\norm{\mathcal{M}(E')}<\delta$. We write $u'_{i,j}=\mathfrak{u}_{i,j}+\mathfrak{o}_{i,j}$; where $\mathfrak{o}_{i,j}=(\mathfrak{o}_{i,j,k,l})$ is a $n\times n$ matrix.

\textbf{\emph{Step $1$}}: Suppose $v_i\in \mathfrak{sl}(n,\RR)$ with $\norm{v_i}\leq \norm{E-E'}$, $1\leq i\leq m$. Let $v=\exp(v_m)\cdots\exp(v_1)$.  Then
\begin{gather*}
 \norm{\text{Ad}_{v}E'-E'-\sum_{i=1}^m\text{ad}_{v_i}E}\leq C_m\norm{E'-E}^2; \text{ and}\\
 \mathcal{M}(E')=\mathcal{M}\big(\text{Ad}_{v}E'\big).
\end{gather*}
Hence we have
\begin{gather*}
  \|\mathcal{M}\big(E'+\sum_{i=1}^m\text{ad}_{v_i}E\big)\|\leq \norm{\mathcal{M}(E')}+C_m\norm{E'-E}^2; \text{ and}\\
  \norm{E'+\sum_{i=1}^m\text{ad}_{v_i}E-E}\leq C_m\norm{E-E'},\quad \norm{v-I}\leq C_m\norm{E-E'}.
\end{gather*}
The above discussion shows that: $(*)$ it is harmless to replace $E'$ by $E'+\sum_{i=1}^m\text{ad}_{v_i}E$.

\smallskip
\textbf{\emph{Step $2$}}: In this part, we show that it is harmless to assume that the following hold for $E'$:

\smallskip
$(*')$ $\mathfrak{o}_{1,2,p,m}=0$ if $m=2$ or $p=1$ and $(p,m)$ is a pair; and
$\mathfrak{o}_{1,2,2,2}=\mathfrak{o}_{1,2,3,3}$.

\smallskip
$(*'')$ $\mathfrak{o}_{1,i,1,m}=0$ if $i\geq 2$, $m\geq 2$ and $(1,i)$ is good.
\smallskip

We note that the image of $\text{ad}_{\mathfrak{u}_{1,2}}$ is spanned by
$\{\mathfrak{u}_{1,2},\cdots,\mathfrak{u}_{1,n},\mathfrak{u}_{3,2},\cdots,$ $ \mathfrak{u}_{n,2}$, $\mathfrak{u}_{1,1}-\mathfrak{u}_{2,2}\}$. Then by $(*)$ we can assume $(*')$ holds.

$(*')$ shows that $(*'')$ holds for $i=2$. Then we argue by induction. Suppose $\mathfrak{o}_{1,i,1,m}=0$, if $m\geq 2$ for all $2\leq i\leq k$ where $2\leq k\leq n-2$. We note that:
$[\mathfrak{u}_{1,k+2},\mathfrak{u}_{k+2,m}]=\mathfrak{u}_{1,m}$ and $[\mathfrak{u}_{k+2,m},\mathfrak{u}_{1,p}]=0$ if $2\leq m\neq k+2$ and $p<k+1$; moreover,
$[\mathfrak{u}_{1,k+2},X_{k+1}]=\mathfrak{u}_{1,k+2}$ and $[X_{k+1},\mathfrak{u}_{1,p}]=0$ if $p<k+1$, where $X_{j}=\frac{1}{j+1}\diag(1,\cdots,\underset{j}{1},\underset{j+1}{-j},0,\cdots,0)\in\mathfrak{sl}(n,\RR)$.
Then by $(*)$ we can assume that: $\mathfrak{o}_{1,k+2,1,m}=0$, if $m\geq 2$.


\smallskip

 \smallskip
\textbf{\emph{Step $3$}}: For good pairs $(i,j)$ and $(k,l)$, we have
\begin{align*}
 (**)\quad \big\|[\mathfrak{u}_{i,j},\mathfrak{o}_{k,l}]-[\mathfrak{u}_{k,l},\mathfrak{o}_{i,j}]\big\|\leq \norm{\mathcal{M}(E')}+C\norm{E-E'}^2.
\end{align*}
Fix a good pair $(i,j)$. Next, we will obtain useful information from $(**)$ by choosing different good pairs $(k,l)$.

1. Choose $k\neq i$ and $l\neq j$. By checking the coefficient of $\mathfrak{u}_{k,l}$ in $(**)$, we have
$\mathfrak{o}_{i,j,k,k}\equiv\mathfrak{o}_{i,j,l,l}$, meaning
\begin{align*}
 |\mathfrak{o}_{i,j,k,k}-\mathfrak{o}_{i,j,l,l}|\leq \norm{\mathcal{M}(E')}+C\norm{E-E'}^2.
\end{align*}
2. We note that for any pair $(p,m)$ not good with $p\neq i$ and $m\neq j$, there is a good pair $(k,l)$ such that $[\mathfrak{u}_{k,l},\mathfrak{u}_{p,m}]$ is not the in image of $\text{ad}_{\mathfrak{u}_{i,j}}$. This shows that $\mathfrak{o}_{i,j,p,m}\equiv0$.

3. Choose $k\neq i$ and $l\neq j$.  By checking the coefficient of $\mathfrak{u}_{i,l}$ in $(**)$, we conclude that
$\mathfrak{o}_{i,j,i,k}\equiv-\mathfrak{o}_{k,l,j,l}$.

4.  Choose $k\neq i$ and let $l=j$. By checking the coefficient of $\mathfrak{u}_{i,j}$ in $(**)$, we conclude that $\mathfrak{o}_{i,j,i,k}\equiv\mathfrak{o}_{k,j,i,i}-\mathfrak{o}_{k,j,j,j}$.

5.  3 and 4 give: $-\mathfrak{o}_{k,l,j,l}\equiv\mathfrak{o}_{k,j,i,i}-\mathfrak{o}_{k,j,j,j}$ if $k\neq i$, $l\neq j$.

6. Choose $l\neq j$ and let $k=i$. By checking the coefficient of $\mathfrak{u}_{i,j}$ in $(**)$, we conclude that $\mathfrak{o}_{i,j,l,j}\equiv-\mathfrak{o}_{i,l,i,i}+\mathfrak{o}_{i,l,j,j}$.


\smallskip
\textbf{\emph{Step $4$}}: In this part,  we show that $(\spadesuit)$: $\mathfrak{o}_{1,l}\equiv0$ mod $\text{Lie}(A)$, $l\geq2$, meaning
$\mathfrak{o}_{1,l}-\mathfrak{c}\in\text{Lie}(A)$, where $\norm{\mathfrak{c}}\leq \norm{\mathcal{M}(E')}+C\norm{E-E'}^2$. We emphasize that we still use $(i,j)$ and $(k,l)$ to denote good pairs as in \textbf{\emph{Step $3$}}. We note that
\begin{align}\label{for:219}
 -\mathfrak{o}_{1,l,j,l}\overset{\text{\tiny$(a)$}}{\equiv}\mathfrak{o}_{1,j,i,i}-\mathfrak{o}_{1,j,j,j}\overset{\text{\tiny$(a)$}}{\equiv}\mathfrak{o}_{1,2,j,2}\overset{\text{\tiny$(b)$}}{=}0, \quad\text{if }j\neq 2,l.
\end{align}
Here $(a)$ is from 5; $(b)$ is from $(*')$. We also have
\begin{align}\label{for:220}
 -\mathfrak{o}_{1,l,2,l}\overset{\text{\tiny$(a)$}}{\equiv}\mathfrak{o}_{1,2,i,i}-\mathfrak{o}_{1,2,2,2}\overset{\text{\tiny$(b)$}}{=}0,\quad i\neq 1,\,\,l\neq2.
\end{align}
Here in $(a)$ we use 5; in $(b)$ we use $(*')$ and 1. \eqref{for:219} and \eqref{for:220} imply that
\begin{align}\label{for:221}
 \mathfrak{o}_{1,l,j,l}\equiv0,\quad l\neq j.
\end{align}
By \eqref{for:221}, 2 and $(*'')$ we have: for $l\geq2$, $\mathfrak{o}_{1,l}\equiv\sum_m\mathfrak{o}_{1,l,m,m}\mathfrak{u}_{m,m}$ mod $\text{Lie}(A)$. Then it suffices to show that
$\mathfrak{o}_{1,l,m,m}\equiv0$ for any $m$. We note that if $1\neq i,\, l\neq j$
\begin{align*}
 0&\overset{\text{\tiny$(a)$}}{\equiv}-\mathfrak{o}_{1,j,l,j}\overset{\text{\tiny$(b)$}}{\equiv}\mathfrak{o}_{1,l,1,1}-\mathfrak{o}_{1,l,j,j},\quad\text{and}\\
 0&\overset{\text{\tiny$(a)$}}{\equiv}-\mathfrak{o}_{1,j,l,j}\overset{\text{\tiny$(c)$}}{\equiv}\mathfrak{o}_{1,l,i,i}-\mathfrak{o}_{1,l,l,l}.
\end{align*}
Here $(a)$ is from \eqref{for:221};
 $(b)$ is from 6; $(c)$ is from 5. Moreover, by $1$ we have $\mathfrak{o}_{1,l,i,i}\equiv\mathfrak{o}_{1,l,j,j}$ if $1\neq i,\, l\neq j$.
 Hence $\mathfrak{o}_{1,l,i,i}=\mathfrak{o}_{1,l,j,j}$ for any $i,j$. Since $\mathfrak{o}_{1,l}\in
 \mathfrak{sl}(n,\RR)$, $\mathfrak{o}_{1,l,m,m}\equiv0$ for any $m$. Then we finish the proof for $(\spadesuit)$.

\smallskip
\textbf{\emph{Step $5$}}:  In this part we show that $\mathfrak{o}_{i,j}\equiv\sum_l\mathfrak{o}_{i,j,m,m}\mathfrak{u}_{m,m}$ mod $\text{Lie}(A)$. we have: $k\neq 1$, $l\neq j$
\begin{align}\label{for:222}
0\overset{\text{\tiny$(a)$}}{\equiv}-\mathfrak{o}_{1,j,1,k}\overset{\text{\tiny$(b)$}}{\equiv}\mathfrak{o}_{k,l,j,l}.
\end{align}
Here $(a)$ is from $(\spadesuit)$ and $(b)$ is form 3. \eqref{for:222} implies
\begin{align}\label{for:223}
  \mathfrak{o}_{i,j,l,j}\equiv0,\quad l\neq j
\end{align}
by letting $k=i$ and switching $j,l$. From \eqref{for:222} we have: $k\neq i$
\begin{align}\label{for:224}
 0\equiv\mathfrak{o}_{k,l,j,l}\overset{\text{\tiny$(b)$}}{\equiv}-\mathfrak{o}_{i,j,i,k}.
\end{align}
Here $(b)$ is form 3. By \eqref{for:223}, \eqref{for:224}, and 2 we get the result.

\smallskip
\textbf{\emph{Step $6$}}: In this part, we show that $\mathfrak{o}_{i,l,m,m}=0$ for any $m$. We have
\begin{gather*}
0\overset{\text{\tiny$(a)$}}{\equiv}-\mathfrak{o}_{i,j,l,j}\overset{\text{\tiny$(b)$}}{\equiv}\mathfrak{o}_{i,l,i,i}-\mathfrak{o}_{i,l,j,j},\quad l\neq j\\
0\overset{\text{\tiny$(c)$}}{\equiv}\mathfrak{o}_{i,j,i,k}\overset{\text{\tiny$(d)$}}{\equiv}\mathfrak{o}_{k,j,i,i}-\mathfrak{o}_{k,j,j,j},\quad k\neq i.
\end{gather*}
Here $(a)$ is from \eqref{for:223}; $(b)$ is from 6; $(c)$ is from \eqref{for:224}; $(d)$ is from 4. In $(d)$ switching $k,i$ and letting $j=l$ we have
$\mathfrak{o}_{i,l,k,k}-\mathfrak{o}_{i,l,l,l}$, if $k\neq i$. By 1 we have $\mathfrak{o}_{i,l,k,k}\equiv\mathfrak{o}_{i,l,j,j}$ if $k\neq i$, $l\neq j$.  Hence we see that $\mathfrak{o}_{i,l,k,k}\equiv \mathfrak{o}_{i,l,j,j}$ for any $k,j$. Since $\mathfrak{o}_{i,l}\in
 \mathfrak{sl}(n,\RR)$, $\mathfrak{o}_{i,l,m,m}\equiv0$ for any $m$.

\smallskip
\textbf{\emph{Step $7$}}: \textbf{\emph{Step $5$}} and \textbf{\emph{Step $6$}} imply that $\mathfrak{o}_{i,j}\equiv0$ mod $\text{Lie}(A)$. Hence we finish the proof.

\subsection{$\alpha_A$ in Corollary \ref{cor:2}} We fix a basis $e_{i,1},e_{i,2}\cdots $ for each $\text{Lie}(A_i)$. Suppose $E'=\{e_{i,j}'\}$ is a $c$-perturbation of $E$ satisfying $c+\norm{\mathcal{M}(E')}<\delta$. We write $e'_{i,j}=e_{i,j}+\mathfrak{o}_{i,j}$. We denote by $p_i$ the projection from $\text{Lie}(\GG)$ to the $i$-th factor. As each $\alpha_A|_{A_i}$ is geometrically stable inside $\GG_i$ (see Section \ref{sec:40}), it is harmless to assume that
\begin{align}\label{for:209}
 p_i(\mathfrak{o}_{i,j})=0,\qquad \text{for each }i,\,j.
\end{align}
Next, we show that
\begin{align}\label{for:211}
 \norm{p_k(\mathfrak{o}_{i,j})/\text{Lie}(A_k)}\leq C\norm{\mathcal{M}(E')},\qquad\text{if }k\neq i.
\end{align}
For each $k$ we see that the map $q_k:\,\mathfrak{g}_k/\text{Lie}(A_k)\to q_k\big(\mathfrak{g}_k/\text{Lie}(A_k) \big)\subset\mathfrak{g}_k^{\dim \text{Lie}(A_k)}$ with the assignment: $Y\to ([Y,e_{k,1}],\,[Y,e_{k,2}]\cdots )$ is both well defined ($\text{Lie}(A_k)$ is abelian) and injective ($\text{Lie}(A_k)$ is maximal). Thus we have
\begin{align}\label{for:210}
 \norm{q_k^{-1}}\leq C,\qquad \forall\,k.
\end{align}
Note that
\begin{align*}
p_k([e'_{i,j},e'_{k,l}])=[p_k(e'_{i,j}),\,p_k(e'_{k,l})]\overset{\text{\tiny$(1)$}}{=}[p_k(\mathfrak{o}_{i,j}) ,\,e_{k,l}]
\end{align*}
for all $j,\,l$ if $i\neq k$. Here in $(1)$ we use \eqref{for:209}. This means
\begin{align*}
 \norm{[p_k(\mathfrak{o}_{i,j}) ,\,e_{k,l}]}\leq \norm{[e'_{i,j},e'_{k,l}]}\leq \norm{\mathcal{M}(E')}
\end{align*}
for all $j,\,l$ if $i\neq k$. This and \eqref{for:210} give \eqref{for:211}.

The result follows from \eqref{for:209} and \eqref{for:211} immediately.

\subsection{$\alpha_A$ in Corollary \ref{cor:13}} We denote by $\mathfrak{g}_1$ the Lie algebra of the $SL(n-1,\RR)$ subgroup containing $A_1$. It is easy to check that
 \begin{align}\label{for:212}
  F_1=\{Y\in \mathfrak{g}: [\textbf{x},Y]=0\}=\{\RR\textbf{x}\}\times\mathfrak{g}_1.
 \end{align}
We note that the space
 \begin{align}\label{for:226}
 F=\{X\in \mathfrak{g}: X \text{ is semisimple and commutes with }
 \text{Lie}(A_1)\}
 \end{align}
 is one dimensional.

 We fix a basis $e_{1},e_{2}\cdots $ for $\text{Lie}(A_1)$. Suppose $E'=\{\textbf{x}',e_{1}',e_2',\cdots\}$ is a $c$-perturbation of $E$ satisfying $c+\norm{\mathcal{M}(E')}<\delta$. There is $g\in \GG$ with $\norm{g-I}\leq Cc$ such that for $\text{Ad}_g(\textbf{x}')$
we have a decomposition
\begin{align*}
 \text{Ad}_g(\textbf{x}')=\mathfrak{s}+\mathfrak{k}+\mathfrak{n}
\end{align*}
for $3$ commuting elements,   where $\mathfrak{s}$ is a diagonal matrix, $\mathfrak{k}$ is compact and $\mathfrak{n}$ is nilpotent satisfying
\begin{align*}
 \norm{\mathfrak{s}-\textbf{x}}+\norm{\mathfrak{k}}+\norm{\mathfrak{n}}\leq Cc.
\end{align*}
Thus we have
\begin{align}\label{for:257}
\norm{\mathfrak{k}}+\norm{\mathfrak{n}}\leq Cc.
\end{align}
From \eqref{for:212} we see that if  $\delta$ is sufficiently small then we have
\begin{align*}
  \{Y\in \mathfrak{g}: [\mathfrak{s},Y]=0\}\subseteq\{\RR\textbf{x}\}\times\mathfrak{g}_1
 \end{align*}
This implies that
\begin{align}\label{for:259}
 \text{$\mathfrak{k}$ and $\mathfrak{n}$ are both in $\mathfrak{g}_1$}.
\end{align}
Thus
\begin{align*}
  \{Y\in \mathfrak{g}: [\text{Ad}_g(\textbf{x}'),Y]=0\}\subseteq\{\RR\textbf{x}\}\times\mathfrak{g}_1.
 \end{align*}
We consider the map $q: \mathfrak{g}\to \mathfrak{g}$ with the assignment: $q(z)=[z,\text{Ad}_g(\textbf{x}')]$. Then we have
\begin{align*}
 \norm{q^{-1}}\leq C, \qquad\text{where }q^{-1}:q(\mathfrak{g})\to \mathfrak{g}/\ker(q).
\end{align*}
Since
\begin{align}\label{for:225}
 \big\|[\text{Ad}_g(\textbf{x}'),\,\text{Ad}_g(e_k')]\big\|\leq \norm{\mathcal{M}(\text{Ad}_gE')}\leq C\norm{\mathcal{M}(E')}
\end{align}
for any $k$, we see that there are $\mathfrak{o}_k\in \mathfrak{g}$ with
\begin{align*}
 \norm{\mathfrak{o}_k}\leq C\norm{\mathcal{M}(E')}
\end{align*}
 such that $e_k''=\text{Ad}_g(e_k')-\mathfrak{o}_k\in \ker(q)\subseteq \{\RR\textbf{x}\}\times\mathfrak{g}_1$ for any $k$.

 As any maximal abelian subgroup in $SL(n-1,\RR)$ is geometrically stable (see Section \ref{sec:40}), we can assume that
 \begin{align}\label{for:228}
  e_k''=e_k+d_k\textbf{x},\quad \forall\, k
 \end{align}
 where $d_k\in\RR$ and
 \begin{align}\label{for:258}
  \norm{d_k}\leq Cc\qquad \forall\, k.
 \end{align}
\eqref{for:228} together with \eqref{for:225} give
 \begin{align}
 \big\|[\text{Ad}_g(\textbf{x}'),\,e_k+d_k\textbf{x}]\big\|&\leq C\norm{\mathcal{M}(E')}\notag\\
 \overset{(1)}{\Rightarrow} \big\|[\text{Ad}_g(\textbf{x}'),\,e_k]\big\|&\leq C\norm{\mathcal{M}(E')}+C_1c^2\notag\\
 \overset{(2)}{\Rightarrow} \big\|[g_0,\,e_k]\big\|&\leq C\norm{\mathcal{M}(E')}+C_1c^2\label{for:260}
\end{align}
 for any $k$. Here in $(1)$ we note that
 \begin{align*}
 \big\|[\text{Ad}_g(\textbf{x}'),\,d_k\textbf{x}]\big\|=\big\|[\mathfrak{k}+\mathfrak{n},\,d_k\textbf{x}]\big\|\overset{(a)}{\Rightarrow} C_1c^2.
 \end{align*}
Here in $(a)$ we use \eqref{for:257} and \eqref{for:258}; in $(2)$ we note that there is some $l\in\RR$ such that $\mathfrak{s}-l\textbf{x}\in \mathfrak{g}_1$, which gives
\begin{align}\label{for:271}
 \text{Ad}_g(\textbf{x}')=l\textbf{x}+\mathfrak{s}-l\textbf{x}+\mathfrak{k}+\mathfrak{n}=l\textbf{x}+g_0
\end{align}
where $g_0\in \mathfrak{g}_1$ (we recall \eqref{for:259}).

We define a map $p:  \mathfrak{g}/\text{Lie}(A_1)\to p(\mathfrak{g}/\text{Lie}(A_1))\subseteq\mathfrak{g}^{\dim \text{Lie}(A_1)}$ with the assignment: $Y\to ([Y,e_{1}],\,[Y,e_{2}]\cdots )$ is both well defined ($\text{Lie}(A_1)$ is abelian) and injective ($\text{Lie}(A_1)$ is maximal). Thus we have
\begin{align*}
 \norm{p^{-1}}\leq C.
\end{align*}
This together with \eqref{for:260} imply that
 \begin{align*}
 \norm{ g_0}\leq C_2\norm{\mathcal{M}(E')}+C_2c^2.
 \end{align*}
This means that it is harmless to assume that $\text{Ad}_g(\textbf{x}')=l\textbf{x}$ (see \eqref{for:271}). This together with \eqref{for:228} gives  $e_k''\in \text{Lie}(A_1)$ for any $k$. This completes the proof.

\bibliographystyle{ieeetr}

\bibliography{mathBib}

\end{document}